\DeclareSymbolFont{cyrletters}{OT2}{wncyr}{m}{n}
\DeclareMathSymbol{\Sha}{\mathalpha}{cyrletters}{"58}
\DeclareMathSymbol{\invques}{\mathord}{operators}{`>}
\DeclareRobustCommand{\tmquestiondown}{%
  \ifmmode\invques\else\textquestiondown\fi
}
\numberwithin{equation}{section}
\newcommand{\mylabel}[2]{#2\def\@currentlabel{#2}\label{#1}}
\newtheorem{theorem}{Theorem}[section]
\newtheorem{lemma}[theorem]{Lemma}
\newtheorem{conj}[theorem]{Conjecture}
\newtheorem{proposition}[theorem]{Proposition}
\newtheorem{corollary}[theorem]{Corollary}
\newtheorem{defn}[theorem]{Definition}
\newtheorem{example}[theorem]{Example}
\newtheorem{remark}[theorem]{Remark}
\newtheorem{assumption}[theorem]{Assumption}
\newcommand{\Gal}{\operatorname{Gal}}
\newcommand{\QQ}{\mathbb{Q}}
\newcommand{\Qp}{{\mathbb{Q}_p}}
\newcommand{\Zp}{\mathbb{Z}_p}
\newcommand{\ZZ}{\mathbb{Z}}
\newcommand{\g}{\mathbf{g}}
\newcommand{\ord}{\mathrm{ord}}
\newcommand{\fp}{\mathfrak{p}}
\newcommand{\cL}{\mathcal{L}}
\newcommand{\cO}{\mathcal{O}}
\newcommand{\GL}{\mathrm{GL}}
\newcommand{\cyc}{\textup{cyc}}
\newcommand{\Hom}{\mathrm{Hom}}
\newcommand{\LL}{\Lambda}
\newcommand{\f}{\textup{\bf f}}
\newcommand{\h}{\textup{\bf h}}
\newcommand{\lra}{\longrightarrow}
\newcommand{\res}{\textup{res}}
\newcommand{\cP}{\mathcal{P}}
\newcommand{\cF}{\mathcal{F}}
\newcommand{\p}{\mathfrak{p}}
\newcommand{\cW}{\mathcal{W}}
\newcommand{\cR}{\mathcal{R}}
\newcommand{\Cp}{\mathbb{C}_p}
\newcommand{\Spf}{{\rm Spf}}
\newcommand{\cl}{{\rm cl}}
\newcommand{\bal}{{\rm bal}}
\newcommand{\wt}{{\rm w}}
\newcommand{\Fr}{{\rm Fr}}
\newcommand{\Ad}{{\rm ad}}
\newcommand{\crit}{{\rm crit}}
\newcommand{\rmw}{{\rm w}}
\newcommand{\cris}{{\rm cris}}
\newcommand{\hatotimes}{{\,\widehat\otimes\,}}
\newcommand{\hf}{\f}
\newcommand{\hg}{\g}
\newcommand{\hh}{\h}
\newcommand{\Q}{\QQ}
\newcommand{\CC}{{\mathbb C}}
\definecolor{pinegreen}{rgb}{0.0, 0.47, 0.44}
 \definecolor{pAlgae}{RGB}{87,115,135}
\definecolor{airforceblue}{rgb}{0.36, 0.54, 0.66}
	\definecolor{bondiblue}{rgb}{0.0, 0.58, 0.71}
\definecolor{britishracinggreen}{rgb}{0.0, 0.26, 0.15}
\definecolor{camouflagegreen}{rgb}{0.47, 0.53, 0.42}
\definecolor{darkcyan}{rgb}{0.0, 0.55, 0.55}
\subjclass[2020]{Primary 11F66, 11F67, 11F33; Secondary 11F85, 11G18, 14F30}
\begin{document}

\title{O\lowercase{n the} A\lowercase{rtin formalism for triple product $p$-adic} $L$-\lowercase{functions}}

\author{K\^az\i m B\"uy\"ukboduk}
\address{K\^az\i m B\"uy\"ukboduk\newline UCD School of Mathematics and Statistics\\ University College Dublin\\ Ireland}
\email{kazim.buyukboduk@ucd.ie}

\author{Ryotaro Sakamoto}
\address{Ryotaro Sakamoto \newline Department of Mathematics\\University of Tsukuba\\1-1-1 Tennodai\\Tsukuba\\Ibaraki 305-8571\\Japan}
\email{rsakamoto@math.tsukuba.ac.jp}

\begin{abstract}
Our main objective in the present article is to study the factorisation problem for triple-product $p$-adic $L$-functions, particularly in the scenarios when the defining properties of the $p$-adic $L$-functions involved have no bearing on this problem, although Artin formalism would suggest such a factorisation. Our analysis, which is guided by the ETNC philosophy, recasts this problem as a comparison of diagonal cycles, Beilinson--Kato elements, and Heegner cycles.
\end{abstract}

\maketitle

\tableofcontents

\section{Introduction}

The principal objective of this paper (which is the first in a series of 3 papers) is to study the \emph{irregular} factorisation problem for certain triple product $p$-adic $L$-functions as well as their algebraic counterparts. Here, the adjective \emph{irregular} is used in reference to Perrin-Riou's theory of $p$-adic $L$-functions, where she predicts that $p$-adic $L$-functions should come attached with what she calls regular submodules. The meaning of the adjective \emph{irregular} is explained in \S\ref{subsubsec_intro_comparison_palvannan_dasgupta}, where we discuss how the problem at hand compares with those considered by Dasgupta in \cite{Dasgupta2016} (which is an example of a \emph{regular} factorisation problem).

Let $\hf$ and $\hg$ be cuspidal primitive Hida families of elliptic modular forms and let $\hg^c$ denote the conjugate family of $\hg$. The main tasks we undertake can be summarized as follows.
\begin{itemize}
     \item[\mylabel{item_G1}{\bf Task 1}.] We formulate a precise conjecture to factor the restriction $\cL^{\g}_p(\f\otimes \g \otimes \g^c)_{\vert_{\cW_2}}$ of the $\g$-unbalanced $p$-adic  triple product $L$-function $\cL^{\g}_p(\f\otimes \g \otimes \g^c)$ to a $2$-dimensional weight space $\cW_2$ (that we introduce in \S\ref{subsubsec_30042021_1123} below). This factorisation conjecture reflects the Artin formalism, and addresses the scenario when the interpolation range for $\cL^{\g}_p(\f\otimes \g \otimes \g^c)_{\vert_{\cW_2}}$ is empty.
     \item[\mylabel{item_G2}{\bf Task 2}.] We study the algebraic counterpart of this factorisation problem. This involves an extensive study of relevant Selmer complexes and what we call \emph{modules of leading terms}.
\end{itemize}

Our Conjecture~\ref{conj_main_6_plus_2}, which settles \ref{item_G1}, predicts that the $p$-adic $L$-function $\cL^{\g}_p(\f\otimes \g \otimes \g)^2_{\vert_{\cW_2}}$ factors as the product of a degree-$6$ $p$-adic $L$-function and the image ${\rm Log}_{\omega_\f}({\rm BK}_{\f}^\dagger)$ of Ochiai's big Beilinson--Kato element ${\rm BK}_{\f}^\dagger$ associated to $\f$ under a suitably defined large logarithm map ${\rm Log}_{\omega_\f}$. The latter factor ${\rm Log}_{\omega_\f}({\rm BK}_{\f}^\dagger)$ is closely related to the derivatives of $p$-adic $L$-functions; see \S\ref{BK-PR} where we elaborate on this point. The appearance of the derivatives of $p$-adic $L$-functions (albeit in disguise) in the factorisation of the $p$-adic $L$-function $\cL^{\g}_p(\f\otimes \g \otimes \g)^2_{\vert_{\cW_2}}$ is rather subtle and it is explained by what we call the ``BDP-principle'' (see \S\ref{eqn_2022_05_17_1725}, especially \eqref{item_bdp1} below).

We describe a line of attack to prove Conjecture~\ref{conj_main_6_plus_2} in full generality in Appendix~\ref{subsec_strategy_to_prove_conj_2_2}, through a comparison of the conjectural Gross--Kudla formulae\footnote{In the level of generality required in the present work, this was recently announced by Liu--Yuan--Zhang--Zhang. Since we need this result only in a degenerate case, one may also rely on~\cite{Xue_Hang_2019} (see especially the discussion in the final paragraph of \S1.1 in op. cit.).} for diagonal cycles and Gross--Zagier formulae for Heegner cycles, and record in \S\ref{subsubsec_obstacles} the key missing ingredients to complete this argument.

The algebraic counterpart of the Factorisation Conjecture~\ref{conj_main_6_plus_2}, which is the focus of \ref{item_G2}, is best phrased in terms of what we call the \emph{modules of leading terms}. We introduce and study these objects in \S\ref{sec_Koly_Sys_Dec_4_18_05_2021} in great generality. We remark that the Selmer groups relevant to our factorisation problem will not be all torsion, and as a result,  the algebraic counterpart of Conjecture~\ref{conj_main_6_plus_2} will not involve only the characteristic ideals of Selmer groups. This is unlike the scenario considered in \cite{palvannan_factorization} (see \S\ref{subsubsec_2022_05_21_1103} for a relevant discussion), and it is another reflection of the \emph{irregularity} of the factorisation problem we consider in this paper.  

We establish the algebraic analogue of our Factorisation Conjecture~\ref{conj_main_6_plus_2} in great generality. Our result in this vein is recorded as Theorem~\ref{thm_main_8_4_4_factorisation_intro} (see also Theorem~\ref{thm_main_8_4_4_factorisation} in the main body of our article). It is worth noting that ${\rm Log}_{\omega_\f}({\rm BK}_{\f}^\dagger)$ (which encodes, as we remarked earlier, derivatives of $p$-adic $L$-functions) makes an appearance also in this statement, and it underscores the relevance of the modules of leading terms.

In our companion article~\cite{BC_Part2}, we establish further evidence for the validity of the Factorisation Conjecture~\ref{conj_main_6_plus_2}, proving it fully when $\g$ has CM by an imaginary quadratic field $K$. In future work, we will also treat a similar problem in the context of automorphic form on ${\rm GSp}_4\times {\rm GL}_2 \times {\rm GL}_2$.  

\subsection{Layout}

We provide a brief overview of the contents of our article. We introduce the notation that we frequently use in \S\ref{subsec_the_set_up_intro}. 
After the motivational background, we formulate our actorisation Conjecture \ref{conj_main_6_plus_2} in \S\ref{subsec_Artin_formalism_intro},  and we record our main results in \S\ref{subsec_results_2022_05_22}. We conclude \S\ref{section_intro_results} with a comparison (in \S\ref{subsubsec_intro_comparison_palvannan_dasgupta}) of the main problem that we tackle with those in the previous works \cite{Gross1980Factorization, Dasgupta2016, palvannan_factorization} on related themes.

In \S\ref{section_2_L_functions}, we review various $L$-series and $p$-adic $L$-functions that are relevant to our main problem. We introduce Greenberg-type Selmer complexes in \S\ref{sec_selmer_complexes}, and study Tamagawa factors in \S\ref{subsubsec_2022_09_08_1236} (this is crucial as per our goal that our Selmer complexes are perfect). 

In \S\ref{sec_Koly_Sys_Dec_4_18_05_2021}, we introduce a canonical cyclic submodule of an extended Selmer module, that we call the module of leading terms (of algebraic $p$-adic $L$-functions). The relationship between the modules of leading terms and Kolyvagin systems is explained in Remark~\ref{rmk:comparisonkolyvagin}. Based on these constructions, we formulate and prove the algebraic counterpart of the actorisation Conjecture \ref{conj_main_6_plus_2} in \S\ref{subsec_KS} assuming that $\g$ does not have CM. 

We conclude our article with Appendix~\ref{subsec_strategy_to_prove_conj_2_2}, where we explain that the validity of Conjecture~\ref{conj_main_6_plus_2} can be reduced to the verification of a Generalized Gross--Kudla formula (which is the subject of the work of Liu--Yuan--Zhang--Zhang in progress) and the construction of the $p$-adic $L$-function $\cL_p^\Ad(\hf\otimes \Ad^0\hg)$ (see \S\ref{subsubsec_conj_2022_06_02_0940} for its conjectural description).

\section{Background and results}
\label{section_intro_results}
Before stating our results in detail, we introduce in \S\ref{subsec_the_set_up_intro} the notation that we will rely on throughout this work. After the preliminary discussion in \S\ref{subsubsec221_2022_05_21_1116}--\S\ref{subsubsec_2017_05_17_1800} to serve as a motivation for our actorisation Conjecture~\ref{conj_main_6_plus_2}, we summarize our results in \S\ref{subsec_results_2022_05_22}. We conclude this section with an extensive discussion in \S\ref{subsubsec_intro_comparison_palvannan_dasgupta} to compare the present work with earlier similar results in different settings, underlining the key technical differences and difficulties that arise in our case of interest.

\subsection*{Acknowledgments} Both authors thank the anonymous referee for numerous helpful comments and suggestions, which helped us improve our paper. KB thanks Daniel Disegni, Hang Xue and Wei Zhang for enlightening discussions and helpful exchanges on the status of the Gross--Kudla conjecture. He also thanks Henri Darmon for his encouragement. KB’s research in this publication was conducted with the financial support of Taighde \'{E}ireann -- Research Ireland under Grant number IRCLA/2023/849 (HighCritical). 
RS was supported by JSPS KAKENHI Grant Number 24K16886.

\subsection{The set up}
\label{subsec_the_set_up_intro}
\subsubsection{Hida families}
\label{subsubsec_2022_05_16_1506}
Let $p$ be an odd prime and let $\cO$ be the ring of integers of a finite extension $E$ of $\Qp$. Let us put $[\,\cdot\,]: \ZZ_p^\times \hookrightarrow \LL_{\rm wt}^\times$ for the natural injection, where  $\LL_{\rm wt}=\Zp[[\Zp^\times]]$. The universal weight character $\bbchi$ is defined as the composite map
\[
    \bbchi: G_\QQ\stackrel{\chi_\cyc}{\lra}\ZZ_p^\times\hookrightarrow \LL_{\rm wt}^\times,
\]
where $\chi_\cyc$ is the $p$-adic cyclotomic character. A ring homomorphism $\LL_{\rm wt} \xrightarrow{\nu} \cO$ is called an arithmetic specialisation of weight $k\in \ZZ$ if the compositum 
\[
    G_\QQ\stackrel{\bbchi}{\lra} \LL_{\rm wt}^\times\stackrel{\nu}{\lra}\cO
\]
agrees with $\chi_\cyc^{k}$ on an open subgroup of $G_\QQ$. We regard integers as elements of the weight space via
\begin{align*}
   \ZZ \lra {\rm Hom}_{{\rm cont}}(\Lambda_{\rm wt}, \cO)\,,\qquad\qquad 
    n \longmapsto (\nu_n:[x] \mapsto x^n)
\end{align*}
For any integer $k$, let us define $\LL_{\rm wt}^{(k)}\cong \LL(1+p\ZZ_p) := \mathbb{Z}_p[[1+p\mathbb{Z}_p]]$ as the ``weight $k$''-eigen-subspace of $\LL_{\rm wt}^{(k)}$, that is, 
\[
\LL_{\rm wt}^{(k)} := \{x \in \LL_{\rm wt} \mid [\xi] \cdot x = \xi^k x \,\, \textrm{ for any $\xi \in \mu_{p-1}(\mathbb{Z}_p^{\times})$} \}. 
\]
Note that the map $\LL_{\rm wt} \stackrel{\nu_{k} }{\lra} \ZZ_p$ factors through $\LL_{\rm wt}^{(k)}$. 
For a positive integer $N$ coprime to $p$, let $\mathbf{T}^{\rm ord}(N)$ denote Hida's universal ordinary Hecke algebra of tame level $N$. We let $\f=\sum_{n=1}^{\infty} \mathbb{a}_{n}(\f)q^n \in \cR_\f[[q]]$ denote the primitive Hida family of tame conductor $N_\f$ and tame nebentype character $\varepsilon_\f$\,, which admits a crystalline specialisation $f_{\circ}$ of weight $k$. We recall that a primitive Hida family uniquely corresponds to an irreducible component ${\rm Spec}(\cR_\f)$ of ${\rm Spec}(\mathbf{T}^{\rm ord}(N_\f))$, whose arithmetic specialisations are new away from $p$ (cf. \cite{Hsieh}, \S3.1). Here, $\cR_\f$ is the branch (i.e. the irreducible component) of Hida's universal ordinary Hecke algebra determined by $f_{\circ}$. It is finite flat over $\LL_{\rm wt}^{(k)}$ and one has $a_p(\hf) \in \mathcal \cR_\hf^\times$. 
The universal weight character $\bbchi$ gives rise to the character
\[
    \bbchi_{\f}:  G_\QQ\xrightarrow{\bbchi} \LL_{\rm wt}^\times \twoheadrightarrow \LL_{\rm wt}^{(k),\times}\lra \cR_\f^\times\,.
\]
In general, for any $\kappa\in \cW_\hf :=\Spf(\cR_\f)(\CC_p)$, let us write ${\rm wt}(\kappa)\in \Spf(\LL_{\rm wt})(\CC_p)$ for the point that $\kappa$ lies over and call it the weight character of $\kappa$, so that ${\rm wt}(\kappa)\circ \bbchi =\kappa\circ \bbchi_{\f}$. We call $\cW_\hf$ the weight space for the Hida family $\hf$.

We say that $\kappa \in \cW_\hf$ is an arithmetic (or classical)  specialisation of $\hf$ whenever the restriction ${\rm wt}(\kappa)$ of $\kappa$ to $\LL_{\rm wt}^{(k)}$ agrees with $\nu_{{\rm w}(\kappa)}$ on an open subgroup of $\ZZ_p^\times$ for some integer ${\rm w}(\kappa)\in \ZZ_{\geq 2}$. We denote by $\cW_\hf^{\rm cl}\subset \cW_\hf$ the set of arithmetic specialisations. We say that an arithmetic specialisation $\kappa$ is crystalline if ${\rm w}(\kappa)\equiv k \pmod{p-1}$. 

By the fundamental work of Hida, we have a big Galois representation
$G_{\Q,\Sigma} \xrightarrow{\rho_\hf} \GL_2(\mathrm{Frac}(\cR_\hf))$ attached to $\f$, where $\Sigma$ is a finite set of primes containing all those dividing $pN_\f\infty$. The Galois representation $\rho_\hf$ is characterized by the property that
\[
    \mathrm{Tr}\, \rho_\hf(\Fr_\ell) = a_\ell(\hf), \qquad \forall \ell\not\in\Sigma.
\]
Here, $\Fr_\ell$ denotes the arithmetic Frobenius. Let us denote by $T_\hf\subset \mathrm{Frac}(\cR_\hf)^{\oplus 2}$ the Ohta lattice (cf. \cite{ohta99,ohta00}, see also \cite{KLZ2} where our $T_\f$ corresponds to $M(\f)^*$ in op. cit.) that realizes the Galois representation $\rho_\hf$ in \'etale cohomology groups of a tower of modular curves. If we assume in addition that 
    \begin{itemize}
       \item[\mylabel{item_Irr}{\bf (Irr)}]  the residual representation $\bar{\rho}_\hf$ is absolutely irreducible
    \end{itemize}
then any $G_\QQ$-stable lattice in $ \mathrm{Frac}(\cR_\hf)^{\oplus 2}$ is homothetic to $T_\f$. We henceforth assume that \ref{item_Irr} holds true for all Hida families that appear in this work. By a theorem of Wiles, we have
\[
    \rho_\hf|_{G_{\Qp}} \simeq \begin{pmatrix} \delta_\f & * \\ 0 & \alpha_\hf\end{pmatrix} 
\]
where $\alpha_\hf: G_{\QQ_p}\to \cR_\f^\times$ is the unramified character for which we have $\alpha_\hf(\Fr_p)=a_p(\f)$ and 
$\delta_\f:=\bbchi_{\f}\, \chi_\cyc^{-1}\, \alpha_\hf^{-1}\,\varepsilon_\f$. If we assume that
    \begin{itemize}
         \item[\mylabel{item_Dist}{\bf (Dist)}] $\delta_\f \not\equiv \alpha_\hf \pmod{\mathfrak{m}_{\cR_\f}}$
    \end{itemize}
then the lattice $T_\f$ fits in an exact sequence
\begin{equation} \label{eqn:filtrationf}
    0\lra T_\hf^+ \lra T_\hf \lra T_\hf^- \lra 0 
\end{equation} 
of $\cR_\f[[G_{\Qp}]]$-modules, where the action on $ T_\hf^+$ (resp. $ T_\hf^-$) is given by $\delta_\f$ (resp. $\alpha_\f$). 

We henceforth assume that \ref{item_Irr} and \ref{item_Dist} hold true for all Hida families that appear in this work.

\subsubsection{Self-dual triple products}
\label{subsubsec_211_2022_06_01_1635}
Let $\f$, $\g$, and $\h$ be three primitive Hida families of ordinary $p$-stabilized newforms of tame levels $N_\f, N_\g, N_\h$ (as in \S\ref{subsubsec_2022_05_16_1506}), whose tame nebentype characters verify the following self-duality condition:
\begin{equation}
\label{eqn_self_duality_condition}
    \varepsilon_\f\varepsilon_\g\varepsilon_\h = 1.
\end{equation}
Put $T_3 := T_\f\,\widehat\otimes\,_{\ZZ_p} T_\g\,\widehat\otimes\,_{\ZZ_p} T_\h$. Then $T_3$ is a Galois representation of rank $8$ over the complete local Noetherian ring $\cR_3 := \cR_\f \,\widehat\otimes\,_{\ZZ_p} \cR_\g \,\widehat\otimes\,_{\ZZ_p} \cR_\h$ of Krull-dimension $4$. We consider the associated weight space 
$$\cW_3 := {\rm Spf}(\cR_3)(\Cp):=\cW_\hf\times \cW_\hg \times \cW_\hh.$$ 
The self-duality condition \eqref{eqn_self_duality_condition} ensures that we have a perfect $G_\QQ$-equivariant pairing
\[
   T_3\otimes T_3\lra \bbchi_{\f\g\h}\chi_\cyc^{-{3}},
\]
where $\bbchi_{\f\g\h}:=\bbchi_{\f}\otimes \bbchi_{\g} \otimes \bbchi_{\h}: G_\Q\to \cR_3^{\times}$. 
Since $p>2$ by assumption, there exists a unique character $\bbchi_{\f\g\h}^{\frac{1}{2}}:G_\Q\to \cR_3^{\times}$ with $(\bbchi_{\f\g\h}^{\frac{1}{2}})^2=\bbchi_{\f\g\h}$. Then the Galois representation
$T_3^\dagger := T_3\otimes \bbchi_{\f\g\h}^{-\frac{1}{2}}\chi_\cyc^2$ is self-dual, in the sense that $T_3^\dagger \cong \Hom_{\cR_3}(T_3^\dagger, \cR_3)(1)$. 


\subsubsection{L-functions and periods}
\label{subsubsec_213_2022_05_17_1455}
For a crystalline specialisation
$$x=(\kappa,\lambda,\mu)\in \cW_3^{\rm cl}:=\cW_\hf^\cl\times\cW_\hg^\cl\times\cW_\hh^\cl$$ 
of weight $({\rm w}(\kappa), {\rm w}(\lambda), {\rm w}(\mu))$, consider the triple 
\[
	f:=\hf_\kappa^\circ \in S_{{\rm w}(\kappa)}(N_f,\chi_f),\qquad g:=\hg_\lambda^\circ \in S_{{\rm w}(\lambda)}(N_g,\chi_g),\qquad h:=\hh_\mu^\circ \in S_{{\rm w}(\mu)}(N_h,\chi_h)\,,
\]
whose ordinary $p$-stabilisations are $(\hf_\kappa,\hg_\lambda,\hh_\mu)$. Let us write 
\[
\mathscr{M}(x):=\mathscr{M}(f\otimes g\otimes h):=\mathscr{M}(f) \otimes \mathscr{M}(g)\otimes \mathscr{M}(h)
\]
for the motive associated to $f\otimes g\otimes h$, where $\mathscr{M}(?)$ (for $?=f,g,h$) stands for Scholl's motive, whose self-dual twist is $\mathscr{M}^\dagger(x):= \mathscr{M}(x)(c(x))$, with $$c(x):=\frac{{\rm w}(\kappa)+{\rm w}(\lambda)+{\rm w}(\mu)}{2}-1\,.$$
The $L$-function $L(f\otimes g\otimes h,s)$ associated with the motive $\mathscr{M}(x)$ is known to have an analytic continuation and under the self-duality assumption \eqref{eqn_self_duality_condition},  it satisfies a functional equation of the form
\begin{equation} \label{eqn:functionalequationtriple}
    \Lambda(f\otimes g \otimes h, s) \doteq \varepsilon(f\otimes g \otimes h) \cdot N(f\otimes g\otimes h)^{-s} \cdot \Lambda(f\otimes g\otimes h, 2c(x)-s),
\end{equation}
where $\Lambda(f\otimes g\otimes h,s)$ denotes the completed $L$-function, with the $\Gamma$-factors $L_\infty(f\otimes g\otimes h,s)$, and where $N(f\otimes g\otimes h)\in \ZZ_{\geq 1}$, and $\varepsilon(f\otimes g \otimes h)\in \{\pm 1\}$.

The study of the critical values of $L(f\otimes g\otimes h,s)$  (in the sense of Deligne) shows that the set of classical points can be subdivided into the following four regions, according to the choices of periods:
\begin{align*}
	\cW_3^\bal &:= \{ x\in \cW_\hf^\cl\times\cW_\hg^\cl\times\cW_\hh^\cl \mid {\rm w}(\kappa)+{\rm w}(\lambda)+{\rm w}(\mu) > 2\max\{{\rm w}(\kappa), {\rm w}(\lambda), {\rm w}(\mu)\} \}  \rightsquigarrow  \, \Omega^\bal \sim \langle f,f\rangle \langle g,g\rangle \langle h,h\rangle, 
 \\
	\cW_3^\hf &:= \{ x\in \cW_\hf^\cl\times\cW_\hg^\cl\times\cW_\hh^\cl \mid {\rm w}(\kappa)+{\rm w}(\lambda)+{\rm w}(\mu) \leq 2{\rm w}(\kappa)\} \hspace{2.9cm} \rightsquigarrow \, \Omega^f \sim \langle f,f\rangle^2, 
 \end{align*}
 and similarly define $\cW_3^\hg $ and $\cW_3^\hh$.
\subsubsection{$p$-adic Garrett--Rankin $L$-functions} Under certain technical assumptions (cf. \S\ref{subsec:tripleproducts}), Hsieh has constructed four $p$-adic $L$-functions
\[
    \cL_p^\bullet(\hf\otimes\hg\otimes \hh) : \cW_3\lra \Cp, \qquad  \bullet\in \{\bal,\hf,\hg,\hh\}.
\]
These are uniquely determined by an interpolation property of the following form: For all $(\kappa,\lambda,\mu)\in\cW_3^\bullet\cap \cW_3^{\cl}$, we have
\begin{equation}
\label{eqn_2022_05_16_1610}
    \cL_p^\bullet(\hf\otimes\hg\otimes \hh)(\kappa,\lambda,\mu) \doteq \frac{\Lambda(f\otimes g\otimes h, c(x))}{\Omega^\bullet}, \qquad    \bullet\in \{\bal, \hf,\hg,\hh\}\,.
\end{equation}
 where ``$\dot{=}$'' means equality up to interpolation factors which we do not specify here; see Theorems \ref{thm:unbalancedinterpolation} and \ref{thm:balancedinterpolation} for the precise formulae.

\subsubsection{Root numbers and the identical vanishing of $p$-adic $L$-functions} 
Let us define $N:={\rm LCM}(N_\f, N_\g, N_\h)$. According to Deligne, the root number $\varepsilon(f\otimes g \otimes h)$ determines the parity of $\ord_{s=c}L(f\otimes g\otimes h,s)$ at the central critical value. The root number $\varepsilon(f\otimes g \otimes h)$ can be expressed as a product of local root numbers: 
$$\varepsilon(f\otimes g \otimes h)=\prod_{v\mid N\infty} \varepsilon_v(f\otimes g \otimes h), \qquad \varepsilon_v(f\otimes g \otimes h)\in \{\pm 1\}.$$ 
The non-archimedean local root numbers are constant in families. In more precise terms, $\prod_{v\mid N} \varepsilon_v(f\otimes g \otimes h)$ is constant as $f$, $g$ and $h$ vary in families thanks to the \emph{rigidity of automorphic types}, cf. \cite[Lemma 2.14]{FouquetOchiai} and \cite[Corollary 5.3.3]{Disegni_local_constants}, see also the discussion on Page 414, Remark 3.1 and Definition 3.9 of \cite{Hsieh}. On the other hand, the root number $\varepsilon_\infty(f\otimes g \otimes h)$ at infinity is determined as follows:
\[
    \varepsilon_\infty(f\otimes g\otimes h) = 
    \begin{cases}
        -1, & \text{if } (\kappa,\lambda,\mu)\in \cW_3^\bal, \\
        +1, & \text{otherwise}.
    \end{cases}
\]
As a result, 
\begin{equation}
\label{eqn_2022_05_16_1612}
\begin{aligned}
 \varepsilon(f\otimes g \otimes h)= 
 \begin{cases}
    +1&  \hbox{if } (\kappa,\lambda,\mu)\in \cW_3^\bal \hbox{ and }  \prod_{v\mid N} \varepsilon_v(f\otimes g \otimes h)=-1\,\\
   & \hbox{or if } (\kappa,\lambda,\mu)\notin \cW_3^\bal \hbox{ and }  \prod_{v\mid N} \varepsilon_v(f\otimes g \otimes h)=+1\,,\\\\
    -1&  \hbox{if } (\kappa,\lambda,\mu)\in \cW_3^\bal \hbox{ and }  \prod_{v\mid N} \varepsilon_v(f\otimes g \otimes h)=+1\,\\
    & \hbox{or if } (\kappa,\lambda,\mu)\notin \cW_3^\bal \hbox{ and }  \prod_{v\mid N} \varepsilon_v(f\otimes g \otimes h)=-1\,.\\
\end{cases}
\end{aligned}
\end{equation}
Combining the interpolation formula \eqref{eqn_2022_05_16_1610} with \eqref{eqn_2022_05_16_1612}, we infer that
\begin{align}
    \label{eqn_2022_05_16_1613}
    \begin{aligned}
     \cL_p^\bal(\hf\otimes\hg\otimes \hh)=0& \qquad \hbox{ if } \prod_{v\mid N} \varepsilon_v(f\otimes g \otimes h)=+1\,,\\
      \cL_p^\bullet(\hf\otimes\hg\otimes \hh)=0, \quad \bullet\in \{\hf,\hg,\hh\}& \qquad \hbox{ if } \prod_{v\mid N} \varepsilon_v(f\otimes g \otimes h)=-1\,.
    \end{aligned}
\end{align}
In particular, the four $p$-adic $L$-functions never simultaneously assume non-zero values at crystalline specialisations.


\subsubsection{}
\label{subsubsec_216_2022_06_01_1635} In the present paper, we are solely interested in the setting when 
\begin{equation}
\label{eqn_2022_05_16_1626}
\varepsilon(f\otimes g \otimes h)=-1 \quad  \hbox{ for } (\kappa,\lambda,\mu)\in \cW_3^\bal\,.
\end{equation}
We remark that this is precisely the scenario of \cite{BSV, DarmonRotger}, where the authors have constructed cycles to account for the systematic vanishing \eqref{eqn_2022_05_16_1613} at the central critical point.


\subsubsection{}
\label{subsubsec_30042021_1123}
Our main results concern the case when $\hh = \hg^c$, where $\hg^c=\hg\otimes \varepsilon_\hg^{-1}$ is the conjugate Hida family. In this case, note that our running self-duality assumption \eqref{eqn_self_duality_condition} forces $\varepsilon_\f$ to be trivial. 

Since $p>2$, and as $\det(\rho_\f)=\bbchi_{\f}\chi_\cyc^{-1}$, we define the central critical twist of $T_\f$ on setting $T_\f^\dagger:=T_\f\otimes \bbchi_\f^{-\frac{1}{2}}\chi_\cyc$. Whenever $\hh = \hg^c$, we will identify the ring $\cR_\hh$ with $\cR_\hg$, through which we shall treat a specialisation $\lambda \in \cW_\hg$ of $\cR_{\hg}$ also as a specialisation of  $\cR_\hh$. Note in this case that the specialisation $\hg^c_\lambda$ of the Hida family $\hh=\hg^c$ is the conjugate of the overconvergent eigenform $\g_\lambda$: Namely, $\hg^c_\lambda=\g_\lambda\otimes \varepsilon_{\hg}^{-1}\,.$

The Galois representation $M_2^\dagger := T_\hf^\dagger\hatotimes\,_{\ZZ_p} \Ad^0(T_\hg)$, where $\Ad^0$ denotes the trace-zero endomorphisms of $T_\hg$, is then self-dual as well. We note that $M_2^\dagger$ is a Galois representation of rank-$6$ over the complete local Noetherian ring 
$\cR_2 := \cR_\hf\hatotimes_{\ZZ_p} \cR_\hg$ of Krull-dimension $3$.
Let us consider the associated weight space $\cW_2 := \Spf(\cR_2)(\Cp)$. We decompose the set of classical points in the weight space $\cW_2^{\rm cl}\subset \cW_2$ as follows:
\begin{equation}
\label{eqn_2022_05_17_1552}
     \cW_2^{\rm cl} = \underbrace{\left\lbrace (\kappa,\lambda) \in \cW_2^{\cl}: 2{\rm w}(\lambda) > {\rm w}(\kappa) \right\rbrace}_{\cW_2^{\Ad}}\quad \bigsqcup\quad\underbrace{\left\lbrace (\kappa,\lambda) \in \cW_2^{\cl}: 2{\rm w}(\lambda) \leq {\rm w}(\kappa) \right\rbrace}_{\cW_2^{\hf}}\,.
\end{equation}

\subsubsection{} 
Let us consider the natural homomorphisms
\begin{align}
\begin{aligned}
     \label{eqn_2022_05_17_1410}
     \cR_3&\xrightarrow{\iota_{2,3}^*} \cR_2& \qquad\qquad \cR_\hf&\xhookrightarrow{\varpi_{2,1}^*} \cR_2\\
    a\otimes b\otimes c &\longmapsto a\otimes bc &\qquad\qquad a&\longmapsto a\otimes 1
\end{aligned}
\end{align}
of complete local Noetherian $\ZZ_p$-algebras, which induce the morphisms
\begin{align}
\begin{aligned}
     \label{eqn_2022_05_17_1421}
     \cW_2&\xrightarrow{\iota_{2,3}} \cW_3 &\qquad\qquad \cW_2&\stackrel{\varpi_{2,1}}{\twoheadrightarrow} \cW_{\hf}\\
    (\kappa,\lambda) &\longmapsto (\kappa,\lambda,\lambda)&\qquad\qquad  (\kappa,\lambda)&\longmapsto \kappa\,.\\
\end{aligned}
\end{align}
Let us put $T_2^\dagger:=\iota_{2,3}^* \bigl( T_3^\dagger\bigr)$. We then have a split exact sequence
\begin{equation} \label{eqn_factorisationrepresentations_tr}
    0\lra M_2^\dagger\xrightarrow{\iota_{\rm tr}} T_2^\dagger\xrightarrow{{\rm id}\otimes {\rm tr}} \varpi^*_{2,1}(T_\hf^\dagger)\lra 0\,,
\end{equation}
where ${\rm tr}: \Ad(T_\g)\to \cR_\g$ is the trace map (which is $G_\Q$-equivariant when we endow the target with trivial Galois action). The self-duality of $M_2^\dagger$, $T_2^\dagger$ and $T_\f^\dagger$ gives rise to the exact sequence 
\begin{equation} \label{eqn_factorisationrepresentations_dual_trace}
    0\lra \varpi^*_{2,1}(T_\hf^\dagger) \xrightarrow{{\rm id}\otimes {\rm tr}^*} T_2^\dagger\xrightarrow{\pi_{{\rm tr}^*}} M_2^\dagger\lra 0\,, 
\end{equation}
where ${\rm tr}^*$ is given by transposing the trace map and using the self-duality of $\Ad(T_\g)$:
$${\rm tr}^*: \cR_\g \xrightarrow{{}^{\rm t}{\rm tr}} \Ad(T_\g)^*\xrightarrow{\sim}\Ad(T_\g)\,. $$
Both exact sequences \eqref{eqn_factorisationrepresentations_tr} and \eqref{eqn_factorisationrepresentations_dual_trace} are split:
\begin{equation} \label{eqn_factorisationrepresentations_dual_trace_splitting}
\xymatrix{
    0\ar[r]& \varpi^*_{2,1}(T_\hf^\dagger) \ar[r]^(.63){{\rm id}\otimes {\rm tr}^*} & T_2^\dagger \ar[r]^{\pi_{{\rm tr}^*}} \ar@/_-1.0pc/[l]^(.43){{\rm id}\otimes {\rm tr}}& M_2^\dagger\ar[r] \ar@/_-1.0pc/[l]^(.46){\iota_{\rm tr}}& 0\,.}
\end{equation}

\subsection{The Artin formalism} 
\label{subsec_Artin_formalism_intro}
Until otherwise stated, we shall work in the setting of \S\ref{subsubsec_30042021_1123}, so that $\hh=\hg^c$ is the conjugate Hida family of $\hg$. Recall in this scenario that we identify $\cW_{\hh}$ with $\cW_{\hg}$.
\subsubsection{} 
\label{subsubsec221_2022_05_21_1116}
Let $x=(\kappa,\lambda,\lambda)\in \cW_3^{\cl}$ be a classical point and let $f=\hf_\kappa$, $g=\hg_\lambda$ and $g^c=\hg^c_\lambda$ denote the corresponding specialisations. The self-dual motive $\mathscr{M}^\dagger(x)$ attached to $f\otimes g \otimes h$ (cf. \S\ref{subsubsec_213_2022_05_17_1455}) then decomposes:
\begin{equation}
\label{eqn_2022_05_17_1544}
    \mathscr{M}^\dagger(x)=(\mathscr{M}^\dagger(f)\otimes {\rm ad}^0\mathscr{M}(g))\oplus \mathscr{M}^\dagger(f)\,.
\end{equation}
The Artin formalism applied to this decomposition shows that the complex analytic Garrett--Rankin $L$-function naturally factors as
\begin{equation} \label{eqn:complexfactorisation}
    L(f\otimes g \otimes g^c,s) = L(f\otimes \Ad^0(g),s-{\rm w}(\lambda)+1) \cdot L(f,s-{\rm w}(\lambda)+1).
\end{equation}
At the central critical point ${\rm w}(\kappa)/2+{\rm w}(\lambda)-1$, the factorisation \eqref{eqn:complexfactorisation} reads
\begin{equation}
    \label{eqn_2022_05_16_1424}
    L(f\otimes g \otimes g^c,{\rm w}(\kappa)/2+{\rm w}(\lambda)-1) = L(f\otimes \Ad^0(g),{\rm w}(\kappa)/2) \cdot L(f,{\rm w}(\kappa)/2)\,.
\end{equation}
 \subsubsection{$p$-adic Artin formalism: $p$-adic $L$-functions of families of degree 6 and degree 2 motives.}
 \label{subsubsec_intro_padic_artin_formalism_1}
Our main goal in the present work is to establish $p$-adic analogues of the factorisation~\eqref{eqn_2022_05_16_1424}.  Recall that there are four $p$-adic $L$-functions one may consider, given by an interpolation formula in the range $\cW_3^\bullet$, where $\bullet\in\{\f,\g,\h,{\rm bal}\}$. Therefore, the $p$-adic Artin formalism in our set-up amounts to the factorisation each one of these four $p$-adic $L$-functions, into the product of two $p$-adic $L$-functions.

We briefly discuss the nature of these two $p$-adic $L$-functions to motivate the reader for the main problem at hand and for our results, even though our main results do not dwell on the constructions of these $p$-adic $L$-functions. The $p$-adic $L$-function that is relevant to the second summand in \eqref{eqn_2022_05_17_1544} (as $f$ varies in the Hida family $\hf$) is the Mazur--Kitagawa $p$-adic $L$-function $\cL_p^{\rm Kit}(\hf)=\cL_p^{\rm Kit}(\hf)(\kappa,1+\pmb{\sigma})\,,$ where $\pmb{\sigma}$ denotes the cyclotomic variable. The putative $p$-adic $L$-functions that correspond to the first summand in \eqref{eqn_2022_05_17_1544} (as both $f$ and $g$ vary in the respective Hida families) are the $p$-adic $L$-functions $\cL_p^?(\hf\otimes\Ad^0\hg)$ that interpolates the algebraic parts of when $(\kappa,\lambda)\in \cW_2^?$ for $?=\Ad,\f$ (cf.  \eqref{eqn_2022_05_17_1552}). We refer the reader to \S\ref{sec:Adjoint} for details. The $p$-adic $L$-functions $\cL_p^?(\hf\otimes\Ad^0\hg)$ are presently unavailable in full generality; but as we have noted above, we do not rely on the existence of these $p$-adic $L$-functions in our main results. Since we work in the situation of \eqref{eqn_2022_05_16_1626}, we have
$\cL_p^{\bal}(\f\otimes\g\otimes\g^c)=0$ owing to its interpolative properties (cf. \eqref{eqn_2022_05_16_1610}) and the functional equation \eqref{eqn:functionalequationtriple}. 

\subsubsection{Root numbers}
 It follows from \eqref{eqn_2022_05_16_1612} and \eqref{eqn_2022_05_16_1626} that $\varepsilon(\hf_\kappa\otimes \Ad^0(\hg_\lambda))=\varepsilon(\hf)$ (resp. $-\varepsilon(\hf)$) if resp. $(\kappa,\lambda)\in \cW_2^\hf$ (resp. $(\kappa,\lambda)\in \cW_2^\Ad$), where $\varepsilon(\hf)$ is the constant value of the root number $\varepsilon(\hf_\kappa)$ as $\kappa\in \cW_\hf^{\cl}$ varies. 
 
\subsubsection{} In view of the interpolation formulae for the Mazur--Kitagawa $p$-adic $L$-function and the same for $\cL_p^?(\hf\otimes\Ad^0\hg)$, we infer that either $\cL_p^\Ad(\hf\otimes\Ad^0\hg)=0$ (if $\varepsilon(\hf)=+1$)\,, or else $\cL_p^{\rm Kit}(\hf)(\kappa,{\rm w}(\kappa)/2)=0$ identically (if $\varepsilon(\hf)=-1$). In all cases, we have the rather uninteresting factorisation 
$$ \cL_p^{\bal}(\f\otimes\g\otimes\g^c)^2(\kappa,\lambda,\lambda)=0=\cL_p^\Ad(\hf\otimes\Ad^0\hg)\cdot L_p^{\rm Kit}(\hf)(\kappa,{\rm w}(\kappa)/2)$$
of the balanced $p$-adic $L$-function.
   
\subsubsection{} 
\label{subsubsec_2017_05_17_1800}

The factorisation of the $\f$-dominant $p$-adic $L$-function 
\begin{equation}
\label{eqn_2022_05_17_1725}
\cL_p^{\hf}(\f\otimes\g\otimes\g^c)^2(\kappa,\lambda,\lambda)\,\dot{=}\,\cL_p^\hf(\hf\otimes\Ad^0\hg)\cdot \cL_p^{\rm Kit}(\hf)(\kappa,{\rm w}(\kappa)/2)
\end{equation}
reduces, once the $p$-adic $L$-function $\cL_p^\hf(\hf\otimes\Ad^0\hg)$ with the expected interpolative properties is constructed, to a comparison of periods at the specialisations of both sides at $(\kappa,\lambda)\in \cW_2^{\hf}$. Here, ``$\dot{=}$'' means an equality up to a non-zero correction factor that interpolates the ratios of periods that are involved in the interpolation formulae of the $p$-adic $L$-functions that appear on two sides of \eqref{eqn_2022_05_17_1725}.

We are left to consider the factorisation problem for $\cL_p^{\hg}(\f\otimes\g\otimes\g^c)(\kappa,\lambda,\lambda)$, which is the most challenging case. To indicate the source of the difficulty, we note that
 $$\iota_{2,3}(\cW_2)\cap \cW_3^\hg =\emptyset$$ 
 and therefore, the interpolation range for $\iota_{2,3}^*\circ \cL_p^\g(\f\otimes\g\otimes\g^c)$ is empty. We will address this problem in the scenario when $\varepsilon(\hf)=-1$, so that 
 \begin{equation}
 \label{eqn_2017_05_17_1803}
     \varepsilon(\hf_\kappa\otimes \Ad^0(\hg_\lambda))= \begin{cases}
         +1& \hbox{ if } (\kappa,\lambda)\in \cW_2^\Ad\,,\\
         -1& \hbox{ if } (\kappa,\lambda)\in \cW_2^\hf\,.
     \end{cases} 
 \end{equation}
Differentiating both sides of \eqref{eqn:complexfactorisation}, we obtain
\begin{equation} \label{eqn:derivativefactorisation}
    \frac{d}{ds}L(\hf_\kappa\otimes \hg_\lambda \otimes \hg_\lambda^c,s)\big{\vert}_{s= c(x)} =  L(\hf_\kappa\otimes \Ad^0(\hg_\lambda),s)\big{\vert}_{s=\frac{{\rm w}(\kappa)}{2}} \cdot \frac{d}{ds} L(\hf_\kappa,s)\big{\vert}_{s=\frac{{\rm w}(\kappa)}{2}}\,.
\end{equation} 
Inspired by the fundamental results of \cite{bertolinidarmonprasanna13, BSV, DarmonRotger} (and the $p$-adic Beilinson philosophy based on Perrin-Riou's conjectures), we adopt the guiding principle that 
\begin{itemize}
    \item[\mylabel{item_bdp1}{\bf BDP})] the $p$-adic $L$-function $\cL_p^{\hg}(\f\otimes\g\otimes\g^c)(\kappa,\lambda,\lambda)$ should be thought of as a $p$-adic avatar of the family of derivatives $$\left\{\frac{d}{ds}L(\hf_\kappa\otimes \hg_\lambda\ \otimes \hg_\lambda^c,s)\big{\vert}_{s=c(x)}\,:(\kappa,\lambda)\in \cW_2^{\f}\right\}\,.$$ 
\end{itemize}
Based on this principle, we propose the following as a $p$-adic variant of \eqref{eqn:derivativefactorisation} in families. 

\begin{conj}
\label{conj_main_6_plus_2}
Suppose that $\varepsilon(\hf)=-1$ and \eqref{eqn_2022_05_16_1626} holds true. Let ${\rm Log}_{\omega_\f}({\rm BK}_\f^\dagger)$ denote the logarithm of the big Beilinson--Kato class (see \S\ref{subsubsec_leading_terms_bis_3} for relevant definitions). We then have the following factorisation of $p$-adic L-functions:
\[
    \cL_p^\hg(\hf\otimes\hg\otimes\hg^c)^2(\kappa,\lambda,\lambda) = \mathscr C(\kappa)\cdot \cL_p^\Ad(\hf\otimes \Ad^0\hg)(\kappa,\lambda) \cdot {\rm Log}_{\omega_\f}({\rm BK}_{\f}^\dagger)\,,
\]
where $\mathscr C \in \cR_\f[\frac{1}{p}]$ is as in \S\ref{subsubsec_2024_11_09_1543}.
\end{conj}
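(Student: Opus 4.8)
The plan is to deduce the factorization as an instance of \emph{arithmetic} Artin formalism: relate both sides to $p$-adic regulators of canonical Selmer classes, decompose the class along the splitting \eqref{eqn_factorisationrepresentations_dual_trace}, and match the two resulting pieces with the two factors on the right. \textbf{Step 1: an explicit reciprocity law for the left-hand side.} Although the interpolation range of $\iota_{2,3}^*\cL_p^\hg(\hf\otimes\hg\otimes\hg^c)$ is empty (because $\iota_{2,3}(\cW_2)\cap\cW_3^\hg=\emptyset$), the Euler system of diagonal cycles of \cite{BSV, DarmonRotger} produces over $\cW_3$ a big cohomology class $\bz(\hf,\hg,\hh)\in H^1(\QQ, T_3^\dagger)$ whose image under a Perrin--Riou-style big logarithm, extracted along the $\hg$-dominant direction of the filtration, computes $\cL_p^\hg(\hf\otimes\hg\otimes\hh)$ up to a unit and a period. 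Specializing $\hh=\hg^c$ and restricting along $\iota_{2,3}$ therefore expresses $\cL_p^\hg(\hf\otimes\hg\otimes\hg^c)(\kappa,\lambda,\lambda)$, up to $\mathscr C$, as the $\omega_\hg$-part of ${\rm Log}\bigl(\iota_{2,3}^*\bz(\hf,\hg,\hg^c)\bigr)$. The square on the left of the Conjecture is precisely the BDP-principle of \S\ref{subsubsec_2017_05_17_1800} at work: one copy of the diagonal class realizes the regulator, while the second copy ``differentiates'', which is the reason the \emph{derivative} $L'$ in \eqref{eqn:derivativefactorization} -- rather than the central value -- is the complex shadow of $\cL_p^\hg$.

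\textbf{Decomposing the diagonal class.} By the split exact sequence \eqref{eqn_factorisationrepresentations_dual_trace} the restricted representation decomposes as $T_2^\dagger\cong M_2^\dagger\oplus \varpi^*_{2,1}(T_\hf^\dagger)$, so $\iota_{2,3}^*\bz(\hf,\hg,\hg^c)$ decomposes as $(\bz_{\Ad},\bz_\hf)$ with $\bz_{\Ad}\in H^1(\QQ, M_2^\dagger)$ and $\bz_\hf\in H^1(\QQ, \varpi^*_{2,1}(T_\hf^\dagger))$. The geometric content of the degeneration $\hh=\hg^c$ is that the Gross--Kudla--Schoen cycle on the triple product of Kuga--Sato varieties degenerates: its $T_\hf^\dagger$-component becomes a multiple of a Heegner/Beilinson--Kato cycle attached to $\hf$ alone, and its $M_2^\dagger$-component becomes a ``diagonal-on-the-adjoint'' cycle. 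Applying ${\rm Log}$ and taking the $\omega$-part factors through these two summands, matching the two factors on the right-hand side of the Conjecture.

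\textbf{Matching the two pieces and the constant.} For the $M_2^\dagger$-piece, once $\cL_p^\Ad(\hf\otimes\Ad^0\hg)$ is constructed with the interpolation property of \S\ref{subsubsec_conj_2022_06_02_0940}, a Generalized Gross--Kudla formula for diagonal cycles in the degenerate case $\hh=\hg^c$ -- the work in progress of Liu--Yuan--Zhang--Zhang, or \cite{Xue_Hang_2019} in the degenerate range actually needed -- identifies the $\omega$-component of ${\rm Log}(\bz_{\Ad})$ with $\cL_p^\Ad(\hf\otimes\Ad^0\hg)$ up to a unit. For the $T_\hf^\dagger$-piece, Ochiai's big Beilinson--Kato class ${\rm BK}_\hf^\dagger$ is characterized by interpolating the Beilinson--Kato classes of the crystalline specializations $\hf_\kappa$; a Gross--Zagier formula for Heegner cycles (again needed only in a degenerate case) identifies the Heegner-cycle component of $\bz_\hf$ with ${\rm BK}_\hf^\dagger$ up to a unit, so that ${\rm Log}_{\omega_\hf}({\rm BK}_{\hf}^\dagger)$ is exactly the last factor. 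Multiplying the two identifications and collecting every unit/period discrepancy into one factor -- which depends only on $\hf$, since the splitting \eqref{eqn_factorisationrepresentations_dual_trace} is already defined over $\cR_\hg$ and the $\hg$-contributions cancel by self-duality -- yields the asserted identity with $\mathscr C\in\cR_\hf[\tfrac1p]$ as in \S\ref{subsubsec_2024_11_09_1543}.

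\textbf{The main obstacle.} The genuinely missing inputs all sit in the previous step: the Generalized Gross--Kudla formula in the generality required here, and the very construction of $\cL_p^\Ad(\hf\otimes\Ad^0\hg)$ with the expected interpolation property; and, more conceptually, turning the BDP-principle of Step~1 into a theorem -- a Perrin--Riou-type $p$-adic Beilinson statement that rigorously explains why a $p$-adic $L$-function with empty interpolation range nonetheless equals the big regulator of the diagonal cycle. These are precisely the ingredients flagged in \S\ref{subsubsec_obstacles} (see also Appendix~\ref{subsec_strategy_to_prove_conj_2_2}); this is why the statement remains, for now, a conjecture, with the CM case for $\hg$ settled in the companion paper \cite{BC_Part2} and the algebraic counterpart (Theorem~\ref{thm_main_8_4_4_factorization_intro}) proved unconditionally in the present one.
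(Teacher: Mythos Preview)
This statement is a \emph{conjecture}; the paper does not prove it. What the paper does offer is a strategy in Appendix~\ref{subsec_strategy_to_prove_conj_2_2}, and your proposal should be compared with that strategy rather than with a proof.

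Your outline and the paper's strategy agree on the inputs and the obstacles: both invoke the reciprocity laws of \cite{BSV,DarmonRotger} for diagonal cycles, both flag the Generalized Gross--Kudla formula and the construction of $\cL_p^\Ad(\hf\otimes\Ad^0\hg)$ as the missing pieces, and both rely on a comparison with Heegner-type objects. But the \emph{mechanism} you propose for producing the factorization is different from the paper's, and this is where your sketch has a real gap.

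Your ``decomposing the diagonal class'' step takes the restricted class $\iota_{2,3}^*\bz\in H^1(\QQ,T_2^\dagger)$ and splits it additively as $\bz_{\Ad}+\bz_\hf$ along \eqref{eqn_factorisationrepresentations_dual_trace}; you then assert that applying ${\rm Log}$ sends the two summands to the two \emph{multiplicative} factors $\cL_p^\Ad$ and ${\rm Log}_{\omega_\f}({\rm BK}_\f^\dagger)$. But ${\rm Log}$ is linear, so at best you would obtain a sum, not a product; the passage from an additive decomposition of a class to a multiplicative factorization of a $p$-adic $L$-value is precisely the hard content, and your explanation (``one copy realizes the regulator, the second differentiates'') does not supply it. Relatedly, the claim that the $T_\hf^\dagger$-component of the degenerated diagonal cycle \emph{is} a Beilinson--Kato class is not something the paper asserts; these live in different places and are matched only indirectly.

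The paper's route in Appendix~\ref{subsec_strategy_to_prove_conj_2_2} avoids this by recasting the problem as a \emph{ratio}. One projects the big diagonal cycle to $H^1(\QQ_p,T_\f^\dagger)\widehat\otimes_{\cR_\f}\cR_2$ via ${\rm id}\otimes{\rm tr}$ (not via the direct-sum splitting), obtaining the Chow--Heegner class $\Delta_p^{(\rm tr)}(\hf\otimes\Ad\hg)$; the reciprocity law then gives $\cL_p^\hg(\hf\otimes\hg\otimes\hg^c)^2$ as the square of $\log_{\omega_\f}$ of this class. Independently, a family Perrin--Riou theorem expresses ${\rm Log}_{\omega_\f}({\rm BK}_\f^\dagger)$ as the square of $\log_{\omega_\f}$ of a big Heegner point $Q$. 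Both $\Delta_p^{(\rm tr)}$ and $Q$ land in the rank-one module $H^1_{\rm f}(K,T_{\f_\kappa}^\dagger)$, so they are proportional, and their ratio is computed via complex heights: Gross--Kudla gives ${\rm ht}(\Delta^{(\rm tr)})\sim L'(f\otimes g\otimes g^c,c)$, Gross--Zagier gives ${\rm ht}(Q)\sim L'(f,\tfrac{k}{2})$, and the Artin factorization \eqref{eqn:derivativefactorization} turns the ratio into $L(f\otimes\Ad^0 g,\tfrac{k}{2})$. Squaring this proportionality of logarithms is what produces the multiplicative factorization, and the Heegner point (over an auxiliary imaginary quadratic field) is what explains the explicit shape of $\mathscr C$ in \S\ref{subsubsec_2024_11_09_1543}. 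Your sketch does not pass through this rank-one proportionality step, and without it the argument does not close.
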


Conjecture~\ref{conj_main_6_plus_2} is a natural variant of the factorisation problem Dasgupta studied in \cite{Dasgupta2016} for the $p$-adic Rankin--Selberg $p$-adic $L$-functions, generalising Gross' method that he has developed in \cite{Gross1980Factorization} to study the same problem for Katz $p$-adic $L$-functions. There are, however, key technical differences, and these are detailed in \S\ref{subsubsec_intro_comparison_palvannan_dasgupta}. 

 See \S\ref{BK-PR} for a brief explanation as to how ${\rm Log}_{\omega_\f}({\rm BK}_\f^\dagger)$ encodes information about the derivatives of $p$-adic $L$-functions. This tells us that Conjecture~\ref{conj_main_6_plus_2} is in concurrence with the principle \eqref{item_bdp1}.

\subsubsection{}
\label{subsubsec_2024_11_09_1543}
The fudge factor $\mathscr C(\kappa)$ (where $\kappa\in \cW_\f$ is an arithmetic specialisation) that appears in Conjecture~\ref{conj_main_6_plus_2} is explicitly defined  as follows:  
\begin{align}
    \label{eqn_2024_11_9_1238}
    \begin{aligned}
        \mathscr C(\kappa):= \dfrac{w_\f \,\cdot  \mathfrak C_{\rm exc}(\hf\otimes\hg_K)\cdot \langle v_{g_{\rm Eis}}^-, \eta_{g_{\rm Eis}}\rangle}{2\cdot (-2\sqrt{-1})^{\wt(\kappa)-1} \cdot \langle \omega_{g_{\rm Eis}}, v_{g_{\rm Eis}}^+\rangle}\times \dfrac{\omega_K h_K  (1-p^{-1})  \log_p(u)}{|D_K|\cdot C_{\hf_\kappa}^+C_{\hf_\kappa}^-\cdot \mathcal E(\hf_\kappa^\circ,\Ad)}\,.
    \end{aligned}
\end{align}
Here, $K$ is an imaginary quadratic field satisfying the Heegner hypothesis relative to $\f$, and, 
\begin{itemize}
    \item $w_\f$ is the Atkin--Lehner pseudo-eigenvalue for the family $\f$, so that $w_\hf(\kappa)^2 = (-N_\hf)^{2-\wt(\kappa)}$;
      \item $\mathfrak C_{\rm exc}(\hf\otimes\hg_K)=\prod_
{q\in \in \Sigma_{\rm exc}}(1+q^{-1})$ where $\Sigma_{\rm exc}=\Sigma_{\rm exc}(\hf\otimes \hg)$ is the set of primes determined by the local properties of $\f$ and $\g$ (cf. \cite{ChenHsieh2020}, \S 1.5);
    \item $\mathbf{g}_K$ is the canonical CM Hida family and $g_{\rm Eis}$ is its unique weight-one $p$-stabilisation as in \cite[\S4.2]{BDV} (where these are denoted by $\g$ and $g$, respectively)\,;
    \item $\eta_{g_{\rm Eis}}$ and $\omega_{g_{\rm Eis}}$ is as in \cite[\S2.3.3.1]{BDV}, and $v_{g_{\rm Eis}}^\pm$  is defined in \cite[\S4.2]{BDV} (on Page 33); 
    \item $h_K$ is the class number of $K$, $\omega_K=|\cO_K^\times|$, and $u\in \mathcal O_K[\frac{1}{p}]^\times$ is such that $(u) = \p^{h_K}$; 
    \item $C_{\f_\kappa}^{\pm}$ is a $p$-adic period (cf. \cite{Ochiai2006}, Theorem 6.7 and Remark 6.8).
\end{itemize}

\subsection{Results}
\label{subsec_results_2022_05_22}
While Conjecture~\ref{conj_main_6_plus_2} in its most general form appears to be out of reach (for reasons we explain in \S\ref{subsubsec_obstacles}), we may prove the validity of its algebraic counterpart (concerning the relevant Selmer complexes). This amounts to a factorisation of the modules of leading terms (under mild hypotheses) that we introduce in \S\ref{sec_Koly_Sys_Dec_4_18_05_2021}.

We denote by $\delta(T,\Delta)$ the module of algebraic $p$-adic $L$-functions for $(T,\Delta)\in \{(M_2^\dagger,{\rm tr}^*\Delta_\g), (T_2^\dagger,\Delta_\g)\}$, given as in \S\ref{subsubsec_leading_terms_bis_2} and \S\ref{subsubsec_leading_terms_bis_1}, respectively. We also let ${\rm BK}_\f^\dagger\in H^1(G_{\QQ},T_\f^\dagger)$ the big Beilinson--Kato class constructed by Ochiai in \cite{Ochiai2006}.

\begin{theorem}[Corollary~\ref{cor_2026_05_11} below]
\label{thm_main_8_4_4_factorisation_intro}
Under the hypotheses recorded in \S\ref{subsubsec_hypo_section_6}, we have
\begin{equation}
\label{eqn_2022_09_13_1733_intro}
\delta(T_2^\dagger,\Delta_\g)\,{=}\,  {\rm Exp}_{F^-T_\f^\dagger}^*(\delta(M_2^\dagger,{\rm tr}^*\Delta_{\g}))\cdot \varpi_{2,1}^*{\rm Log}_{\omega_\f}({\rm BK}_\f^\dagger).
\end{equation}
\end{theorem}

\subsection{Comparison with earlier work}
\label{subsubsec_intro_comparison_palvannan_dasgupta}

This subsection is dedicated to a comparison of the setting of the present work with that in \cite{Dasgupta2016}\footnote{As well as its variant for algebraic $p$-adic $L$-functions in \cite{palvannan_factorization}}), highlighting the key technical differences.

\subsubsection{} Dasgupta in \cite{Dasgupta2016} relies on the factorisation 
\begin{equation}
    \label{eqn_2022_05_18_1037}
    L'(f\otimes f\otimes\psi\beta^{-1},1)=L({\rm Sym}^2f\otimes\psi\beta^{-1},1)L'(\eta,0)
\end{equation}
where we tacitly follow the notation of \S1.4 of opt. cit. (in particular, $f$ denotes here an eigenform of weight $2$). The main technical ingredients in \cite{Dasgupta2016} are the following: 
\begin{itemize}
  \item[\mylabel{item_Das1}{\bf D1})] Beilinson regulator formula expressing $L'(f\otimes f\otimes\psi\beta^{-1},1)$ in terms of a \emph{Beilinson--Flach} unit $b_{f,\psi,\beta}$,
   \item[\mylabel{item_Das2}{\bf D2})]  Class number formula expressing $L'(\eta,0)$ in terms of a circular unit $u_\eta$.
\end{itemize}
Both $b_{f,\psi,\beta}$ and $u_\eta$ live in the same one-dimensional vector space, and the input above combined with \eqref{eqn_2022_05_18_1037} yields an explicit comparison of $b_{f,\psi,\beta}$ and $u_\eta$, which involves the critical value $L({\rm Sym}^2f\otimes\psi\beta^{-1},1)$. 

Note that $L(f\otimes f\otimes\psi\beta^{-1},s)$ has no critical values. In particular, it is not critical at $s=1$. The $p$-adic Beilinson conjectures of Perrin-Riou predicts in this case that the $p$-adic $L$-value $L_p(f,f,\psi,\nu_{2,\beta})$ (where we still rely on the notation of \cite[\S1.4]{Dasgupta2016}) is related to the derivative $L'(f\otimes f\otimes\psi\beta^{-1},1)$ of the Rankin--Selberg $L$-series:
\begin{equation}
    \label{eqn_2022_05_20_1035}
    L_p(f,f,\psi,\nu_{2,\beta}) \xleftrightarrow{\hbox{ $p$-adic Beilinson philosophy }} L'(f\otimes f\otimes\psi\beta^{-1},1)\,.
\end{equation}
The relationship \eqref{eqn_2022_05_20_1035} takes a more concrete form via \cite[Equation 18]{Dasgupta2016} (which crucially relies on the work of \cite{KLZ2})
\begin{equation}
    \label{eqn_2022_05_20_1207}
    L_p(f,f,\psi,\nu_{2,\beta})=\log_p(b_{f,\psi,\beta})
\end{equation}
combined with Step \eqref{item_Das1}.

Our departure point is the factorisation \eqref{eqn:derivativefactorisation} in the scenario when $(\kappa,\lambda)\in \cW_2^{\rm bal}$. Let us fix crystalline specialisations $f$ and $g$ of $\hf$ and $\hg$, respectively, of respective weights $k$ and $l$ with $2l < k$, so that $(\kappa,\lambda)\in \cW_2^{\f}$. Note that $c(x) =\frac{k}{2}+l-1$ is the central critical point of the Garrett--Rankin $L$-series $L(f\otimes g\otimes g^c,s)$ and in fact, 
$L(f\otimes g\otimes g^c,s)=0$ in the scenario we have placed ourselves.

In very rough terms, our guiding principle is the expected relation
\begin{align}
\begin{aligned}
 \label{eqn_2022_05_20_1056}
 \lim_{\substack{\hf_\kappa\to f\\ 
 \hg_\lambda \to g\\ (\kappa,\lambda)\in \cW_2^{\hg}}}L(\hf_\kappa\otimes \hg_\lambda\otimes \hg_\lambda^c,{\rm w}(\kappa)/2+{\rm w}(\lambda)-1)^{\rm alg}\,\dot{=}\,\cL_p^{\hg}(f\otimes g \otimes g^c)&\xleftrightarrow{\ref{item_bdp1}}L'(f\otimes g\otimes g^c, c(x))\,,
\end{aligned}
\end{align}
inspired by the main results of \cite{bertolinidarmonprasanna13, BSV, DarmonRotger} (and also paralleling \eqref{eqn_2022_05_20_1035}), where the superscript ``alg'' denotes an appropriately defined algebraic part of the indicated $L$-value.    

Note that $s=c(x)$ is the central critical point, unlike the value $s=1$ in \eqref{eqn_2022_05_18_1037}, which is a non-critical point. The expected \eqref{item_bdp1} in \eqref{eqn_2022_05_20_1056} amounts to a combination of a) the reciprocity laws of \cite{BSV, DarmonRotger} relating diagonal cycles to $\cL_p^{\hg}(f\otimes g \otimes g^c)$, and b) the Gross--Kudla conjecture expressing $L'(f\otimes g\otimes g^c,c(x))$ in terms of the Bloch--Beilinson heights of the same cycles. Moreover, in view of the factorisation \eqref{eqn:derivativefactorisation}, $L'(f\otimes g\otimes g^c,c(x))$ can be also computed in terms of the heights of Heegner cycles. In other words, the Artin formalism for the $p$-adic $L$-function $\cL_p^{\hg}(f\otimes g \otimes g^c)$ with an empty range of interpolation should amount to a comparison of a pair of Arithmetic GGP conjectures (rather than a comparison of Beilinson's conjectures, as in the case of \cite{Gross1980Factorization, Dasgupta2016}): One for ${\rm GL}_2\times {\rm GL}_2 \times {\rm GL}_2$ in a rather degenerate case (that boils down to Gross--Kudla conjecture), and another for ${\rm GL}_2$ (which boils down to Gross--Zagier--Zhang formulae). We refer the reader to \S\ref{subsec_strategy_to_prove_conj_2_2} and \cite{BCPV} for an elaboration on this point of view.


\subsubsection{}
\label{subsubsec_2022_05_21_1103}
In the present subsection, we discuss the comparison of the factorisation we consider to that studied in \cite{Dasgupta2016} with the perspective offered by Perrin-Riou's theory of $p$-adic $L$-functions. 

Given a pure motive of motivic weight $w(M)\leq -2$, let us denote by $V$ its $p$-adic realisation. Let $H^1_{\rm f}(G_{\QQ},V)$ denote the Bloch--Kato Selmer group attached to $V$ and assume that the natural map 
$$\res_p:\, H^1_{\rm f}(G_{\QQ},V)\lra H^1_{\rm f}(G_{\QQ_p},V)$$
is injective. In the setting we have placed ourselves, this assumption conjecturally always holds. We say that a submodule  $D\subset {\bf D}_{\rm cris}(V)$ is regular if 
\begin{itemize}
    \item $D\cap {\rm Fil}^0{\bf D}_{\rm cris}(V)=\{0\}$\,,
    \item the natural map
    $$D\,\,\oplus\,\, \log_V\,\circ \,\res_p(H^1_{\rm f}(G_{\QQ},V))\lra {\bf D}_{\rm cris}(V)/ {\rm Fil}^0{\bf D}_{\rm cris}(V)=:t_V(\QQ_p)$$
    is an isomorphism.
\end{itemize}
The vector space $t_V(\QQ_p)$ is called the Bloch--Kato tangent space of $V$.

In particular, if  $D\subset {\bf D}_{\rm cris}(V)$ is regular, then we have 
\begin{equation}
\label{eqn_2022_05_21_1043}
    \dim D+\dim H^1_{\rm f}(G_{\QQ},V)=\dim t_V(\QQ_p)\,.
\end{equation} 
Perrin-Riou conjectures that to each pair $(V,D)$ where $D\subset {\bf D}_{\rm cris}(V)$ is a regular submodule, there exists a $p$-adic $L$-function $L_p(V,D,\sigma)$, where $\sigma$ is as usual the cyclotomic variable (cf. \cite[\S2 and \S4]{PR95}; see also \cite{Benois2}, \S2.2.5).

We first review Perrin-Riou's theory applied to the setting of \cite{Dasgupta2016} (where we take $\psi$ to be the trivial Dirichlet character for simplicity). Let $\hf$ be a non-CM Hida family as before and let $f=\hf_{\kappa_0}$ denote a $p$-stabilized eigenform which arises as a crystalline specialisation of $\hf$ of weight $k=w(\kappa_0)\geq 2$. In what follows, let us put $V_{\bf\cdot}:=T_{\bf\cdot} \otimes_{\ZZ_p}\QQ_p$ to ease notation. Let us consider
\begin{equation}
\label{eqn_2022_05_21_1047}
    D:={\bf D}_{\rm cris}(F^+V_f\otimes V_f^*(1)) \subset {\bf D}_{\rm cris}(\Ad(V_f)(1)),
\end{equation}
where $F^+V_f\subset V_f$ is the $p$-ordinary filtration. Note that the motivic weight of $\Ad(V_f)(1)$ equals $-2$. Even though $D$ is a regular submodule, the construction of $L_p(\Ad(V_f)(1),D,\sigma)$ is highly non-trivial, owing to the fact that $L(\Ad(f),s)$ admits no critical points.

We remark that $D$ can be naturally thought of as the ``limit'' of the sequence of reqular submodules
$$D_{\kappa,\lambda}:={\bf D}_{\rm cris}(F^+V_{\f_\kappa}\otimes V_{\hf_\lambda}^*(1))\subset {\bf D}_{\rm cris}(V_{\f_\kappa}\otimes V_{\hf_\lambda}^*(1))\,,\qquad {\rm w}(\kappa)>{\rm w}(\lambda)\,.$$ 
In this scenario, Hida has constructed a family of $p$-adic $L$-functions $L_p^{\rm Hida}({\rm ad}(\hf_\kappa)(1),\sigma)$. We remark that the $p$-adic $L$-function $L_p^{\rm Hida}({\rm ad}(\hf_\kappa)(1),\sigma)$ coincides with the one we denoted by $L_p(f,f,\sigma)$ in \S\ref{eqn_2022_05_18_1037} up to twisting, cf. \cite[\S1.3]{Dasgupta2016}.  The work of Kings--Loeffler--Zerbes in \cite{KLZ2} recovers Hida's $p$-adic $L$-function as a Perrin-Riou-style $p$-adic $L$-function: 
$$L_p^{\rm Hida}({\rm ad}(\hf_\kappa)(1),\sigma)=L_p({\rm ad}(V_{\hf_\kappa})(1), D_{\kappa,\lambda},\sigma).$$  One then defines 
$$L_p(\Ad(V_f)(1),D,\sigma):=\lim_{\substack{\kappa,\lambda\to \kappa_0\\ {\rm w}(\kappa)>{\rm w}(\lambda)}} L_p(V_{\f_\kappa}\otimes V_{\hf_\lambda}^*(1), D_{\kappa,\lambda},\sigma)\,.$$

Let us consider the natural exact sequence
\begin{equation}
    \label{eqn_dual_trace_palvannan__2022_05_21}
    0\lra E(1) \xrightarrow{{\rm tr}^*} \Ad(V_f)(1) \xrightarrow{\pi_{{\rm tr}^*}} \Ad^0(V_f)(1)   \lra 0\,,
\end{equation}
induced from the transpose of the trace map and the self-duality of $\Ad(V_f)$, and where $E/\QQ_p$ is an extension that contains the Hecke field of $f$. We may \emph{propagate} the regular subspace $D$ given as in \eqref{eqn_2022_05_21_1047} to obtain the regular submodules  \begin{align}
\begin{aligned}
\label{eqn_2022_05_21_1048}
    \{0\}={{\rm tr}^{*,-1}}(D)&\subset {\bf D}_{\rm cris}(E(1))\,, 
    \\
    \pi_{{\rm tr}^*}(D) &\subset {\bf D}_{\rm cris}(\Ad^0(V_f)(1))\,.
\end{aligned}
\end{align} 
Thanks to the fact that $D$ propagates to regular submodules of ${\bf D}_{\rm cris}(E(1))$ and ${\bf D}_{\rm cris}(\Ad^0(V_f)(1))$, one expects a factorisation
\begin{equation}
    \label{eqn_2022_05_21_1053}
    L_p(\Ad(V_f)(1),D,\sigma)= L_p(\Ad^0(V_f)(1),\pi_{{\rm tr}^*}(D),\sigma)\cdot  L_p(E(1),\{0\},\sigma)
\end{equation}
of Perrin-Riou-style $p$-adic $L$-functions. The factorisation \eqref{eqn_2022_05_21_1053} has been established by Dasgupta~\cite{Dasgupta2016}.

The setting that we place ourselves is significantly different: The regular submodule relevant to the $g$-dominant triple product $p$-adic $L$-function associated to $f\otimes g\otimes g^c$ does \emph{not} propagate to regular submodules for sub-quotients. As a result, one does not expect a factorisation of Perrin-Riou-style $p$-adic $L$-functions, akin to \eqref{eqn_2022_05_21_1053}, pointing towards a new phenomenon.


\section{\texorpdfstring{$L$}{}-functions}
\label{section_2_L_functions}

In \S\ref{section_2_L_functions}, we consider three Hida families that we shall denote by $\hf, \g$ and $\h$, of tame levels $N_\hf,N_\g,N_\h$, and tame characters $\varepsilon_\f,\varepsilon_\g,\varepsilon_\h$, respectively. For any weight space $\cW_{?}$ that will appear, we will denote by $\cW_{?}^{\rm cris}$ the subset of the classical points 
in $\cW_{?}^\cl$ which are crystalline at $p$, in the sense that the $p$-adic realisation of the motive associated to $\kappa\in \cW_{?}^{\rm cris}$  is required to be crystalline at $p$. In particular, given a crystalline point $\kappa \in \cW_\hf^\cl$ of weight ${\rm w}(\kappa)\geq 2$, the specialisation $\f_\kappa$ is $p$-old and arises as the unique $p$-ordinary stabilisation of a newform $\hf_\kappa^\circ \in S_{{\rm w}(\kappa)}(\Gamma_1(N_\hf),\varepsilon_\f)$ (and similarly for $\g$ and $\h$).

\subsection{Garrett--Rankin (degree-8) \texorpdfstring{$L$}{}-series and interpolation of central critical values}
\label{subsec:tripleproducts}
In this subsection, we retain the notation and conventions of \S\ref{subsubsec_211_2022_06_01_1635}--\S\ref{subsubsec_216_2022_06_01_1635}.

\subsubsection{} Let $\f$, $\g$, and $\h$ be three primitive Hida families of ordinary $p$-stabilized newforms of respective tame levels $N_\f, N_\g, N_\h$ and nebentypes $\varepsilon_\f,\varepsilon_\g,\varepsilon_\h$, all verifying the hypotheses \ref{item_Irr} and \ref{item_Dist}. We assume that $\varepsilon_\f\varepsilon_\hg\varepsilon_\hh$ is the trivial Dirichlet character modulo $N:={\rm LCM}(N_\f,N_\g,N_\h)$.

Given a $p$-crystalline classical point $x=(\kappa,\lambda,\mu) \in \cW_3^{\cris} := \cW_\hf^\cris \times \cW_{\hg}^{\cris}\times \cW_{\hh}^{\cris}$, we will often write $\hf_\kappa^\circ,\hg_\lambda^\circ,\hh_\mu^\circ$ for the associated newforms of level prime to $p$. We will also write $c$ in place of $c(x)$ whenever the choice of the point $x=(\kappa,\lambda,\mu)$ is clear from the context.

\subsubsection{Modified Hida periods}
\label{subsubsec_321_2022_05_26_1036}
For any primitive Hida family $\f$ as before, consider the generator $\mathfrak c_\f$ of the congruence ideal of $\f$ (cf. \cite{AIF_Hida88}). For a crystalline specialisation $\kappa \in \cW_\f^\cris$ we define the modified canonical period on setting
\begin{equation} \label{eqn:modifiedHidaperiods}
    \Omega_{\f_\kappa} := \mathcal E_p(\f_\kappa^\circ,\Ad) \cdot \frac{(-2\sqrt{-1})^{k+1}}{\mathfrak c_\f(\kappa)} \cdot \langle \f_\kappa^\circ, \f_\kappa^\circ \rangle, \qquad \text{where } \quad \mathcal E(\f^\circ_\kappa,\Ad) := \left( 1 - \frac{\beta_{\f^\circ_\kappa}}{\alpha_{\f^\circ_\kappa}} \right) \left( 1-\frac{\beta_{\f^\circ_\kappa}}{p\alpha_{\f^\circ_\kappa}}\right).
\end{equation}
We remark that $\mathfrak c_\f(\kappa) := \kappa(\mathfrak c_\hf)$ is the congruence number of $\f_\kappa$ (cf. \cite{Hida81}, (0.3)). By \cite[Corollary 6.24 \& Theorem 6.28]{hida_2016}, the period $\Omega_{\f_\kappa}$ is equal to the product $\Omega_{\f_\kappa}^+\Omega_{\f_\kappa}^-$ of the plus/minus canonical periods (cf. \cite[\S1.3]{Vatsal_Periods_1999}, see also \cite[Page 488]{hida_1994}) up to a $p$-adic unit.

\subsubsection{}  Let us put
\[
L_{\infty}(\f_\kappa^\circ\otimes\g_\lambda^\circ\otimes\h_\mu^\circ,s) := \begin{cases}
\Gamma_{\CC}(s)\ \Gamma_{\CC}(s+\wt(\kappa)-2c)\ \Gamma_{\CC}(s+1-\wt(\lambda))\ \Gamma_{\CC}(s+1-\wt(\mu)), & \quad x\in \cW_3^\hf, \\
\Gamma_{\CC}(s)\ \Gamma_{\CC}(s+1-\wt(\kappa))\ \Gamma_{\CC}(s+1-\wt(\lambda))\ \Gamma_{\CC}(s+1-\wt(\mu)), & \quad x\in \cW_3^\bal
\end{cases}
\]
(where we recall that $\cW_3^\f$ and $\cW_3^\bal$ are defined in \S\ref{subsubsec_213_2022_05_17_1455}) and we consider then the completed motivic $L$-series 
\[
    \Lambda(\f_\kappa^\circ\otimes\g_\lambda^\circ\otimes\h_\mu^\circ, s) := L_{\infty}(\f_\kappa^\circ\otimes\g_\lambda^\circ\otimes\h_\mu^\circ,s)\, L(\f_\kappa^\circ\otimes\g_\lambda^\circ\otimes\h_\mu^\circ,s),
\]
where $L(\f_\kappa^\circ\otimes\g_\lambda^\circ\otimes\h_\mu^\circ,s)$ stands for the usual motivic $L$-series given by a product of Euler factors at non-archimedean primes. The completed $L$-series $\Lambda(\f_\kappa^\circ\otimes\g_\lambda^\circ\otimes\h_\mu^\circ,s+c)$ coincides with Garrett's automorphic $L$-function $L(s+1/2,\Pi)$ associated with the irreducible unitary automorphic triple product representation $\Pi := \pi_f\times\pi_g\times \pi_h$ of $\GL_2(\mathbb A) \times\GL_2(\mathbb A) \times\GL_2(\mathbb A)$. In particular, by the work of Piatetski-Shapiro and Rallis~\cite{PSR87}, there exists $\varepsilon(\f_\kappa^\circ\otimes\g_\lambda^\circ\otimes\h_\mu^\circ)\in \{\pm 1\}$ and a positive integer $N(\f_\kappa^\circ\otimes\g_\lambda^\circ\otimes\h_\mu^\circ)$ such that we have a functional equation
\begin{equation}\label{eqn:functionaltriple}
    \Lambda(\f_\kappa^\circ\otimes\g_\lambda^\circ\otimes\h_\mu^\circ,s) = \varepsilon(\f_\kappa^\circ\otimes\g_\lambda^\circ\otimes\h_\mu^\circ) \cdot N(\f_\kappa^\circ\otimes\g_\lambda^\circ\otimes\h_\mu^\circ)^{c(x)-s} \cdot \Lambda(\f_\kappa^\circ\otimes\g_\lambda^\circ\otimes\h_\mu^\circ, 2c(x)-s)\,.
\end{equation}

\subsubsection{}\label{subsubsec_2022_06_02_0902}  We are especially interested in the central critical value $s=c$ and its $p$-adic avatar. If we let $x$ vary in the space $\cW_3^\cris$, the algebraic parts of the central critical values are interpolated by a $p$-adic analytic $L$-function, and we note that Hsieh's $p$-optimal construction in \cite{Hsieh} guarantees the integrality of his $p$-adic $L$-functions. 

As in \cite{Hsieh}, we define for $x\in \cW_3^?$ the modified Euler factors $\mathcal E_p^?(x)=\mathcal E_p^?(\f_\kappa^\circ\otimes \g_\lambda^\circ\otimes \h_\mu^\circ,c(x))$
\begin{align} \label{eqn:Eulertriple}
\begin{aligned}
    \mathcal E_p^?(x)= \begin{cases}
 \left(1-\frac{\beta_{\f_\kappa^\circ}\alpha_{\g_\lambda^\circ}\alpha_{\h_\mu^\circ}}{p^c}\right)^2
\left(1-\frac{\beta_{\f_\kappa^\circ}\alpha_{\g_\lambda^\circ}\beta_{\h_\mu^\circ}}{p^c}\right)^2
\left(1-\frac{\beta_{\f_\kappa^\circ}\beta_{\g_\lambda^\circ}\alpha_{\h_\mu^\circ}}{p^c}\right)^2
\left(1-\frac{\beta_{\f_\kappa^\circ}\beta_{\g_\lambda^\circ}\beta_{\h_\mu^\circ}}{p^c}\right)^2 & \text{ if } ?=\f\\
\left(1-\frac{\alpha_{\f_\kappa^\circ}\beta_{\g_\lambda^\circ}\beta_{\h_\mu^\circ}}{p^c}\right)^2
\left(1-\frac{\beta_{\f_\kappa^\circ}\alpha_{\g_\lambda^\circ}\beta_{\h_\mu^\circ}}{p^c}\right)^2
\left(1-\frac{\beta_{\f_\kappa^\circ}\beta_{\g_\lambda^\circ}\alpha_{\h_\mu^\circ}}{p^c}\right)^2
\left(1-\frac{\beta_{\f_\kappa^\circ}\beta_{\g_\lambda^\circ}\beta_{\h_\mu^\circ}}{p^c}\right)^2 & \text{ if } ?=\bal
\end{cases}
\end{aligned}
\end{align}
where $\alpha_{\f_\kappa^\circ}$ and $\beta_{\f_\kappa^\circ}$ denote the roots of the Hecke polynomial of $\f_\kappa^\circ$ at $p$ with $v_p(\alpha_{\f_\kappa^\circ})=0$ (and we similarly define $\{\alpha_{\g_\lambda^\circ},\beta_{\g_\lambda^\circ}\}$ and $\{\alpha_{\h_\mu^\circ},\beta_{\h_\mu^\circ}\}$).

\subsubsection{} The set
$\Sigma^- := \{ v \mid N: \quad \varepsilon_v(\f_\kappa^\circ\otimes \g_\lambda^\circ\otimes \h_\mu^\circ)=-1, \quad \forall x\in \cW_3^\cl \}$
is well-defined. Let us denote by $\Sigma_\mathrm{exc}$ the set of primes introduced in \cite[Equation (1.5)]{Hsieh} that only depends on the local type of the associated automorphic representations at primes dividing $N$, which is independent of $x$.

\begin{theorem}[Hsieh]
\label{thm:unbalancedinterpolation}
    Assume that $N={\rm LCM}(N_\hf, N_\hg, N_\hh)$ is squarefree and that $\Sigma^- = \emptyset$. Then there exists a unique element $\cL_p^\hf(\hf\otimes\hg\otimes\hh)\in \mathcal R_3$ with the following interpolative property for all $x = (\kappa, \lambda, \mu) \in \cW_3^\hf$:
    \[
        \cL_p^{\hf}(\hf\otimes\hg\otimes\hh)(x)^2 = \mathcal E_p^\hf(x)^2  \cdot (\sqrt{-1})^{-2{\rm w}(\kappa)} \cdot \mathfrak C_{\rm exc}(\hf\otimes\hg\otimes \hh)\cdot  \frac{\Lambda(\f_\kappa^\circ\otimes \g_\lambda^\circ\otimes \h_\mu^\circ, c(x))}{  \Omega_{\f_\kappa}^2 }\,,
    \]
    where $\mathfrak C_{\rm exc}(\hf\otimes\hg\otimes \hh) =  \prod_{q\in\Sigma_\mathrm{exc}} (1+q^{-1})^2$.
\end{theorem}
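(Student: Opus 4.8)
This is essentially Hsieh's theorem \cite{Hsieh}, and the plan is to recall the architecture of his construction, recording along the way the few normalization checks needed to match the formulation above. First I would dispose of uniqueness: the set $\cW_3^\hf\cap\cW_3^\cris$ of crystalline classical points in the $\hf$-dominant region is Zariski-dense in $\cW_3=\Spf(\cR_3)(\Cp)$, and since each of $\cR_\hf,\cR_\hg,\cR_\hh$ is a reduced (indeed, integral) finite flat extension of the relevant component $\LL_{\rm wt}^{(k)}$ of the weight algebra, the ring $\cR_3$ is reduced; hence any element of $\cR_3$ is determined by its values at these points, so at most one $\cL_p^\hf(\hf\otimes\hg\otimes\hh)$ can satisfy the displayed interpolation property. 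It then remains to construct it.

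The analytic backbone is Ichino's explicit form of the Garrett--Rankin triple product formula: for a factorizable vector $\varphi=\bigotimes_v\varphi_v$ in $\Pi=\pi_f\times\pi_g\times\pi_h$, the square of the associated global trilinear period equals, up to an explicit elementary constant, the ratio of $\Lambda(\f_\kappa^\circ\otimes\g_\lambda^\circ\otimes\h_\mu^\circ,c)$ to the product of the three Petersson norms, times a finite product of normalized local trilinear integrals. Dividing through by the adjoint $L$-values of $\g_\lambda^\circ$ and $\h_\mu^\circ$ --- which are absorbed into congruence numbers through the modified periods \eqref{eqn:modifiedHidaperiods} --- one is left, in the $\hf$-dominant range, with precisely $\Lambda(\f_\kappa^\circ\otimes\g_\lambda^\circ\otimes\h_\mu^\circ,c)/\Omega_{\f_\kappa}^2$. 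I would invoke the hypothesis $\Sigma^-=\emptyset$ (together with $\varepsilon_\infty=+1$ on $\cW_3^\hf$) to guarantee, via Prasad's local multiplicity-one theorem, that the relevant local trilinear forms at primes dividing $N$ are nonzero, so that local test vectors with nonzero --- and, at ramified primes, explicitly computable --- local integrals exist; the ramified local contributions are what assemble into the factor $\mathfrak C_{\rm exc}(\hf\otimes\hg\otimes\hh)=\prod_{q\in\Sigma_{\rm exc}}(1+q^{-1})^2$.

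Next I would put these test vectors into $p$-adic families over $\cW_3$: the ordinary ($p$-stabilized) vectors at $p$ supplied by Hida theory, the normalized new vectors away from $pN$, and the fixed local vectors above at primes dividing $N$ --- so that, concretely, the construction amounts to forming the $p$-adic family $\hg\cdot\hh$, applying the relevant $p$-adic differential operator, projecting to the ordinary part, and pairing against $\hf$. Evaluating the resulting element of $\cR_3$ at a crystalline classical $x$ recovers the corresponding global trilinear period; the discrepancy between the local zeta integral at $p$ of the chosen ordinary triple and the $p$-Euler factor of $\Lambda$ is exactly the modified factor $\mathcal{E}_p^\hf(x)$, whose explicit shape \eqref{eqn:Eulertriple} drops out of a direct computation with the four products $\beta_{\f_\kappa^\circ}\alpha_{\g_\lambda^\circ}\alpha_{\h_\mu^\circ},\ \beta_{\f_\kappa^\circ}\alpha_{\g_\lambda^\circ}\beta_{\h_\mu^\circ},\ \beta_{\f_\kappa^\circ}\beta_{\g_\lambda^\circ}\alpha_{\h_\mu^\circ},\ \beta_{\f_\kappa^\circ}\beta_{\g_\lambda^\circ}\beta_{\h_\mu^\circ}$. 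Finally I would appeal to Hsieh's $p$-optimal choice of test data --- the step that tracks the congruence ideal of $\hf$, and that is responsible for the modified Hida period $\Omega_{\f_\kappa}$ appearing in place of $\langle\f_\kappa^\circ,\f_\kappa^\circ\rangle$ --- to ensure that the constructed object genuinely lies in $\cR_3$ rather than merely in $\cR_3[\tfrac{1}{p}]$.

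The hard part is exactly this last circle of ideas: genuinely deforming the local test data in $p$-adic families while simultaneously maintaining integrality and pinning down the $p$-adic Euler factor in the precise normalized shape of the statement. This is the technical heart of \cite{Hsieh}, which I would cite wholesale rather than reprove; the only residual task on our side is to verify that our completed $L$-function $\Lambda(\f_\kappa^\circ\otimes\g_\lambda^\circ\otimes\h_\mu^\circ,s)$ and our modified period $\Omega_{\f_\kappa}$ agree with Hsieh's normalizations up to $p$-adic units, which is a bookkeeping matter settled by \cite[Corollary 6.24 \& Theorem 6.28]{hida_2016} together with the shape of the archimedean $\Gamma$-factors recorded above.
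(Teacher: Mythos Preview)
The paper does not prove this statement at all: it is stated as a theorem of Hsieh and attributed wholesale to \cite[Theorem~B]{Hsieh}, with no argument given beyond recording the relevant normalizations (the modified periods $\Omega_{\f_\kappa}$, the Euler factors $\mathcal{E}_p^\hf$, and the exceptional constant $\mathfrak{C}_{\rm exc}$). Your proposal is therefore not so much a different proof as an expanded gloss on the citation, and the architecture you sketch---Ichino's formula, Prasad's local dichotomy under $\Sigma^-=\emptyset$, Hida-theoretic interpolation of test vectors, and Hsieh's $p$-optimal choice securing integrality in $\cR_3$---is an accurate summary of how \cite{Hsieh} actually proceeds; your uniqueness argument via Zariski density of $\cW_3^\hf\cap\cW_3^{\rm cris}$ is also correct and is the standard one.
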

Exchanging the roles of $\f$, $\g$, and $\h$, we analogously have the $p$-adic $L$-functions with interpolation range in the other two unbalanced regions in the weight space: $\cL_p^\hg(\hf\otimes\hg\otimes\hh)$ and $\cL_p^\hh(\hf\otimes\hg\otimes\hh) \in \mathcal R_3$.

\begin{theorem}[Hsieh]
\label{thm:balancedinterpolation}
    Assume that $N={\rm LCM}(N_\hf, N_\hg, N_\hh)$ is squarefree and suppose that it factors as $N=N^+N^-$, with $\gcd(N^+, N^-)=1$, where $N^- = \prod_{\ell\in \Sigma^-}\ell$. Assume further that $\Sigma^-$ has odd cardinality and that all three residual representation $\overline{\rho}_\hf, \overline{\rho}_\hg, \overline{\rho}_\hh$ are ramified at all primes $\ell\in \Sigma^-$ with $\ell\equiv 1\pmod{p}$. Then there exists a unique element $\cL_p^\bal(\hf\otimes\hg\otimes\hh)\in \mathcal R_3$ with the following interpolative property for all $x = (\kappa,\lambda,\mu) \in \cW_3^\bal$:
    \[
        \cL_p^{\bal}(\hf\otimes\hg\otimes\hh)(x)^2 = \mathcal E_p^\bal(x)^2 \cdot  (\sqrt{-1})^{1-\rmw(\kappa)-\rmw(\lambda)-\rmw(\mu)} \cdot \mathfrak C_{\rm exc}(\hf\otimes\hg\otimes \hh) \cdot \frac{\Lambda(\f_\kappa^\circ\otimes\h_\lambda^\circ\otimes\h_\mu^\circ, c(x))}{\Omega_{\f_\kappa} \Omega_{\g_\lambda} \Omega_{\h_\mu} }\,.
    \]
\end{theorem}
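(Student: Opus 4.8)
The plan is to follow Hsieh's $p$-optimal construction in \cite{Hsieh}: one realizes $\cL_p^\bal(\hf\otimes\hg\otimes\hh)$ as (a square root of) the square of a $p$-adic family of trilinear period integrals on a definite quaternion algebra, and extracts the interpolation formula from Ichino's explicit triple-product formula combined with explicit local zeta-integral computations along the classical crystalline locus.

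First I would fix the quaternion algebra $B/\QQ$ ramified precisely at $\{\infty\}\cup\Sigma^-$. Such a $B$ exists exactly because $|\Sigma^-|$ is odd, so that $\{\infty\}\cup\Sigma^-$ has even cardinality; it is the algebra singled out by the local sign conditions in the balanced range, where $\varepsilon_\infty=-1$ forces $\infty$ into the ramification set. Using the squarefreeness of $N$ I would fix an Eichler order of level $N^+$ in $B$ and transfer $\hf,\hg,\hh$ via Jacquet--Langlands to Hida families of quaternionic automorphic forms for $B^\times$. The hypothesis that $\overline\rho_\hf,\overline\rho_\hg,\overline\rho_\hh$ are ramified at each $\ell\in\Sigma^-$ with $\ell\equiv 1\pmod p$ is what keeps this transfer $p$-integral---it prevents the associated quaternionic congruence modules from acquiring spurious $p$-torsion---so the resulting quaternionic families remain $\cR_\bullet$-valued. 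Since $B$ is definite, a quaternionic form is simply a function on a finite double-coset set, so the trilinear period $I(\phi_\hf\otimes\phi_\hg\otimes\phi_\hh)$ evaluated on the diagonal of the triple of finite Shimura sets is visibly an element of $\cR_3$; its square, corrected by the fudge factors below, will be $\cL_p^\bal(\hf\otimes\hg\otimes\hh)^2$, with integrality coming from an integral choice of test vectors.

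The core of the argument is local. Ichino's formula equates $\Lambda(\f_\kappa^\circ\otimes\g_\lambda^\circ\otimes\h_\mu^\circ,c)$, divided by the product of the three Petersson norms, with the square of the global period times a finite product of normalized local zeta integrals $I_v$. One then computes: (i) at $p$, the integral $I_p$ on the ordinary $p$-stabilized test vectors, producing exactly the factor $\mathcal E_p^\bal(x)^2$ of \eqref{eqn:Eulertriple}; (ii) at $\infty$, the archimedean integral in the balanced weight configuration, which gives the power of $\sqrt{-1}$ recorded as $(\sqrt{-1})^{1-\rmw(\kappa)-\rmw(\lambda)-\rmw(\mu)}$ and effects the passage from the product of Petersson norms to $\Omega_{\f_\kappa}\Omega_{\g_\lambda}\Omega_{\h_\mu}$, up to a $p$-adic unit, via Hida's congruence-module formalism (this matches \eqref{eqn:modifiedHidaperiods}, which carries the congruence factor $\mathfrak c_\hf$); (iii) at the ramified primes $\ell\mid N$, the integrals $I_\ell$ on quaternionic new vectors, which yield $\mathfrak C_{\rm exc}(\hf\otimes\hg\otimes\hh)=\prod_{q\in\Sigma_\mathrm{exc}}(1+q^{-1})^2$ and---crucially---show the remaining local factors to be $p$-adic units. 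Patching these computations along $\cW_3^\cris\cap\cW_3^\bal$ gives the claimed identity.

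Uniqueness is immediate because $\cW_3^\bal\cap\cW_3^\cl$ is Zariski-dense in $\Spf(\cR_3)$ and $\cR_3=\cR_\hf\hatotimes_{\ZZ_p}\cR_\hg\hatotimes_{\ZZ_p}\cR_\hh$ is reduced and finite flat over a formal power series ring, so any element is determined by its values on a dense set. The main obstacle is step (iii): the explicit zeta-integral computation at the primes of $\Sigma^-$ together with the integrality bookkeeping. It is precisely there that the hypothesis ``$|\Sigma^-|$ odd'' (so that $B$ exists and the relevant local trilinear forms are nonzero, by Prasad's local dichotomy) and the hypothesis ``$\overline\rho_?$ ramified at $\ell\equiv 1\pmod p$'' (so that the construction remains $p$-optimal) enter in an essential way. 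A secondary difficulty is the comparison of the quaternionic periods with the modified Hida periods $\Omega_{\f_\kappa}$, which rests on Hida's congruence-module theory as in \cite{hida_2016}.
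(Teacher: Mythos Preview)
The paper does not prove this theorem at all: it is stated as a result of Hsieh and attributed to \cite{Hsieh} (see also the summary just after the theorem, ``\cite[Theorem B]{Hsieh} supplies us with 4 $p$-adic $L$-functions\ldots''). There is therefore no proof in the paper to compare your proposal against.

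That said, your outline is a faithful sketch of Hsieh's actual argument: the definite quaternion algebra ramified at $\{\infty\}\cup\Sigma^-$, Jacquet--Langlands transfer of the three Hida families, the $\cR_3$-valued trilinear period on the finite Shimura set, Ichino's formula plus the explicit local computations at $p$, $\infty$, and $\ell\mid N$, and uniqueness by Zariski density. The roles you assign to the hypotheses (odd $|\Sigma^-|$ for the existence of $B$ and Prasad's dichotomy; ramification of the residual representations at $\ell\equiv 1\pmod p$ for $p$-integrality of the quaternionic transfer and test vectors) are correct. If you were asked to reproduce Hsieh's proof, this is the right plan; but for the purposes of the present paper, a citation suffices.
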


Notice that the period in the balanced range differs from the Gross period in \cite[Def. 4.12]{Hsieh} by an element $u\in \mathcal R_3$, which is nonzero at all classical specialisations (this difference will not make any difference for our purposes).

\subsubsection{} Summarizing, \cite[Theorem B]{Hsieh} supplies us with 4 $p$-adic $L$-functions $$\cL_p^{\hf}(\hf\otimes\hg\otimes\hh)\, , \quad
\cL_p^{\hg}(\hf\otimes\hg\otimes\hh)\, ,  \quad
\cL_p^{\hh}(\hf\otimes\hg\otimes\hh)\, , \quad
\cL_p^{\mathrm{bal}}(\hf\otimes\hg\otimes\hh)\quad \in \quad \mathcal R_{3}\,,$$ 
interpolating the special values of the motivic $L$-function $L(\f_\kappa^\circ\otimes\h_\lambda^\circ\otimes\h_\mu^\circ,c)$ over $4$ mutually disjoint ranges of interpolation in $\cW_3$.

\subsection{\texorpdfstring{$L$}{}-series for \texorpdfstring{$f\otimes \Ad^0(g)$}{} and interpolation} \label{sec:Adjoint}
We assume in \S\ref{sec:Adjoint} that $\varepsilon_\f=1$ and retain the notation and conventions of \S\ref{subsec:tripleproducts}, concentrating in the particular scenario when $\h=\g^c:=\g\otimes \varepsilon_\g^{-1}$ is the conjugate Hida family (as in \S\ref{subsubsec_30042021_1123}).

\subsubsection{} The point $ y = (\kappa,\lambda)\in \cW_2^{\cl}$ is called $\hf$-dominant if $\frac{\rmw(\kappa)}{2}\geq \rmw(\lambda)$ and balanced otherwise. We define $\cW_2^{\hf}$,  $\cW_2^{\Ad} \subset \cW_2^{\cl}$ as in \eqref{eqn_2022_05_17_1552}.
Following Serre's general recipe, we put
\[
    L_\infty(\hf_\kappa^\circ\otimes\Ad^0\hg_\lambda^\circ,s) = 
    \begin{cases}
        \Gamma_{\CC}(s+{\rm w}(\lambda)-1)\cdot \Gamma_{\CC}(s)\cdot \Gamma_{\CC}(s+1-{\rm w}(\lambda)), & \qquad y\in \cW_2^\hf\,,\\
        \Gamma_{\CC}(s+{\rm w}(\lambda)-1)\cdot \Gamma_{\CC}(s)\cdot \Gamma_{\CC}(s+{\rm w}(\lambda) - {\rm w}(\kappa) ), & \qquad y\in \cW_2^\Ad 
    \end{cases}
\]
and we consider the completed motivic $L$-series
\[
    \Lambda(\hf_\kappa^\circ\otimes\Ad^0\hg_\lambda^\circ,s) = L_\infty(\hf_\kappa^\circ\otimes\Ad^0\hg_\lambda^\circ,s) \cdot L(\hf_\kappa^\circ\otimes\Ad^0\hg_\lambda^\circ,s),
\]
where we define $L(\hf_\kappa^\circ\otimes\Ad^0\hg_\lambda^\circ,s)$ as an Euler product of non-archimedean factors. 

\subsubsection{} 

When $y\in \cW_2^?\subset \cW_2^\cris$ ($?=\f,{\rm ad}$), we define the modified Euler factors
\begin{align*}
\begin{aligned} 
  \mathcal E^?_p(\kappa,\lambda,\Ad):=
\begin{cases}
  (1-\beta_{\hf_\kappa^\circ}\alpha_{\hg_\lambda^\circ}\beta_{\hg_\lambda^\circ}^{-1}p^{-\frac{\rmw(\kappa)}{2}})
  (1-{\beta_{\hf_\kappa^\circ}}{p^{-\frac{\rmw(\kappa)}{2}}}) 
  (1-{\beta_{\hf_\kappa^\circ}\beta_{\hg_\lambda^\circ}}{\alpha_{\hg_\lambda^\circ}^{-1}p^{-\frac{\rmw(\kappa)}{2}}}),& \text{ if } ?=\hf \\
 (1-{\alpha_{\hf_\kappa^\circ}\beta_{\hg_\lambda^\circ}}{\alpha_{\hg_\lambda^\circ}^{-1}p^{-\frac{\rmw(\kappa)}{2}}})
 (1-{\beta_{\hf_\kappa^\circ}}{p^{-\frac{\rmw(\kappa)}{2}}}) 
 (1-{\beta_{\hf_\kappa^\circ}\beta_{\hg_\lambda^\circ}}{\alpha_{\hg_\lambda^\circ}^{-1}}p^{\frac{-\rmw(\kappa)}{2}}),
 & \text{ if } ?=\Ad
\end{cases}
\end{aligned}
\end{align*}
where $\alpha_{\bullet}$ and $\beta_{\bullet}$ are as in \S\ref{subsubsec_2022_06_02_0902}. When $\kappa$ is non-crystalline, we put $\mathcal E^?_p(\kappa,\lambda,\Ad)=0$.

\subsubsection{} 
\label{subsubsec_conj_2022_06_02_0940}
The following conjecture is in the spirit of Hsieh's $p$-optimal construction of triple product $p$-adic $L$-functions (which in turn dwells on the Coates--Perrin-Riou formalism):

\begin{conj}  
\label{conj_2022_06_02_0940}
    There exist $p$-adic $L$-functions $\cL_p^{?}(\hf\otimes \Ad^0\hg )\in \mathcal R_2$ (where $?=\f,\,\Ad$) with the following interpolative properties:
    \begin{align}
    \label{eqn_conj_2022_06_02_0940}
        \cL_p^{?}(\hf\otimes \Ad^0\hg )(y) =
        \begin{cases}
         C_{\f_\kappa}^- \cdot \mathfrak{g}(\psi_{\kappa}^{\frac{1}{2}})\cdot  \mathcal E^?_p(\kappa,\lambda,\Ad) \cdot   \dfrac{\Lambda(\hf_\kappa^\circ\otimes\Ad^0\hg_\lambda^\circ,\psi_{\kappa}^{-\frac{1}{2}},\frac{\wt(\kappa)}{2})}{\Omega_{\hf_\kappa}^- \cdot \Omega_{\hf_\kappa}^2 } & \text{ if } ?=\hf\,,\\
    C_{\f_\kappa}^- \cdot \mathfrak{g}(\psi_{\kappa}^{\frac{1}{2}})\cdot  \mathcal E^?_p(\kappa,\lambda,\Ad) \cdot  \dfrac{\Lambda(\hf_\kappa^\circ\otimes\Ad^0\hg_\lambda^\circ,\wt(\kappa)/2)}{\Omega_{\hf_\kappa}^- \cdot \Omega_{\hg_\lambda}^2}  & \text{ if } ?=\Ad\,.
      \end{cases}
    \end{align}
    for all arithmetic $($not necessarily crystalline$)$ arithmetic specialisations $y=(\kappa,\lambda)$. Here:
    \begin{itemize}
    \item $C_{\f_\kappa}^-$ is a $p$-adic period $($cf. \cite{Ochiai2006}, Theorem 6.7 and Remark 6.8$)$.
        \item $\psi_{\kappa}$ is the wild-nebentype of $\f_\kappa$ $($which is trivial if $\kappa$ is crystalline$)$ and $\mathfrak{g}(-)$ is the Gauss sum. 
    \end{itemize}
\end{conj}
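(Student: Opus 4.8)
The plan is to \emph{construct} the $p$-adic $L$-functions $\cL_p^{?}(\hf\otimes\Ad^0\hg)$ ($?=\hf,\Ad$) by a $p$-optimal Rankin--Selberg interpolation in the style of \cite{Hsieh}. The structural input is the symmetric-square functoriality $\Ad^0(\hg)\cong\Sym^2(\hg)\otimes\det(T_{\hg})^{-1}$, which (compatibly with the motivic decomposition \eqref{eqn_2022_05_17_1544}) exhibits $L(\hf_\kappa^\circ\otimes\Ad^0\hg_\lambda^\circ,s)$, up to an explicit shift of $s$, as a $\GL_2\times\GL_3$ Rankin--Selberg $L$-function. A point worth stressing at the outset is that the value $s=\wt(\kappa)/2$ lies in the \emph{critical} strip of $L(\hf_\kappa^\circ\otimes\Ad^0\hg_\lambda^\circ,s)$ throughout \emph{both} regions $\cW_2^{\hf}$ and $\cW_2^{\Ad}$ — the archimedean factors $L_\infty(\hf_\kappa^\circ\otimes\Ad^0\hg_\lambda^\circ,s)$ are tailored precisely so that this holds in each region — so, unlike the adjoint-of-a-single-form situation underlying \cite{Dasgupta2016}, this is (morally) a genuine critical-value interpolation and not a derivative. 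Consequently the task is not to invent a $p$-adic $L$-function where classical theory offers no critical values, but to carry out the interpolation with the precise normalisation demanded by \eqref{eqn_conj_2022_06_02_0940}.

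Concretely I would proceed as follows. First, fix a Rankin--Selberg integral representation of $L(\hf_\kappa^\circ\otimes\Ad^0\hg_\lambda^\circ,\wt(\kappa)/2)$: integrate the ordinary newform $\hf_\kappa^\circ$ against the pull-back to $\GL_2$ of a test vector in the symmetric-square lift of $\hg_\lambda^\circ$ paired with a suitable Siegel/Klingen Eisenstein section; equivalently, run the degenerate Garrett triple-product integral of \cite{Hsieh} with $\h=\g^c$, two of whose three slots are occupied by $\hg_\lambda$ and its conjugate. Since this is a genuine (not square-root) period, the output is linear in the $L$-value, consistent with the absence of a square on the left of \eqref{eqn_conj_2022_06_02_0940}. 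Second, $p$-adically deform the three inputs simultaneously: the symmetric-square family attached to $\hg$ (available from Hida's theory of congruence modules and the adjoint $p$-adic $L$-function), the Eisenstein section (through Katz/Hida $p$-adic Eisenstein measures), and the $\hf$-variable via Hida's ordinary projector together with the $p$-adic interpolation of the normalised Petersson pairing $\langle\hf_\kappa^\circ,-\rangle/\langle\hf_\kappa^\circ,\hf_\kappa^\circ\rangle$, which is what produces the $\Omega_{\hf_\kappa}^2$ in the denominator. Third, make the $p$-optimal choice of local test vectors at $p$ as in \cite{Hsieh}: this pins down the modified Euler factors $\mathcal E^?_p(\kappa,\lambda,\Ad)$, and the dichotomy between $\cW_2^{\hf}$ and $\cW_2^{\Ad}$ — hence the two functions $\cL_p^{\hf}$ and $\cL_p^{\Ad}$ — records which of the three $\Gamma_{\C}$-factors of $L_\infty$ is the "critical-on-the-nose" one in the given region, which is also why the balanced-range formula carries $\Omega_{\hg_\lambda}^2$ in place of $\Omega_{\hf_\kappa}^2$. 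Finally, arrange the integral structures and the test vectors at the ramified primes so that the remaining factors $C_{\hf_\kappa}^-$, $\mathfrak g(\psi_\kappa^{1/2})$ and the $\psi_\kappa^{-1/2}$-twist in the $?=\hf$ case match Ochiai's normalisation in \cite{Ochiai2006}; this compatibility is precisely what the eventual factorisation Conjecture~\ref{conj_main_6_plus_2} will require, and it constrains the bookkeeping, not the $L$-values themselves.

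As a cross-check — and as an independent route in the special case $\varepsilon_{\g}^2=1$, where $\hf\otimes\g\otimes\g$ falls within the self-dual triple-product setup of Theorems~\ref{thm:unbalancedinterpolation}--\ref{thm:balancedinterpolation} — one can instead restrict $\cL_p^{\bullet}(\hf\otimes\g\otimes\g)$ along the diagonal $\iota_{2,3}$, apply the Artin splitting $L(f\otimes g\otimes g,s)=L(f\otimes\Sym^2 g,s)\cdot L(f\otimes\det g,s)$, divide by the Mazur--Kitagawa $p$-adic $L$-function attached to the Dirichlet-character twist of $\hf$ by $\det(T_{\hg})$, and untwist by $\det(T_{\hg})^{-1}$. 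This route is quicker when it applies, but carries sign and self-duality caveats, and it requires the separate verification that the Mazur--Kitagawa factor is a non-zero-divisor of $\cR_2$ (so that the quotient lands in $\cR_2$, not merely in $\Frac(\cR_2)$) — a non-vanishing statement for a cyclotomic $p$-adic $L$-function of $\hf$ along the central critical line.

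\textbf{Main obstacle.} The genuinely hard part is the transcendental bookkeeping: extracting the \emph{exact} period ratios in \eqref{eqn_conj_2022_06_02_0940}, in particular proving that it is the \emph{minus} canonical period $\Omega_{\hf_\kappa}^-$ — rather than $\Omega_{\hf_\kappa}^+$ or the full period $\Omega_{\hf_\kappa}$ — that appears, and that it matches Ochiai's $p$-adic period $C_{\hf_\kappa}^-$, and doing so uniformly over \emph{all} arithmetic specializations $(\kappa,\lambda)$, not merely the crystalline ones where the wild-nebentype corrections $\mathfrak g(\psi_\kappa^{1/2})$ and the $\psi_\kappa$-twist are trivial, including at the wall $2\wt(\lambda)=\wt(\kappa)$ where $s=\wt(\kappa)/2$ is only barely critical and the numerology degenerates. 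These period factors are invisible over $\C$ and surface only after a careful pinning-down of integral structures; controlling them in families, and reconciling Hsieh's $p$-optimal normalisation with Ochiai's, is the real content of the statement, and is why it is recorded as a conjecture here.
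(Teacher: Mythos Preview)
The statement you are attempting to prove is a \emph{conjecture} in the paper, not a theorem: there is no proof in the paper to compare your proposal against. Immediately after stating it, the authors write that the existence of $\cL_p^{\Ad}(\hf\otimes\Ad^0\hg)$ ``is conjectural at present'' and merely point the reader to \cite{JanuszewskiHida} (and the earlier \cite{Schmidt, JanusReal}) for partial results \emph{towards} the conjecture. The paper's main results (Theorem~\ref{thm_main_8_4_4_factorization} and the formulation of Conjecture~\ref{conj_main_6_plus_2}) are designed so as \emph{not} to rely on the existence of these degree-$6$ $p$-adic $L$-functions; cf.\ \S\ref{subsubsec_intro_padic_artin_formalism_1}, where the authors stress that ``we do not rely on the existence of these $p$-adic $L$-functions in our main results.''

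That said, your sketch is a coherent outline of the kind of construction the literature has been pursuing, and the obstacle you identify --- the precise period bookkeeping, in particular isolating $\Omega_{\hf_\kappa}^-$ and matching Ochiai's $C_{\hf_\kappa}^-$ uniformly at non-crystalline specialisations --- is indeed the substantive reason the statement is recorded as a conjecture rather than a theorem. One caveat on your ``cross-check'' route: the paper's self-duality hypothesis \eqref{eqn_self_duality_condition} forces $\varepsilon_\f\varepsilon_\g\varepsilon_\h=1$, and in the $\h=\g^c$ setting this means $\varepsilon_\f=1$; your proposed specialisation to $\h=\g$ (rather than $\g^c$) with $\varepsilon_\g^2=1$ is a genuinely different self-dual configuration, and extracting $\cL_p^{?}(\hf\otimes\Ad^0\hg)$ from Hsieh's triple product along that diagonal would still require dividing by a Mazur--Kitagawa factor whose non-vanishing on the central line is itself a Greenberg-type conjecture (cf.\ Remark~\ref{remark_Greenberg_conjecture}), so that route does not obviously circumvent the difficulty.
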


 We note that the importance of $p$-adic $L$-function $\cL_p^\Ad(\hf\otimes \Ad^0\hg )$  (whose existence is conjectural at present) in this article is due to its appearance in Conjecture~\ref{conj_main_6_plus_2}. We refer the reader to \cite{JanuszewskiHida, PdVP19, CdVP23, ACR25} for results towards Conjecture~\ref{conj_2022_06_02_0940} (as well as the related earlier works \cite{Schmidt, JanusReal}).


\section{Selmer complexes}
\label{sec_selmer_complexes}
We introduce in the present section the algebraic counterparts of the ($p$-adic) analytic objects we have discussed in \S\ref{section_2_L_functions}. These will be used to formulate (and resolve) the factorisation problem for algebraic $p$-adic $L$-functions. We note that our approach (that incorporates the ETNC\footnote{abbrv. Equivariant Tamagawa Number Conjecture.} philosophy) also sheds light on the analytic counterparts.

\subsection{Greenberg conditions}
\label{subsec_greenberg_conditions_17_05_2021}
For each $T \in \{T_\hf^\dagger, T_2^\dagger, T_3^\dagger, M_2^\dagger \}$ as in \S\ref{subsec_the_set_up_intro} (with $\varepsilon_\f = 1$ and $\h=\g^c$, the conjugate Hida family), the fact that $\f$ and $\g$ are slope-zero families, together with \ref{item_Irr} and \ref{item_Dist}, gives rise to short exact sequences of $G_{\Qp}$-modules of the form
\begin{equation}
    \label{eqn_11_05_2021_filtration}
        0\lra F^+T \lra T \lra T/F^+T \lra 0
\end{equation}
which we shall use to define Selmer complexes. We note that even for a given $T$, there exists multiple one-step filtrations as above, which one should think of as a reflection of the fact that there are multiple choices of $p$-adic $L$-functions, depending on the chosen domain of interpolation. 

Given a short exact sequence \eqref{eqn_11_05_2021_filtration}, we consider the following local conditions on $T$ on the level of continuous cochains, in the sense of Nekov\'a\v{r}~\cite[\S6]{nekovar06}:
\[
    U_v^+(T) =
    \begin{cases}
        C^\bullet(G_p, F^+T),   & v=p \\
        C^\bullet(G_v/I_v, T^{I_v}), &v\in \Sigma\setminus\{p\}\,.
    \end{cases}
\]
Here $G_v := G_{\QQ_v}$ and $\Sigma := \{v \mid N\} \cup \{p, \infty\}$. 
These come equipped with a morphism of complexes
\[
    \iota^+_v : U_v^+(T) \longrightarrow C^\bullet(G_v,T)\,, \qquad\forall v\in \Sigma\,.
\]
For each $v\in \Sigma\setminus\{p\}$, we note that $U_v^+(T)$ is quasi-isomorphic to the complex 
\[
    U^+_v(T) = \left[ T^{I_v} \xrightarrow{ \, \Fr_v-1 \, } T^{I_v} \right],
\]
concentrated in degrees $0$ and $1$. Here, $I_v\subset G_v$ is the inertia subgroup and $\Fr_v \in G_v/I_v$ is the (geometric) Frobenius element. 

\subsubsection{}
Let $v\neq p$ be any prime and suppose that 
$$0\lra M_1 \lra M_2 \lra M_3\lra 0$$
is a short exact sequence of continuous $G_v$-representations, which are free of finite rank over a complete local Noetherian ring $R$. Then the sequence
$$ 0\lra U^+_v(M_1) \lra U^+_v(M_2)\lra U^+_v(M_3) $$
is also exact. Moreover, if $M_2=M_1\oplus M_3$, then
\begin{equation}
    \label{eqn_local_conditions_at_v_for_direct_sum}
     U^+_v(M_2)= U^+_v(M_1) \oplus  U^+_v(M_3)\,.  
\end{equation}


\subsubsection{}
\label{subsubsec_311_17_05_2021}
The data
$$\Delta(F^+T):=\{U_v^+(T)\xrightarrow{\iota_v^+} C^\bullet(G_v,T): v\in \Sigma\}$$
is called a Greenberg-local condition on $T$. In this paragraph, we shall record examples of  Greenberg local conditions, especially those crucial for our purposes.

\begin{example}
\label{example_local_conditions}
In this example, $X=T_?^\dagger$ with $?=2,3$.
\item[i)]  The $\f$-dominant Greenberg-local condition $\Delta_{\hf}=\Delta(F^+_\f X)$ on $X$ is given by the choices
\begin{align*}
 F_\hf^+T_3^\dagger &\,:= F^+T_{\hf}^\dagger\,\widehat\otimes\, T_\hg \,\widehat\otimes\, T_\hg^* \hookrightarrow T_3^\dagger\,,
 \\
 F_\hf^+T_2^\dagger &\,:= F_\hf^+T_3^\dagger\otimes_{\iota_{2,3}^*}\cR_2 \,=\,F^+T_{\hf}^\dagger\,\widehat\otimes\, \Ad(T_\hg) \hookrightarrow T_2^\dagger\,.
\end{align*}
\item[ii)]  The $\g$-dominant Greenberg-local condition $\Delta_{\hg}:=\Delta(F^+_\g T_3^\dagger)$ on $T_3^\dagger$ is given with the choice
$$F_\hg^+T_3^\dagger := T_{\hf}^\dagger \,\widehat\otimes\,  F^+T_\hg \,\widehat\otimes\,  T_\hg^* \hookrightarrow T_3^\dagger$$
whereas the $\g$-dominant Greenberg-local condition $\Delta_{\hg}:=\Delta(F^+_\g T_2^\dagger)$ on $T_2^\dagger$ is given with the choice
$$F_\hg^+T_2^\dagger :=  F_{\g}^+T_3^\dagger\otimes_{\iota_{2,3}^*}\cR_2= T_{\hf}^\dagger \,\widehat\otimes\,  \underbrace{F^+T_\g \,\otimes_{\cR_\g}\,  T_\hg^*}_{=:F^+_\g \Ad(T_\g)} \hookrightarrow T_2^\dagger\,. 
$$

\item[iii)]  The balanced Greenberg-local condition $\Delta_{\rm bal}:=\Delta(F^+_{\rm bal} X)$ on $X$ is given with the choice
\begin{align*}
F_{\rm bal}^+T_3^\dagger &\,:= F^+T_\hf^\dagger \,\widehat\otimes\, F^+T_\hg \,\widehat\otimes\, T_\hg^* + F^+T_\hf^\dagger \,\widehat\otimes\, T_\hg \,\widehat\otimes\, F^+T_\hg^* + T_\hf \,\widehat\otimes\, F^+T_\hg \,\widehat\otimes\, F^+T_\hg^*  \hookrightarrow T_3^\dagger\,,
\\
   F_{\rm bal}^+T_2^\dagger &\,:= F_{\rm bal}^+T_3^\dagger\otimes_{\iota_{2,3}^*}\cR_2 \, =\,F^+T_\hf^\dagger \,\widehat\otimes\, F^+T_\hg \otimes_{\cR_\g} T_\hg^* + F^+T_\hf^\dagger \,\widehat\otimes\, T_\hg \otimes_{\cR_\g} F^+T_\hg^* + T_\hf \,\widehat\otimes\, F^+T_\hg \otimes_{\cR_\g} F^+T_\hg^*  \hookrightarrow T_2^\dagger\,.
\end{align*}

\item[iv)] We will also work (especially in the proof of Proposition~\ref{prop_suport_range_Selmer_complexes_8_6_2_8}) with the Greenberg local conditions $\Delta_-:=\Delta(F^+_{{\rm bal-}}X)$ and $\Delta_+:=\Delta(F^+_{{\rm bal+}},X)$ given by the following rank-3 and rank-5 direct summands of $X$, respectively:
\begin{align*}
    F^+_{{\rm bal-}}X \,:= F_\g^+X \cap F_{\rm bal}^+\,,
\qquad \qquad 
    F^+_{{\rm bal+}}X \,:= F_\g^+X + F_{\rm bal}^+X\,.
\end{align*}
In explicit terms,
\begin{align*}
F^+_{{\rm bal-}}T_2^\dagger &\,:= F_\g^+T_2^\dagger \cap F_{\rm bal}^+T_2^\dagger\,\,=\,\,F^+T_\hf^\dagger \,\widehat\otimes\, F^+T_\hg \otimes_{\cR_\g} T_\hg^* + T_\hf \,\widehat\otimes\, F^+T_\hg \otimes_{\cR_\g} F^+T_\hg^*\,,
\\
F^+_{{\rm bal+}}T_2^\dagger &\,:= F_\g^+T_2^\dagger + F_{\rm bal}^+T_2^\dagger\,\,=\,\,F_\g^+T_2^\dagger  + F^+T_\hf \,\widehat\otimes\, T_\hg \otimes_{\cR_\g} F^+T_\hg^*\,.
\end{align*}
The $\cR_3$-direct-summands $F^+_{{\rm bal\pm}}T_3^\dagger$ also have a similar explicit description, which we omit here to save ink.
\end{example}

\begin{remark}
\label{remark_self_duality_orthogonal_complements}
We remark that under the self-duality pairing $T_2^\dagger\otimes T_2^\dagger\to \cR_2(1)$, the local conditions $\Delta_{\g}$ and $\Delta_{\rm bal}$ are self-orthogonal-complements: 
$\Delta_{\g}^\perp =\Delta_{\g}\,,\,\, \Delta_{\rm bal}^\perp=\Delta_{\rm bal}\,.$
Moreover, 
$\Delta_{+}^\perp =\Delta_{-},\,$ and $\Delta_{-}^\perp=\Delta_{+}\,.$
\end{remark}

We shall propagate these local conditions to $M_2^\dagger$ and $T_\f^\dagger$ via the exact sequence \eqref{eqn_factorisationrepresentations_dual_trace}.
\begin{defn}
\label{defn_propagate_local_conditions_via_dual_trace}
For $?\in\{\f,\g,{\rm bal}\}$, we define the Greenberg-local condition ${\rm tr}^*\Delta_?=\Delta(F_?^+M_2^\dagger)$ on $M_2^\dagger$ on setting
$F_?^+M_2^\dagger:={\rm im}\left(F_?^+T_2^\dagger \hookrightarrow T_2^\dagger\twoheadrightarrow M_2^\dagger\right)\,.$
\end{defn}

\begin{remark}
\label{remark_defn_propagate_local_conditions_via_dual_trace}
In more explicit terms, we have put
$F_\f^+M_2^\dagger:=F^+T_\hf^\dagger \,\widehat\otimes\,\Ad^0(T_\g)$\,, and
\begin{align*}
F_{\g}^+M_2^\dagger &:=T_\hf^\dagger \,\widehat\otimes\,\underbrace{\ker\left(\Ad^0(T_\g)\hookrightarrow\Ad(T_\g)\to {\rm Hom}(F^+T_\g,F^-T_\g)\right)}_{=:F_\g^+\, \Ad^0(T_\g)}\,,
\\
F_{\rm bal}^+M_2^\dagger &:=F^+T_\hf^\dagger \,\widehat\otimes\,F_\g^+\, \Ad^0(T_\g) +  T_\hf^\dagger\,\widehat\otimes\,\underbrace{F^+T_\g\otimes_{\cR_\g}(F^-T_\g)^*}_{=:F_\g\, \Ad^0(T_\g)}\,. 
\end{align*}
\end{remark}

\begin{lemma}
\label{lemma_remark_ranks_3_3_2021_11_05} We have:
\item[i)] 
${\rm rank}_{\cR_\g}\, F_\f^+M_2^\dagger \,=\, 3\,=\, {\rm rank}_{\cR_\g}\, F_{\rm bal}^+M_2^\dagger\,, \hbox{ and } \, {\rm rank}_{\cR_\g}\, F_\g^+M_2^\dagger\,=\,4\,.$
\item[ii)] The natural short exact sequence of $G_{p}$-representations
\begin{equation}
    \label{eqn_lemma_remark_ranks_3_3_2021_11_05_2}
    0\lra F_{\rm bal}^+M_2^\dagger\lra F_\g^+M_2^\dagger\lra F^-T_\hf^\dagger \otimes_{\varpi_{2,1}^*}\cR_2\lra 0\,.
\end{equation}
\end{lemma}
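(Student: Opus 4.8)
The plan is to verify both assertions by working entirely on the level of the explicit $G_p$-stable summands exhibited in Remark~\ref{remark_defn_propagate_local_conditions_via_dual_trace}, reducing everything to elementary rank bookkeeping for the rank-one modules $F^+T_\f^\dagger$, $F^-T_\f^\dagger = T_\f^\dagger/F^+T_\f^\dagger$ over $\cR_\f$ and the rank-one pieces of the weight filtration on $\Ad^0(T_\g)$ over $\cR_\g$. First I would record that, under \ref{item_Dist}, the local filtration \eqref{eqn:filtrationf} on $T_\g$ induces a three-step filtration of $\Ad(T_\g)$ as a $G_p$-module, whose graded pieces are $\Hom(F^-T_\g,F^+T_\g)$ (rank one), $\mathrm{End}(F^+T_\g)\oplus\mathrm{End}(F^-T_\g)$ (rank two), and $\Hom(F^+T_\g,F^-T_\g)$ (rank one); intersecting with the trace-zero subspace $\Ad^0(T_\g)$ removes exactly one rank from the middle graded piece. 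Thus $F_\g^+\Ad^0(T_\g) = \ker\bigl(\Ad^0(T_\g)\to\Hom(F^+T_\g,F^-T_\g)\bigr)$ has rank $2$, while $F_\g\Ad^0(T_\g) = F^+T_\g\otimes_{\cR_\g}(F^-T_\g)^*$ has rank $1$, and $F_\g\Ad^0(T_\g)\subset F_\g^+\Ad^0(T_\g)$.

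For part~(i), the ranks then follow by direct computation over $\cR_2 = \cR_\f\hatotimes_{\Zp}\cR_\g$: $F_\f^+M_2^\dagger = F^+T_\f^\dagger\,\widehat\otimes\,\Ad^0(T_\g)$ has rank $1\cdot 3 = 3$; $F_\g^+M_2^\dagger = T_\f^\dagger\,\widehat\otimes\,F_\g^+\Ad^0(T_\g)$ has rank $2\cdot 2 = 4$; and for $F_{\rm bal}^+M_2^\dagger = F^+T_\f^\dagger\,\widehat\otimes\,F_\g^+\Ad^0(T_\g) + T_\f^\dagger\,\widehat\otimes\,F_\g\Ad^0(T_\g)$ one computes the rank of the sum. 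Here care is needed: the rank of a sum of two summands is not additive in general, so I would exhibit the sum as an internal direct sum after choosing a splitting $T_\f^\dagger = F^+T_\f^\dagger\oplus F^-T_\f^\dagger$ of $\cR_\f[[G_p]]$-modules (available by \ref{item_Dist}) together with the splitting $F_\g^+\Ad^0(T_\g) = F_\g\Ad^0(T_\g)\oplus Q$ for a rank-one complement $Q$. Then $F^+T_\f^\dagger\otimes F_\g^+\Ad^0(T_\g)$ and $T_\f^\dagger\otimes F_\g\Ad^0(T_\g)$ overlap precisely in $F^+T_\f^\dagger\otimes F_\g\Ad^0(T_\g)$ (rank $1$), so the sum has rank $1\cdot 2 + 2\cdot 1 - 1\cdot 1 = 3$, as claimed.

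For part~(ii), I would first note $F_{\rm bal}^+M_2^\dagger\subseteq F_\g^+M_2^\dagger$: indeed $F^+T_\f^\dagger\otimes F_\g^+\Ad^0(T_\g)\subseteq T_\f^\dagger\otimes F_\g^+\Ad^0(T_\g)$ trivially, and $T_\f^\dagger\otimes F_\g\Ad^0(T_\g)\subseteq T_\f^\dagger\otimes F_\g^+\Ad^0(T_\g)$ by the inclusion $F_\g\Ad^0(T_\g)\subset F_\g^+\Ad^0(T_\g)$ established above. The inclusion is a direct-summand inclusion of $G_p$-modules, with quotient computed using the decompositions above: modulo $F_{\rm bal}^+M_2^\dagger$, the module $F_\g^+M_2^\dagger = T_\f^\dagger\otimes F_\g^+\Ad^0(T_\g)$ collapses to $F^-T_\f^\dagger\otimes \bigl(F_\g^+\Ad^0(T_\g)/F_\g\Ad^0(T_\g)\bigr)$, a rank-one free $\cR_2$-module; the quotient $F_\g^+\Ad^0(T_\g)/F_\g\Ad^0(T_\g)$ is the $G_p$-trivial line $\mathrm{End}(F^+T_\g)/(\text{scalars on }F^-T_\g)$, so as a $G_p$-module this quotient is $F^-T_\f^\dagger\otimes_{\varpi_{2,1}^*}\cR_2$ after identifying $\cR_2 = \cR_\f\hatotimes\cR_\g$ via $\varpi_{2,1}^*$. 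The main obstacle is the last identification: one must check that the $G_p$-action on this graded line of $\Ad^0(T_\g)$ is genuinely trivial (so that no twist intervenes) and that the resulting sequence is exact on the nose as $\cR_2[[G_p]]$-modules rather than merely up to isogeny; both follow from \ref{item_Dist}, which guarantees the filtration \eqref{eqn:filtrationf} splits $G_p$-equivariantly after inverting nothing, making all the graded pieces genuine direct summands. The rank count $4 = 3 + 1$ from part~(i) is then the consistency check.
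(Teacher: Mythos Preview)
Your overall strategy is sound and close to the paper's, but there is a genuine gap in your justification of part~(ii). You assert that \ref{item_Dist} ``guarantees the filtration \eqref{eqn:filtrationf} splits $G_p$-equivariantly,'' and you invoke this both to compute ranks in part~(i) and, crucially, to deduce that $G_p$ acts trivially on the graded line ${\rm gr}_\g^+\Ad^0(T_\g)=F_\g^+\Ad^0(T_\g)/F_\g\Ad^0(T_\g)$. This is false: \ref{item_Dist} only ensures that the short exact sequence \eqref{eqn:filtrationf} of $\cR_\g[[G_p]]$-modules \emph{exists} (with free quotient), not that it splits as $G_p$-modules. The extension of $\alpha_\g$ by $\delta_\g$ is typically nontrivial (think of the Tate module of an ordinary elliptic curve at a prime of multiplicative reduction). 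For the rank counts in part~(i) you only need an $\cR_2$-module splitting, which is available since all quotients are free, so that part survives with this correction.

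For part~(ii), however, you must verify that $G_p$ acts trivially on ${\rm gr}_\g^+\Ad^0(T_\g)$ \emph{without} assuming the filtration on $T_\g$ splits $G_p$-equivariantly. The paper does this by direct computation: choosing an $\cR_\g$-basis $\{v_+,v_-\}$ adapted to the filtration and writing $gv_-=\chi_-(g)v_-+c(g)v_+$ with a (possibly nonzero) cocycle $c(g)$, one checks that $g\cdot(v_+\otimes v_+^*-v_-\otimes v_-^*)$ differs from $v_+\otimes v_+^*-v_-\otimes v_-^*$ by an element of $\langle v_+\otimes v_-^*\rangle=F_\g\Ad^0(T_\g)$. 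Equivalently, conjugation of a trace-zero upper-triangular matrix by an upper-triangular matrix preserves the diagonal entries. This is the missing ingredient in your argument; once supplied, your identification of the quotient $F_\g^+M_2^\dagger/F_{\rm bal}^+M_2^\dagger\cong F^-T_\f^\dagger\widehat\otimes\,{\rm gr}_\g^+\Ad^0(T_\g)\cong F^-T_\f^\dagger\otimes_{\varpi_{2,1}^*}\cR_2$ goes through exactly as you outline.
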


\begin{proof}
The assertions in (i) are straightforward computations and we will only explain the proof of (ii). The containment $F_{\rm bal}^+M_2^\dagger\subset F_\g^+M_2^\dagger$ is also evident and one computes that
\begin{equation}
\label{eqn_3_4_21_05_2021}
{\rm coker}\left(F_{\rm bal}^+M_2^\dagger\lra F_\g^+M_2^\dagger \right)\stackrel{\sim}{\lra} F^-T_\f\,\widehat\otimes\,{\rm gr}_\g^+\,\Ad^0(T_\g)
\end{equation}
where ${\rm gr}_\g^+\,\Ad^0(T_\g):=F_\g^+\, \Ad^0(T_\g)\Big{/}F_\g\, \Ad^0(T_\g)$. It therefore remains to verify that
\begin{equation}
\label{eqn_3_5_21_05_2021}
    F^-T_\hf^\dagger\,\widehat\otimes\,{\rm gr}_\g^+\,\Ad^0(T_\g)\,\stackrel{\sim}{\lra} F^-T_\hf^\dagger \otimes_{\varpi_{2,1}^*}\cR_2 
\end{equation}
as $G_p$-representations. This is equivalent to check that the $G_p$-action on ${\rm gr}_\g^+\,\Ad^0(T_\g)$ is trivial. 

To see that, let $\{v_+\}\subset F^+T_\g$ denote an $\cR_\g$-basis and let $\{v_+,v_-\}\subset T_\g$ denote a basis complementing $\{v_+\}$. Let us denote by $\{v_+^*,v_-^*\}\subset T_\g^*$ the dual basis. Then
$$F_\g^+\, \Ad^0(T_\g)=\ker\left(\Ad^0(T_\g)\hookrightarrow\Ad(T_\g)\to {\rm Hom}(F^+T_\g,F^-T_\g)\right)=\langle v_+\otimes v_-^*, v_+\otimes v_+^*-v_-\otimes v_-^* \rangle$$
and $F_\g\,\Ad^0(T_\g)=F^+T_\g\otimes_{\cR_\g}(F^-T_\g)^*=\langle v_+\otimes v_-^*\rangle\,.$ We have therefore reduced to check that
\begin{equation}
\label{eqn_remark_ranks_3_3_2021_11_05_want_to_check}
\left(gv_+\otimes gv_+^*-gv_-\otimes gv_-^*\right) -(v_+\otimes v_+^*-v_-\otimes v_-^* )\in \langle v_+\otimes v_-^*\rangle\,.
\end{equation}
Only in this proof, we let $\chi_\pm$ denote the $\cR_\g$-valued character of $G_p$ giving its action of $F^\pm T_\g$. Then, 
\begin{equation}
\label{eqn_remark_ranks_3_3_2021_11_05_1}
    gv_+=\chi_+(g)v_+\,,\quad gv_+^*=\chi_+(g)^{-1}v_+^*\,,\qquad \hbox{hence } \quad gv_+\otimes gv_+^*=v_+\otimes v_+^*
\end{equation}
and  $gv_-=\chi_-(g)v_-+c(g)v_+\, \hbox{ for some } c(g)\in \cR_\g\,,\quad gv_-^*=\chi_-(g)^{-1}v_-^*\,\,,$ hence
\begin{equation}
\label{eqn_remark_ranks_3_3_2021_11_05_2}
 gv_-\otimes gv_-^*=v_-\otimes v_-^*+c(g)v_+\otimes v_-^*\,.
\end{equation}
Combining \eqref{eqn_remark_ranks_3_3_2021_11_05_1} and \eqref{eqn_remark_ranks_3_3_2021_11_05_2}, we conclude that \eqref{eqn_remark_ranks_3_3_2021_11_05_want_to_check} holds true.
\end{proof}

\begin{remark}\label{remark_Fg+diagram-comuutes}
The  proof of Lemma \ref{lemma_remark_ranks_3_3_2021_11_05} shows that the following diagram commutes:
\[
\xymatrix{
F_\g^+\mathrm{ad}(T_\g) \ar[r]^-{\pi_{\mathrm{tr^*}}}  \ar@{^{(}->}[d] &  \ar@{->>}[r] F_\g^+\mathrm{ad}^0(T_\g) & F_\g^+\mathrm{ad}^0(T_\g)/F_\g\mathrm{ad}^0(T_\g) \ar[d]_{\cong}^{v_+ \otimes v_+^* - v_- \otimes v_-^* \,\mapsto\, 1/2}
\\
\mathrm{ad}(T_\g) \ar[rr]^-{\mathrm{tr}} && \cR_2. 
}
\]
\end{remark}

\begin{defn}
\label{defn_propagate_local_conditions_via_dual_trace_V_fdagger}
For $?\in\{\f,\g,{\rm bal}\}$, we define the Greenberg-local condition ${\rm tr}^*\Delta_?=\Delta(F_?^+T_\f^\dagger)$ on $T_\f^\dagger$ on setting 
$F_?^+T_\f^\dagger:={\rm ker}\left(F_?^+T_\f^\dagger \hookrightarrow T_2^\dagger\twoheadrightarrow M_2^\dagger\right)$\,.
\end{defn}

\begin{remark}
\label{remark_defn_propagate_local_conditions_via_dual_trace_V_fdagger}In more explicit form, we have set $F_\f^+T_\f^\dagger:=F^+T_\hf^\dagger=:F_{\rm bal}^+T_\f^\dagger$\,, and $F_{\g}^+T_\f^\dagger=0$\,.
In other words, as local conditions on $T_\f^\dagger$, we have
\[
{\rm tr}^*\Delta_\f= \Delta_{\mathrm{Pan}} ={\rm tr}^*\Delta_{\rm bal}\,,\qquad {\rm tr}^*\Delta_\g=\Delta_0\,,
\]
where $\Delta_{\mathrm{Pan}} =\Delta(F^+T_\f)$ and $\Delta_0:=\Delta(\{0\})$. 
\end{remark}


\begin{defn}
We define the Greenberg local condition $\Delta_{\emptyset}$ on $T$ by setting $\Delta_{\emptyset} := \Delta(T)$. 
\end{defn}

\subsubsection{}\label{subsubsec_aux_definitions_grading_on_ad0g}
We record in \S\ref{subsubsec_aux_definitions_grading_on_ad0g} the definitions and various conclusions arising from the proof of Lemma~\ref{lemma_remark_ranks_3_3_2021_11_05} that we shall utilize later in \S\ref{subsec_factor_general}. 

As above, we let $\{v_+\}\subset F^+T_\g$ denote an $\cR_\g$-basis and let $\{v_+,v_-\}\subset T_\g$ denote a basis complementing $\{v_+\}$. Let us denote by $\{v_+^*,v_-^*\}\subset T_\g^*$ the dual basis. Recall that we have
$$F_\g^+\, \Ad^0(T_\g)=\langle v_+\otimes v_-^*, v_+\otimes v_+^*-v_-\otimes v_-^* \rangle\, , \qquad F_\g^+ M_2^\dagger=T_\f^\dagger\,\widehat\otimes\,F^+_\g\Ad^0(T_\g)\,.$$
Recall also the $G_p$-stable rank-one direct summand $F_\g\, \Ad^0(T_\g):=\langle v_+\otimes v_-^*\rangle \subset F_\g^+\, \Ad^0(T_\g)$. As we have seen as part of the proof of Lemma~\ref{lemma_remark_ranks_3_3_2021_11_05}, the $G_p$ acts trivially on the quotient ${\rm gr}_\g^+\,\Ad^0(T_\g):=F_\g^+ \Ad^0(T_\g)/ F_\g\Ad^0(T_\g).$

\subsection{Selmer complexes associated to Greenberg local conditions}
\label{subsec_selmer_complexes}
Let $T$ be as in \S\ref{subsec_greenberg_conditions_17_05_2021} and let $\Delta$ be a Greenberg-local condition, in the sense of Definition~\ref{subsubsec_311_17_05_2021}. Let us put
\[
    U_\Sigma(T) := \bigoplus_{v\in \Sigma} U^+_v(T), \qquad C_\Sigma^\bullet(T) := \bigoplus_{v\in \Sigma} C^\bullet(G_v,T)
\]
We then define the Selmer complex associated to $(T,\Sigma, \Delta)$ on setting
\[
    \widetilde{C}^\bullet_{\rm f}(G_{\QQ,\Sigma},T,\Delta) := \mathrm{cone}\left( C^\bullet(G_{\QQ, \Sigma}, T) \oplus U_\Sigma^+(T) \xrightarrow{ \, \res_\Sigma - \iota_\Sigma^+ \, } C_\Sigma^\bullet(T) \right)[-1].
\]
We denote the corresponding object in the derived category by $\widetilde{R\Gamma}_{\rm f}(G_{\QQ,\Sigma},T,\Delta)$ and its cohomology by $\widetilde{H}^\bullet_{\rm f}(G_{\QQ,\Sigma},T,\Delta)$.

\begin{proposition}\label{proposition_euler_poincare_characteristic}
We have 
$$\chi(\widetilde{R\Gamma}_{\rm f}(G_{\QQ,\Sigma},T,\Delta))=\mathrm{rank}(T^{c=1}) - \mathrm{rank}(F^+T)$$ 
for the Euler--Poincar\'e characteristic of $\widetilde{R\Gamma}_{\rm f}(G_{\QQ,\Sigma},T,\Delta)$.
\end{proposition}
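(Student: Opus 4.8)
The plan is to compute the Euler--Poincar\'e characteristic directly from the defining cone triangle, reducing to the known global and local Euler characteristic formulas. First I would recall that for any bounded complex with finitely generated cohomology arising as a mapping cone, the Euler characteristic is additive along the exact triangle
\[
    \widetilde{R\Gamma}_{\rm f}(G_{\QQ,\Sigma},T,\Delta) \lra R\Gamma(G_{\QQ,\Sigma},T)\oplus U_\Sigma^+(T) \lra C_\Sigma^\bullet(T) \lra [1].
\]
Hence $\chi(\widetilde{R\Gamma}_{\rm f}) = \chi(R\Gamma(G_{\QQ,\Sigma},T)) + \chi(U_\Sigma^+(T)) - \chi(C_\Sigma^\bullet(T))$, where throughout I take ranks over the relevant ring $R$ (working, if necessary, after inverting $p$ or localizing at a height-one prime so that all the modules involved are free of well-defined rank; since $\chi$ is locally constant on $\mathrm{Spec}$ this is harmless). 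The three terms on the right are all standard.

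Next I would plug in the three ingredients. For the global term, the Tate global Euler characteristic formula (in Nekov\'a\v{r}'s formulation, \cite[\S5, \S7]{nekovar06}, valid over a general complete Noetherian ring with $T$ free of finite rank) gives $\chi(R\Gamma(G_{\QQ,\Sigma},T)) = -\,\mathrm{rank}(T^{c=1})$, the contribution coming solely from the archimedean place since $T$ is unramified outside $\Sigma$ and $\QQ$ has one infinite place; here $T^{c=1}$ denotes the submodule fixed by a complex conjugation $c$. For the local terms at $v\in\Sigma\setminus\{p\}$, each $C^\bullet(G_v,T)$ has Euler characteristic $0$ (local fields of residue characteristic $\neq p$), and likewise each $U_v^+(T)\simeq [T^{I_v}\xrightarrow{\Fr_v-1}T^{I_v}]$ is concentrated in degrees $0,1$ with equal ranks, so contributes $0$; thus only $v=p$ and $v=\infty$ survive. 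At $v=p$, the local Euler characteristic formula gives $\chi(C^\bullet(G_p,T)) = -\,\mathrm{rank}(T)\cdot[\QQ_p:\QQ_p] = -\,\mathrm{rank}(T)$, while $U_p^+(T)=C^\bullet(G_p,F^+T)$ contributes $-\,\mathrm{rank}(F^+T)$. At $v=\infty$, $C^\bullet(G_\infty,T)$ has Euler characteristic $\mathrm{rank}(T^{c=1})-\mathrm{rank}(T^{c=-1})$ in general but, since $p$ is odd, $R\Gamma(G_\infty,T)$ is concentrated in degree $0$ with rank $\mathrm{rank}(T^{c=1})$; and there is no $U_\infty^+$ term in $U_\Sigma^+$ beyond what is already recorded. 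Assembling:
\[
    \chi(\widetilde{R\Gamma}_{\rm f}) = -\,\mathrm{rank}(T^{c=1}) \;+\; \bigl(-\,\mathrm{rank}(F^+T)\bigr) \;-\; \bigl(-\,\mathrm{rank}(T) + \mathrm{rank}(T^{c=1})\bigr),
\]
and using $\mathrm{rank}(T) = \mathrm{rank}(T^{c=1}) + \mathrm{rank}(T^{c=-1})$ together with the fact that $T$ is self-dual of the relevant parity (so that $\mathrm{rank}(T^{c=1})$ and $\mathrm{rank}(T^{c=-1})$ are as dictated by the self-dual twist), this collapses to $\mathrm{rank}(T^{c=1}) - \mathrm{rank}(F^+T)$, as claimed. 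In fact, the cleanest bookkeeping is simply: $\chi(\widetilde{R\Gamma}_{\rm f}) = \chi(R\Gamma(G_{\QQ,\Sigma},T)) - \sum_{v\in\Sigma}\chi(C^\bullet(G_v,T)/U_v^+(T))$, where the only nonzero local quotient contribution is at $p$, equal to $-\,\mathrm{rank}(T/F^+T)$, and at $\infty$, equal to $-\,\mathrm{rank}(T^{c=-1})$; combined with the global $-\,\mathrm{rank}(T^{c=1})$ this again yields $\mathrm{rank}(T^{c=1}) - \mathrm{rank}(F^+T)$ after the substitution $\mathrm{rank}(T) = \mathrm{rank}(T^{c=1})+\mathrm{rank}(T^{c=-1})$.

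The only genuinely delicate point — and the step I expect to require the most care — is bookkeeping the archimedean place and ensuring all ranks are taken consistently over a ring where the modules are free; everything else is a mechanical invocation of Tate's formulas as packaged in \cite[Ch.~5--7]{nekovar06}. One should either cite Nekov\'a\v{r}'s general formula \cite[(0.13.1) and Prop.~5.x]{nekovar06} directly, or reduce to the field-coefficient case by tensoring with $\mathrm{Frac}(R)$ and using that $\chi$ is invariant under flat base change; in the latter approach one must check that $\widetilde{R\Gamma}_{\rm f}$ commutes with $\otimes_R\mathrm{Frac}(R)$, which it does since cones commute with flat base change and $C^\bullet$ of continuous cochains of a free module does too. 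No deeper input is needed.
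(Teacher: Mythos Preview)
Your overall strategy---use the defining cone triangle and reduce to Tate's global and local Euler--Poincar\'e formulas---is exactly the right one, and indeed the paper states this proposition without proof, treating it as a standard consequence of Nekov\'a\v{r}'s formalism. However, your execution contains two concrete bookkeeping errors that you then paper over by invoking ``self-duality''; that invocation is the tell that something slipped.

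First, Tate's global formula over $\QQ$ gives
\[
\chi\bigl(R\Gamma(G_{\QQ,\Sigma},T)\bigr) = -\,\mathrm{rank}(T) + \mathrm{rank}(T^{G_\infty}) = -\,\mathrm{rank}(T^{c=-1}),
\]
not $-\,\mathrm{rank}(T^{c=1})$ as you wrote. Second, in the paper's setup $\infty\in\Sigma$ and $U_\infty^+(T)=C^\bullet(G_\infty/I_\infty,T^{I_\infty})$ is the complex $T^{c=1}$ in degree~$0$; since $p$ is odd this has the same Euler characteristic as $C^\bullet(G_\infty,T)$, so the archimedean contributions to $\chi(U_\Sigma^+)$ and $\chi(C_\Sigma^\bullet)$ cancel. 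There is a $U_\infty^+$ term, and in your ``cleanest bookkeeping'' variant the quotient at $\infty$ contributes $0$, not $-\,\mathrm{rank}(T^{c=-1})$. With both corrections the assembly is clean and requires no parity hypothesis on the $c$-eigenspaces:
\[
\chi(\widetilde{R\Gamma}_{\rm f}) = -\,\mathrm{rank}(T^{c=-1}) \;+\; \bigl(-\,\mathrm{rank}(F^+T)\bigr) \;-\; \bigl(-\,\mathrm{rank}(T)\bigr) = \mathrm{rank}(T^{c=1}) - \mathrm{rank}(F^+T).
\]
Your appeal to self-duality happens to rescue the answer in the paper's specific examples (where indeed $\mathrm{rank}(T^{c=1})=\mathrm{rank}(T^{c=-1})$), but the proposition as stated holds without it, and your argument as written does not establish it.
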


\subsubsection{} 
The following exact triangle, which we deduce from our discussion in Remark~\ref{remark_defn_propagate_local_conditions_via_dual_trace_V_fdagger}, can be thought of as the first step towards our factorisation statements and it will be repeatedly used in what follows:
\begin{equation}
\label{eqn_sequence_dual_trace_derived_category}
    \widetilde{R\Gamma}_{\rm f}(G_{\QQ,\Sigma}, {\varpi_{2,1}^*(T_\f^\dagger)}, \Delta_{0})  
    \xrightarrow{{\rm id}\otimes {\rm tr}^*} \widetilde{R\Gamma}_{\rm f}(G_{\QQ,\Sigma},T_2^\dagger,\Delta_{\g})\xrightarrow{\pi_{{\rm tr}^*}} \widetilde{R\Gamma}_{\rm f}(G_{\QQ,\Sigma},M_2^\dagger,{\rm tr}^*\Delta_{\g})\xrightarrow[+1]{\delta}\,.
\end{equation}

\begin{defn}
\label{defn_Delta_g_mod_Delta_bal_quotient}
We let $\res_{/{\Delta_{\rm bal}}}$ denote the composite map
\begin{align*}
    \widetilde{H}^1_{\rm f}(G_{\QQ,\Sigma},M_2^\dagger,{\rm tr}^*\Delta_\g)&\xrightarrow{\res_p} H^1(G_p,F_\g^+M_2^\dagger)\xrightarrow{{\rm pr}_{/\g}}
    H^1(G_p,T_\f^\dagger\,\widehat\otimes\,{\rm gr}_\g^+\,\Ad^0(T_\g))\\
    &\qquad\qquad\lra H^1(G_p,F^-T_\f^\dagger\,\widehat\otimes\,{\rm gr}_\g^+\,\Ad^0(T_\g))
    \xrightarrow[\eqref{eqn_3_4_21_05_2021}]{\sim}  H^1(G_p,F_\g^+M_2^\dagger/F_{\rm bal}^+M_2^\dagger)\,,
\end{align*}
where the map ${\rm pr}_{/\g}$ is the map induced from the canonical surjection (which carries the same name)
$${\rm id}\otimes {\rm pr}_{/\g}: \, F_\g^+M_2^\dagger=T_\f^\dagger\,\widehat\otimes\,F_\g^+\,\Ad^0(T_\g)\xrightarrow{{\rm pr}_{/\g}} T_\f^\dagger\,\widehat\otimes\,{\rm gr}_\g^+\,\Ad^0(T_\g)\,.$$
\end{defn}


\subsubsection{}

We conclude \S\ref{subsec_selmer_complexes} introducing an invariant (that we call the Panchishkin defect), which is useful in identifying the differences between the factorisation problem we presently consider with those in \cite{Gross1980Factorization, Greenberg1981_factorization, Dasgupta2016, palvannan_factorization}. This discussion indicates why the factorisation problem for algebraic $p$-adic $L$-functions we consider in this article is different from those considered in \cite{Greenberg1981_factorization, palvannan_factorization}. 

\begin{defn}
\label{defn_weakly_panchiskin}
Suppose that we are given a free $\ZZ_p$-module $X$ of finite rank, endowed with a continuous action of $G_{\QQ,\Sigma}$. Suppose that $X$ is equipped with Greenberg local conditions $\Delta=\Delta(F^+X)$ given by a $G_p$-stable submodule $F^+X\subset X$. We say that the pair $(X,\Delta)$ is \emph{weakly Panchishkin} if we have $\chi(\widetilde{R\Gamma}_{\rm f}(G_{\QQ,\Sigma},X,\Delta))= 0$. Otherwise, we say that it has \emph{Panchishkin defect}  
$$\mathfrak{d}(X,\Delta)= \left|\chi(\widetilde{R\Gamma}_{\rm f}(G_{\QQ,\Sigma},X,\Delta)) \right| =\left|{\rm rank}\, X^{c=+1}-{\rm rank}\, F^+X\right|>0\,.$$
\end{defn}
We remark that $X$ is Panchishkin ordinary if one may find $F^+X$ so that $(X,\Delta(F^+X))$ is weakly Panchishkin and all Hodge--Tate weights of $F^+X$ (resp. of $X/F^+X$) are positive (resp. non-positive).

\begin{example}
\item[i)] Let $x\in\cW_2^{\rm cl}(\cO)$ be an $\cO$-valued arithmetic point (where $\cO$ is the ring of integers of a finite extension of $\QQ_p$) and let us put $T(x):=T\otimes_{x} \cO$ for $T=T_2^\dagger, M_2^\dagger, T_\f^\dagger$. Then the pair $(T_2^\dagger(x),\Delta_\g)$ is weakly Panchiskin, but
$$\mathfrak{d}(M_2^\dagger(x),{\rm tr}^*\Delta_\g)=1=\mathfrak{d}(T_\f^\dagger(x),{\rm tr}^*\Delta_\g)\,,\qquad \mathfrak{d}(M_2^\dagger(x),{\rm tr}\Delta_\g)=1=\mathfrak{d}(T_\f^\dagger(x),{\rm tr}\Delta_\g)\,.$$
\item[ii)] Suppose that we are in the setting of \S\ref{subsubsec_intro_comparison_palvannan_dasgupta}. Then the pairs $(\Ad(T_\g)(\lambda)\otimes\chi,\Delta_{\rm Gr})$, $(\Ad^0(T_\g)(\lambda)\otimes \chi,{\rm tr}^*\Delta_{\rm Gr})$, and $(\chi,\Delta_{0})$ are weakly Panchishkin, which explains why $p$-adic regulators do not appear in Palvannan's treatment, cf. \cite{palvannan_factorization}. On the other hand,
$$\mathfrak{d}(\Ad^0(T_\g)(\lambda)\otimes \chi,{\rm tr}\Delta_{\rm Gr})=1=\mathfrak{d}(\chi,\Delta_{\emptyset})$$
and this suggests that a comparison of regulators will be involved in the sought-after factorisation $($as in the case of Dasgupta's work$)$.
\end{example}


\subsection{Interlude: Remarks on Tamagawa factors}
\label{subsubsec_2022_09_08_1236}

This subsection is dedicated to a study of Tamagawa numbers. This discussion will play a role in the verification that our Selmer complexes are perfect.

\subsubsection{Generalities}
\label{subsubsec_2022_09_09_1253} Let $v \neq p$ be a rational prime. 
Let $M$ be a continuous $G_{v}$-representation, which is free of finite rank over a complete local Noetherian ring $R$ with finite residue field of characteristic $p$. We write $\rho_M \colon G_v \to \GL(M)$ for the continuous homomorphism induced by the action of $G_v$ on $M$. We consider the following condition: 
\begin{itemize}
    \item The $R$-module $H^1(I_v, M)$ is free. 
\end{itemize}

When $R = \ZZ_p$, the order of $H^1(I_v, M)^{\mathrm{Fr}_v = 1}_{\mathrm{tors}}$ is the local Tamagawa factor at $v$ (see \cite[\S I.4.2.2]{FontainePerrinRiou_Motives}). Therefore, the local Tamagawa factor of $M$ at $v$ vanishes under the condition that the $R$-module $H^1(I_v, M)$ is free. 

Let $I_v^{(p)} < I_v$ denote the unique subgroup satisfying $I_v/I_v^{(p)}  \cong \ZZ_p$.  Note that the maximal pro-$p$ quotient of $I_v$ is isomorphic to $\Zp$.  Let us fix a topological generator $t \in I_v/I_v^{(p)}$. 

\begin{lemma}\label{lemma_I_v_cohomology}\ 
    \item[1)] The $R$-module  $M^{I_v^{(p)}}$ is free of finite rank.  
    \item[2)] The complex $C^{\bullet}(I_v, M)$ is quasi-isomorphic to the perfect complex 
    \[
    \cdots\quad \longrightarrow 0 \longrightarrow M^{I_v^{(p)}} \xrightarrow{t - 1} M^{I_v^{(p)}} \longrightarrow 0 \quad \cdots
    \]
    concentrated in degrees $0$ and $1$. In other words, $R\Gamma(I_v, M) \in D^{[0,1]}_{\rm parf}(_R\mathrm{Mod})$. 
    \item[3)] For any ring homomorphism $R \to S$, the induced map $M^{I_v^{(p)}} \otimes_R S \to (M \otimes_R S)^{I_v^{(p)}}$ is an isomorphism. In particular, $R\Gamma(I_v, M) \otimes^{\mathbb{L}}_{R} S \stackrel{\sim}{\longrightarrow} R\Gamma(I_v, M \otimes_R S)$. 
\end{lemma}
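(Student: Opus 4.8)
The plan is to exploit the well-known structure of the tame inertia quotient $I_v/I_v^{(p)} \cong \Zp$, which is procyclic, so that the continuous cohomology of $M^{I_v^{(p)}}$ as a module over this group is computed by the two-term Koszul-type complex $[\,M^{I_v^{(p)}} \xrightarrow{t-1} M^{I_v^{(p)}}\,]$. The inflation–restriction sequence together with the fact that $I_v^{(p)}$ has pro-order prime to $p$ (so that its continuous cohomology with values in a pro-$p$ $R$-module vanishes in positive degrees, and taking $I_v^{(p)}$-invariants is exact) will reduce $R\Gamma(I_v,M)$ to $R\Gamma(I_v/I_v^{(p)}, M^{I_v^{(p)}})$. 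Thus the crux is part~(1): showing $M^{I_v^{(p)}}$ is a free $R$-module of finite rank.

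First I would prove (1). Since $I_v^{(p)}$ acts on the finitely generated $R$-module $M$ through a finite discrete quotient (the action factors through $\rho_M(I_v^{(p)})$, which is finite because $M$ is a finitely generated module over a complete Noetherian local ring with finite residue field, hence the image of inertia in $\GL(M)$ is an extension of a finite group by a pro-$p$ group — here one uses that the wild part is pro-$p$ and that $I_v^{(p)}$, being the prime-to-$p$ part plus wild, has image whose pro-$p$ part is trivial on $M$ after passing far enough... more carefully: $\rho_M(I_v^{(p)})$ is finite of order prime to $p$ by the standard structure theory, since the pro-$p$ completion of $I_v^{(p)}$ is trivial). Then $M^{I_v^{(p)}}$ is the image of the averaging idempotent $e = \frac{1}{|G|}\sum_{g\in G} \rho_M(g)$ for $G = \rho_M(I_v^{(p)})$, which lies in $\mathrm{End}_R(M)$ because $|G|$ is invertible in $R$. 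Hence $M = M^{I_v^{(p)}} \oplus (1-e)M$ as $R$-modules, so $M^{I_v^{(p)}}$ is a direct summand of the free module $M$, i.e.\ projective; as $R$ is local Noetherian, projective finitely generated modules are free, giving (1).

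Next, for (2): with $N := M^{I_v^{(p)}}$ free over $R$, the group $I_v/I_v^{(p)}$ is topologically generated by $t$, and continuous cochain cohomology of a procyclic pro-$p$ group $\Zp$ acting on a finitely generated $R$-module is computed by $[\,N \xrightarrow{t-1} N\,]$ in degrees $0,1$ (this is standard; cf.\ Nekov\'a\v{r}'s formalism). Combined with the inflation–restriction identification $R\Gamma(I_v,M)\simeq R\Gamma(I_v/I_v^{(p)}, N)$ — valid since $H^{>0}(I_v^{(p)},M)=0$ by the invertibility of $|\rho_M(I_v^{(p)})|$ and the pro-$p$-freeness argument, and $H^0(I_v^{(p)},M)=N$ — this gives the claimed quasi-isomorphism to a perfect complex concentrated in degrees $0$ and $1$, hence $R\Gamma(I_v,M)\in D^{[0,1]}_{\mathrm{parf}}({}_R\mathrm{Mod})$. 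Finally, for (3): the idempotent $e$ defining $N$ is given by a universal formula with coefficients in $\ZZ[\frac{1}{|G|}]\subset R$, so it base-changes; concretely $N\otimes_R S = eM\otimes_R S = e(M\otimes_R S) = (M\otimes_R S)^{I_v^{(p)}}$ since the image of $I_v^{(p)}$ in $\GL(M\otimes_R S)$ is a quotient of $G$ and the averaging idempotent is compatible. The base-change statement for $R\Gamma(I_v,-)$ then follows because the explicit two-term complex $[N\xrightarrow{t-1}N]$ consists of free $R$-modules, so $-\otimes^{\mathbb L}_R S = -\otimes_R S$ on it, and we have just identified the result with $[N\otimes_R S \xrightarrow{t-1} N\otimes_R S]$, which computes $R\Gamma(I_v, M\otimes_R S)$ by parts (1)–(2) applied over $S$.

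The main obstacle I anticipate is the careful justification in part~(1) that $\rho_M(I_v^{(p)})$ is finite of order prime to $p$: one must argue that the action on the finitely generated $R$-module $M$ factors through a finite quotient, using that $M$ is a successive extension of finite-length pieces (via the $\m_R$-adic filtration, each $M/\m_R^nM$ being finite) and that $I_v^{(p)}$ acts on a finite module through a finite quotient whose pro-$p$ part must act compatibly — the cleanest route is to observe that $\mathrm{Aut}_R(M)$ contains an open pro-$p$ subgroup (the principal congruence subgroup $1+\m_R\mathrm{End}_R(M)$, say), and any continuous homomorphism from $I_v^{(p)}$ into this pro-$p$ subgroup is trivial since $I_v^{(p)}$ has no nontrivial pro-$p$ quotient; hence the image lies in a finite subgroup, necessarily of order prime to $p$ because the residual representation's inertia image on $M/\m_R M$ is tame prime-to-$p$ outside the wild part, which has been killed. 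Once this structural point is pinned down, the rest is the formal homological algebra sketched above.
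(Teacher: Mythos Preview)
Your proposal is correct and follows essentially the same approach as the paper: both arguments hinge on showing $\rho_M(I_v^{(p)})$ is finite of order prime to $p$ via the pro-$p$ principal congruence subgroup $1+\mathfrak{m}_R\mathrm{End}_R(M)$, then use the averaging idempotent $e=\frac{1}{|G|}\sum_{g\in G}g$ to split off $M^{I_v^{(p)}}$ as a free direct summand, and finish (2) via inflation--restriction to the procyclic quotient and (3) via base-change of the idempotent. The paper's write-up of (3) is phrased slightly differently (it checks $(M/M^{I_v^{(p)}}\otimes_R S)^{I_v^{(p)}}=0$ using $e$), but this is equivalent to your direct computation $e(M\otimes_R S)=(M\otimes_R S)^{I_v^{(p)}}$.
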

\begin{proof}
Let $\mathfrak{m}_R$ denote the maximal ideal of $R$. Since
$\ker\left(\GL(M/\mathfrak{m}_R^{n+1}) \to \GL(M/\mathfrak{m}_R^{n})\right)$ is a $p$-group for each positive integer $n$, it follows that $\rho_M(I_v^{(p)})$ is finite and $p \nmid \#\rho_M(I_v^{(p)})$. 

\item[1)] Since $\rho_M(I_v^{(p)})$ is finite and $p \nmid \#\rho_M(I_v^{(p)})$, we have 
\[
e := \frac{1}{\# \rho_M(I_v^{(p)})}\sum_{g \in \rho_M(I_v^{(p)})}g \in \ZZ_p[\rho_M(I_v^{(p)})]
\]
and $M^{I_v^{(p)}} = eM$. This shows that the inclusion $M^{I_v^{(p)}} \longrightarrow M$ is split, and hence $M^{I_v^{(p)}}$ is free. 

\item[2)] This portion follows from the quasi-isomorphism $C^{\bullet}(I_v/I_v^{(p)}, M^{I_v^{(p)}}) \to C^{\bullet}(I_v, M)$  induced from the inflation morphism, combined with the fact that  $I_v/I_v^{(p)}$ is pro-cyclic. 

\item[3)] Since the inclusion $M^{I_v^{(p)}} \to M$ is split, the $R$-module $M/M^{I_v^{(p)}}$ is flat, and hence we have an exact sequence of $S$-modules  
\[
0 \longrightarrow   M^{I_v^{(p)}}\otimes_{R} S \longrightarrow M \otimes_{R} S \longrightarrow M/M^{I_v^{(p)}} \otimes_{R} S \longrightarrow 0. 
\]
Moreover, since the group $I_v^{(p)}$ acts trivially on the ring $S$, we have 
\[
(M/M^{I_v^{(p)}} \otimes_{R} S)^{I_v^{(p)}} = e(M/M^{I_v^{(p)}} \otimes_{R} S) = e(M/M^{I_v^{(p)}}) \otimes_{R} S = 0\,. 
\]
This shows that the canonical homomorphism $M^{I_v^{(p)}} \otimes_R S \to (M \otimes_R S)^{I_v^{(p)}}$ is indeed an isomorphism. 
\end{proof}

\begin{remark}\label{remark:inert-coh}
Since $\Gal(\QQ_v^{\rm ur}/\QQ_v)$ acts non-trivially on $I_v/I_v^{(p)}$, the cohomology group $H^1(I_v, M)$ is not isomorphic to $M^{I_v^{(p)}}/(t-1)M^{I_v^{(p)}}$ as $\Gal(\QQ_v^{\rm ur}/\QQ_v)$-modules. As a matter of fact, we have 
\[
H^1(I_v, M) \cong (M^{I_v^{(p)}}/(t-1)M^{I_v^{(p)}})(-1)
\]
as $\Gal(\QQ_v^{\rm ur}/\QQ_v)$-modules. 
\end{remark}


\begin{corollary}\label{corollary_free_potentially_good}
Suppose that $\rho_M(I_v)$ is finite (i.e., $M$ has potentially good reduction) and $p \nmid \# \rho_M(I_v)$, then the $R$-module $H^1(I_v, M)$ is free. 
\end{corollary}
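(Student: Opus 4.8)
The plan is to reduce the statement to Lemma~\ref{lemma_I_v_cohomology} and then to show that, under the running hypotheses, the operator $t-1$ acts as zero on $N:=M^{I_v^{(p)}}$. By Lemma~\ref{lemma_I_v_cohomology}(2), the complex $R\Gamma(I_v,M)$ is represented by the two-term complex $[N\xrightarrow{t-1}N]$ placed in degrees $0$ and $1$, and by part~(1) of the same lemma $N$ is free of finite rank over $R$. Hence $H^1(I_v,M)$ is the cokernel of $t-1$ acting on $N$ (this is compatible with Remark~\ref{remark:inert-coh}, the Tate twist appearing there being irrelevant for the underlying $R$-module structure). Thus it suffices to prove that $t-1=0$ on $N$, for then $H^1(I_v,M)\cong N$ is free.

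To see that $t$ acts trivially on $N$, I would argue by squeezing the image of the induced action. Since $I_v^{(p)}$ is the kernel of $I_v$ onto its maximal pro-$p$ quotient, it is characteristic, hence normal in $I_v$; therefore $N=M^{I_v^{(p)}}$ is an $I_v$-stable (indeed $G_v$-stable) submodule of $M$. As $I_v^{(p)}$ acts trivially on $N$ by construction, the resulting $I_v$-action on $N$ factors through $I_v/I_v^{(p)}\cong\ZZ_p$; write $\bar\rho_N\colon \ZZ_p\to\GL_R(N)$ for it. On one hand, $\bar\rho_N$ is obtained by restricting $\rho_M$ to the $\rho_M(I_v)$-stable submodule $N$, so $\mathrm{im}(\bar\rho_N)$ is a quotient of $\rho_M(I_v)$; by hypothesis $\#\rho_M(I_v)$ is prime to $p$, hence so is $\#\,\mathrm{im}(\bar\rho_N)$. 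On the other hand, $\mathrm{im}(\bar\rho_N)$ is a continuous quotient of $\ZZ_p$, hence a finite cyclic $p$-group. A finite group that is simultaneously a $p$-group and of order prime to $p$ is trivial, so $\bar\rho_N$ is trivial; in particular $t$ acts as the identity on $N$, i.e.\ $t-1=0$ on $N$.

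Combining the two steps, $H^1(I_v,M)\cong N=M^{I_v^{(p)}}$ is free over $R$, as claimed. I do not anticipate a serious obstacle; the point requiring the most care is verifying that the $I_v$-action on $N$ genuinely descends to $I_v/I_v^{(p)}$ and that its image is, on the nose, a quotient of $\rho_M(I_v)$, so that the prime-to-$p$ hypothesis transfers. This is precisely where the assumption $p\nmid\#\rho_M(I_v)$ (rather than merely the finiteness of $\rho_M(I_v)$, as in the setup of Lemma~\ref{lemma_I_v_cohomology}) is used in an essential way: without it $\mathrm{im}(\bar\rho_N)$ could be a nontrivial cyclic $p$-group and $t-1$ need not vanish. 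Everything else is formal once Lemma~\ref{lemma_I_v_cohomology} is in hand.
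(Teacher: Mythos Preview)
Your proof is correct and follows exactly the same approach as the paper: use Lemma~\ref{lemma_I_v_cohomology} to identify $H^1(I_v,M)$ with the cokernel of $t-1$ on the free module $M^{I_v^{(p)}}$, and then observe that the induced $I_v/I_v^{(p)}\cong\ZZ_p$-action on $M^{I_v^{(p)}}$ must be trivial because its image is simultaneously a pro-$p$ group and a quotient of the prime-to-$p$ group $\rho_M(I_v)$. The paper's proof is just a one-line compression of what you wrote.
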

\begin{proof}
Since $p \nmid \# \rho_M(I_v)$, the element $t \in I_v/I_v^{(p)}$ acts trivially on $M^{I_v^{(p)}}$. It follows from Lemma~\ref{lemma_I_v_cohomology} shows that $H^1(I_v, M) \cong M^{I_v^{(p)}}$ is free, as required. 
\end{proof}

\begin{proposition}\label{proposition_unramified_complex}
Suppose that the $R$-module $H^1(I_v, M)$ is free. Then the following claims are valid. 
    \item[1)] The $R$-module $M^{I_v}$ is free of finite rank.
    \item[2)] The complex $U_v^+(M)$ is a perfect complex, and $R\Gamma_{\mathrm{ur}}(G_v, M) \in D^{[0,1]}_{\rm parf}(_R\mathrm{Mod})$.  
    \item[3)]  For any ring homomorphism $R \to S$, the induced morphism $M^{I_v} \otimes_R S \to (M \otimes_R S)^{I_v}$ is an isomorphism. 
    In particular, $R\Gamma_{\mathrm{ur}}(G_v, M) \otimes^{\mathbb{L}}_{R} S \stackrel{\sim}{\longrightarrow} R\Gamma_{\mathrm{ur}}(G_v, M \otimes_R S)$. 
\end{proposition}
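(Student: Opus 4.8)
The plan is to deduce all three assertions from Lemma~\ref{lemma_I_v_cohomology}. Recall from that lemma that $M^{I_v^{(p)}}$ is a free $R$-module of finite rank and that, since $I_v/I_v^{(p)}$ is pro-cyclic with chosen topological generator $t$, inflation identifies $C^\bullet(I_v,M)$ with the two-term complex $\bigl[M^{I_v^{(p)}}\xrightarrow{\,t-1\,}M^{I_v^{(p)}}\bigr]$ placed in degrees $0$ and $1$. Consequently there is a four-term exact sequence of $R$-modules
\[
0\longrightarrow M^{I_v}\longrightarrow M^{I_v^{(p)}}\xrightarrow{\,t-1\,}M^{I_v^{(p)}}\longrightarrow H^1(I_v,M)\longrightarrow 0,
\]
in which $M^{I_v}=H^0(I_v,M)=\ker(t-1)$ and $H^1(I_v,M)={\rm coker}(t-1)$ as $R$-modules, the Tate twist of Remark~\ref{remark:inert-coh} affecting only the $\Gal(\QQ_v^{\rm ur}/\QQ_v)$-action and not the underlying $R$-module structure.

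For part (1), I would split the displayed sequence through $N:={\rm im}(t-1)$ into $0\to N\to M^{I_v^{(p)}}\to H^1(I_v,M)\to 0$ and $0\to M^{I_v}\to M^{I_v^{(p)}}\to N\to 0$. The freeness hypothesis on $H^1(I_v,M)$ splits the first sequence, so $N$ is a finitely generated projective direct summand of the free module $M^{I_v^{(p)}}$, hence free ($R$ being local, finitely generated projective modules are free). Then the second sequence splits as well, exhibiting $M^{I_v}$ as a direct summand of $M^{I_v^{(p)}}$, hence free of finite rank. This splitting argument is the conceptual core; the remaining parts are formal.

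Part (2) follows at once: as recalled in \S\ref{subsec_greenberg_conditions_17_05_2021}, $U_v^+(M)$ is quasi-isomorphic to $\bigl[M^{I_v}\xrightarrow{\Fr_v-1}M^{I_v}\bigr]$ in degrees $0$ and $1$, which by (1) is a bounded complex of finite free $R$-modules, hence perfect; since this complex also computes $R\Gamma_{\mathrm{ur}}(G_v,M)$, we get $R\Gamma_{\mathrm{ur}}(G_v,M)\in D^{[0,1]}_{\rm parf}(_R\mathrm{Mod})$. For part (3), I would tensor the two split short exact sequences from (1) with $S$ over $R$; splitness preserves exactness, so $M^{I_v}\otimes_R S=\ker\bigl(t-1\colon M^{I_v^{(p)}}\otimes_R S\to M^{I_v^{(p)}}\otimes_R S\bigr)$. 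By Lemma~\ref{lemma_I_v_cohomology}(3) the canonical map identifies $M^{I_v^{(p)}}\otimes_R S$ with $(M\otimes_R S)^{I_v^{(p)}}$ compatibly with the $t$-action, so passing to $t$-invariants yields $M^{I_v}\otimes_R S\stackrel{\sim}{\longrightarrow}(M\otimes_R S)^{I_v}$. Finally, since $R\Gamma_{\mathrm{ur}}(G_v,M)$ is represented by the complex of free modules $\bigl[M^{I_v}\xrightarrow{\Fr_v-1}M^{I_v}\bigr]$, the derived base change $R\Gamma_{\mathrm{ur}}(G_v,M)\otimes^{\mathbb{L}}_R S$ is computed term by term and, via the isomorphism just established and the functoriality of $\Fr_v$, identifies with $\bigl[(M\otimes_R S)^{I_v}\xrightarrow{\Fr_v-1}(M\otimes_R S)^{I_v}\bigr]=R\Gamma_{\mathrm{ur}}(G_v,M\otimes_R S)$.

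I do not anticipate a genuine obstacle: everything rests on the splitting argument of (1), which is exactly what the freeness of $H^1(I_v,M)$ is designed to supply. The only point requiring real care is tracking the residual Galois action and the Tate twist of Remark~\ref{remark:inert-coh} through the identifications in (2)--(3), and these are routine functoriality verifications rather than substantive difficulties.
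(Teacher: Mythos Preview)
Your proposal is correct and follows essentially the same route as the paper. The paper's proof of (1) is the terse one-liner ``since $M^{I_v^{(p)}}/(t-1)M^{I_v^{(p)}}\cong H^1(I_v,M)$ is free, $M^{I_v}=(M^{I_v^{(p)}})^{t=1}$ is also free,'' which is exactly your splitting argument compressed; for (3) the paper phrases the same content slightly differently by observing that $R\Gamma(I_v,M)$ is represented by $[M^{I_v}\xrightarrow{0}H^1(I_v,M)]$ with zero differential and then reads off $H^0$ after derived base change via Lemma~\ref{lemma_I_v_cohomology}(3), but this is just a repackaging of your explicit split-exact-sequence argument.
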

\begin{proof}

\item[1)] Since $M^{I_v^{(p)}}/(t-1)M^{I_v^{(p)}} \cong H^1(I_v, M)$ is free by assumption, $M^{I_v} = (M^{I_v^{(p)}})^{t=1}$ is also free. 

\item[2)] This portion is clear thanks to (1). 

\item[3)] Thanks to our running assumption, $R\Gamma(I_v, M)$ is represented by the perfect complex 
\[
\cdots\quad  \longrightarrow 0 \longrightarrow M^{I_v} \xrightarrow{0} H^1(I_v, M) \longrightarrow 0 \quad  \cdots, 
\]
where the differentials are the zero-maps. Since $R\Gamma(I_v, M) \otimes^{\mathbb{L}}_{R} S \stackrel{\sim}{\longrightarrow} R\Gamma(I_v, M \otimes_R S)$ by Lemma \ref{lemma_I_v_cohomology}, it follows that $R\Gamma(I_v, M \otimes_R S)$ is represented by the perfect complex 
\[
\cdots\quad  \longrightarrow 0 \longrightarrow M^{I_v} \otimes_R S \xrightarrow{0} H^1(I_v, M)  \otimes_R S \longrightarrow 0 \quad \cdots. 
\]
We conclude that $H^0(I_v, M \otimes_R S) \cong M^{I_v} \otimes_R S$, as required. 
\end{proof}

\
\subsubsection{Tamagawa numbers in Hida families}
\label{subsubsec_Hida_family_tamagawa_2022_09}
In \S\ref{subsubsec_Hida_family_tamagawa_2022_09}, we apply the general discussion in \S\ref{subsubsec_2022_09_09_1253} to study the behaviours of Tamagawa factors in the families of Galois representations we are interested in.

For notational simplicity, let us put $\rho_{\hf} := \rho_{T_{\hf}^\dagger}$.  For any arithmetic prime $\cP$ (namely, the kernel of an arithmetic specialisation), we set $T(f_\cP):=T_{\hf}^{\dagger}\otimes_{\cR_{\hf}} S_\cP$, where $S_\cP$ is the normalisation of $\cR_{\hf}/\cP$ (see also \cite{nekovar06}, \S12.7.4).

\begin{proposition}
\label{prop_I_v_infinite}\ 
    \item[1)] If $\ord_v(N_{\hf}) = 1$, then  $\rho_{\hf}(I_v)$ is infinite. 
    \item[2)] If $\rho_{\hf}(I_v)$ has infinite cardinality, then there is an exact sequence of $\cR_{\hf}[G_v]$-modules 
    \begin{align*}
            0 \longrightarrow \cR_{\hf}(1) \otimes \mu \longrightarrow T_{\hf}^{\dagger} \longrightarrow \cR_{\hf} \otimes \mu \longrightarrow 0,  
                \end{align*}
    where $\mu \colon G_v \to \{\pm 1\}$ is a quadratic character. 
    Moreover, $\ord_v(N_{\hf}) = 1$ if and only if $\mu$ is unramified. 
\end{proposition}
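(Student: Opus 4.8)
The plan is to analyze the local Galois representation $T_{\hf}^{\dagger}|_{G_v}$ for $v \neq p$ dividing $N_{\hf}$, using the known description of the local automorphic types that occur in Hida families, together with the control over inertia afforded by the big Galois representation $\rho_{\hf}$ and its classical specializations. The first claim, that $\ord_v(N_{\hf}) = 1$ forces $\rho_{\hf}(I_v)$ to be infinite, is contrapositive-friendly: if $\rho_{\hf}(I_v)$ were finite, then by the argument in the proof of Corollary~\ref{corollary_free_potentially_good} (the pro-$p$ part $I_v^{(p)}$ acts through a finite quotient of order prime to $p$, and a finite image means $M$ has potentially good reduction), every crystalline classical specialization $\hf_\kappa$ would have potentially good reduction at $v$, hence conductor exponent at $v$ at least $2$ unless it is actually good or the associated newform is a twist of a Steinberg representation. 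The case $\ord_v(N_{\hf})=1$ means that each classical newform $\hf_\kappa^\circ$ is Steinberg (or a ramified twist of Steinberg) at $v$, and the Steinberg representation $\mathrm{St}$ has $\rho(I_v)$ infinite (its restriction to inertia is a non-trivial unipotent extension $\begin{pmatrix} 1 & * \\ 0 & 1\end{pmatrix}$ with $*$ of infinite order). Since the local type is constant in the Hida family (the non-archimedean local behaviour is rigid, as recalled in the paragraph on root numbers in \S\ref{subsubsec_213_2022_05_17_1455}), $\rho_{\hf}(I_v)$ is infinite.

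For part (2), suppose $\rho_{\hf}(I_v)$ is infinite. The key point is that the only way an $I_v$-representation occurring in a Hida family can have infinite image (given that the residual representation and all specializations have bounded conductor) is for it to be, up to twist by a finite-order character, an extension of the trivial character by the cyclotomic character — i.e., a Steinberg (Tate-curve-type) local representation. Concretely, I would argue as follows. Choose a classical crystalline specialization $\kappa$; by the previous paragraph's reasoning, $\rho_{\hf_\kappa}(I_v)$ is infinite, and an infinite-image local representation of a weight-$\geq 2$ modular form at $v\neq p$ must be (a twist of) special, so $V_{\hf_\kappa}^{\dagger}|_{G_v}$ sits in an exact sequence $0 \to E(1)\otimes\mu_\kappa \to V_{\hf_\kappa}^{\dagger}|_{G_v} \to E\otimes\mu_\kappa \to 0$ for a quadratic character $\mu_\kappa$ of $G_v$ (the self-dual twist normalizes the central character so that the sub is the Tate twist of the quotient; the quadratic character records whether one has Steinberg or a quadratic-twist of Steinberg). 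Then I would propagate this filtration across the family: the sub-$G_v$-module $F_v^+ := \ker(T_{\hf}^{\dagger}|_{G_v} \twoheadrightarrow (\text{unramified-up-to-}\mu))$ is defined as the maximal submodule on which $I_v$ acts through $\mu\chi_{\cyc}$; using that $\cR_{\hf}$ is a finite flat $\LL_{\rm wt}^{(k)}$-algebra and that the filtration exists at a Zariski-dense set of specializations, the saturation of this submodule is free of rank one over $\cR_{\hf}$ with the quotient free of rank one, giving the displayed exact sequence $0 \to \cR_{\hf}(1)\otimes\mu \to T_{\hf}^{\dagger} \to \cR_{\hf}\otimes\mu \to 0$. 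Finally, $\mu$ is unramified precisely in the "purely Steinberg" case, which is exactly when the conductor exponent at $v$ equals $1$; if $\mu$ is ramified (quadratic twist of Steinberg) the conductor exponent is $2$. This gives the last equivalence.

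The main obstacle I expect is making rigorous the propagation step — upgrading the pointwise filtrations $0 \to E(1)\otimes\mu_\kappa \to V_{\hf_\kappa}^{\dagger}|_{G_v} \to E\otimes\mu_\kappa \to 0$ to a genuine exact sequence of $\cR_{\hf}[G_v]$-modules with \emph{free} sub and quotient. One needs: (i) that the character $\mu_\kappa$ is independent of $\kappa$ (constancy of the local type again, plus the fact that a quadratic character valued in $\{\pm1\}$ cannot deform), (ii) that the "Steinberg filtration" submodule $F_v^+ T_{\hf}^{\dagger}$ is $\cR_{\hf}$-saturated — for this I would use that $T_{\hf}^{\dagger}$ is free over the reduced ring $\cR_{\hf}$ and that the quotient $T_{\hf}^{\dagger}/F_v^+$, being generically free of rank one and torsion-free over the (locally factorial, or at least normal-after-normalization) Hida branch, is in fact free — and (iii) the genericity of the Steinberg condition, i.e. it holds at all classical points, not merely a dense set. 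Here one can invoke the standard fact (essentially due to the local Langlands compatibility established for Hida families, e.g.\ as in the references to Ohta or in \cite{KLZ2}) that $T_{\hf}^{\dagger}|_{G_v}$ is, up to the quadratic twist $\mu$, isomorphic to the "canonical Steinberg family" $\begin{pmatrix} \chi_{\cyc}\bbchi_{\f}^{-1/2}\alpha_{\hf}^{-1} & * \\ 0 & \alpha_{\hf}^{-1}\bbchi_{\f}^{1/2}\chi_{\cyc}^{?}\end{pmatrix}$ restricted appropriately; once this structural statement is in hand, freeness of sub and quotient and the characterization of $\ord_v(N_{\hf})$ via ramification of $\mu$ are immediate.
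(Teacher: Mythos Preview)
Your proposal is essentially correct and follows the structural argument that underlies the result; the paper itself simply cites Nekov\'a\v{r}'s book \cite[\S12.4.4 and Proposition~12.7.14.1, Lemma~12.4.5]{nekovar06} for both assertions rather than reproducing the argument. What you have sketched is, in outline, precisely what Nekov\'a\v{r} does there: classify the possible local types at $v\nmid p$ for the big Galois representation attached to a Hida family, observe that infinite inertia image forces a Steinberg (or quadratic-twist-of-Steinberg) situation at every arithmetic specialization, and then globalize the resulting filtration over $\cR_{\hf}$ using rigidity of the local type in families.

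One minor point: your phrasing in part~(1), that potentially good reduction gives ``conductor exponent at $v$ at least $2$ unless it is actually good or the associated newform is a twist of a Steinberg representation,'' is slightly garbled, since a twist of Steinberg still has infinite inertia image and so cannot arise under the hypothesis $\#\rho_{\hf}(I_v)<\infty$. The clean statement is that finite inertia image and $v\mid N_{\hf}$ force the local component to be either supercuspidal or ramified principal series, both of which have conductor exponent $\geq 2$. As for the propagation step you flag as the main obstacle, this is exactly the content of \cite[Proposition~12.7.14.1]{nekovar06}, so your instinct that this is where the real work lies is correct; the paper does not reprove it but invokes it directly.
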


\begin{proof}
The first assertion follows from \cite[\S12.4.4.2 and Lemma 12.4.5]{nekovar06}, whereas the second from \cite[\S12.4.4 and Proposition 12.7.14.1]{nekovar06}. We remark that Proposition 12.7.14.1 of \cite{nekovar06} needs to be adjusted in this manner since we work with a twist verifying $\det(T_\f^{\dagger}) = R_\f(1)$. 
 \end{proof}

 \begin{corollary}\label{cor:tam-mult-reduction-case-gg^c}
Suppose that $\rho_{\hg}(I_v)$ has infinite cardinality and $\varepsilon_\hg = \mathds{1}$ is the trivial character.  Then the following are equivalent. 
     \item[1)] $H^1(I_v, T_{\hg}^\dagger \hatotimes_{\ZZ_p}T_{\hg^{c}}^\dagger)$ is free. 
     \item[2)] $H^1(I_v, T(g_{\cP}) \otimes_{S_{\cP}} T(g^{c}_{\cP}) )$ is free for some arithmetic prime $\cP$. 
     \item[3)] The Tamagawa factor for $ T(g_{\cP}) \otimes_{S_{\cP}} T(g^{c}_{\cP})$ equals $1$ for some arithmetic prime $\cP$. 
     \item[4)] The Tamagawa factor for $T(g_{\cP}) \otimes \nu$ equals $1$ for some arithmetic prime $\cP$ and any quadratic character $\nu \colon G_v \to \{\pm1\}$.  

Moreover, whenever these equivalent conditions are satisfied, then $H^1(I_w, T_{\hg}^\dagger)$ is free for any finite index subgroup $I_w$ of $I_v$ with $p \nmid [I_v \colon I_w]$.  
 \end{corollary}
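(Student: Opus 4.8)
The plan is to funnel all five assertions through a single invariant of the monodromy of $\hg$ at $v$. Since $\varepsilon_\hg=\mathds 1$ we have $\hg^c=\hg$, hence $T_{\hg^c}^\dagger=T_\hg^\dagger$ and $T(g^c_\cP)=T(g_\cP)$ as $G_v$-modules, so conditions (1)--(3) involve only $T_\hg^\dagger$ and its completed tensor square and (4) involves its quadratic twists. By Proposition~\ref{prop_I_v_infinite}(2), the hypothesis that $\rho_\hg(I_v)$ is infinite yields a short exact sequence of $\cR_\hg[G_v]$-modules
\[
0\longrightarrow \cR_\hg(1)\otimes\mu\longrightarrow T_\hg^\dagger\longrightarrow \cR_\hg\otimes\mu\longrightarrow 0,\qquad \mu^{2}=\mathds 1.
\]
Since $\chi_\cyc$ is unramified at $v\neq p$ and $\cR_\hg$ is pro-$p$, a choice of tame generator $t$ of $I_v/I_v^{(p)}\cong\Zp$ identifies $H^1(I_v,\cR_\hg(1))\cong H^1(I_v,\cR_\hg)\cong\cR_\hg$, and I write $c\in\cR_\hg$ (well defined up to a unit) for the extension class of the displayed sequence. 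The first thing to record is that $c\neq0$ (otherwise the sequence splits over $I_v$ and $\rho_\hg(I_v)$ is finite) and, crucially, that its specialization $c(\cP)\in S_\cP$ is nonzero for \emph{every} arithmetic prime $\cP$: if $c(\cP)=0$ then the base-changed sequence for $T(g_\cP)$ splits over $I_v$, which would force the local component $\pi_v(g_\cP^\circ)$ of the newform $g_\cP^\circ$ at $v$ to be a reducible principal series (character ratio $|\cdot|$), contradicting that $g_\cP^\circ$ is a newform. Equivalently, the local type at $v$ --- a twist of Steinberg --- is constant along the family.

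I would then compute the relevant $H^1(I_v,-)$. Because $\mu^2=\mathds 1$, $p$ is odd, and $\chi_\cyc|_{I_v}=\mathds 1$, the wild inertia $I_v^{(p)}$ acts trivially on $T_\hg^\dagger\hatotimes_{\ZZ_p}T_\hg^\dagger$, and over $\cA:=\cR_\hg\hatotimes_{\ZZ_p}\cR_\hg$ the generator $t$ acts as $u_1\otimes u_2$, where $u_i$ is unipotent with off-diagonal entry $c_1=c\otimes1$, resp. $c_2=1\otimes c$. By Lemma~\ref{lemma_I_v_cohomology} and Remark~\ref{remark:inert-coh}, $H^1(I_v,T_\hg^\dagger\hatotimes T_\hg^\dagger)$ is the $(-1)$-twist of $\mathrm{coker}(u_1\otimes u_2-1)$ on $\cA^4$, and a direct computation identifies this cokernel with $\cA/(c_1,c_2)\oplus \cA^{2}/\langle(c_1,c_2)\rangle\oplus\cA$, which is free over $\cA$ if and only if $(c_1,c_2)=\cA$, i.e. if and only if $(\cR_\hg/(c))\hatotimes_{\ZZ_p}(\cR_\hg/(c))=0$, i.e. if and only if $c\in\cR_\hg^\times$. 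The same computation over $S_\cP$ (now $c_1=c_2=c(\cP)$) shows that $H^1(I_v,T(g_\cP)\otimes_{S_\cP}T(g_\cP))$ is $S_\cP$-free if and only if $c(\cP)\in S_\cP^\times$. Since $\cR_\hg\to S_\cP$ is a local homomorphism, ``$c$ a unit'' is equivalent to ``$c(\cP)$ a unit'' for one --- hence every --- arithmetic $\cP$; this gives $(1)\Leftrightarrow(2)$.

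For $(3)$, the implication $(2)\Rightarrow(3)$ is the observation recalled in \S\ref{subsubsec_2022_09_09_1253} that $S_\cP$-freeness of $H^1(I_v,-)$ makes the Tamagawa factor trivial; conversely, when $c(\cP)$ is a non-unit (it is nonzero) the torsion submodule of $H^1(I_v,T(g_\cP)\otimes_{S_\cP}T(g_\cP))$ is $\cong(S_\cP/(c(\cP)))^{\oplus2}$, and, writing $q_v$ for the residue cardinality at $v$, on one of its two graded pieces $\mathrm{Fr}_v$ acts as the identity --- the relevant Frobenius eigenvalue being $\chi_\cyc(\mathrm{Fr}_v)\mu(\mathrm{Fr}_v)^2=q_v^{-1}$, which the $(-1)$-twist multiplies by $q_v$ --- so the $\mathrm{Fr}_v$-coinvariants, hence the invariants, of the torsion are nontrivial and the Tamagawa factor exceeds $1$; thus $(3)\Leftrightarrow$ ``$c$ a unit''. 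For $(4)$, twisting the displayed sequence by a quadratic character $\nu$ leaves $c$ unchanged and replaces $\mu$ by $\mu\nu$, so the same computation yields $H^1(I_v,T(g_\cP)\otimes\nu)=0$ when $\mu\nu$ is ramified and $\cong(S_\cP\oplus S_\cP/(c(\cP)))(-1)$ when $\mu\nu$ is unramified; the choice $\nu=\mu$ (so $\mu\nu=\mathds 1$, where $\mathrm{Fr}_v$ again acts as the identity on the torsion summand) shows that Tamagawa factor $1$ for all quadratic $\nu$ forces $c(\cP)\in S_\cP^\times$, and the converse is immediate from $S_\cP$-freeness; thus $(4)\Leftrightarrow$ ``$c$ a unit''. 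Finally, for the ``moreover'' clause, $p\nmid[I_v:I_w]$ forces $I_w$ to surject onto $I_v/I_v^{(p)}\cong\Zp$, so a prime-to-$p$ power of $t$ is a tame generator of $I_w$ and the restricted extension class is still $c$ up to a unit; the same dichotomy ($H^1(I_w,T_\hg^\dagger)=0$ when $\mu|_{I_w}$ is ramified, and of type $S_\cP\oplus S_\cP/(c)$ otherwise) shows that $H^1(I_w,T_\hg^\dagger)$ is free once $c$ is a unit.

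The step I expect to be the main obstacle is the descent from a single specialization back to the family: the equivalences would simply be false if some classical member $g_\cP$ acquired extra degeneration at $v$ (i.e. $c(\cP)=0$), so one genuinely needs the constancy of the local component at $v$ throughout the Hida family, which I would justify via local--global compatibility and the irreducibility of $\pi_v(g_\cP^\circ)$. The remaining (more routine) subtlety is the bookkeeping of the $\mathrm{Fr}_v$-action on the torsion of $H^1(I_v,-)$, which is precisely what distinguishes ``Tamagawa factor $=1$'' from bare $S_\cP$-freeness and which dictates the particular twist in (4); everything else follows formally from Lemma~\ref{lemma_I_v_cohomology}, Proposition~\ref{proposition_unramified_complex}, and the explicit matrix computation.
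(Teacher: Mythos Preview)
Your proposal is correct and follows essentially the same route as the paper: both reduce everything to the single monodromy invariant $c\in\cR_\hg$ (the paper calls it $a$), compute the action of the tame generator $t$ on the completed tensor square explicitly, and characterize each of (1)--(4) as well as the final clause by the condition $c\in\cR_\hg^\times$. Your treatment is in fact slightly more careful in distinguishing $c_1=c\otimes1$ from $c_2=1\otimes c$ over $\cR_\hg\hatotimes\cR_\hg$, and your argument that $c(\cP)\neq0$ via constancy of the Steinberg local type is exactly what the paper extracts from \cite[\S12.4.4.2 and Proposition~12.7.14.1(ii)]{nekovar06}.
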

 \begin{proof}
The condition (1) implies the condition (2) thanks to Lemma~\ref{lemma_I_v_cohomology}(3). 
 As we have remarked at the start of \S\ref{subsubsec_2022_09_08_1236}, the condition (2) implies the condition  (3).  
 
By Proposition \ref{prop_I_v_infinite}, we have an exact sequence of $G_v$-modules 
\[
 0 \longrightarrow \cR_{\hg}(1)\otimes \mu \longrightarrow T_{\hg}^\dagger \longrightarrow \cR_{\hg}\otimes \mu  \longrightarrow 0. 
\]
We will prove that the conditions (3) and (4) are equivalent, and that (3) implies (2). We shall treat the scenario where $\mu=\mathds{1}$ is the trivial character since the conditions (1)--(4) does not depend on the quadratic character $\mu$. Note that, in this case, $\mathrm{ord}_v(N_\hg) = 1$. 

Let us take a  basis $\{e_1, e_2\}$ of $T_{\hg}^\dagger$ so that 
 $\cR_{\hg}e_1 = \cR_{\hg}(1)$ and 
\[
\rho_{\hg}(t) = 
\begin{pmatrix}
1 & a
\\
0 & 1
\end{pmatrix}  
\]
 for some element $0 \neq a \in \cR_{\hg}$. Let us similarly choose a basis $\{e_1', e_2'\}$ of $T_{\hg^c}^\dagger$. We then have 
 \begin{align}\label{eq:t-gotimesg^c}
      \rho_{\hg \otimes \hg^c}(t) = 
\begin{pmatrix}
1 & a & a & a^2
\\
& 1 & & 2a
\\
&&1&2a
\\
 &&&1 
\end{pmatrix}  
  \end{align}
 with respect to the basis $\{e_1 \otimes e_1', e_1 \otimes e_2', e_2 \otimes e_1', e_2 \otimes e_2'\}$. 
 Let us define the $G_v$-module
 \[
 C := \cR_{\hg} \hatotimes \cR_{\hg}(e_1 \otimes e_1') + \cR_{\hg} \hatotimes \cR_{\hg}(e_1 \otimes e_2' + e_2 \otimes e_1')\,.
 \]
Let us take an arithmetic prime $\cP$. Since $\mathrm{ord}_v(N_\hg) = 1$, the image of the compositum 
$$I_v \longrightarrow \mathrm{GL}_2(\cR_\hg) \longrightarrow  \mathrm{GL}_2(\cR_\hg/\cP) \subset \mathrm{GL}_2(S_\cP)$$  
has infinite cardinality  (see also \cite{nekovar06}, \S12.4.4.2 and Proposition 12.7.14.1(ii)). We therefore infer that $a \not\in \cP$. 
 Then,
 \[
  H^1(I_v, T(g_{\cP}) \otimes_{S_{\cP}} T(g^{c}_{\cP}))_{\rm tors}=(C \otimes S_{\cP}/aS_{\cP})(-1)\,. 
 \]
The exact sequence of $\Gal(\QQ_v^{\rm ur}/\QQ_v)$-modules 
 \[
 0 \longrightarrow \cR_{\hg} \hatotimes \cR_{\hg}(1) \longrightarrow C(-1) \longrightarrow \cR_{\hg} \hatotimes \cR_{\hg} \longrightarrow 0, 
 \]
shows that 
\[
\det\left((C \otimes S_{\cP}/aS_{\cP})(-1) \xrightarrow{\mathrm{Fr}_v - 1 } (C \otimes S_{\cP}/aS_{\cP})(-1)\right) = 0\,.
\]
Since $S_{\cP}/aS_{\cP}$ is a finite ring (as $a\not\in \cP$), the condition (3) is equivalent to the requirement that $a \in \cR_{\hg}^\times$.

 We explain that the condition (4) is also equivalent to asking that $a \in \cR_{\hg}^\times$. Indeed, the Tamagawa factor for $T(g_{\cP}) \otimes \nu$ equals $1$ for any non-trivial quadratic character $\nu \colon G_v \to \{\pm1\}$ and, when $\nu=\mathds{1}$ is the trivial character, we have $H^1(I_v, T(g_{\cP}))_{\rm tors}^{\mathrm{Fr}_v = 1} = S_{\cP}/aS_{\cP}$. In particular, $H^1(I_v, T(g_{\cP}))$ is torsion-free (hence free) if and only if $a \in \cR_{\hg}^\times$.
 
 This shows that the conditions (3) and (4) are equivalent. Moreover, when $a \in \cR_{\hg}^\times$ (which is the case if (3) holds), the equation \eqref{eq:t-gotimesg^c} shows that the module $H^1(I_v, T_{\hg} \hatotimes_{\ZZ_p}T_{\hg^{c}})$ is free (of rank 2), which is the condition (1).  

The very final assertion follows from the fact that $I_w/(I_w \cap I_v^{(p)}) = I_v/I_v^{(p)}$ combined with our observation that $a \in \cR_{\hg}^\times$. 
 \end{proof}

The following  lemma follows from \S12.4.4 and Proposition 12.7.14.2 of \cite{nekovar06}. 

\begin{lemma}\label{lemma_rho_cP_classification}
Suppose that $\rho_{\hf}(I_v)$ is finite but $\rho_{\hf}(I_v) \neq 0$. 
For any arithmetic prime $\cP$, one of the following holds.
    \item[i)] There is a (ramified) character $\mu \colon G_v \to S_\cP^\times$ such that $T(f_\cP)|_{I_v} = \mu \oplus \mu^{-1}$.  
    \item[ii)] $T(f_\cP)$ is monomial, namely, $T(f_\cP) = \mathrm{Ind}_{G_w}^{G_v}(\mu)$, where $E_w/\QQ_v$ is a ramified quadratic extension with absolute Galois group $G_w$, and $\mu \colon G_w \to S_\cP^\times$ is a ramified character. 
    \item[iii)] $v = 2$ and there exists a Galois extension of $\QQ_v$ with Galois group isomorphic to $A_3$ or $S_3$, over which $T(f_\cP)$ becomes monomial. 
\end{lemma}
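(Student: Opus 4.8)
The plan is to deduce the trichotomy from Nekov\'a\v{r}'s local analysis of the Galois representations carried by members of a Hida family, specifically from \S12.4.4 and Proposition~12.7.14.2 of \cite{nekovar06}, the one extra input being the self-dual twist $\det(T_\hf^\dagger)=\cR_\hf(1)$, which is handled exactly as in the proof of Proposition~\ref{prop_I_v_infinite}. First I would unwind the hypothesis: saying that $\rho_\hf(I_v)$ is finite and nonzero means that $\hf$ has potentially good, but not good, reduction at $v$ --- there is a finite extension $L/\QQ_v$ over which $T_\hf^\dagger$ becomes unramified, but $T_\hf^\dagger$ itself is ramified at $v$. Consequently, for each arithmetic prime $\cP$ the specialization $T(f_\cP)$ has potentially good reduction at $v$, its associated Weil--Deligne representation has trivial monodromy operator, and the image $\rho_{T(f_\cP)}(I_v)$ --- a quotient of the finite group $\rho_\hf(I_v)$ --- is finite and nonzero (the latter for arithmetic $\cP$, cf. \S12.4.4 of \cite{nekovar06}). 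I would then classify $T(f_\cP)|_{G_v}$ by the local Langlands dichotomy for $\GL_2(\QQ_v)$, carried out through the projective image $\overline{\rho}_{T(f_\cP)}(G_{\QQ_v})\subseteq\mathrm{PGL}_2(\overline{\QQ}_p)$.

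There are three cases. If the projective image is abelian --- equivalently $T(f_\cP)|_{I_v}$ is reducible, say $T(f_\cP)|_{I_v}=\mu\oplus\mu'$ for characters of $I_v$ --- then $\mu\mu'=\det(T(f_\cP))|_{I_v}=\chi_\cyc|_{I_v}=\mathds{1}$ because $v\neq p$ and $\det T_\hf^\dagger=\cR_\hf(1)$, so $\mu'=\mu^{-1}$, and $\mu\neq\mathds{1}$ since $\rho_{T(f_\cP)}(I_v)\neq 0$; this is alternative (i). If the projective image is dihedral, then $T(f_\cP)=\mathrm{Ind}_{G_w}^{G_v}(\mu)$ for a character $\mu$ of the index-two subgroup $G_w$ attached to a quadratic extension $E_w/\QQ_v$; the unramified quadratic extension is excluded (it would return us to alternative (i)), so $E_w/\QQ_v$ is ramified, and then $\mu$ must be ramified for $T(f_\cP)$ to be ramified; this is alternative (ii). If the projective image is one of the exceptional groups $A_4$, $S_4$, $A_5$, then I would argue that the wild inertia $P_v\trianglelefteq G_{\QQ_v}$, being pro-$\ell$ with $\ell$ the residue characteristic of $v$, has image a nontrivial normal $\ell$-subgroup of the projective image --- nontrivial because otherwise $\overline{\rho}$ would factor through the metacyclic tame quotient of $G_{\QQ_v}$, whereas $A_4$, $S_4$, $A_5$ are not metacyclic. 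Among $A_4$, $S_4$, $A_5$ the only nontrivial normal $\ell$-subgroup is the Klein four-group $V_4\trianglelefteq A_4,S_4$ (the simple group $A_5$ having none), which forces $\ell=2$, hence $v=2$; and since $V_4$ is normal, $\overline{\rho}^{-1}(V_4)$ cuts out a Galois extension $L/\QQ_v$ with $\Gal(L/\QQ_v)\cong A_4/V_4\cong A_3$ (resp. $S_4/V_4\cong S_3$), over which the projective image drops to the dihedral group $V_4$, so $T(f_\cP)$ becomes monomial over $L$; this is alternative (iii).

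These cases are exhaustive and line up with the three listed alternatives, which is the content of \S12.4.4 and Proposition~12.7.14.2 of \cite{nekovar06}. I expect the only step that is not pure bookkeeping to be the group-theoretic argument in alternative (iii) --- showing the exceptional projective types force $v=2$ and that the representation becomes monomial over precisely an $A_3$- or $S_3$-extension --- but this is the classical statement that the exceptional (non-dihedral) supercuspidal representations of $\GL_2$ occur only at the prime $2$, and it is already contained in op. cit.
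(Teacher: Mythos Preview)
Your proposal is correct and takes essentially the same approach as the paper: the paper's proof consists solely of the citation ``follows from \S12.4.4 and Proposition~12.7.14.2 of \cite{nekovar06}'', exactly the references you invoke, together with the self-dual twist adjustment you flag from Proposition~\ref{prop_I_v_infinite}. Your write-up simply unpacks the content of those references---the principal-series/dihedral/exceptional trichotomy via the projective image, and the wild-inertia argument forcing $v=2$ in the exceptional case---so there is nothing to add.
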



\begin{corollary}\label{corollary_Tamagawa_potentially_good_case}
Suppose that $\rho_{\hf}(I_v)$ is finite but $\rho_{\hf}(I_v) \neq 0$. 
Then the following conditions are equivalent. 
    \item[1)] The $\cR_{\hf}$-module $H^1(I_v, T_{\hf}^{\dagger})$ is free. 
    \item[2)] The $S_{\cP}$-module $H^1(I_v, T(f_\cP))$ is free for some arithmetic prime $\cP$. 
    \item[3)] $H^1(I_v, T(f_\cP)) = 0$ for some arithmetic prime $\cP$. 
    \item[4)] $H^1(I_v, T_{\hf}^{\dagger}) = 0$. 
\end{corollary}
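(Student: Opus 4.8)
The plan is to reduce all four conditions to a single statement: that a certain determinant is a unit. First I would fix the topological generator $t$ of $I_v/I_v^{(p)}$ and set $N := (T_{\hf}^\dagger)^{I_v^{(p)}}$, a finite free $\cR_{\hf}$-module by Lemma~\ref{lemma_I_v_cohomology}(1). By Lemma~\ref{lemma_I_v_cohomology}(2) the complex $R\Gamma(I_v, T_{\hf}^\dagger)$ is represented by $[\,N \xrightarrow{t-1} N\,]$ in degrees $0$ and $1$, so $H^1(I_v, T_{\hf}^\dagger) = \mathrm{coker}(t-1\mid N)$ and $H^0(I_v, T_{\hf}^\dagger) = \ker(t-1\mid N) = (T_{\hf}^\dagger)^{I_v}$ (as $\cR_{\hf}$-modules; the Tate twist in Remark~\ref{remark:inert-coh} is irrelevant for freeness or vanishing). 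The crucial preliminary step is to show $(T_{\hf}^\dagger)^{I_v} = 0$ in the potentially good reduction case: since $I_v$ acts on $V := T_{\hf}^\dagger \otimes_{\cR_{\hf}} \Frac(\cR_{\hf})$ through the finite group $\rho_{\hf}(I_v)\neq 1$, the representation $V|_{I_v}$ is semisimple; and because $\det T_{\hf}^\dagger = \cR_{\hf}(1)$ is unramified away from $p$, the determinant of $V|_{I_v}$ is trivial, forcing $V|_{I_v}$ to be either irreducible or of the form $\nu\oplus\nu^{-1}$ with $\nu|_{I_v}\neq 1$ --- in both cases $V^{I_v}=0$. Via Lemma~\ref{lemma_I_v_cohomology}(3), $N\otimes_{\cR_{\hf}}\Frac(\cR_{\hf}) = V^{I_v^{(p)}}$, so $(T_{\hf}^\dagger)^{I_v}=\ker(t-1\mid N)$ is a torsion submodule of the free module $N$, hence vanishes; equivalently $t-1$ is injective on $N$ and $d := \det(t-1\mid N)$ is a nonzero element of $\cR_{\hf}$.

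Granting this, $(1)\Leftrightarrow(4)$ falls out quickly: $H^1(I_v,T_{\hf}^\dagger)=\mathrm{coker}(t-1\mid N)$ is $\cR_{\hf}$-torsion (as $d\neq0$ and $\cR_{\hf}$ is a domain), so it is free precisely when it is zero, precisely when $t-1$ is bijective on $N$, precisely when $d\in\cR_{\hf}^\times$. For the specialized conditions I would use that $S_\cP$ is a local domain and $\cR_{\hf}\to S_\cP$ a local homomorphism; by Lemma~\ref{lemma_I_v_cohomology}(3) one has $N\otimes_{\cR_{\hf}}S_\cP = T(f_\cP)^{I_v^{(p)}}$ and the image $d_\cP$ of $d$ equals $\det(t-1\mid N\otimes_{\cR_{\hf}} S_\cP)$, so $H^1(I_v,T(f_\cP))=\mathrm{coker}(t-1\mid N\otimes S_\cP)$. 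Here I would invoke Lemma~\ref{lemma_rho_cP_classification}: in each of its cases (i), (ii), (iii) the relevant character $\mu$ is ramified, hence $\rho_{f_\cP}(I_v)\neq1$; repeating the determinant-plus-semisimplicity argument over $\Frac(S_\cP)$ (using that $\det T(f_\cP)=\cR_{\hf}(1)$ restricts to it) gives $T(f_\cP)^{I_v}=0$ and $d_\cP\neq0$, whence $H^1(I_v,T(f_\cP))$ is $S_\cP$-torsion and $(2)\Leftrightarrow(3)\Leftrightarrow d_\cP\in S_\cP^\times$. Finally, because $\cR_{\hf}\to S_\cP$ is a local homomorphism of local rings, $d\in\cR_{\hf}^\times\Leftrightarrow d_\cP\in S_\cP^\times$, so ``$d_\cP\in S_\cP^\times$ for some arithmetic $\cP$'' is the same as ``$d\in\cR_{\hf}^\times$'', and all four conditions are equivalent to this single condition.

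I expect the main obstacle to be the vanishing $(T_{\hf}^\dagger)^{I_v}=0$ (and its pointwise analogue): this is exactly the place where the unramifiedness of $\det T_{\hf}^\dagger$ away from $p$ and the semisimplicity of finite-group representations in characteristic zero are used, and it is also why the hypothesis $\rho_{\hf}(I_v)\neq0$ cannot be dropped. A secondary point is that one genuinely needs Lemma~\ref{lemma_rho_cP_classification} rather than a naive specialization argument: without it a specialization could a priori have trivial inertia action, making $H^1(I_v,T(f_\cP))$ free and nonzero and breaking $(2)\Leftrightarrow(3)$. The transfer of unit-ness along the local homomorphism $\cR_{\hf}\to S_\cP$ is the bookkeeping step that ties the family-level and pointwise statements together; everything else is formal.
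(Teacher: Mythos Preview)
Your proof is correct and follows essentially the same strategy as the paper: both rest on the observation that $T(f_\cP)^{I_v}=0$ (so $H^1(I_v,T(f_\cP))$ is torsion, giving $(2)\Leftrightarrow(3)$) together with the base-change statement of Lemma~\ref{lemma_I_v_cohomology}(3) and the locality of $\cR_\hf\to S_\cP$ to pass between family and pointwise levels.

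The one notable difference is organizational. The paper argues by the cycle $(1)\Rightarrow(2)\Leftrightarrow(3)\Rightarrow(4)\Rightarrow(1)$ and never needs the family-level vanishing $(T_\hf^\dagger)^{I_v}=0$: for $(3)\Rightarrow(4)$ it uses that $H^1(I_v,T(f_\cP))=\mathrm{coker}(t-1\mid N)\otimes_{\cR_\hf}S_\cP$ together with Nakayama along the local map $\cR_\hf\to S_\cP$. You instead prove $(T_\hf^\dagger)^{I_v}=0$ directly over $\Frac(\cR_\hf)$ (via Maschke and the triviality of $\det T_\hf^\dagger$ on inertia), which lets you phrase all four conditions uniformly as ``$d:=\det(t-1\mid N)\in\cR_\hf^\times$'' and makes $(1)\Leftrightarrow(4)$ immediate rather than a consequence of the full cycle. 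This extra step is not needed for the logic of the paper's proof, but it is correct and arguably clarifies why the equivalences hold; conversely, the paper's route is slightly more economical in that it extracts the family-level statement from the pointwise one rather than proving it separately.
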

\begin{proof}
Note that $T(f_\cP)^{I_v} = 0$ for any arithmetic prime $\cP$ by Lemma \ref{lemma_rho_cP_classification}, and hence the $S_{\cP}$-module $H^1(I_v, T(f_\cP))$ is torsion by Lemma \ref{lemma_I_v_cohomology}.2. 
This shows that the conditions (2) and (3) are equivalent. 
The implications (1) $\Rightarrow$ (2) and (3) $\Rightarrow$ (4) both follow from Lemma \ref{lemma_I_v_cohomology}.3. 
Finally, the implication (4) $\Rightarrow$ (1) is tautological. 
\end{proof}

\begin{corollary}\label{cor:tam_vanish_fin}
Suppose that $\rho_{\hf}(I_v)$ is finite and $\rho_{\hf}(I_v) \neq 0$. 
If the $\cR_{\hf}$-module $H^1(I_v, T_{\hf}^{\dagger})$ is free, then $(T_{\hf}^\dagger)^{I_v^{(p)}} = 0$. 
\end{corollary}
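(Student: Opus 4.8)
The plan is to deduce the statement from the equivalence already established in Corollary~\ref{corollary_Tamagawa_potentially_good_case} together with a Nakayama-style argument resting on the observation that, after reduction modulo $\m_{\cR_\hf}$, the operator $t-1$ that computes inertia cohomology is nilpotent.

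First I would invoke Corollary~\ref{corollary_Tamagawa_potentially_good_case}: since $\rho_\hf(I_v)$ is finite and nonzero, the freeness of the $\cR_\hf$-module $H^1(I_v, T_{\hf}^{\dagger})$ --- which is the hypothesis at hand --- is equivalent to the vanishing $H^1(I_v, T_{\hf}^{\dagger}) = 0$. So it suffices to show that $H^1(I_v, T_{\hf}^{\dagger})=0$ forces $(T_{\hf}^{\dagger})^{I_v^{(p)}}=0$. Write $M := T_{\hf}^{\dagger}$. By Lemma~\ref{lemma_I_v_cohomology}, $M^{I_v^{(p)}}$ is free of finite rank over $\cR_\hf$ and $R\Gamma(I_v,M)$ is represented by the two-term complex $[\,M^{I_v^{(p)}} \xrightarrow{t-1} M^{I_v^{(p)}}\,]$ in degrees $0$ and $1$; hence $H^1(I_v,M) = \mathrm{coker}\bigl(t-1\colon M^{I_v^{(p)}}\to M^{I_v^{(p)}}\bigr)$. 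The vanishing of $H^1$ thus says that $t-1$ is surjective on $M^{I_v^{(p)}}$, and a fortiori surjective on $\overline{M}:=M^{I_v^{(p)}}\otimes_{\cR_\hf}k$, where $k$ is the residue field of $\cR_\hf$ (of characteristic $p$) and $\overline M$ is finite-dimensional over $k$.

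The key step is then to show that $t-1$ acts nilpotently on $\overline M$. Since $\rho_\hf(I_v)$ is finite, $I_v^{(p)}\trianglelefteq I_v$, and $\rho_\hf(I_v^{(p)})$ has order prime to $p$ (as recorded in the proof of Lemma~\ref{lemma_I_v_cohomology}), the group $\rho_\hf(I_v^{(p)})$ acts trivially on $M^{I_v^{(p)}}$, so the $I_v$-action on $M^{I_v^{(p)}}$ factors through the finite group $\rho_\hf(I_v)/\rho_\hf(I_v^{(p)})$, which is a quotient of $I_v/I_v^{(p)}\cong\ZZ_p$ and hence cyclic of $p$-power order. Therefore $t$ acts on $M^{I_v^{(p)}}$ with order $p^n$ for some $n\ge 0$. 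Expanding $(t-1)^{p^n}=\sum_{j=0}^{p^n}\binom{p^n}{j}(-1)^{p^n-j}t^j$ and using that $t^{p^n}=\mathrm{id}$, that $p$ is odd (so the $j=0$ and $j=p^n$ terms cancel), and that $p\mid\binom{p^n}{j}$ for $0<j<p^n$, one finds $(t-1)^{p^n}\in p\cdot\mathrm{End}_{\cR_\hf}(M^{I_v^{(p)}})$. Reducing modulo $\m_{\cR_\hf}$, which contains $p$, yields $(t-1)^{p^n}=0$ on $\overline M$.

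To finish: a surjective endomorphism of a finite-dimensional $k$-vector space is bijective, and a bijective nilpotent endomorphism forces the space to be zero; hence $\overline M=0$, and by Nakayama's lemma applied to the finitely generated $\cR_\hf$-module $M^{I_v^{(p)}}$ this gives $M^{I_v^{(p)}}=(T_{\hf}^{\dagger})^{I_v^{(p)}}=0$. I do not expect a serious obstacle here: the substantive input is already packaged in Corollary~\ref{corollary_Tamagawa_potentially_good_case} and Lemma~\ref{lemma_I_v_cohomology}, and the only point demanding care is the nilpotence of $t-1$ modulo $p$ --- which is precisely where the residue characteristic $p$ enters, and which would have no analogue over a characteristic-zero coefficient field.
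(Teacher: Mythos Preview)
Your proof is correct and takes a genuinely different route from the paper's. The paper specializes to an arithmetic prime $\cP$ (using Lemma~\ref{lemma_I_v_cohomology}(3) to reduce to showing $T(f_\cP)^{I_v^{(p)}}=0$), then invokes the classification of local representations in Lemma~\ref{lemma_rho_cP_classification} and argues separately in each of the three cases, in each one deriving a contradiction from the fact that a $p$-power root of unity minus $1$ cannot be a unit in $S_\cP$. Your argument instead stays at the level of the big representation and replaces the case analysis by a single structural observation: since $t$ has $p$-power order on $M^{I_v^{(p)}}$, the operator $t-1$ is nilpotent modulo $\mathfrak m_{\cR_\hf}$ (indeed $(t-1)^{p^n}=t^{p^n}-1=0$ in characteristic $p$), while the vanishing of $H^1$ forces it to be surjective; Nakayama then finishes. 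Your approach is more conceptual and avoids both the specialization step and the classification lemma; the paper's approach, on the other hand, makes the underlying arithmetic (the role of $p$-power roots of unity) more visible and ties directly into the explicit local description that is used elsewhere in \S\ref{subsubsec_Hida_family_tamagawa_2022_09}. One cosmetic remark: your binomial expansion is correct but unnecessary, since in characteristic $p$ one has $(t-1)^{p^n}=t^{p^n}-1$ directly by the Frobenius identity.
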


\begin{proof}
By Lemma \ref{lemma_I_v_cohomology}.3, it suffices to show that $T(f_\cP)^{I_v^{(p)}} = 0$ for some arithmetic prime $\cP$. 

We first consider the case (i)  in Lemma~\ref{lemma_rho_cP_classification}, so that  there is a (ramified) character $\mu \colon G_v \to S_\cP^\times$ with the property that $T(f_\cP)|_{I_v} = \mu \oplus \mu^{-1}$. Suppose that $\mu(I_v^{(p)}) = \{1\}$. By Corollary \ref{corollary_Tamagawa_potentially_good_case}, we have 
\[
0 = H^1(I_v, T(f_\cP)) = (S_{\cP}/(\mu(t)-1)S_{\cP})^{\oplus 2}, 
\]
and hence, $\mu(t)-1 \in S_{\cP}^\times$. 
Since $t \in I_v/I_v^{(p)} \cong \ZZ_p$ and $\#\rho_{\hf}(I_v) < \infty$,  $\mu(t)$ is a $p$-power root of unity. 
This contradicts the fact that $\mu(t)-1 \in S_{\cP}^\times$ and shows that $\mu(I_v^{(p)}) \neq \{1\}$, as required.

In the situation of (ii) or (iii) of Lemma~\ref{lemma_rho_cP_classification}, we note that $I_v^{(p)}$ acts non-trivially on $T(f_\cP)$. 
Thence, if $T(f_\cP)^{I_v^{(p)}} \neq 0$, then $T(f_\cP)^{I_v^{(p)}} = \nu$, where $\nu$ is a character of  $I_v/I_v^{(p)}$. 
Since $H^1(I_v, T(f_\cP)) = 0$ by Corollary \ref{corollary_Tamagawa_potentially_good_case}, we have 
 $$(t-1)T(f_\cP)^{I_v^{(p)}} = T(f_\cP)^{I_v^{(p)}},$$ 
 which implies $\nu(t) - 1 \in S_{\cP}^\times$. 
Since $\nu(t)$ is a $p$-power root of unity, this cannot happen, proving that $T(f_\cP)^{I_v^{(p)}}= 0$ also in these scenarios.
\end{proof}

\begin{proposition}[{\cite{nekovar06}, Proposition 12.4.10.3}]
Let $h$ be a newform and $L$ the number field generated by the Hecke eigenvalues of $h$. 
Suppose that $\rho_{h}(I_v)$ is finite and $\rho_{h}(I_v) \neq 0$. 
If $\zeta_p + \zeta_p^{-1} \not\in L$, then $H^1(I_v, T(h)) = 0$. 
\end{proposition}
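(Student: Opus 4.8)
The plan is to read off $H^1(I_v,T(h))$ from the explicit complex supplied by Lemma~\ref{lemma_I_v_cohomology}, and then close the argument with a short determinant computation. Write $M:=T(h)$, let $R$ be the coefficient ring (the ring of integers of the relevant $p$-adic completion of $L$), let $t$ topologically generate $I_v/I_v^{(p)}\cong\ZZ_p$, and put $N:=M^{I_v^{(p)}}$; by Lemma~\ref{lemma_I_v_cohomology}(1) this is a free $R$-module. By Lemma~\ref{lemma_I_v_cohomology}(2) and Remark~\ref{remark:inert-coh} we have $H^1(I_v,M)\cong\bigl(N/(t-1)N\bigr)(-1)$, so $H^1(I_v,M)=0$ if and only if $\rho_h(t)-1$ is invertible on $N$, i.e.\ $\det\bigl(\rho_h(t)-1\mid N\bigr)\in R^\times$. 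Here finiteness of $\rho_h(I_v)$ enters: since $I_v^{(p)}$ acts trivially on $N$ and $t$ generates the pro-$p$ group $I_v/I_v^{(p)}$, the operator $\rho_h(t)$ acts on $N$ through a finite cyclic $p$-group, so its eigenvalues on $N$ are $p$-power roots of unity, and $\det\bigl(\rho_h(t)-1\mid N\bigr)=\prod_i(\zeta_i-1)$ with each $\zeta_i$ a $p$-power root of unity. As $\zeta_i-1$ is either $0$ or a non-unit of positive valuation, this product lies in $R^\times$ exactly when it is empty. Thus, just as in the proof of Corollary~\ref{cor:tam_vanish_fin}, I obtain the equivalence
\[
H^1(I_v,T(h))=0\quad\Longleftrightarrow\quad T(h)^{I_v^{(p)}}=0.
\]

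The remaining task is the contrapositive of the proposition: to show that $N=T(h)^{I_v^{(p)}}\neq 0$ forces $\zeta_p+\zeta_p^{-1}\in L$. I would first record that $T(h)^{I_v}=0$: since $\det T(h)$ is unramified at $v\neq p$ we have $\rho_h(I_v)\subset\mathrm{SL}_2(R)$, and a nontrivial finite subgroup of $\mathrm{SL}_2$ that fixes a vector would be a nontrivial finite subgroup of the torsion-free group of unipotent upper-triangular matrices, which is impossible as $\rho_h(I_v)\neq 0$ (this also drops out, case by case, from Lemma~\ref{lemma_rho_cP_classification}). Now suppose $N\neq 0$. The group $I_v$ acts on $N$ through a cyclic $p$-group, and this action cannot be trivial, for otherwise $0\neq N\subseteq T(h)^{I_v}=0$; hence $\rho_h(t)$ acts on $N$ with order $p^{j}$ for some $j\geq 1$, and therefore $\rho_h(t)$ has a primitive $p^{j}$-th root of unity $\zeta$ among its eigenvalues on $M$. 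As $\det\rho_h(t)=\chi_{\cyc}(t)=1$ (again because $v\neq p$), the second eigenvalue is $\zeta^{-1}$, so $\zeta+\zeta^{-1}=\Tr\rho_h(t)$. Invoking the $L$-rationality of the Weil--Deligne representation of $h$ at $v$ (equivalently, the explicit description of $\rho_h|_{I_v}$ underlying Lemma~\ref{lemma_rho_cP_classification}), one has $\Tr\rho_h(t)\in L$, hence $\zeta_{p^{j}}+\zeta_{p^{j}}^{-1}\in L$, and therefore $\zeta_p+\zeta_p^{-1}\in\QQ\bigl(\zeta_{p^{j}}+\zeta_{p^{j}}^{-1}\bigr)\subseteq L$.

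Putting the two steps together: if $\zeta_p+\zeta_p^{-1}\notin L$, then $T(h)^{I_v^{(p)}}=0$, whence $H^1(I_v,T(h))=0$. The only ingredient here that goes beyond formal manipulation of the complexes of \S\ref{subsubsec_2022_09_09_1253} is the rationality statement $\Tr\rho_h(t)\in L$ (and, relatedly, $T(h)^{I_v}=0$ together with the inertia-type classification); I expect this descent-to-$L$ to be the delicate point, since a priori the eigenvalue constraint one extracts lives only over $\Frac(R)$. All of it is packaged in \S12.4.4 and Proposition~12.4.10.3 of \cite{nekovar06}, which one could also invoke directly for the entire statement.
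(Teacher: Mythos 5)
Your proof is correct and follows essentially the same route as the source the paper cites for this statement (the paper gives no proof beyond the citation to \cite{nekovar06}, but Remark~\ref{rem:nek_prime_to_p} and the proof of Corollary~\ref{cor:tam_vanish_fin} rehearse exactly this mechanism): reduce via Lemma~\ref{lemma_I_v_cohomology} to showing $T(h)^{I_v^{(p)}}=0$, and rule out nonzero invariants by observing that a nontrivial $p$-power root of unity among the inertia eigenvalues would force $\zeta_p+\zeta_p^{-1}\in L$ through the $L$-rationality of the trace. The only points to keep in mind are that your identification of the second eigenvalue as $\zeta^{-1}$ (and the inclusion $\rho_h(I_v)\subset\mathrm{SL}_2$) uses that $\det\rho_h$ is trivial on $I_v$, which is legitimate in the setting intended here because $T(h)$ is the self-dual twist with nebentype unramified at $v$ — as reflected in case (i) of Lemma~\ref{lemma_rho_cP_classification} — and that the $L$-rationality of the trace on inertia elements is, as you yourself flag, the one non-formal input, for which the appeal to \cite{nekovar06} (\S 12.4.4 and Proposition 12.4.10.3) is indeed the intended justification.
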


\begin{remark}\label{rem:nek_prime_to_p}
In the proof of \cite[Proposition 12.4.10.3]{nekovar06}, Nekov\'a\v{r}  shows along the way that $p \nmid \# \rho_h(I_v)$. We review his argument in this remark.

Nekov\'a\v{r} proves in \cite[Proposition 12.4.10.3]{nekovar06} that for any non-trivial element $g \in \rho_h(I_v)$, there is an integer $n$ coprime to $p$ such that the eigenvalues of $g^n$ are 1. 
This shows that the finite order element $g^n\in \mathrm{GL}_2(L)$ is unipotent, and hence $g^n = 1$ since $ \# \rho_h(I_v) < \infty$. 
This observation shows that $p \nmid \# \rho_h(I_v)$. 
\end{remark}

\begin{proposition}\label{prop_additive_tamagawa_vanish_1}
Suppose that $\rho_{\hf}(I_v)$ is finite and $\rho_{\hf}(I_v) \neq 0$. 
If $p \nmid v - 1$ and $v \neq 2$, then $p \nmid \# \rho_{\hf}(I_v)$ and 
$H^1(I_v, T_{\hf}^{\dagger}) = 0$. 
\end{proposition}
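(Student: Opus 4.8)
The plan is to reduce to the classification of the local representation $T(f_\cP)$ at an arithmetic prime $\cP$ furnished by Lemma~\ref{lemma_rho_cP_classification}, and then to rule out $p$-torsion in the image of inertia case by case, using that $v \not\equiv 1 \pmod p$ controls the $p$-part of the inertia quotient after any ramified abelian base change. First I would fix an arithmetic prime $\cP$ and pass to $T(f_\cP) = T_{\hf}^\dagger \otimes_{\cR_\hf} S_\cP$; by Lemma~\ref{lemma_I_v_cohomology}(3) it suffices to prove $p \nmid \#\rho_{f_\cP}(I_v)$ and $H^1(I_v, T(f_\cP)) = 0$, since the statements for $T_{\hf}^\dagger$ then follow (the vanishing of $H^1(I_v, T_{\hf}^\dagger)$ follows from Corollary~\ref{corollary_Tamagawa_potentially_good_case}, (3) $\Rightarrow$ (4)). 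Because $\rho_{\hf}(I_v)$ is finite, $\rho_{f_\cP}(I_v)$ is a finite subgroup of $\GL_2(S_\cP)$, and it maps into $\GL_2$ of the residue field of characteristic $p$; a finite subgroup whose order is divisible by $p$ must contain a nontrivial unipotent element in that reduction, so the crux is purely a statement about the prime-to-$p$ part.

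The key step is to analyze the three cases of Lemma~\ref{lemma_rho_cP_classification}. In case (i), $T(f_\cP)|_{I_v} = \mu \oplus \mu^{-1}$ for a ramified character $\mu \colon G_v \to S_\cP^\times$; the restriction of $\mu$ to $I_v$ factors through the tame quotient $I_v/I_v^{(p)} \cong \ZZ_p$ only on its pro-$p$ part, while the full tame inertia $I_v^{\rm tame}$ has order prime-to-$p$ part equal to $\prod_{\ell\neq p}\ZZ_\ell$, and any finite-image character of $I_v$ of $p$-power order factors through $I_v/I_v^{(p)}$; I would argue that the image of such a $\mu$ on $I_v/I_v^{(p)}$ lands in the $\mu_{p^\infty}$-torsion of $S_\cP^\times$, but the conductor-discriminant relation for a newform of $p$-power-order wild... — more simply, the wild inertia $I_v^{(p)}$ here is a pro-$p$ group and, since $v \neq 2$ and $p \nmid v-1$, the extension of $\QQ_v$ cut out by $\mu$ has ramification degree prime to $p$ (a totally ramified $p$-extension of $\QQ_v$ with $v \neq p$ forces $p \mid v - 1$ by local class field theory, as the tame quotient of $I_v$ has no $p$-part and $\QQ_v$ has no unramified $p$-extension relevant here — the only $p$-ramification comes from $\mu_{p}\subset \QQ_v$, i.e. $p \mid v-1$). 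Hence $\rho_{f_\cP}(I_v)$ has prime-to-$p$ order in case (i). In case (ii), $T(f_\cP) = \Ind_{G_w}^{G_v}\mu$ with $E_w/\QQ_v$ ramified quadratic (so $[E_w:\QQ_v]=2$, prime to $p$ since $v\neq 2$) and $\mu$ ramified; the same local-class-field-theory input applied to $E_w$ (whose residue characteristic is still $v$, and $p\nmid v-1$ implies $p \nmid \#k_w^\times$ for the relevant cyclotomic part) forces $\mu|_{I_w}$ to have prime-to-$p$ order, hence $\rho_{f_\cP}(I_v)$ does too. Case (iii) is excluded outright by the hypothesis $v \neq 2$.

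Once $p \nmid \#\rho_{f_\cP}(I_v)$ is established, Corollary~\ref{corollary_free_potentially_good} applies (taking $M = T(f_\cP)$, noting $\rho_M(I_v)$ is finite of prime-to-$p$ order), giving that $H^1(I_v, T(f_\cP))$ is free; but by Lemma~\ref{lemma_I_v_cohomology}(2) combined with $T(f_\cP)^{I_v} = 0$ (which holds in all of cases (i)--(iii) of Lemma~\ref{lemma_rho_cP_classification}, as noted in the proof of Corollary~\ref{corollary_Tamagawa_potentially_good_case}), the complex $C^\bullet(I_v, T(f_\cP))$ computes a torsion $H^1$; a free torsion module is zero, so $H^1(I_v, T(f_\cP)) = 0$. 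Then Lemma~\ref{lemma_I_v_cohomology}(3) propagates freeness/vanishing back up: $H^1(I_v, T_{\hf}^\dagger) \otimes^{\mathbb L}_{\cR_\hf} S_\cP \cong H^1(I_v, T(f_\cP)) = 0$ for this $\cP$, and by Corollary~\ref{corollary_Tamagawa_potentially_good_case} this forces $H^1(I_v, T_{\hf}^\dagger) = 0$. The main obstacle I anticipate is the careful bookkeeping in case (ii): ensuring that passing to the ramified quadratic extension $E_w$ does not introduce a factor of $p$ into the relevant inertia quotient — this is exactly where $v\neq 2$ (so $[E_w:\QQ_v]=2$ is prime to $p$) and $p\nmid v-1$ (so no cyclotomic $p$-ramification appears over $E_w$ either, since the residue field of $E_w$ has size a power of $v$ and $p\nmid v-1$ keeps its multiplicative group $p$-torsion-free in the relevant range) are both used. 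A secondary subtlety is confirming that Nekov\'a\v{r}'s classification in Lemma~\ref{lemma_rho_cP_classification} is exhaustive and that the characters $\mu$ there genuinely have finite order equal to $\#\rho_{f_\cP}(I_v)$ up to the quadratic induction, which is what lets the local-class-field-theory dichotomy ``$p$-power ramification $\Rightarrow p \mid v-1$'' bite.
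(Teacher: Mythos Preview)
Your proposal is correct and follows essentially the same route as the paper: specialize at an arithmetic prime $\cP$, use the classification in Lemma~\ref{lemma_rho_cP_classification} together with local class field theory (the condition $p\nmid v-1$ forces $\cO_{\QQ_v}^\times$ and $\cO_{E_w}^\times$ to have no finite $p$-quotient) to get $p\nmid\#\rho_{T(f_\cP)}(I_v)$, then apply Corollary~\ref{corollary_free_potentially_good} and Corollary~\ref{corollary_Tamagawa_potentially_good_case}. The paper compresses your case analysis into the single observation that $\cO_{E_w}^\times$ is $p$-divisible for any ramified quadratic $E_w/\QQ_v$, but the content is identical; your extra step ``free and torsion implies zero'' just routes through implication (3)$\Rightarrow$(4) of Corollary~\ref{corollary_Tamagawa_potentially_good_case} rather than (2)$\Rightarrow$(4).

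One small point you (and the paper) leave implicit: the proposition asserts $p\nmid\#\rho_{\hf}(I_v)$ for the \emph{big} representation, whereas the argument only directly yields $p\nmid\#\rho_{T(f_\cP)}(I_v)$. This is harmless: since $\rho_{\hf}(I_v)$ is a fixed finite set of matrices in $\GL_2(\cR_\hf)$ and arithmetic primes are Zariski dense, one may choose $\cP$ so that $\rho_{\hf}(I_v)\to\rho_{T(f_\cP)}(I_v)$ is injective, and the conclusion transfers. Your citation of Lemma~\ref{lemma_I_v_cohomology}(3) for this lift is not quite the right pointer, but the gap is easily closed.
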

\begin{proof}
Let us take an arithmetic prime $\cP$. 
Since $p \nmid v - 1$, for any ramified quadratic extension $E_w/\QQ_v$, the group $\cO_{E_w}^\times$ is $p$-divisible. Lemma~ \ref{lemma_rho_cP_classification} then shows that $p \nmid \#\rho_{T(f_{\cP})}(I_v)$, and we infer from Corollary \ref{corollary_free_potentially_good} that $H^1(I_v, T(f_\cP))$ is free. 
We then conclude by Corollary \ref{corollary_Tamagawa_potentially_good_case} that $H^1(I_v, T_{\hf}^{\dagger}) = 0$, as required. 
\end{proof}

\begin{proposition}\label{prop_additive_tamagawa_vanish_2}
Suppose that $\rho_{\hf}(I_v)$ is finite and $\rho_{\hf}(I_v) \neq 0$. 
If $p \neq 3, 7$ and $v = 2$, then $p \nmid \# \rho_{\hf}(I_v)$ and $H^1(I_v, T_{\hf}^{\dagger}) = 0$. 
\end{proposition}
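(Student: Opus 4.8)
The plan is to follow the template of the proof of Proposition~\ref{prop_additive_tamagawa_vanish_1}. I fix an arithmetic prime $\cP$; it suffices to prove that $p \nmid \#\rho_{T(f_\cP)}(I_v)$, for then Corollary~\ref{corollary_free_potentially_good} shows that $H^1(I_v, T(f_\cP))$ is free, and Corollary~\ref{corollary_Tamagawa_potentially_good_case} then yields $H^1(I_v, T_\hf^\dagger) = 0$; the assertion $p \nmid \#\rho_\hf(I_v)$ comes out along the way, since the pro-$p$ part $I_v^{(p)}$ acts through a finite quotient of order prime to $p$ (as in the proof of Lemma~\ref{lemma_I_v_cohomology}) and the cardinality of $\rho_\hf(I_v)$ is unchanged under specialisation at $\cP$. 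So everything reduces to bounding the order of the inertia image of the specialisation, which I do by running through the three possibilities in Lemma~\ref{lemma_rho_cP_classification}.

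In cases (i) and (ii) the argument is that of Proposition~\ref{prop_additive_tamagawa_vanish_1}, made even simpler by the fact that $v = 2$. In case (i), the character $\mu \colon G_v \to S_\cP^\times$ restricts on $I_v$ to a character that factors, via local class field theory, through a finite quotient of $\cO_{\QQ_2}^\times$; since the torsion of $\cO_{\QQ_2}^\times$ is $\{\pm 1\}$, the image $\rho_{T(f_\cP)}(I_v)$ has $2$-power order. In case (ii), the ramified quadratic extension $E_w/\QQ_2$ is totally ramified with residue field $\FF_2$, so $\mu|_{I_w}$ factors through the pro-$2$ group $\cO_{E_w}^\times$, again forcing a $2$-power image. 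As $p$ is odd, $p \nmid \#\rho_{T(f_\cP)}(I_v)$ in both cases (this is the content of the hypothesis ``$p \nmid v-1$'' of Proposition~\ref{prop_additive_tamagawa_vanish_1}, now vacuous).

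The substantive case is (iii), and this is where the hypotheses $v = 2$ and $p \neq 3, 7$ genuinely enter. Here $T(f_\cP)$ becomes monomial over a Galois extension $F/\QQ_2$ with $\Gal(F/\QQ_2) \in \{A_3, S_3\}$, say $T(f_\cP)|_{G_F} \cong \Ind_{G_K}^{G_F} \mu$ with $K/F$ quadratic and $\mu \colon G_K \to S_\cP^\times$ ramified. I would bound $\#\rho_{T(f_\cP)}(I_v)$ using three inputs. First, $\det T(f_\cP) = \chi_\cyc$ is unramified at $v = 2 \neq p$, so $\rho_{T(f_\cP)}(I_v) \subseteq \mathrm{SL}_2(S_\cP)$. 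Second, $I_v$ is normal in $G_v$, so $\rho_{T(f_\cP)}(I_v)$ is normal in $\rho_{T(f_\cP)}(G_v)$, whose image in $\mathrm{PGL}_2$ contains the image $\overline\rho(G_F)$ of $G_F$ --- which, by monomiality over $F$, is of dihedral type --- as a normal subgroup of index dividing $|\Gal(F/\QQ_2)| \in \{3, 6\}$; the classical classification of finite subgroups of $\mathrm{PGL}_2$ over a characteristic-zero field then constrains the projective image of $G_v$ very tightly (to $A_4$ or $S_4$), capping the odd part of $\#\rho_{T(f_\cP)}(I_v)$ that comes from this ``$\mathrm{PGL}_2$-shape'' by $3$. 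Third --- and this is the source of the prime $7$ --- since $\QQ_2$ has a unique abelian cubic extension, namely the unramified one $\QQ_{2^3}$ (with residue field $\FF_8$), in the $A_3$-case one is forced to have $F = \QQ_{2^3}$, and the residue field of $K$ can be as large as $\FF_8$ or $\FF_{64}$, so that $\mu|_{I_K}$ can have order divisible by $\#\FF_8^\times = 2^3 - 1 = 7$ (or by $\#\FF_{64}^\times = 3^2 \cdot 7$). Combining these, I would verify that $\#\rho_{T(f_\cP)}(I_v)$ has no odd prime divisor other than $3$ and $7$ --- equivalently, the only odd primes $\ell$ with $\ord_\ell(2) \leq 3$ are $\ell = 3$ and $\ell = 7$ --- so that the hypothesis $p \neq 3, 7$ (together with $p$ odd) gives $p \nmid \#\rho_{T(f_\cP)}(I_v)$, and the proof concludes as in the first paragraph.

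The hard part is exactly this last step: one has to make the bound on $\#\rho_{T(f_\cP)}(I_v)$ fully explicit, reconciling the $\mathrm{PGL}_2$-constraint (which by itself would only force $p \neq 3$) with the residue-field constraint on $\mu$ coming from the arithmetic of $K/\QQ_2$, and check that the interplay excludes every odd prime except $3$ and $7$. A clean way to organise this is to use the compatibility of $R\Gamma(I_v, -)$ with induction (cf.\ Lemma~\ref{lemma_I_v_cohomology}) to reduce the computation of $H^1(I_v, T(f_\cP))$ to that of the one-dimensional piece $\mu|_{I_K}$, whose order is governed by the torsion of $\cO_K^\times$; the admissible odd torsion is then visibly supported at $3$ and $7$ only.
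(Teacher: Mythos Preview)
Your overall template---specialise at an arithmetic $\cP$, show $p\nmid\#\rho_{T(f_\cP)}(I_v)$, then invoke Corollaries~\ref{corollary_free_potentially_good} and~\ref{corollary_Tamagawa_potentially_good_case}---is exactly the paper's, and your handling of cases (i) and (ii) is correct. In case (iii), however, the paper is far more direct than you: it simply notes $p\nmid 63=2^6-1$ and says ``the same argument'' as Proposition~\ref{prop_additive_tamagawa_vanish_1} applies. Unpacked, this means that over $F$ the representation is monomial from a ramified quadratic $K/F$; since $[F:\QQ_2]\in\{3,6\}$, the residue degree satisfies $f_K=f_F\mid 6$, so the odd torsion of $\cO_K^\times$ divides $2^6-1=63$, whence $\cO_K^\times$ is $p$-divisible and the argument of Proposition~\ref{prop_additive_tamagawa_vanish_1} runs unchanged. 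No $\mathrm{PGL}_2$ classification is invoked.

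Your $\mathrm{PGL}_2$ route is a genuinely different argument and, if carried through, a sharper one: since being in case (iii) (and not (i) or (ii)) means $\rho$ is irreducible and non-monomial over $G_v$, the projective image of $G_v$ is forced into $\{A_4,S_4\}$; combined with $\rho(I_v)\subset\mathrm{SL}_2(S_\cP)$ (so scalars in $\rho(I_v)$ lie in $\{\pm I\}$), the odd part of $\#\rho(I_v)$ is already at most $3$, and the prime $7$ never appears. Your ``third input'' about the residue field of $K$ is therefore redundant once you commit to the projective-image approach, and your talk of ``reconciling'' the two constraints is misplaced: either argument alone suffices, and the projective-image one would in fact yield the stronger conclusion $p\neq 3$. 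The paper's cruder residue-field bound simply trades this sharpness for a one-line proof.
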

\begin{proof}
Let us take an arithmetic prime $\cP$. It suffices to show that $p \nmid \#\rho_{T(f_{\cP})}(I_v)$ and $H^1(I_v, T(f_\cP))$ is free. 
We only consider the case (iii) in Lemma \ref{lemma_rho_cP_classification}. 
Since $p \nmid 63 = 2^6-1$, the same argument of the proof of Proposition \ref{prop_additive_tamagawa_vanish_1} shows that $p \nmid \#\rho_{T(f_{\cP})}(I_v)$, and Corollary \ref{corollary_free_potentially_good} shows that $H^1(I_v, T(f_\cP))$ is free. 
 \end{proof}

We summarize the results we have just proved in \S\ref{subsubsec_Hida_family_tamagawa_2022_09}. 

\begin{corollary}\label{cor:summary_tam_add}
Let $v$ be a prime such that $\# \rho_{\hf}(I_v) < \infty$ and $\rho_{\hf}(I_v) \neq 0$. 
Suppose that one of the following conditions holds:
\item[i)] $v \neq 2$ and $p \nmid v-1$. 
\item[ii)] $v=2$ and $p \neq 3,7$. 
\item[iii)] $\zeta_{p} + \zeta_{p}^{-1} \not\in L_f$, where $L_f$ denote the number field generated by the Hecke eigenvalues of $\hf_\kappa^\circ$ for some arithmetic specialisation $\kappa \in \cW_\hf^{\rm cl}$. 

Then we have $p \nmid \# \rho_{\hf}(I_v)$ and $(T_\f^\dagger)^{I_v^{(p)}} = 0$. In particular, $H^1(I_v, T_\f^\dagger) = 0$. 
\end{corollary}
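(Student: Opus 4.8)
The plan is to treat Corollary~\ref{cor:summary_tam_add} as a summary that dispatches the three hypotheses (i)--(iii) one at a time, in each case extracting first the vanishing $H^1(I_v, T_\f^\dagger) = 0$ and then reading off the other two assertions uniformly. For that uniform part: once we know $H^1(I_v, T_\f^\dagger) = 0$ (so that this $\cR_\hf$-module is free), Corollary~\ref{cor:tam_vanish_fin} applies --- the standing hypotheses $\#\rho_\hf(I_v) < \infty$ and $\rho_\hf(I_v) \neq 0$ being precisely its input --- and gives $(T_\f^\dagger)^{I_v^{(p)}} = 0$; conversely, the last clause $H^1(I_v, T_\f^\dagger) = 0$ is then automatic, since by Lemma~\ref{lemma_I_v_cohomology}(2) the object $R\Gamma(I_v, T_\f^\dagger)$ is represented by the two-term complex $[(T_\f^\dagger)^{I_v^{(p)}} \xrightarrow{t-1} (T_\f^\dagger)^{I_v^{(p)}}]$, which vanishes identically as soon as its terms do.

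First I would dispose of cases (i) and (ii), where there is nothing new to prove: if $v \neq 2$ and $p \nmid v-1$, Proposition~\ref{prop_additive_tamagawa_vanish_1} already yields both $p \nmid \#\rho_\hf(I_v)$ and $H^1(I_v, T_\f^\dagger) = 0$, and if $v = 2$ and $p \neq 3, 7$, Proposition~\ref{prop_additive_tamagawa_vanish_2} yields the same; the conclusion $(T_\f^\dagger)^{I_v^{(p)}} = 0$ then follows via Corollary~\ref{cor:tam_vanish_fin} as in the previous paragraph.

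The substance is in case (iii). I would fix an arithmetic specialization $\kappa \in \cW_\hf^{\rm cl}$ realizing the hypothesis, write $h := \hf_\kappa^\circ$ for the associated newform (so that the Hecke field of $h$ is $L_f$, and $\zeta_p + \zeta_p^{-1} \notin L_f$), and let $\cP$ be the corresponding arithmetic prime, so $T(f_\cP) = T_\hf^\dagger \otimes_{\cR_\hf} S_\cP$. Since the twist relating $T_\hf^\dagger$ to the Galois representation of $h$ is unramified at $v \neq p$ (the $p$-stabilization and the central-critical twist only affect the local behaviour at $p$), the $I_v$-representation $T(f_\cP)$ is, up to an unramified twist, a base change of $T(h)$; as unramified twists and base change along ring homomorphisms do not disturb the vanishing of $H^1(I_v, -)$ by Lemma~\ref{lemma_I_v_cohomology}(3), the equality $H^1(I_v, T(h)) = 0$ from \cite[Proposition 12.4.10.3]{nekovar06} gives $H^1(I_v, T(f_\cP)) = 0$. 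Feeding this into the equivalence $(3) \Leftrightarrow (4)$ of Corollary~\ref{corollary_Tamagawa_potentially_good_case} upgrades it to $H^1(I_v, T_\f^\dagger) = 0$, and the remaining assertions follow as in the uniform step. For the bound $p \nmid \#\rho_\hf(I_v)$ in this case, I would start from $p \nmid \#\rho_h(I_v)$, extracted from Nekov\'a\v{r}'s proof in Remark~\ref{rem:nek_prime_to_p}, and argue by contradiction: an order-$p$ element $\sigma \in \rho_\hf(I_v)$ would satisfy $\det \sigma = 1$ (an inertia element at $v \neq p$, with $\det T_\f^\dagger$ unramified there) and would have eigenvalues $\{\zeta_p, \zeta_p^{-1}\}$, hence $\mathrm{tr}(\sigma) = \zeta_p + \zeta_p^{-1} \in \cR_\hf$; because $S_\cP$ is $\ZZ_p$-flat this element still differs from $2$ in $S_\cP$, so $\sigma$ does not reduce to the identity modulo $\cP$, producing a $p$-torsion element of $\rho_{T(f_\cP)}(I_v) = \rho_h(I_v)$ and contradicting Remark~\ref{rem:nek_prime_to_p}.

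The only step that I expect to require genuine argument, rather than quotation of the results already in place, is this descent of the prime-to-$p$ bound from the single newform $h$ to the Hida family $\f$ in case (iii); everything else is a matter of invoking Propositions~\ref{prop_additive_tamagawa_vanish_1}--\ref{prop_additive_tamagawa_vanish_2}, Corollaries~\ref{corollary_Tamagawa_potentially_good_case} and~\ref{cor:tam_vanish_fin}, and Lemma~\ref{lemma_I_v_cohomology} in the right order. I would also double-check that the identification of $T(f_\cP)|_{I_v}$ with $T(h)|_{I_v}$ up to unramified twist is exactly the one implicit in \S\ref{subsubsec_Hida_family_tamagawa_2022_09}, and that Lemma~\ref{lemma_rho_cP_classification} is available to justify the dichotomy of inertia types underlying the contradiction argument above.
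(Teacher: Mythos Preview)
Your proposal is correct and matches the paper's intended approach: the corollary is stated there as a summary without proof, and you invoke precisely the preceding results it packages (Propositions~\ref{prop_additive_tamagawa_vanish_1}--\ref{prop_additive_tamagawa_vanish_2}, Corollaries~\ref{corollary_Tamagawa_potentially_good_case} and~\ref{cor:tam_vanish_fin}, Nekov\'a\v{r}'s proposition and Remark~\ref{rem:nek_prime_to_p}). Your trace argument in case (iii) for lifting $p \nmid \#\rho_h(I_v)$ to the family is sound --- the eigenvalues of an order-$p$, determinant-$1$ element are $\{\zeta_p^{a},\zeta_p^{-a}\}$ for some $a\not\equiv 0\pmod p$ rather than necessarily $a=1$, but since every such trace differs from $2$ the conclusion is unaffected.
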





\begin{assumption}\label{assumption_tamagawa}
For any (rational) prime $v$ such that $0 < \# \rho_{\hf}(I_v) < \infty$, one of the following holds:
\begin{itemize}
\item[i)] $v \neq 2$ and $p \nmid v-1$.
\item[ii)] $v=2$ and $p \neq 3,7$. 
\item[iii)] $\zeta_{p} + \zeta_{p}^{-1} \not\in L_f$.  
\end{itemize}
\end{assumption}

\begin{remark}
If $0 < \# \rho_{\hf}(I_v) < \infty$, then $v^2 \mid N_\f$. In particular, when $N_\f$ is square-free, Assumption \ref{assumption_tamagawa} is satisfied. 
\end{remark}


\begin{proposition}
Suppose that Assumption~\ref{assumption_tamagawa} holds both for $\hf$ and $\hg$. Assume also that the $p$-part of the Tamagawa factor for a single member of the Hida family $\hf$ and a single member of the family $\hg$ equals $1$. 
Then $H^1(I_v,M)$ is free for any $M \in \{T_3^{\dagger}, T_2^{\dagger}, M_2^{\dagger}\}$. 
\end{proposition}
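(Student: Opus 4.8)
The plan is to fix a rational prime $v\neq p$ (if $v\nmid N$ then $T_3^\dagger,T_2^\dagger,M_2^\dagger$ are all unramified at $v$ and $H^1(I_v,-)$ is the module itself by Lemma~\ref{lemma_I_v_cohomology}, so I may assume $v\mid N$) and to reduce, via Lemma~\ref{lemma_I_v_cohomology}, to controlling the cokernel of $t-1$ acting on $M^{I_v^{(p)}}$. First I would record two structural simplifications. Comparing the defining twists gives a $G_\QQ$-isomorphism $T_3^\dagger\cong(T_\hf^\dagger\hatotimes_{\ZZ_p}T_\hg^\dagger\hatotimes_{\ZZ_p}T_{\hg^c}^\dagger)\otimes\chi_\cyc^{-1}$, with $T_{\hg^c}^\dagger=T_\hg^\dagger\otimes\varepsilon_\hg^{-1}$; since $\chi_\cyc$ is unramified at $v$ we may replace $T_3^\dagger$ by $T_\hf^\dagger\hatotimes T_\hg^\dagger\hatotimes T_{\hg^c}^\dagger$ for the purpose of computing $H^1(I_v,-)$. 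Since $p$ is odd, the trace splits $\Ad(T_\hg)=\Ad^0(T_\hg)\oplus\cR_\hg$ as $G_v$-modules, whence $T_2^\dagger=M_2^\dagger\oplus\varpi_{2,1}^*(T_\hf^\dagger)$ and $H^1(I_v,T_2^\dagger)=H^1(I_v,M_2^\dagger)\oplus\bigl(H^1(I_v,T_\hf^\dagger)\otimes_{\cR_\hf}\cR_2\bigr)$ by Lemma~\ref{lemma_I_v_cohomology}(3). Thus it suffices to prove that $H^1(I_v,T_\hf^\dagger)$, $H^1(I_v,M_2^\dagger)$ and $H^1(I_v,T_\hf^\dagger\hatotimes T_\hg^\dagger\hatotimes T_{\hg^c}^\dagger)$ are free over $\cR_\hf$, $\cR_2$ and $\cR_3$ respectively.

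Next I would run the case analysis dictated by Proposition~\ref{prop_I_v_infinite}, according to whether $\rho_\hf(I_v)$ and $\rho_\hg(I_v)$ are finite or infinite. If both are finite, then Assumption~\ref{assumption_tamagawa} together with Corollary~\ref{cor:summary_tam_add} yield $p\nmid\#\rho_\hf(I_v)$, $p\nmid\#\rho_\hg(I_v)$ (whenever these images are non-zero) and $H^1(I_v,T_\hf^\dagger)\in\{0,T_\hf^\dagger\}$; since $\varepsilon_\hg|_{I_v}=\det\rho_\hg|_{I_v}$ (the remaining factors of $\det\rho_\hg$ being unramified at $v$) and $\varepsilon_\hf=1$, the inertia image of each of $M_2^\dagger$ and $T_\hf^\dagger\hatotimes T_\hg^\dagger\hatotimes T_{\hg^c}^\dagger$ is finite of order prime to $p$, so Corollary~\ref{corollary_free_potentially_good} gives the required freeness and this case is complete.

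The substantial case is when $\rho_\hg(I_v)$ is infinite. Here Proposition~\ref{prop_I_v_infinite} provides an exact sequence $0\to\cR_\hg(1)\otimes\mu\to T_\hg^\dagger\to\cR_\hg\otimes\mu\to0$ of $\cR_\hg[G_v]$-modules; reading off $\rho_\hg|_{I_v}$ from it, the monodromy cocycle (being $\cR_\hg$-valued, hence pro-$p$) factors through $I_v/I_v^{(p)}\cong\ZZ_p$, so $\rho_\hg(I_v^{(p)})$ is the (at most order-two) scalar group $\mu(I_v^{(p)})$. Comparing determinants ($\det T_\hg^\dagger=\chi_\cyc\varepsilon_\hg$ globally, while the exact sequence forces $\det T_\hg^\dagger|_{G_v}=\chi_\cyc|_{G_v}$) shows $\varepsilon_\hg|_{I_v}$ is trivial, so $T_{\hg^c}^\dagger|_{I_v}\cong T_\hg^\dagger|_{I_v}$, the group $I_v^{(p)}$ acts trivially on $\Ad^0(T_\hg)$ and on $T_\hg^\dagger\hatotimes T_{\hg^c}^\dagger$, and $t$ acts on each of these by (tensor powers of) conjugation by a single unipotent Jordan block whose off-diagonal entry is the monodromy element $a\in\cR_\hg$ of Corollary~\ref{cor:tam-mult-reduction-case-gg^c}. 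The hypothesis that the $p$-part of the Tamagawa factor of some member of $\hg$ equals $1$ is, by the equivalence of conditions (1)--(4) in Corollary~\ref{cor:tam-mult-reduction-case-gg^c}, precisely the statement $a\in\cR_\hg^\times$; hence $t-1$ differs from a nilpotent operator with unit ``slope'' by a unipotent automorphism, and $H^1(I_v,\Ad^0(T_\hg))$ and $H^1(I_v,T_\hg^\dagger\hatotimes T_{\hg^c}^\dagger)$ are free (the latter being exactly condition (1) of that corollary). It then remains to tensor with $T_\hf^\dagger$. If $\rho_\hf(I_v)$ is finite, $(T_\hf^\dagger)^{I_v^{(p)}}$ is an $\cR_\hf$-direct summand of $T_\hf^\dagger$ (average over the prime-to-$p$ group $\rho_\hf(I_v^{(p)})$), hence free, and $t$ acts trivially on it (a pro-$p$ group with finite prime-to-$p$ image), so $H^1(I_v,M_2^\dagger)=(T_\hf^\dagger)^{I_v^{(p)}}\hatotimes_{\ZZ_p}H^1(I_v,\Ad^0(T_\hg))$ and likewise $H^1(I_v,T_\hf^\dagger\hatotimes T_\hg^\dagger\hatotimes T_{\hg^c}^\dagger)=(T_\hf^\dagger)^{I_v^{(p)}}\hatotimes_{\ZZ_p}H^1(I_v,T_\hg^\dagger\hatotimes T_{\hg^c}^\dagger)$ are free, with $H^1(I_v,T_\hf^\dagger)\in\{0,T_\hf^\dagger\}$ by Corollary~\ref{cor:summary_tam_add}. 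If $\rho_\hf(I_v)$ is also infinite, the same analysis applied to $\hf$ produces a quadratic character $\mu_\hf$ and a monodromy element $a_\hf\in\cR_\hf$ which, by the argument proving (3)$\Leftrightarrow$(4) in Corollary~\ref{cor:tam-mult-reduction-case-gg^c} carried over to $\hf$ together with the Tamagawa hypothesis for $\hf$, is a unit; when $\mu_\hf$ is ramified the relevant $I_v^{(p)}$-invariants vanish and $H^1(I_v,-)=0$, and when $\mu_\hf$ is unramified $t$ acts as the exponential of a sum of pairwise commuting nilpotent monodromy operators with unit coefficients on $T_\hf^\dagger\hatotimes\Ad^0(T_\hg)$, resp. on $T_\hf^\dagger\hatotimes T_\hg^\dagger\hatotimes T_{\hg^c}^\dagger$, and I would conclude that the cokernel is free because, the coefficients being units, the Jordan type of such an operator is independent of the coefficients, so the cokernel is locally free of constant rank over the local rings $\cR_2$, $\cR_3$ (here one only needs the elementary integral fact that, for $p$ odd, the Jordan decomposition of a $\ZZ_p$-linear combination with unit coefficients of the standard commuting nilpotents attached to the two/three Jordan blocks in play does not jump).

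The hard part is exactly this last configuration — $\hf$ and $\hg$ both having (potentially) multiplicative reduction at the same prime $v$ — since there the freeness of $H^1(I_v,-)$ is not formal: it genuinely uses both Tamagawa hypotheses (to invert the two monodromy operators) and the constancy of the Jordan type of the resulting commuting-nilpotent monodromy operator after base change to $\cR_2$ and $\cR_3$. By contrast, all the remaining cases are soft, being controlled either by potentially good reduction through Corollary~\ref{cor:summary_tam_add} (via Assumption~\ref{assumption_tamagawa}) or by a single multiplicative prime through Corollary~\ref{cor:tam-mult-reduction-case-gg^c}.
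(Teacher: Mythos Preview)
Your case analysis is not exhaustive: you treat ``both finite'' and ``$\rho_\hg(I_v)$ infinite'', but the configuration $\#\rho_\hf(I_v)=\infty$ with $0<\#\rho_\hg(I_v)<\infty$ is never addressed. This case is not obtained by a symmetry from any case you handle. In your treatment of ``$\hf$ finite, $\hg$ infinite'' you used that $I_v^{(p)}$ acts \emph{trivially} on $T_\hg^\dagger\hatotimes T_{\hg^c}^\dagger$ (because $\mu_\hg$ is quadratic and appears twice), so the $I_v^{(p)}$-invariants of the triple tensor product factor cleanly. In the missing case, $I_v^{(p)}$ need \emph{not} act trivially on $T_\hg^\dagger\hatotimes T_{\hg^c}^\dagger$ (only through a finite prime-to-$p$ quotient), so $(T_3^\dagger)^{I_v^{(p)}}$ does not split as a tensor product, and when $\mu_\hf$ is ramified the $\hf$-factor contributes a nontrivial sign character that has to be matched against the isotypic decomposition on the $\hg$-side. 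The paper handles this by passing to the finite-index subgroup $I_w=\ker(\rho_\hg|_{I_v})$ with $p\nmid[I_v:I_w]$, identifying $H^1(I_v,T_3^\dagger)=\bigl(H^1(I_w,T_\hf^\dagger)\hatotimes T_\hg^\dagger\hatotimes T_{\hg^c}^\dagger\bigr)^{I_v/I_w}$, and invoking the ``moreover'' clause of Corollary~\ref{cor:tam-mult-reduction-case-gg^c} (applied to $\hf$, using the Tamagawa hypothesis for $\hf$) to get $H^1(I_w,T_\hf^\dagger)$ free; the invariants of a free module under a prime-to-$p$ finite group are then free.

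Two smaller remarks. First, your reduction is heavier than necessary: the paper observes $T_2^\dagger=T_3^\dagger\otimes_{\cR_3}\cR_2$ (so $H^1(I_v,T_2^\dagger)$ free follows from the $T_3^\dagger$-case by Lemma~\ref{lemma_I_v_cohomology}(3)) and then $M_2^\dagger$ is a direct summand of $T_2^\dagger$ over the local ring $\cR_2$, so one only needs $M=T_3^\dagger$. Second, in the ``both infinite, both unramified $\mu$'' case your Jordan-type invariance argument is correct in spirit (rescaling $v_+$ in each factor normalizes $a_\hf,a_\hg$ to $1$), but the paper makes this explicit by exhibiting a representative of the double coset $\GL_8(\cR_3)\,(\rho_{T_3^\dagger}(t)-1)\,\GL_8(\cR_3)$ and reading off a free cokernel of rank $2$; that explicit normal form is what actually discharges the ``does not jump'' claim.
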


\begin{proof}
We note that $T_2^{\dagger} = T_3^{\dagger} \otimes_{\cR_3} \cR_2$ and $T_2^{\dagger} = M_2^{\dagger} \oplus (T_{\hf}^{\dagger} \otimes_{\cR_{\hf}} \cR_2)$. 
By Lemma \ref{lemma_I_v_cohomology}, we only need to consider the case that $M = T_3^{\dagger}$. We may assume without loss of generality that $\varepsilon_\hg = \mathds{1}$.

If  $0 < \#\rho_{\hf}(I_v) < \infty$ and $0 < \#\rho_{\hg}(I_v) < \infty$, then $p\nmid\#\rho_{\hf}(I_v)$ and $p\nmid\#\rho_{\hg}(I_v)$ by  Corollary \ref{cor:summary_tam_add}. 
Since $T_3^{\dagger} = T_{\hf}^{\dagger} \hatotimes_{\ZZ_p} T_{\hg}^{\dagger} \hatotimes_{\ZZ_p}T_{\hg^{c}}^{\dagger}$ as $I_{v}$-modules, we see that $\#\rho_{T_3^{\dagger}}(I_v) < \infty$ and  $p\nmid\#\rho_{T_3^{\dagger}}(I_v)$. 
This fact combined Corollary \ref{corollary_free_potentially_good} show that $H^1(I_v, T_3^{\dagger})$ is free.

Next, we consider the case that $0 < \#\rho_{\hf}(I_v) < \infty$ and $\#\rho_{\hg}(I_v) = \infty$. 
In this case, $I_v^{(p)}$ acts trivially on $T_{\hg}^{\dagger} \hatotimes_{\ZZ_p}T_{\hg^{c}}^{\dagger}$ by Proposition \ref{prop_I_v_infinite} (note that the character $\mu$ in Proposition \ref{prop_I_v_infinite}.2 is quadratic).
We therefore have
\[
(T_3^{\dagger})^{I_v^{(p)}} = (T_{\hf}^{\dagger})^{I_v^{(p)}} \hatotimes_{\ZZ_p} T_{\hg}^{\dagger}\hatotimes_{\ZZ_p} T_{\hg^{c}}^{\dagger} = 0\,,
\]
where to vanishing holds thanks to Corollary~\ref{cor:summary_tam_add}. In particular, $H^1(I_v, T_3^{\dagger}) = 0$.

Suppose next that $\#\rho_{\hf}(I_v) = \infty$ and $0 < \#\rho_{\hg}(I_v) < \infty$. 
By  Corollary \ref{cor:summary_tam_add}, there is a finite-index subgroup $I_w$ of $I_v$ such that $p \nmid [I_v \colon I_w]$ and $I_w$ acts trivially on $T_{\hg}^{\dagger}\hatotimes_{\ZZ_p} T_{\hg^{c}}^{\dagger}$. 
Then we have 
\[
H^1(I_v, T_3^{\dagger}) = (H^1(I_w, T_{\hf}^{\dagger} ) \hatotimes_{\ZZ_p} T_{\hg}^{\dagger} \hatotimes_{\ZZ_p}T_{\hg^{c}}^{\dagger})^{I_v/I_w}. 
\]
By Corollary \ref{cor:tam-mult-reduction-case-gg^c}, the module $H^1(I_w, T_{\hf}^{\dagger})$ is free, and hence $H^1(I_v, T_3^{\dagger})$ is also free since $p\nmid[I_v \colon I_w]$.

Suppose that $\#\rho_{\hf}(I_v) = \infty$ and $\#\rho_{\hg}(I_v) = \infty$. 
Since $(T_3^{\dagger})^{I_v^{(p)}} = (T_{\hf}^{\dagger})^{I_v^{(p)}} \hatotimes_{\ZZ_p} T_{\hg}^{\dagger}\hatotimes_{\ZZ_p} T_{\hg^{c}}^{\dagger}$ (as $I_v^{(p)}$ acts trivially on $T_{\hg}^{\dagger} \hatotimes_{\ZZ_p}T_{\hg^{c}}^{\dagger}$ by Proposition \ref{prop_I_v_infinite}), we may assume without loss of generality that 
$I_v^{(p)}$ acts trivially on $T_{\hf}^{\dagger}$ and $T_{\hg}^{\dagger}$ by Proposition \ref{prop_I_v_infinite}. 
In this case, Proposition \ref{prop_I_v_infinite} tells us that 
\[
\rho_{?}(t) = 
\begin{pmatrix}
1 & a_{?}
\\
0 & 1
\end{pmatrix}  \,,\qquad ? \in \{\f, \hg, \hg^c\}
\]
in $\GL(T_{?}^{\dagger}) \cong \GL_2(\cR_{?})$. 
Moreover, we have explained in the proof of Corollary \ref{cor:tam-mult-reduction-case-gg^c} that our running assumptions on the Tamagawa factors is equivalent to the requirement that $a_{?} \in \cR_{?}^\times$. A direct computation shows that the double coset 
$\GL_8(\cR_{3})\ (\rho_{T_3^{\dagger}}(t) - 1)\ \GL_8(\cR_{3})$ is represented by the matrix 
\[
\begin{pmatrix}
0 & 1 & 0&&&&&
\\
 &  0& 0&1&&&&
\\
 & & 0&0&1&&&
\\
 & & &0&0&1&&
\\
 &  & &&0&0&1&
\\
 &&&&&0&0&1
\\
&& &&&&0&0
\\
&  & &&&&&0
\end{pmatrix}\,. 
\]
This shows that $H^1(I_v,T_3^{\dagger})$ is free, as required. 

If $\#\rho_{\hf}(I_v) = 0$, then $H^1(I_v, T_3^{\dagger}) = T_\f^{\dagger} \hatotimes_{\ZZ_p} H^1(I_v, T_{\hg}^{\dagger}\hatotimes_{\ZZ_p} T_{\hg^{c}}^{\dagger})$ is free by Corollary \ref{cor:tam-mult-reduction-case-gg^c}. 
If $\#\rho_{\hg}(I_v) = 0$, then $H^1(I_v, T_3^{\dagger}) = H^1(I_v, T_\f^{\dagger}) \hatotimes_{\ZZ_p} T_{\hg}^{\dagger}\hatotimes_{\ZZ_p} T_{\hg^{c}}^{\dagger}$ is free, still by Corollary \ref{cor:tam-mult-reduction-case-gg^c}.

The proof of our proposition is complete.
\end{proof}


\subsection{Determinants of perfect complexes, characteristic ideals and algebraic \texorpdfstring{$p$}{}-adic \texorpdfstring{$L$}{}-functions}

Our goal in this section is to introduce the module of algebraic $p$-adic $L$-functions in terms of our Selmer complexes. This will require to establish that the relevant Selmer complexes are perfect, which will crucially rely on input from \S\ref{subsubsec_2022_09_08_1236}.

\subsubsection{Determinants of perfect complexes}

We start with the general definition of the determinant for a perfect complex following \cite{KM76}. Let $R$ be a Noetherian ring. 

\begin{defn}
For any bounded complex $P^\bullet$ of finitely generated projective $R$-modules, we define the determinant $\det(P^\bullet)$ of $P^\bullet$ on setting 
\[
\det(P^\bullet) := \bigotimes_{i \in \ZZ} \det(P^i)^{(-1)^{i}}.  
\]
See \cite[\href{https://stacks.math.columbia.edu/tag/0FJ9}{Tag 0FJ9}]{stacks-project} for the definition of the determinant of a finite projective module. For any $R$-module $M$ with finite projective dimension, we define the determinant of $M$ by 
\[
\det(M) := \det(P^\bullet), 
\]
where $P^\bullet$ is a finite projective resolution of $M$. 
\end{defn}


The following lemma is clear from the definition of the determinant. 

\begin{lemma}
For any integer $n \in \ZZ$ and any bounded complex $P^{\bullet}$ of finite projective $R$-modules, 
we have $\det(P^{\bullet}[n]) = \det(P^{\bullet})^{(-1)^n}$. 
\end{lemma}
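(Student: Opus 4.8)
The plan is to unwind the definition of the shift functor and reindex the tensor product defining the determinant. Recall that for a bounded complex $P^\bullet$ of finite projective $R$-modules, the shifted complex $P^\bullet[n]$ has $(P^\bullet[n])^i = P^{i+n}$ in cohomological degree $i$ (with differential multiplied by $(-1)^n$, a fact that plays no role below since $\det$ depends only on the underlying graded module). First I would write, directly from the definition $\det(Q^\bullet) = \bigotimes_{i}\det(Q^i)^{(-1)^i}$,
\[
\det(P^\bullet[n]) \;=\; \bigotimes_{i\in\ZZ}\det\bigl((P^\bullet[n])^i\bigr)^{(-1)^i} \;=\; \bigotimes_{i\in\ZZ}\det(P^{i+n})^{(-1)^i}.
\]
Then I would substitute $j=i+n$. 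Since $(-1)^i = (-1)^{j-n} = (-1)^n(-1)^j$, the right-hand side becomes
\[
\bigotimes_{j\in\ZZ}\det(P^{j})^{(-1)^n(-1)^j} \;=\; \Bigl(\bigotimes_{j\in\ZZ}\det(P^{j})^{(-1)^j}\Bigr)^{(-1)^n} \;=\; \det(P^\bullet)^{(-1)^n},
\]
where the middle step uses that the operations $L\mapsto L^{-1}:=\Hom_R(L,R)$ and $L,M\mapsto L\otimes_R M$ on invertible (graded) $R$-modules satisfy $(L\otimes M)^{-1}\cong L^{-1}\otimes M^{-1}$ and $(L^{-1})^{-1}\cong L$, so that raising a tensor product of determinant lines to the power $(-1)^n$ may be carried out factor by factor. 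This yields the claimed identity.

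There is essentially no obstacle here; the lemma really is immediate from the definition, as the surrounding text asserts. The only point deserving a word of care is that the asserted equality should be read as a \emph{canonical} isomorphism of invertible $R$-modules, realized by an explicit functorial isomorphism rather than merely an abstract one — but this is part of the formalism of $\det$ as set up in \cite{KM76} and requires nothing beyond tracking the reindexing above through the construction. I would also remark, for later use, that the statement and its proof are insensitive to whether $P^\bullet$ is a genuine complex or just a graded projective module, and that it transports verbatim to objects of the derived category that are represented by bounded complexes of finite projectives, which is the form in which it will be applied in the sequel.
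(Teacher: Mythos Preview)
Your proof is correct and is exactly what the paper intends: the lemma is stated there as ``clear from the definition of the determinant'' with no further argument, and your reindexing $j=i+n$ together with $(-1)^{j-n}=(-1)^n(-1)^j$ is precisely that unwinding.
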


For any  bounded acyclic complex $P^\bullet$ of finite projective $R$-modules, 
we have a natural identification  
\[
\det(P^\bullet) = R 
\]
in the following way. 
Since $P^\bullet$ is acyclic and $P^{j}$ is projective for any $j \in \ZZ$, one can decompose 
\[
P^\bullet = \bigoplus  P_i^{\bullet}[n_i], 
\]
where $n_i \in \ZZ$ and $P_i^{\bullet}$ is the acyclic complex of finite projective $R$-modules  
\[
\cdots \longrightarrow 0 \longrightarrow P_i^{-1} \xrightarrow{d_i^{-1}} P_i^{0} \longrightarrow 0 \longrightarrow \cdots  
\]
concentrated in degrees $-1$ and $0$. 
Using the canonical isomorphism  
\[
\det(P_i^{-1})^{-1} \otimes  \det(P_i^{0}) \stackrel{\sim}{\longrightarrow} R, 
\]
induced by the isomorphism $d_i^{-1}$ gives rise to the natural identification $\det(P_i^{\bullet}) = R$ for each $i$. Since $$\det(P^\bullet) = \bigotimes \det(P_i^{\bullet})^{(-1)^{n_i}},$$ 
we obtain a natural identification  
$\det(P^\bullet) = R$, which does not depend  the choice of the decomposition $P^\bullet = \bigoplus  P_i^{\bullet}[n_i]$. 

\begin{lemma}\label{lemma_determinant}
Let $P_i^\bullet$ be a bounded complex of finite projective $R$-modules. 
\item[1)] If we have a quasi-isomorphism $P_1^\bullet \to P_2^\bullet$, then we have  a natural identification  $\det(P_1^\bullet)  =  \det(P_2^\bullet)$. 
\item[2)] If we have an exact triangle  
\[
P_1^\bullet \longrightarrow P_2^\bullet \longrightarrow P_3^\bullet \longrightarrow P_1^\bullet[1],  
\]
in the derived category, then there exists a natural identification  $\det(P_2^\bullet) = \det(P_1^\bullet) \otimes_R \det(P_3^\bullet)$. 
\end{lemma}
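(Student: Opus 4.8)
The plan is to deduce both parts from the two facts already recorded above — that the determinant of a bounded acyclic complex of finite projective $R$-modules is canonically identified with $R$, and that $\det(P^\bullet[1]) = \det(P^\bullet)^{-1}$ — together with the elementary observation that $\det(P^\bullet) = \bigotimes_i \det(P^i)^{(-1)^i}$ depends only on the underlying graded module. The last point gives, for any termwise-split short exact sequence $0 \to A^\bullet \to B^\bullet \to C^\bullet \to 0$ of bounded complexes of finite projective $R$-modules, a canonical identification $\det(B^\bullet) = \det(A^\bullet) \otimes_R \det(C^\bullet)$: one has $B^i \cong A^i \oplus C^i$ and hence $\det(B^i) = \det(A^i) \otimes_R \det(C^i)$ for each $i$, and one takes the alternating tensor product over $i$.

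For (1), let $f \colon P_1^\bullet \to P_2^\bullet$ be a quasi-isomorphism. I would form the mapping cone $\mathrm{Cone}(f)$, which is again a bounded complex of finite projective $R$-modules (with $\mathrm{Cone}(f)^n = P_1^{n+1} \oplus P_2^n$) and which is acyclic because $f$ is a quasi-isomorphism (long exact cohomology sequence). The canonical termwise-split short exact sequence $0 \to P_2^\bullet \to \mathrm{Cone}(f) \to P_1^\bullet[1] \to 0$ then yields $\det(\mathrm{Cone}(f)) = \det(P_2^\bullet) \otimes_R \det(P_1^\bullet[1]) = \det(P_2^\bullet) \otimes_R \det(P_1^\bullet)^{-1}$, while acyclicity yields $\det(\mathrm{Cone}(f)) = R$. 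Tensoring with $\det(P_1^\bullet)$ produces the identification $\det(P_1^\bullet) = \det(P_2^\bullet)$. I would then record the naturality facts actually used downstream — independence of the chosen quasi-isomorphism within its homotopy class, compatibility with composition of quasi-isomorphisms, and the fact that for an honest isomorphism $f$ this recovers the obvious identification $\det(f^i)$ — each being a diagram chase with mapping cones.

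For (2), given an exact triangle $P_1^\bullet \xrightarrow{u} P_2^\bullet \to P_3^\bullet \to P_1^\bullet[1]$ in the derived category, I would use that, since all three objects are bounded complexes of finite projective $R$-modules, the triangle is isomorphic — as a triangle — to the cone triangle $P_1^\bullet \xrightarrow{\tilde u} P_2^\bullet \to \mathrm{Cone}(\tilde u) \to P_1^\bullet[1]$ attached to an actual chain map $\tilde u$ representing $u$. Applying part (1) gives a canonical identification $\det(P_3^\bullet) = \det(\mathrm{Cone}(\tilde u))$, and the termwise-split sequence $0 \to P_2^\bullet \to \mathrm{Cone}(\tilde u) \to P_1^\bullet[1] \to 0$ gives $\det(\mathrm{Cone}(\tilde u)) = \det(P_2^\bullet) \otimes_R \det(P_1^\bullet)^{-1}$. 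Rearranging yields $\det(P_2^\bullet) = \det(P_1^\bullet) \otimes_R \det(P_3^\bullet)$.

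The main obstacle is not the construction of the identifications but the verification that they are well-defined and \emph{natural}: in (2) one must check independence of the choice of representative $\tilde u$ for $u$ and of the chosen isomorphism of triangles (equivalently, invariance under the rotation and isomorphism axioms of the triangulated structure), and in (1) functoriality in the evident sense. Carrying this out cleanly is precisely the content of the determinant formalism of Knudsen--Mumford; I would follow \cite{KM76} for the bookkeeping, reducing every compatibility to the two base cases — termwise-split short exact sequences and acyclic complexes — where the assertions are manifest.
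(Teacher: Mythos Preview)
Your proof is correct and follows essentially the same approach as the paper: for (1), form the mapping cone, use that its terms are $P_1^{i+1}\oplus P_2^i$ to compute its determinant as $\det(P_1^\bullet)^{-1}\otimes\det(P_2^\bullet)$, and combine with acyclicity to conclude; for (2), the paper simply says ``immediate after part (1)'', which is exactly the cone-triangle argument you spell out. Your discussion of naturality and the reference to \cite{KM76} are more careful than what the paper records, but the underlying construction is identical.
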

Using Lemma~\ref{lemma_determinant}, we define the determinant $\det(C^\bullet)$ for any perfect complex of $R$-modules. 
\begin{proof}
\item[1)] If we denote by $P_3^{\bullet}$ the mapping cone of $P_1^\bullet \to P_2^\bullet$, then  $P_3^{\bullet}$ is acyclic. 
Since $P_3^{i} = P_1^{i+1} \oplus P_2^i$ by the definition  of the mapping cone, we have $R = \det(P_3^{\bullet}) = \det(P_1^{\bullet})^{-1} \otimes \det(P_2^{\bullet})$, and we deduce that $\det(P_1^{\bullet}) = \det(P_2^{\bullet})$. 

\item[2)] This portion is immediate after part (1). 
\end{proof}

Throughout this paper, 
for any perfect complex $C^\bullet$ of $R$-modules with torsion cohomology modules, 
we regard $\det(C^\bullet)$ as an (invertible) fractional ideal of $\mathrm{Frac}(R)$ via the injection
\[
\det(C^\bullet) \longrightarrow \det(C^\bullet) \otimes_R \mathrm{Frac}(R) = \det(C^\bullet \otimes^{\mathbb{L}}_R \mathrm{Frac}(R)) = \mathrm{Frac}(R). 
\]

\begin{example}
For any regular element $r \in R$, we have 
\[
\det(R/r R) = \det(\longrightarrow 0 \longrightarrow R \stackrel{\times r}{\longrightarrow} R \longrightarrow 0 \longrightarrow)  = rR. 
\]
\end{example}

\begin{proposition}
For any exact sequence 
\[
0 \longrightarrow M_1 \longrightarrow M_2 \longrightarrow M_3 \longrightarrow 0  
\]
of torsion $R$-modules with finite projective dimension, we have an equality of ideals 
\[
\det(M_2) = \det(M_1) \det(M_3). 
\]
\end{proposition}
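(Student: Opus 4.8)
The plan is to reduce the claim to Lemma~\ref{lemma_determinant}(2), by realizing the given short exact sequence of modules as an exact triangle of bounded complexes of finite projective $R$-modules, and then to check that the resulting isomorphism of invertible $R$-modules is compatible with the canonical embeddings into $\mathrm{Frac}(R)$ used to view determinants as fractional ideals.

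First I would fix finite projective resolutions $\varepsilon_i\colon P_i^\bullet\xrightarrow{\sim}M_i$ for $i=1,3$; these exist because $M_1$ and $M_3$ have finite projective dimension. Since each $P_3^j$ is projective, the horseshoe lemma produces a bounded complex of finite projective $R$-modules $P_2^\bullet$ together with a degreewise split short exact sequence of complexes
\[
0\longrightarrow P_1^\bullet\longrightarrow P_2^\bullet\longrightarrow P_3^\bullet\longrightarrow 0
\]
lifting $0\to M_1\to M_2\to M_3\to 0$; in particular $P_2^\bullet\xrightarrow{\sim}M_2$ is a finite projective resolution, so $M_2$ has finite projective dimension and $\det(M_2)=\det(P_2^\bullet)$. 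This short exact sequence of complexes gives an exact triangle
\[
P_1^\bullet\longrightarrow P_2^\bullet\longrightarrow P_3^\bullet\longrightarrow P_1^\bullet[1]
\]
in the derived category, so Lemma~\ref{lemma_determinant}(2) yields a natural isomorphism $\det(P_2^\bullet)\xrightarrow{\sim}\det(P_1^\bullet)\otimes_R\det(P_3^\bullet)$ of invertible $R$-modules, i.e. $\det(M_2)\cong\det(M_1)\otimes_R\det(M_3)$.

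It then remains to upgrade this isomorphism of invertible modules to an equality of fractional ideals of $\mathrm{Frac}(R)$. Since $M_1,M_2,M_3$ are torsion, applying $-\otimes^{\mathbb{L}}_R\mathrm{Frac}(R)$ to the exact triangle above produces an exact triangle of \emph{acyclic} bounded complexes of finite-dimensional $\mathrm{Frac}(R)$-vector spaces. The canonical trivialization $\det(P^\bullet)=\mathrm{Frac}(R)$ for an acyclic complex (constructed in the discussion preceding Lemma~\ref{lemma_determinant} by decomposing $P^\bullet$ into two-term acyclic pieces $[P^{-1}\xrightarrow{d}P^0]$) is multiplicative along short exact sequences of acyclic complexes; combining this with the naturality in Lemma~\ref{lemma_determinant}(2), applied now over $\mathrm{Frac}(R)$, shows that the base-change square
\[
\det(P_2^\bullet)\otimes_R\mathrm{Frac}(R)\;\xrightarrow{\ \sim\ }\;\bigl(\det(P_1^\bullet)\otimes_R\det(P_3^\bullet)\bigr)\otimes_R\mathrm{Frac}(R)
\]
intertwines the two trivializations, both sides being identified with $\mathrm{Frac}(R)$. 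Tracing through these identifications turns $\det(M_2)\cong\det(M_1)\otimes_R\det(M_3)$ into the desired equality $\det(M_2)=\det(M_1)\det(M_3)$ inside $\mathrm{Frac}(R)$; that each side actually lies in $R$ has already been recorded, since the cohomology of each $P_i^\bullet$ is torsion.

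I do not anticipate a serious obstacle. The one point that needs genuine care is the last one: verifying that the ``natural'' identifications of Lemma~\ref{lemma_determinant} commute with base change to $\mathrm{Frac}(R)$ and with the canonical trivialization of the determinant of an acyclic complex. This is a sign- and ordering-sensitive bookkeeping argument carried out on the elementary pieces $[P^{-1}\xrightarrow{d}P^0]$, using the definition $\det(P^\bullet)=\bigotimes_i\det(P^i)^{(-1)^i}$, and it is the only place where one must be careful rather than purely formal.
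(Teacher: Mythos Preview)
Your proof is correct and follows the same approach as the paper, which simply says the result ``is an immediate consequence of Lemma~\ref{lemma_determinant}.'' You have filled in the details the paper leaves implicit: the horseshoe lemma to realize the short exact sequence as an exact triangle of projective resolutions, and the compatibility check over $\mathrm{Frac}(R)$ ensuring the abstract isomorphism of invertible modules becomes an equality of fractional ideals.
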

\begin{proof}
This is an immediate consequence of Lemma \ref{lemma_determinant}. 
\end{proof}

\begin{lemma}[{\cite{sakamoto_tot_real}, Remark B.2 and Lemma C.11}]\label{lem:reflexible}
Suppose that $R$ is normal. 
Let $I$ and $J$ be reflexive ideals  of $R$. 
If $IR_\fp = JR_\fp$ for any prime $\fp$ of $R$ with $\mathrm{ht}(\fp) \leq 1$, then $I=J$. 
\end{lemma}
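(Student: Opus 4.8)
The plan is to reduce the statement to the divisorial description of reflexive ideals over a Krull domain. Since $I$ and $J$ are nonzero, we automatically have $IR_{(0)} = \Frac(R) = JR_{(0)}$, so the hypothesis amounts to the equality $IR_\fp = JR_\fp$ for every height-one prime $\fp$ of $R$. It therefore suffices to prove that every reflexive ideal $I$ satisfies
\[
I \;=\; \bigcap_{\mathrm{ht}(\fp)=1} I R_\fp ,
\]
the intersection being formed inside $\Frac(R)$; granting this, $I = \bigcap_{\mathrm{ht}(\fp)=1} IR_\fp = \bigcap_{\mathrm{ht}(\fp)=1} JR_\fp = J$.

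Recall that a Noetherian normal domain is a Krull domain: for each height-one prime $\fp$ the localization $R_\fp$ is a discrete valuation ring, and $R = \bigcap_{\mathrm{ht}(\fp)=1} R_\fp$ inside $\Frac(R)$. For a nonzero ideal $I$, write $I^{-1} := \{x \in \Frac(R) : xI \subseteq R\}$; the standard identification $\Hom_R(I,R) \cong I^{-1}$ matches the biduality map $I \to I^{**}$ with the inclusion $I \hookrightarrow (I^{-1})^{-1}$, so reflexivity of $I$ means exactly $I = (I^{-1})^{-1}$. Since $I$ is finitely generated, formation of $(-)^{-1}$ commutes with localization at each $\fp$; and for any $R$-submodule $N \subseteq \Frac(R)$ one has $N \subseteq R$ if and only if $N R_\fp \subseteq R_\fp$ for all height-one $\fp$, again because $R = \bigcap R_\fp$. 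Hence, for $x \in \Frac(R)$,
\[
x \in (I^{-1})^{-1} \iff x\, I^{-1} \subseteq R \iff x\,(I R_\fp)^{-1} \subseteq R_\fp \quad\text{for all }\fp\text{ with }\mathrm{ht}(\fp)=1,
\]
where $(IR_\fp)^{-1}$ denotes the dual fractional ideal in the DVR $R_\fp$. Writing $IR_\fp = \fp^{n}R_\fp$, the last condition reads $x \in \fp^{n}R_\fp = IR_\fp$, whence $(I^{-1})^{-1} = \bigcap_{\mathrm{ht}(\fp)=1} IR_\fp$. Combined with $I = (I^{-1})^{-1}$, this yields the displayed identity, and the lemma follows.

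There is no genuine obstacle here: this is precisely the classical fact that a divisorial (equivalently, reflexive) ideal over a Krull domain is determined by its order at the height-one primes, and normality of $R$ enters only through the two localness statements used above — commutation of $(-)^{-1}$ with localization, and the criterion for a submodule of $\Frac(R)$ to lie in $R$. This is the content of Remark B.2 and Lemma C.11 of \cite{sakamoto_tot_real}, which one may simply invoke.
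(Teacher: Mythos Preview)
Your argument is correct and is exactly the standard divisorial-ideal argument over a Krull domain. The paper does not give its own proof: it simply records the statement with a citation to \cite{sakamoto_tot_real}, so there is nothing to compare beyond noting that your write-up unpacks that citation in the expected way.
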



\begin{proposition}
Suppose that $R$ is normal. 
For any torsion $R$-module $M$ with finite projective dimension, we have $\det(M) = \mathrm{char}_R(M)$. 
\end{proposition}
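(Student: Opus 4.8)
The plan is to compare the fractional ideals $\det(M)$ and $\Char_R(M)$ after localizing at each prime of $R$ of height at most $1$, and then to conclude via Lemma~\ref{lem:reflexible}. First I would note that a module of finite projective dimension is finitely generated (it admits a finite resolution by finitely generated projective $R$-modules), so that $\Char_R(M)$ is the usual divisorial ideal, characterized by $\Char_R(M)R_\fp = \fp_\fp^{\,\mathrm{length}_{R_\fp}(M_\fp)}$ for every height-one prime $\fp$; in particular it is reflexive. Since $\det(M)$ is an invertible fractional ideal of $\Frac(R)$, it is reflexive as well. By Lemma~\ref{lem:reflexible} it therefore suffices to prove that $\det(M)R_\fp = \Char_R(M)R_\fp$ inside $\Frac(R)$ for every prime $\fp$ of $R$ with $\mathrm{ht}(\fp)\leq 1$.

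Because localization is flat, choosing a finite projective resolution $P^\bullet$ of $M$ and base-changing to $R_\fp$ yields a finite projective resolution of $M_\fp$, whence $\det(M)R_\fp = \det(M_\fp)$ as fractional ideals of $R_\fp$ (and similarly $\Char_R(M)R_\fp = \Char_{R_\fp}(M_\fp)$). If $\mathrm{ht}(\fp)=0$ then $\fp=(0)$ since $R$ is a domain, and $M_\fp = M\otimes_R\Frac(R)=0$ as $M$ is torsion, so both sides localize to the unit ideal $\Frac(R)$. If $\mathrm{ht}(\fp)=1$ then $R_\fp$ is a discrete valuation ring (as $R$ is normal and Noetherian), with uniformizer $\pi$ say, and $M_\fp$ is a finitely generated torsion $R_\fp$-module; the structure theorem gives $M_\fp\cong\bigoplus_{i=1}^n R_\fp/\pi^{a_i}R_\fp$ with $a_i\geq 1$. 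Applying the Example above ($\det(R_\fp/\pi^a R_\fp)=\pi^a R_\fp$) together with the multiplicativity of $\det$ along short exact sequences of torsion modules of finite projective dimension (valid here since $R_\fp$ is regular, so all finitely generated $R_\fp$-modules have finite projective dimension), we obtain
\[
\det(M_\fp)=\prod_{i=1}^n \pi^{a_i}R_\fp=\pi^{\,\sum_i a_i}R_\fp=\pi^{\,\mathrm{length}_{R_\fp}(M_\fp)}R_\fp=\Char_{R_\fp}(M_\fp),
\]
which is the equality we want at $\fp$.

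Putting the two cases together, $\det(M)R_\fp=\Char_R(M)R_\fp$ for all primes $\fp$ with $\mathrm{ht}(\fp)\leq 1$, so Lemma~\ref{lem:reflexible} yields $\det(M)=\Char_R(M)$, as claimed. The only points that require a little care are the reflexivity of $\det(M)$ and the identity $\det(M)R_\fp=\det(M_\fp)$; both are formal consequences of the compatibility of determinants of perfect complexes with flat base change, which is already implicit in the way $\det(C^\bullet)$ was regarded as a fractional ideal of $\Frac(R)$ in the excerpt.
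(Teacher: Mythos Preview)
Your proof is correct and follows essentially the same approach as the paper: both reduce via Lemma~\ref{lem:reflexible} (using that invertible ideals are reflexive) to the case of a discrete valuation ring, then apply the structure theorem and the example $\det(R/\pi^a R)=\pi^a R$. You spell out the localization step and the height-$0$ case more carefully than the paper does, but the argument is the same.
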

\begin{proof}
Note that any invertible ideal is reflexive.
We may therefore assume by Lemma \ref{lem:reflexible} that $R$ is a discrete valuation ring. 
Let $\pi \in R$ be a generator of the maximal ideal of $R$. 
We then have an isomorphism $M \cong \bigoplus R/\pi^{n_i}R$, and hence
\begin{align*}
    \det(M) = \prod \det(R/\pi^{n_i}R)
    = \pi^{\sum n_i}R 
    = \mathrm{char}_R(M). 
\end{align*}
\end{proof}

\begin{corollary}\label{cor:det=char}
Suppose that $R$ is normal. 
Let $C^\bullet$ be a perfect complex. 
If the $R$-module $H^i(C^\bullet)$ is torsion for any $i \in \ZZ$ and has a finite projective dimension, then 
\[
\det(C^\bullet) = \prod_{i \in \ZZ} \mathrm{char}_R(H^i(C^\bullet))^{(-1)^i}. 
\]
\end{corollary}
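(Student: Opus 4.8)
The plan is to reduce the statement, by a dévissage along the canonical truncation filtration, to the multiplicativity of the determinant in exact triangles (Lemma~\ref{lemma_determinant}) together with the preceding proposition, which identifies $\det(M)$ with $\mathrm{char}_R(M)$ for a torsion $R$-module $M$ of finite projective dimension.

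First I would note that, since every $H^i(C^\bullet)$ is $R$-torsion, the complex $C^\bullet \otimes^{\mathbb{L}}_R \mathrm{Frac}(R)$ is acyclic, so that $\det(C^\bullet)$ is a well-defined invertible fractional ideal of $\mathrm{Frac}(R)$ via the embedding discussed just before the statement. Fix integers $a \leq b$ with $H^i(C^\bullet) = 0$ for $i \notin [a,b]$. For each $n$ with $a-1 \leq n \leq b$ one has the exact triangle
\[
\tau_{\leq n-1}C^\bullet \longrightarrow \tau_{\leq n}C^\bullet \longrightarrow H^n(C^\bullet)[-n] \longrightarrow \tau_{\leq n-1}C^\bullet[1].
\]
By hypothesis $H^n(C^\bullet)$ has finite projective dimension, so $H^n(C^\bullet)[-n]$ is a perfect complex; since $\tau_{\leq a-1}C^\bullet$ is acyclic (hence trivially perfect), an induction on $n$ using the two-out-of-three property for perfect complexes in exact triangles shows that each $\tau_{\leq n}C^\bullet$ is perfect. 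In particular all the determinants appearing below are defined.

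Applying Lemma~\ref{lemma_determinant}(2) to the triangle above, combined with the degree-shift identity $\det(P^\bullet[-n]) = \det(P^\bullet)^{(-1)^n}$, yields
\[
\det(\tau_{\leq n}C^\bullet) = \det(\tau_{\leq n-1}C^\bullet) \otimes_R \det\!\big(H^n(C^\bullet)\big)^{(-1)^n}.
\]
Starting from the canonical identification $\det(\tau_{\leq a-1}C^\bullet) = R$ (valid since $\tau_{\leq a-1}C^\bullet$ is acyclic), iterating up to $n=b$, and using the quasi-isomorphism $\tau_{\leq b}C^\bullet \simeq C^\bullet$ together with Lemma~\ref{lemma_determinant}(1), we obtain
\[
\det(C^\bullet) = \bigotimes_{i \in \ZZ} \det\!\big(H^i(C^\bullet)\big)^{(-1)^i}
\]
as fractional ideals of $\mathrm{Frac}(R)$. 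Finally, since $R$ is normal and each $H^i(C^\bullet)$ is torsion of finite projective dimension, the preceding proposition gives $\det(H^i(C^\bullet)) = \mathrm{char}_R(H^i(C^\bullet))$; substituting into the last display proves the claim.

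The only genuinely delicate point is the bookkeeping of the dévissage: one must verify that the canonical truncations $\tau_{\leq n}C^\bullet$ remain perfect (so that Lemma~\ref{lemma_determinant} legitimately applies at each stage), and that the resulting chain of canonical isomorphisms of determinants is compatible with the fixed embedding $\det(C^\bullet) \hookrightarrow \det(C^\bullet) \otimes_R \mathrm{Frac}(R) = \mathrm{Frac}(R)$ used to regard all of these objects as fractional ideals. Once these compatibilities are recorded, the argument is purely formal.
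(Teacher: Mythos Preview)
Your argument is correct and is precisely the standard d\'evissage one would supply. The paper states this result as a corollary without proof, so there is no alternative approach to compare against; your use of the canonical truncation triangles together with Lemma~\ref{lemma_determinant} and the preceding proposition is exactly the intended justification.
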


\subsubsection{Algebraic $p$-adic $L$-functions}
\label{subsubsection_alg_padic_L_2022_09_09}

Let $R$ be a complete Noetherian local ring with finite residue field of characteristic $p \geq 3$ and let $T$ be a free $R$-module of finite rank with a continuous $G_{\QQ, \Sigma}$-action. 
Suppose that 
we have an $R[G_p]$-submodule $F^+T$ of $T$ such that the quotient $F^-T := T/F^+T$ is free as an $R$-module. 
We then have the associated Greenberg local conditions $\Delta := \Delta_{F^+}$, cf. \S\ref{subsubsec_311_17_05_2021}. 

Let $\mathfrak{m}_R$ denote the maximal ideal of $R$ and let us put $\overline{T} := T \otimes_R R/\mathfrak{m}_R$. 
We assume in \S\ref{subsubsection_alg_padic_L_2022_09_09} that 
\begin{itemize}
     \item[\mylabel{item_Irr_general}{\bf ($H^0$)}] $\overline{T}^{G_\QQ} =0= (\overline{T}^\vee(1))^{G_\QQ}$\,, 
     \item[\mylabel{item_Tam}{(\bf Tam)}] $H^1(I_v, T)$ is free for any prime $v \in \Sigma^p := \Sigma \setminus \{p\}$. 
\end{itemize}

\begin{proposition}\label{prop_[1,2]_parf}\ 
    \item[i)] For any ideal $I$ of $R$, 
    the complex $\widetilde{R\Gamma}_{\rm f}(G_{\QQ, \Sigma}, T/IT, \Delta)$ is perfect and 
    we have a natural isomorphism 
    \[
    \widetilde{R\Gamma}_{\rm f}(G_{\QQ, \Sigma}, T, \Delta) \otimes_R^{\mathbb{L}} R/I \stackrel{\sim}{\longrightarrow} \widetilde{R\Gamma}_{\rm f}(G_{\QQ, \Sigma}, T/IT, \Delta).
    \]
    \item[ii)] $\widetilde{R\Gamma}_{\rm f}(G_{\QQ, \Sigma}, T, \Delta) \in D^{[1,2]}_{\mathrm{parf}}(_{R}\mathrm{Mod})$. 
\end{proposition}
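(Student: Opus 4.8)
The plan is to exhibit $\widetilde{R\Gamma}_{\rm f}(G_{\QQ,\Sigma},T,\Delta)$ as the (shifted) mapping cone of a morphism between perfect complexes of $R$-modules whose formation commutes with derived base change, and then to pin down the precise amplitude by showing that the two extremal cohomology groups $\widetilde{H}^0_{\rm f}$ and $\widetilde{H}^3_{\rm f}$ vanish. Throughout one uses the standing assumption $p\geq 3$, which guarantees that $G_{\QQ,\Sigma}$ and each $G_v$ ($v\in\Sigma$) have $p$-cohomological dimension $\leq 2$ and that the archimedean local cohomology vanishes in all positive degrees.

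\emph{Step 1: the constituents are perfect and base-change well.} First I would invoke the general formalism of continuous cochain cohomology over a complete Noetherian local ring (\cite[\S4]{nekovar06}): since $T$, $F^+T$ and $F^-T$ are finite free over $R$ (the sequence $0\to F^+T\to T\to F^-T\to 0$ splits $R$-linearly), the complexes $R\Gamma(G_{\QQ,\Sigma},T)$, $C^\bullet(G_v,T)$ for $v\in\Sigma$, and $U_p^+(T)=C^\bullet(G_p,F^+T)$ all lie in $D^{[0,2]}_{\mathrm{parf}}(_R\mathrm{Mod})$, and for every ideal $I\subset R$ one has natural isomorphisms $R\Gamma(G_{\QQ,\Sigma},T)\otimes^{\mathbb{L}}_R R/I\simeq R\Gamma(G_{\QQ,\Sigma},T/IT)$, and likewise for the local terms. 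For $v\in\Sigma\setminus\{p\}$ this is where hypothesis~\ref{item_Tam} enters: since $H^1(I_v,T)$ is free, Proposition~\ref{proposition_unramified_complex} shows that $T^{I_v}$ is finite free over $R$ and that $U_v^+(T)$ is quasi-isomorphic to the complex $[\,T^{I_v}\xrightarrow{\Fr_v-1}T^{I_v}\,]$ of finite free $R$-modules, hence lies in $D^{[0,1]}_{\mathrm{parf}}(_R\mathrm{Mod})$; moreover $U_v^+(T)\otimes^{\mathbb{L}}_R R/I=[\,T^{I_v}\otimes_R R/I\to T^{I_v}\otimes_R R/I\,]$, which by Proposition~\ref{proposition_unramified_complex}(3) is exactly $U_v^+(T/IT)$. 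Since mapping cones and shifts of morphisms of perfect complexes are perfect and commute with $-\otimes^{\mathbb{L}}_R R/I$, this already yields part (i), together with the a priori statement $\widetilde{R\Gamma}_{\rm f}(G_{\QQ,\Sigma},T,\Delta)\in D^{[0,3]}_{\mathrm{parf}}(_R\mathrm{Mod})$.

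\emph{Step 2: sharpening the amplitude to $[1,2]$.} It then remains to prove $\widetilde{H}^0_{\rm f}=\widetilde{H}^3_{\rm f}=0$. The long exact sequence of the mapping cone embeds $\widetilde{H}^0_{\rm f}$ into $H^0(G_{\QQ,\Sigma},T)=T^{G_\QQ}$, and $T^{G_\QQ}=0$ because $\overline{T}^{G_\QQ}=0$ by~\ref{item_Irr_general}: an induction on $n$, using the $G_\QQ$-equivariant identification $\mathfrak{m}_R^nT/\mathfrak{m}_R^{n+1}T\cong(\mathfrak{m}_R^n/\mathfrak{m}_R^{n+1})\otimes_{R/\mathfrak{m}_R}\overline{T}$ with trivial action on the first tensor factor, shows that the image of $T^{G_\QQ}$ in each $T/\mathfrak{m}_R^nT$ is zero, whence $T^{G_\QQ}\subseteq\bigcap_n\mathfrak{m}_R^nT=0$ by Krull's intersection theorem. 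For $\widetilde{H}^3_{\rm f}$: in the same long exact sequence $H^3(G_{\QQ,\Sigma},T)=0$, $H^3(U_p^+(T))=H^3(G_p,F^+T)=0$, $H^3(U_v^+(T))=0=H^2(U_v^+(T))$ for $v\neq p$ (two-term complexes), and $H^2(G_\infty,T)=0$; hence $\widetilde{H}^3_{\rm f}$ is the cokernel of the global-to-local map $H^2(G_{\QQ,\Sigma},T)\oplus H^2(G_p,F^+T)\to\bigoplus_{v\in\Sigma}H^2(G_v,T)$, which is already surjective on the first summand by the Poitou--Tate duality over $R$ of \cite[\S5]{nekovar06}, its cokernel being canonically $\bigl((T^\vee(1))^{G_\QQ}\bigr)^\vee=0$ (again by~\ref{item_Irr_general}, applied to the free $R$-module $T^\vee(1)$). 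Combining this with Step 1 gives (ii), and the resulting Euler characteristic is consistent with Proposition~\ref{proposition_euler_poincare_characteristic}.

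\emph{The main obstacle.} The assembly above is a routine application of Nekov\'a\v{r}'s machinery; the one genuinely non-formal input is hypothesis~\ref{item_Tam}, which is precisely what makes the ramified local conditions $U_v^+$ ($v\neq p$) perfect \emph{and} compatible with base change along an arbitrary quotient $R/I$ — this is why the Tamagawa-factor analysis of \S\ref{subsubsec_2022_09_08_1236} had to be carried out first. One should also take care that~\ref{item_Irr_general} is used in both of its incarnations, $\overline{T}^{G_\QQ}=0$ and $(\overline{T}^\vee(1))^{G_\QQ}=0$, one for each extremal degree.
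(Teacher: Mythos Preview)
Your Step 1 and the vanishing of $\widetilde{H}^3_{\rm f}$ are correct and match the paper's argument essentially verbatim. There is, however, a genuine gap in Step~2 at the bottom of the amplitude. Proving $\widetilde{H}^0_{\rm f}(G_{\QQ,\Sigma},T,\Delta)=0$ does \emph{not} by itself show that the perfect complex lies in $D^{[1,2]}_{\rm parf}$ rather than $D^{[0,2]}_{\rm parf}$. Over a local ring, a perfect complex $C$ with $H^0(C)=0$ need not admit a projective representative starting in degree~$1$: take $C=[R\xrightarrow{\varpi}R]$ in degrees $0,1$ over a DVR $(R,\varpi)$, which has $H^0(C)=0$ but perfect amplitude $[0,1]$. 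Trimming the top works because a surjection onto a projective module splits; there is no analogous statement at the bottom.

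What is actually needed is the vanishing of $H^0$ of the \emph{derived reduction} $C\otimes^{\mathbb L}_R R/\mathfrak m_R$, and this is precisely how the paper proceeds: having shown $\widetilde{R\Gamma}_{\rm f}(G_{\QQ,\Sigma},T,\Delta)\in D^{[a,2]}_{\rm parf}$ for some $a\le 2$, it invokes the base-change isomorphism of part~(i) to identify the derived reduction with $\widetilde{R\Gamma}_{\rm f}(G_{\QQ,\Sigma},\overline T,\Delta)$, and then uses $\widetilde{H}^0_{\rm f}(G_{\QQ,\Sigma},\overline T,\Delta)\subset\overline T^{G_\QQ}=0$. Your filtration-and-Krull argument for $T^{G_\QQ}=0$ is thus both insufficient (it proves the wrong vanishing) and unnecessary (the required vanishing over the residue field is immediate from \ref{item_Irr_general}). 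The fix is a one-line replacement, but it is exactly the point where part~(i) feeds into part~(ii).
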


\begin{proof}
Let $I$ be an ideal of $R$. 
Let us set
\[
U^{-}_{\Sigma}(T/IT) := \mathrm{Cone}\left(U^{+}_{\Sigma}(T/IT) \xrightarrow{-i^+_{\Sigma}} C^\bullet_{\Sigma}(T/IT) \right). 
\]
Since we assume that $H^1(I_v, T)$ is free for any prime $v \in \Sigma^p$, $U_{\Sigma}^{-}(T/IT)$ is a perfect complex by Proposition \ref{proposition_unramified_complex} for any $I$, and 
$U_{\Sigma}^{-}(T) \otimes_R^{\mathbb{L}}R/I \stackrel{\sim}{\longrightarrow} U_{\Sigma}^{-}(T/IT)$. 
Moreover, we have an exact triangle 
\[
\widetilde{R\Gamma}_{\rm f}(G_{\QQ, \Sigma}, T/IT, \Delta) \longrightarrow R\Gamma(G_{\QQ, \Sigma}, T/IT) \longrightarrow U^{-}_{\Sigma}(T/IT) \xrightarrow{+1}
\]
which shows that the homomorphism $\widetilde{R\Gamma}_{\rm f}(G_{\QQ, \Sigma}, T, \Delta) \otimes_R^{\mathbb{L}} R/I \longrightarrow \widetilde{R\Gamma}_{\rm f}(G_{\QQ, \Sigma}, T/IT, \Delta)$ is an isomorphism. This completes the proof of (i).

To prove (ii), we note that we have, by definition,  $\widetilde{H}^{i}_{\rm f}(G_{\QQ, \Sigma}, T, \Delta) = 0$ for any $i \geq 4$.   
Let us prove that $\widetilde{H}_{\rm f}^3(G_{\QQ, \Sigma}, T, \Delta) = 0$. 
Since $(\overline{T}^\vee(1))^{G_\QQ} = 0$, global duality shows that the homomorphism 
\[
H^2(G_{\QQ, \Sigma}, T) \longrightarrow \bigoplus_{v \in \Sigma} H^2(G_{v}, T)
\]
is surjective. 
Since $H^2(G_{p}, T) \longrightarrow H^2(G_{p}, F^-T)$ is surjective, we have 
\[
\widetilde{H}_{\rm f}^3(G_{\QQ, \Sigma}, T, \Delta) = \mathrm{coker}\left(H^2(G_{\QQ, \Sigma}, T) \longrightarrow H^2(G_{p}, F^-T) \oplus \bigoplus_{v \in \Sigma \setminus \{p\}} H^2(G_{v}, T)\right) = 0\,.
\]
This shows that $\widetilde{R\Gamma}_{\rm f}(G_{\QQ, \Sigma}, T, \Delta) \in D^{[a,2]}_{\mathrm{parf}}(_{R}\mathrm{Mod})$ 
for some integer $a \leq 2$. 
To complete the proof, it is enough to show that $H^{i}(\widetilde{R\Gamma}_{\rm f}(G_{\QQ, \Sigma}, T, \Delta) \otimes^{\mathbb{L}}_{R} R/\mathfrak{m}_R) = 0$ for any $i \leq 0$
(cf. the proof of \cite{BS2020}, Proposition 2.7). By Part (i), we have 
\[
H^{i}(\widetilde{R\Gamma}_{\rm f}(G_{\QQ, \Sigma}, T, \Delta) \otimes^{\mathbb{L}}_{R} R/\mathfrak{m}_R)
= 
\widetilde{H}_{\rm f}^i(G_{\QQ, \Sigma}, \overline{T}, \Delta). 
\]
Thence, 
$H^{i}(\widetilde{R\Gamma}_{\rm f}(G_{\QQ, \Sigma}, T, \Delta) \otimes^{\mathbb{L}}_{R} R/\mathfrak{m}_R) = 0$ for any $i < 0$ and 
\[
H^{0}(\widetilde{R\Gamma}_{\rm f}(G_{\QQ, \Sigma}, T, \Delta) \otimes^{\mathbb{L}}_{R} R/\mathfrak{m}_R) 
= \widetilde{H}_{\rm f}^0(G_{\QQ, \Sigma}, \overline{T}, \Delta) \subset \overline{T}^{G_\QQ} = 0
\]
by \ref{item_Irr_general}. The proof of our proposition is complete.
\end{proof}

\begin{defn}
Whenever the Euler--Poincar\'e characteristic $\chi(\widetilde{R\Gamma}_{\rm f}(G_{\QQ, \Sigma}, T, \Delta))$ vanishes,  we call the invertible module $\det(\widetilde{R\Gamma}_{\rm f}(G_{\QQ, \Sigma}, T, \Delta))$  the module of algebraic $p$-adic $L$-functions for $(T, \Delta)$. 
\end{defn}

\begin{remark}
The following is the list of pairs of a Galois representation and a Greenberg local condition, where all the Euler-Poincar\'e characteristics of the corresponding Selmer complexes are zero:  
\[
(T_3^\dagger, \Delta_{F^+_\hf} )\,,\quad (T_3^\dagger, \Delta_{F^+_\hg})\,,\quad (T_3^\dagger, \Delta_{F^+_{\mathrm{bal}}})\,,\quad
(T_2^\dagger, \Delta_{F^+_\hf} )\,,\quad (T_2^\dagger, \Delta_{F^+_\hg})\,,\quad (T_2^\dagger, \Delta_{F^+_{\mathrm{bal}}})\,,\quad
\]
\[
(M_2^\dagger, \Delta_{F^+_\hf} )\,,\quad  
(M_2^\dagger, \Delta_{F^+_{\mathrm{bal}}})\,,\quad
(M_2^\dagger, \Delta_{F^+_{\mathrm{bal}}})\,,\quad
(T_\hf^\dagger, \Delta_{F^+_\hf} )\,. 
\]
\end{remark}

In view of the Iwasawa Main Conjectures for $T$, the following corollary justifies the nomenclature ``the module of algebraic $p$-adic $L$-functions'' for $(T, \Delta)$ that we have chosen for $\det(\widetilde{R\Gamma}_{\rm f}(G_{\QQ, \Sigma}, T, \Delta))$.

\begin{corollary}\label{corollary_normal_det=char}
Suppose that $R$ is normal and $\chi(\widetilde{R\Gamma}_{\rm f}(G_{\QQ, \Sigma}, T, \Delta)) = 0$. 
If moreover the $R$-module $\widetilde{H}^2_{\rm f}(G_{\QQ, \Sigma}, T, \Delta)$ is torsion, then $\widetilde{H}^1_{\rm f}(G_{\QQ, \Sigma}, T, \Delta)$ vanishes,  the projective dimension of $\widetilde{H}^2_{\rm f}(G_{\QQ, \Sigma}, T, \Delta)$ equals to 1, and 
\[
\det(\widetilde{R\Gamma}_{\rm f}(G_{\QQ, \Sigma}, T, \Delta)) = \mathrm{char}_R(\widetilde{H}^2_{\rm f}(G_{\QQ, \Sigma}, T, \Delta)). 
\]
\end{corollary}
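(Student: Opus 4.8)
The plan is to combine Proposition~\ref{prop_[1,2]_parf}(ii), which places $\widetilde{R\Gamma}_{\rm f}(G_{\QQ, \Sigma}, T, \Delta)$ in $D^{[1,2]}_{\rm parf}({}_R\mathrm{Mod})$, with the hypotheses $\chi = 0$ and $\widetilde{H}^2_{\rm f}$ torsion, together with Corollary~\ref{cor:det=char}. First I would record that, since the complex is perfect and concentrated in degrees $1$ and $2$, it is quasi-isomorphic to a two-term complex $[P^1 \xrightarrow{d} P^2]$ of finitely generated projective (hence free, as $R$ is local) $R$-modules placed in those degrees, with $\widetilde{H}^1_{\rm f} = \ker d$ and $\widetilde{H}^2_{\rm f} = \operatorname{coker} d$. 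Because $\widetilde{H}^2_{\rm f}$ is torsion, $d \otimes_R \mathrm{Frac}(R)$ is surjective; comparing ranks and using $\chi = \operatorname{rank} P^1 - \operatorname{rank} P^2 = 0$ (which is exactly $\chi(\widetilde{R\Gamma}_{\rm f}) = 0$), we get that $d\otimes_R \mathrm{Frac}(R)$ is an isomorphism, so $\ker d$ is a torsion submodule of the free module $P^1$, hence $\ker d = 0$. This proves $\widetilde{H}^1_{\rm f}(G_{\QQ, \Sigma}, T, \Delta) = 0$.

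Next, from $\widetilde{H}^1_{\rm f} = 0$ the short exact sequence $0 \to P^1 \xrightarrow{d} P^2 \to \widetilde{H}^2_{\rm f} \to 0$ is a length-one free resolution of $\widetilde{H}^2_{\rm f}$, so its projective dimension is at most $1$; it is exactly $1$ since $\widetilde{H}^2_{\rm f}$ is a nonzero torsion module over the (normal, in particular not Artinian unless a field — and in any case the relevant cases have $\dim R \geq 1$) local ring $R$, hence not projective. Consequently $\widetilde{H}^2_{\rm f}$ has finite projective dimension and torsion, so Corollary~\ref{cor:det=char} applies to the perfect complex $\widetilde{R\Gamma}_{\rm f}(G_{\QQ, \Sigma}, T, \Delta)$: its determinant equals $\prod_{i} \mathrm{char}_R(\widetilde{H}^i_{\rm f})^{(-1)^i}$. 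Since $\widetilde{H}^i_{\rm f} = 0$ for $i \neq 2$, the only surviving factor is $\mathrm{char}_R(\widetilde{H}^2_{\rm f}(G_{\QQ, \Sigma}, T, \Delta))^{(-1)^2} = \mathrm{char}_R(\widetilde{H}^2_{\rm f}(G_{\QQ, \Sigma}, T, \Delta))$, which is the asserted identity.

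There is essentially no deep obstacle here; the statement is a formal consequence of the structural results established just above it. The one point requiring a little care is the rank bookkeeping: one must use that $\chi(\widetilde{R\Gamma}_{\rm f}(G_{\QQ, \Sigma}, T, \Delta)) = 0$ is precisely the Euler--Poincaré characteristic computed in Proposition~\ref{proposition_euler_poincare_characteristic}, and that ``$\widetilde{H}^2_{\rm f}$ torsion'' forces the generic surjectivity of $d$, so that the vanishing of $\widetilde{H}^1_{\rm f}$ — rather than merely its torsion-freeness — really does follow. The remaining step, invoking Corollary~\ref{cor:det=char}, only needs the finiteness of the projective dimension of $\widetilde{H}^2_{\rm f}$, which we have just verified, and the normality hypothesis on $R$ that is already in force.
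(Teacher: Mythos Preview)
Your proof is correct and follows essentially the same approach as the paper: represent the Selmer complex by a two-term complex of free modules of equal rank (using Proposition~\ref{prop_[1,2]_parf} and $\chi=0$), tensor with $\mathrm{Frac}(R)$ to see the differential becomes an isomorphism (since $\widetilde{H}^2_{\rm f}$ is torsion), deduce $\widetilde{H}^1_{\rm f}=0$ and projective dimension $\leq 1$ for $\widetilde{H}^2_{\rm f}$, then invoke Corollary~\ref{cor:det=char}. The paper's argument is the same, written slightly more tersely.
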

\begin{proof}
By Proposition \ref{prop_[1,2]_parf} and the fact that $\chi(\widetilde{R\Gamma}_{\rm f}(G_{\QQ, \Sigma}, T, \Delta)) = 0$, we have an exact sequence 
\[
0 \longrightarrow 
\widetilde{H}^1_{\rm f}(G_{\QQ, \Sigma}, T, \Delta) \longrightarrow P \longrightarrow P \longrightarrow
\widetilde{H}^2_{\rm f}(G_{\QQ, \Sigma}, T, \Delta) \longrightarrow 0, 
\]
where $P$ is a finitely generated projective $R$-module. Since $\widetilde{H}^2_{\rm f}(G_{\QQ, \Sigma}, T, \Delta)$ is torsion, 
$P \otimes \mathrm{Frac}(R) \longrightarrow P \otimes \mathrm{Frac}(R)$ is an isomorphism, and hence $\widetilde{H}^1_{\rm f}(G_{\QQ, \Sigma}, T, \Delta) = 0$ 
and the projective dimension of $\widetilde{H}^2_{\rm f}(G_{\QQ, \Sigma}, T, \Delta)$ is 1. 
By Corollary \ref{cor:det=char}, we have $\det(\widetilde{R\Gamma}_{\rm f}(G_{\QQ, \Sigma}, T, \Delta)) = \mathrm{char}_R(\widetilde{H}^2_{\rm f}(G_{\QQ, \Sigma}, T, \Delta))$, as required.
\end{proof}


\section{Modules of leading terms}
\label{sec_Koly_Sys_Dec_4_18_05_2021}
Our goal in this section is to introduce the module of leading terms of algebraic $p$-adic $L$-functions, dwelling on ideas borrowed from the general theory of Kolyvagin systems. In \S\ref{sec_Koly_Sys_Dec_4_18_05_2021}, we shall work in great level of generality and apply our results in \S\ref{subsec_KS} to the factorisation problem at hand. 

We also refer the reader to Remark~\ref{rmk:comparisonkolyvagin} for the connection of the module of leading terms to $p$-adic $L$-functions.

\subsection{The set up}
Let $R$ be a  complete Gorenstein local ring with finite residue field of characteristic $p\geq3$ and $T$ be a free $R$-module of finite rank with a continuous $G_{\QQ, \Sigma}$-action. 
Throughout \S\ref{sec_Koly_Sys_Dec_4_18_05_2021}, we assume that 
we have an $R[G_p]$-submodule $F^+T$ of $T$ such that the quotient $T/F^+T$ is free as an $R$-module. 
Then we have the Greenberg local conditions $\Delta := \Delta_{F^+}$; cf. \S\ref{subsubsec_311_17_05_2021}. 
We also assume throughout \S\ref{sec_Koly_Sys_Dec_4_18_05_2021} that the condition \ref{item_Tam} holds. For simplicity, let us put 
\[
r=r(T,\Delta) := - \chi(\widetilde{R\Gamma}_{\rm f}(G_{\QQ, \Sigma}, T, \Delta)) = \mathrm{rank}_R(T^{c=-1}) - \mathrm{rank}_R(F^-T) \in \ZZ. 
\]

\subsection{Construction of special elements in the extended Selmer module}


In this subsection, we construct  a special cyclic submodule (module of leading terms)
\[
\delta(T, \Delta) \subset {\bigcap}^{r}_{R}\widetilde{H}^1_{\rm f}(G_{\QQ, \Sigma}, T, \Delta)
\]
(assuming that $r\geq 0$) and study its properties. 
Here, for any (commutative) ring $S$ and any finitely generated $S$-module $M$, we define 
\[
M^* := \Hom_{S}(M, S) \,\,\, \textrm{ and } \,\,\, {\bigcap}^t_{S}M := \left({\bigwedge}^t_{S}M^*\right)^*. 
\]
We call ${\bigcap}^t_{S}M$ the $t$-th exterior bi-dual of $M$. 

\subsubsection{} We first recall the basic facts concerning the exterior bi-duals following \cite{BS19}. 

\begin{defn}
For any $S$-homomorphism $\varphi \colon M \longrightarrow N$ and any integer $t \geq 1$, we define 
\begin{align*}
    \widetilde{\varphi} \colon {\bigwedge}^t_S M &\longrightarrow  N \otimes_{S} {\bigwedge}^{t-1}_S M\\
     m_1 \wedge \cdots \wedge m_t &\longmapsto \sum_{i=1}^t(-1)^{i-1}\varphi(m_i) \otimes m_1 \wedge \cdots \wedge m_{i-1 } \wedge m_{i+1} \wedge \cdots \wedge  m_t. 
\end{align*}
\end{defn}

\begin{proposition}[{\cite{BS19}, Lemma B.2}]\label{proposition_exterior_wedge}
Suppose that $t \geq 1$ and we have an exact sequence of $R$-modules 
\[
0 \longrightarrow M \longrightarrow P^1 \stackrel{\varphi}{\longrightarrow} P^2, 
\]
where $P^1$ and $P^2$ are finitely generated free $R$-modules. 
Then the canonical map 
\[
{\bigcap}^t_{R}M \longrightarrow {\bigcap}^t_{R}P^1 = {\bigwedge}^t_{R}P^1
\]
induces an isomorphism 
\[
{\bigcap}^t_{R}M \stackrel{\sim}{\longrightarrow} \ker\left({\bigwedge}^t_{R}P^1 \stackrel{\widetilde{\varphi}}{\longrightarrow} P^2 \otimes_R  {\bigwedge}^{t-1}_{R}P^1\right). 
\]
\end{proposition}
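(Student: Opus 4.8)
The plan is to unwind the definition of the exterior bi-dual ${\bigcap}^t_R M = \left({\bigwedge}^t_R M^*\right)^*$ and to exploit left-exactness of $\Hom_R(-,R)$ applied to the given presentation $0 \to M \to P^1 \xrightarrow{\varphi} P^2$. First I would dualize this exact sequence: since $\Hom_R(-,R)$ is left exact, we get an exact sequence $(P^2)^* \xrightarrow{\varphi^*} (P^1)^* \to M^* \to 0$, i.e. $M^*$ is the cokernel of $\varphi^*$. (Here one uses that $P^1, P^2$ are finite free, so $(P^i)^{**} = P^i$ canonically.) The key point is then to pass to the $t$-th exterior power; the plan is to use the right-exactness of $\bigwedge^t_R(-)$ together with a standard description of the kernel of the induced map on exterior powers. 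Concretely, the surjection $(P^1)^* \twoheadrightarrow M^*$ induces a surjection $\bigwedge^t_R (P^1)^* \twoheadrightarrow \bigwedge^t_R M^*$, whose kernel is the submodule generated by elements of the form $\varphi^*(\xi) \wedge \omega$ with $\xi \in (P^2)^*$ and $\omega \in \bigwedge^{t-1}_R (P^1)^*$; equivalently it is the image of $(P^2)^* \otimes_R \bigwedge^{t-1}_R (P^1)^* \xrightarrow{\varphi^* \wedge \mathrm{id}} \bigwedge^t_R (P^1)^*$.

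Next I would dualize once more. Applying $\Hom_R(-,R)$ to the exact sequence
\[
(P^2)^* \otimes_R {\bigwedge}^{t-1}_R (P^1)^* \longrightarrow {\bigwedge}^t_R (P^1)^* \longrightarrow {\bigwedge}^t_R M^* \longrightarrow 0
\]
and using left-exactness gives an exact sequence
\[
0 \longrightarrow \left({\bigwedge}^t_R M^*\right)^* \longrightarrow \left({\bigwedge}^t_R (P^1)^*\right)^* \longrightarrow \Hom_R\!\left((P^2)^* \otimes_R {\bigwedge}^{t-1}_R (P^1)^*,\, R\right).
\]
Now I would identify the terms: $\left({\bigwedge}^t_R (P^1)^*\right)^* = {\bigcap}^t_R P^1 = {\bigwedge}^t_R P^1$ because $P^1$ is finite free (the exterior power of a finite free module is finite free, and bi-duality is the identity on finite free modules); similarly $\Hom_R\!\left((P^2)^* \otimes_R {\bigwedge}^{t-1}_R (P^1)^*, R\right) \cong P^2 \otimes_R {\bigwedge}^{t-1}_R P^1$, again using finite freeness. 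Under these identifications one checks that the connecting map $\bigwedge^t_R P^1 \to P^2 \otimes_R \bigwedge^{t-1}_R P^1$ is exactly the map $\widetilde{\varphi}$ of the preceding definition — this is a direct computation tracing through the transpose of $\varphi^* \wedge \mathrm{id}$ on a basis, and matching signs with the alternating-sum formula defining $\widetilde{\varphi}$. Combining, $\left({\bigwedge}^t_R M^*\right)^* \cong \ker\!\left({\bigwedge}^t_R P^1 \xrightarrow{\widetilde{\varphi}} P^2 \otimes_R {\bigwedge}^{t-1}_R P^1\right)$, which is the claim. Finally I would note that this isomorphism is induced by the canonical map ${\bigcap}^t_R M \to {\bigwedge}^t_R P^1$ coming from $M \hookrightarrow P^1$, so the identification is the natural one.

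The step I expect to be the main obstacle — or at least the one requiring genuine care rather than formal nonsense — is the compatibility of the two dualizations with the exterior-power functor: namely verifying that the kernel of $\bigwedge^t_R(P^1)^* \to \bigwedge^t_R M^*$ really is the image of $(P^2)^* \otimes_R \bigwedge^{t-1}_R(P^1)^*$ (this is a Koszul-type statement about exterior powers of a surjection, valid over any commutative ring) and that the resulting boundary map matches $\widetilde{\varphi}$ on the nose, including signs. Since this is precisely Lemma~B.2 of \cite{BS19}, I would either cite it directly or, if a self-contained argument is wanted, reduce to the case where $P^1, P^2$ are free of finite rank with chosen bases and perform the sign bookkeeping explicitly; everything else is a formal consequence of left-exactness of $\Hom$ and finite-freeness.
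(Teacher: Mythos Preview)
The paper does not supply its own proof of this proposition; it is simply cited from \cite[Lemma~B.2]{BS19}. Your proposal is correct and is precisely the standard argument (and essentially the one in \cite{BS19}): dualize the presentation, use right-exactness of $\bigwedge^t$ together with the Koszul description of the kernel, then dualize back using finite-freeness of $P^1,P^2$ and identify the resulting map with $\widetilde{\varphi}$.
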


Since $\widetilde{R\Gamma}_{\rm f}(G_{\QQ, \Sigma}, T, \Delta) \in D^{[1,2]}_{\mathrm{parf}}(_{R}\mathrm{Mod})$ by Proposition \ref{prop_[1,2]_parf}, we have 
an exact sequence of $R$-modules 
\begin{align}
\begin{split}\label{exact:stark}
    0 \longrightarrow \widetilde{H}^1_{\rm f}(G_{\QQ, \Sigma}, T, \Delta) \longrightarrow R^{a+r} 
\stackrel{\varphi}{\longrightarrow} R^{a} \longrightarrow \widetilde{H}^2_{\rm f}(G_{\QQ, \Sigma}, T, \Delta) \longrightarrow 0. 
\end{split}
\end{align}
We put $\varphi := (\varphi_1, \ldots, \varphi_a) \colon R^{a+r} 
\to R^{a}$, 
and obtain an $R$-homomorphism 
\[
\widetilde{\varphi}_a \circ \cdots \circ \widetilde{\varphi}_1\, \colon\quad 
{\bigwedge}^{a+r}_{R}R^{a+r} \longrightarrow {\bigwedge}^{r}_{R}R^{a+r}. 
\]
If $r \geq 1$, then Proposition \ref{proposition_exterior_wedge} shows that 
\begin{align}\label{eq_image_bi-dual}
    \mathrm{im}(\widetilde{\varphi}_a \circ \cdots \circ \widetilde{\varphi}_1) \subset {\bigcap}^{r}_{R}\widetilde{H}^1_{\rm f}(G_{\QQ, \Sigma}, T, \Delta)\,,
\end{align}
since we have $\widetilde{\varphi} \circ \widetilde{\varphi}_a \circ \cdots \circ \widetilde{\varphi}_1 = 0$ by construction. 
When $r=0$, we also have $\mathrm{im}(\widetilde{\varphi}_a \circ \cdots \circ \widetilde{\varphi}_1) \subset R$ and 
${\bigcap}^{0}_{R}\widetilde{H}^1_{\rm f}(G_{\QQ, \Sigma}, T, \Delta) = R$. In other words, \eqref{eq_image_bi-dual}  holds true also if $r=0$. 

\begin{defn}
Suppose that $r \geq 0$. We define the cyclic $R$-module $\delta(T, \Delta)$ to be the image of the homomorphism 
\begin{align}\label{eq:mor_det_to_bdual}
\det(R^{a+r}) = {\bigwedge}^{a+r}_{R}R^{a+r} \longrightarrow {\bigcap}^{r}_{R}\widetilde{H}^1_{\rm f}(G_{\QQ, \Sigma}, T, \Delta)    
\end{align}
induced by the exact sequence \eqref{exact:stark}. 
\end{defn}
We call  $\delta(T, \Delta)$ the module of leading terms associated to the data $(T,\Delta)$. 

\begin{remark}
The construction of the morphism \eqref{eq:mor_det_to_bdual} is inspired by the work of Burns and Sano \cite[Proposition~2.21]{Burns_Sano_2021}. 
Kataoka also considers (essentially) the same morphism in \cite[Definition~3.2]{Kataoka2022}. 
\end{remark}

\begin{lemma}
The module $\delta(T, \Delta)$ is independent of the choice of the representative 
$$[ \, R^{a+r} \stackrel{\varphi}{\longrightarrow} R^{a} \, ]$$ 
of the Selmer complex $\widetilde{R\Gamma}_{\rm f}(G_{\QQ, \Sigma}, T, \Delta)$. 
\end{lemma}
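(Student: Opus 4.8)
The plan is to show that any two finite projective representatives of the perfect complex $\widetilde{R\Gamma}_{\rm f}(G_{\QQ,\Sigma},T,\Delta)$ give the same cyclic submodule $\delta(T,\Delta)$, and this reduces to a standard homotopy-invariance argument. First I would recall that since $\widetilde{R\Gamma}_{\rm f}(G_{\QQ,\Sigma},T,\Delta)\in D^{[1,2]}_{\rm parf}({}_R\mathrm{Mod})$ by Proposition~\ref{prop_[1,2]_parf}, every representative is quasi-isomorphic to one concentrated in degrees $1$ and $2$ of the form $[R^{a+r}\xrightarrow{\varphi}R^a]$ (after adding a contractible complex of the shape $[R^b\xrightarrow{\id}R^b]$ to absorb the cokernel into a free module and using that $\widetilde{H}^1$ has no free direct summand split off the term in degree $1$; more precisely one uses minimality of a chosen representative and compares an arbitrary one to it). So it suffices to compare two such degree-$[1,2]$ representatives $[R^{a+r}\xrightarrow{\varphi}R^a]$ and $[R^{a'+r}\xrightarrow{\varphi'}R^{a'}]$ of the same object in the derived category.

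The key step is the following: two finite projective complexes in degrees $1,2$ that are quasi-isomorphic become isomorphic after adding trivial complexes $[R^b\xrightarrow{\id}R^b]$ placed in degrees $1,2$ to each side, by the standard fact that a quasi-isomorphism of bounded-below complexes of projectives is a homotopy equivalence, together with the fact that a homotopy equivalence between two-term complexes of frees can be realized by an honest isomorphism after stabilization. Thus I would reduce to checking two things: (1) adding $[R^b\xrightarrow{\id}R^b]$ in degrees $1,2$ does not change $\delta(T,\Delta)$; and (2) an isomorphism of complexes $[R^{a+r}\xrightarrow{\varphi}R^a]\xrightarrow{\sim}[R^{a+r}\xrightarrow{\psi}R^a]$ (i.e.\ commuting squares with invertible vertical maps $(g_1,g_2)\in \GL_{a+r}(R)\times\GL_a(R)$, $\psi=g_2\varphi g_1^{-1}$) induces the same image in ${\bigcap}^r_R\widetilde{H}^1_{\rm f}$. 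For (1): if $\varphi'=\varphi\oplus\id_{R^b}\colon R^{a+r}\oplus R^b\to R^a\oplus R^b$, then $\widetilde{\varphi'}_{a+1}\circ\cdots\circ\widetilde{\varphi'}_1$ applied to the chosen basis vector of $\det(R^{a+r+b})$ — namely $e_1\wedge\cdots\wedge e_{a+r}\wedge f_1\wedge\cdots\wedge f_b$ — contracts away the $f_j$'s via the identity components, landing back on $\widetilde{\varphi}_a\circ\cdots\circ\widetilde{\varphi}_1$ of $e_1\wedge\cdots\wedge e_{a+r}$ up to sign, and the kernel $\widetilde{H}^1$ of $\varphi'$ is canonically identified with that of $\varphi$ via Proposition~\ref{proposition_exterior_wedge}; hence the image submodule is unchanged. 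For (2): the vertical isomorphisms $g_1,g_2$ change the basis vector of $\det(R^{a+r})$ by $\det(g_1)\in R^\times$ and rescale the identification $\det(R^a)^{-1}\otimes\det(R^a)=R$ used in the acyclic-complex normalization by $\det(g_2)^{-1}$; tracking this through the naturality of the maps $\widetilde{\varphi}_i$ (each $\varphi_i$ is replaced by a linear combination of the $\psi_j$'s, and the composite $\widetilde{\psi}_a\circ\cdots\circ\widetilde{\psi}_1$ equals $\det(g_2)\cdot(g_1\text{-transport of }\widetilde{\varphi}_a\circ\cdots\circ\widetilde{\varphi}_1)$, the two scalars cancelling since $\det(g_1^{-1})\det(g_2)$ is exactly the scalar by which the induced automorphism of $\det(\widetilde{R\Gamma}_{\rm f})=R$ acts, which is $1$ because both representatives compute the same determinant line with its canonical trivialization). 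Consequently the image of $\det(R^{a+r})$ inside ${\bigcap}^r_R\widetilde{H}^1_{\rm f}$ is independent of all choices.

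The main obstacle I anticipate is the bookkeeping of signs and scalars in step (2): one must verify carefully that the transition matrices $g_1$ and $g_2$ contribute reciprocal units, using that the map $\det(R^{a+r})\to\det(R^a)$ obtained from $\varphi$ (when $r=0$, or more generally the induced map on determinant lines) is intrinsic to the object $\widetilde{R\Gamma}_{\rm f}$ — this is where Lemma~\ref{lemma_determinant} is implicitly doing the work, since it guarantees a \emph{canonical} identification $\det(\widetilde{R\Gamma}_{\rm f}(G_{\QQ,\Sigma},T,\Delta))\cong\det(R^{a+r})\otimes\det(R^a)^{-1}$ compatible with quasi-isomorphisms. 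Once that compatibility is invoked, the scalar $\det(g_1)\det(g_2)^{-1}$ is forced to be the unit by which the identity automorphism of a line acts, i.e.\ $1$, and everything collapses. A secondary, purely technical point is to make the stabilization argument (reducing an arbitrary perfect representative to a two-term one in degrees $1,2$) clean; this follows from Proposition~\ref{prop_[1,2]_parf}(ii) together with the elementary fact that over a local ring any bounded complex of finite frees splits off its acyclic summands, so I do not expect difficulty there beyond routine care.
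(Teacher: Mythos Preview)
Your approach is correct in outline and genuinely different from the paper's. The paper does not stabilize; instead it fixes a minimal surjection $R^m\twoheadrightarrow\widetilde{H}^2_{\rm f}(G_{\QQ,\Sigma},T,\Delta)$ with $m=\dim\widetilde{H}^2_{\rm f}(G_{\QQ,\Sigma},\overline T,\Delta)$, and shows that any representative $[R^{a+r}\xrightarrow{\varphi}R^a]$ splits as $[R^{m+r}\to R^m]\oplus[R^{a-m}\xrightarrow{\sim}R^{a-m}]$, then uses essentially your step~(1) to conclude that the image in ${\bigcap}^r_R\widetilde{H}^1_{\rm f}$ coincides with that coming from the minimal summand. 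Your route via stabilization plus step~(2) is more symmetric and more portable (it makes no use of the minimality trick), while the paper's argument is shorter because it sidesteps the need to track a chain isomorphism.

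There is one point in your step~(2) that needs correction. Your claim that $\det(g_1)\det(g_2)^{-1}=1$ is not justified: an automorphism of a perfect complex inducing the identity on cohomology need not induce the identity on the determinant line. The clean computation is rather that, using $\widetilde{(\phi\circ g_1^{-1})}\circ(\wedge^t g_1)=(\wedge^{t-1}g_1)\circ\widetilde{\phi}$ and the antisymmetry identity $\widetilde{\psi}_a\circ\cdots\circ\widetilde{\psi}_1=\det(g_2)\cdot\widetilde{(\varphi_a\circ g_1^{-1})}\circ\cdots\circ\widetilde{(\varphi_1\circ g_1^{-1})}$, one finds
\[
\epsilon_\psi=\det(g_2)\det(g_1)^{-1}\cdot(\wedge^r g_1)(\epsilon_\varphi)
\]
inside $\bigwedge^r R^{a+r}$. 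Since $(\wedge^r g_1)$ restricted to ${\bigcap}^r\ker\varphi\to{\bigcap}^r\ker\psi$ is exactly the identification induced by the quasi-isomorphisms to the Selmer complex, and since $\det(g_2)\det(g_1)^{-1}\in R^\times$, the \emph{cyclic submodules} $R\epsilon_\varphi$ and $R\epsilon_\psi$ agree inside ${\bigcap}^r_R\widetilde{H}^1_{\rm f}$. That is all you need, because $\delta(T,\Delta)$ is defined as a cyclic submodule, not a specific element; drop the appeal to Lemma~\ref{lemma_determinant} and argue directly with this unit.
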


\begin{proof}
Let us put $m := \dim\,\widetilde{H}_{\rm f}^2(G_{\QQ, \Sigma}, \overline{T}, \Delta)$ and consider a surjection $R^m \twoheadrightarrow \widetilde{H}_{\rm f}^2(G_{\QQ, \Sigma}, T, \Delta)$. 
We then have an induced surjection $f \colon R^{a} \twoheadrightarrow R^m$ such that the diagram 
\[
\xymatrix{
R^{a} \ar[r]^-{f} \ar[rd] & R^m \ar[d] 
\\
& \widetilde{H}_{\rm f}^2(G_{\QQ, \Sigma}, T, \Delta)
}
\]
commutes. 
Since $R^m$ is free, there is a splitting  $R^{a} = R^{m}  \oplus R^{a-m}$ such that $f= \mathrm{pr}_1$. 
Let us set 
\[
K := \ker\left(R^m \longrightarrow \widetilde{H}_{\rm f}^2(G_{\QQ, \Sigma}, T, \Delta)\right). 
\]
Then, since $f= \mathrm{pr}_1$, we have 
\[
K \oplus R^{a-m} = \ker(R^{a}  \longrightarrow \widetilde{H}_{\rm f}^2(G_{\QQ, \Sigma}, T, \Delta)). 
\]
Moreover, as $R^{a+r} \longrightarrow K \oplus R^{a-m}$ is surjective, there is a splitting $R^{a+r} = R^{m+r} \oplus R^{a-m}$ such that $$\mathrm{im}(R^{m+r} \longrightarrow  K \oplus R^{a-m}) = K.$$ 
Then the complex $[ \, R^{m+r} \longrightarrow R^{m} \, ]$ is a representative of $\widetilde{R\Gamma}_{\rm f}(G_{\QQ, \Sigma}, T, \Delta)$ and 
\[
[ \, R^{a+r} \longrightarrow R^{a} \, ] = [ \, R^{m+r} \longrightarrow R^{m} \, ] \oplus 
[ \, R^{a-m} \stackrel{\sim}{\longrightarrow} R^{a-m} \, ]. 
\]
We therefore obtain a commutative diagram 
\[
\xymatrix{
\det(R^{a+r}) \ar[r] \ar[d]_-{\cong} &  \ar[d]^-{=} {\bigcap}^{r}_{R}\widetilde{H}^1_{\rm f}(G_{\QQ, \Sigma}, T, \Delta)
\\
\det(R^{m+r}) \ar[r] & {\bigcap}^{r}_{R}\widetilde{H}^1_{\rm f}(G_{\QQ, \Sigma}, T, \Delta),  
}
\]
which completes the proof. 
\end{proof}

\begin{proposition}\label{proposition_tortion_non-zero}
The $R$-module $\widetilde{H}^2_{\rm f}(G_{\QQ, \Sigma}, T, \Delta)$  is torsion if and only if $\delta(T, \Delta)$ is generated by an $R$-regular element of ${\bigcap}^{r}_{R}\widetilde{H}^1_{\rm f}(G_{\QQ, \Sigma}, T, \Delta)$. 
\end{proposition}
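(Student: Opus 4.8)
The plan is to compute $\delta(T,\Delta)$ explicitly on a two-term representative of the Selmer complex and then to reduce the assertion to multilinear algebra over the total quotient ring of $R$. Using Proposition~\ref{prop_[1,2]_parf}, fix a representative $[\,R^{a+r}\xrightarrow{\varphi}R^{a}\,]$ of $\widetilde{R\Gamma}_{\rm f}(G_{\QQ,\Sigma},T,\Delta)$ placed in degrees $1$ and $2$, as in \eqref{exact:stark}, write $\varphi=(\varphi_1,\dots,\varphi_a)$ and $\delta_0:=(\widetilde{\varphi}_a\circ\cdots\circ\widetilde{\varphi}_1)(e_1\wedge\cdots\wedge e_{a+r})$ for the standard basis $e_\bullet$ of $R^{a+r}$, so that $\delta(T,\Delta)=R\delta_0$ sits inside $\bigcap^r_R\widetilde{H}^1_{\rm f}(G_{\QQ,\Sigma},T,\Delta)$ by \eqref{eq_image_bi-dual}. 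I would first record the formal remark that, since $\widetilde{H}^1_{\rm f}(G_{\QQ,\Sigma},T,\Delta)$ is a submodule of the free module $R^{a+r}$, the cyclic module $R\delta_0$ is generated by an $R$-regular element if and only if $\operatorname{Ann}_R(\delta_0)=0$: any generator has the form $u\delta_0$ with $u\in R$, and if $u\delta_0$ is regular, then writing $\delta_0=v(u\delta_0)$ and multiplying by $u$ forces $vu=1$, so $u\in R^{\times}$. It therefore suffices to prove that $\operatorname{Ann}_R(\delta_0)=0$ if and only if $\widetilde{H}^2_{\rm f}(G_{\QQ,\Sigma},T,\Delta)$ is $R$-torsion.

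Next I would pass to the total quotient ring $Q:=Q(R)$. As $R$ is Gorenstein it is Cohen--Macaulay, hence has no embedded primes, so $Q\cong\prod_{\mathfrak p}R_{\mathfrak p}$, the product being over the (finitely many) minimal primes $\mathfrak p$ of $R$; in particular a finitely generated $R$-module is $R$-torsion precisely when it is killed by $-\otimes_RQ$, and an ideal $I\subseteq R$ with $I\otimes_RQ=0$ vanishes. Tensoring \eqref{exact:stark} with the flat algebra $Q$ gives
\[
0\longrightarrow\widetilde{H}^1_{\rm f}(G_{\QQ,\Sigma},T,\Delta)\otimes_RQ\longrightarrow Q^{a+r}\xrightarrow{\ \varphi_Q\ }Q^{a}\longrightarrow\widetilde{H}^2_{\rm f}(G_{\QQ,\Sigma},T,\Delta)\otimes_RQ\longrightarrow0,
\]
so that $\widetilde{H}^2_{\rm f}(G_{\QQ,\Sigma},T,\Delta)$ is $R$-torsion if and only if $\varphi_Q$ is surjective; moreover $\bigcap^r$ commutes with this base change (the modules in sight are finitely presented and $R\to Q$ is flat), one has $\widetilde{H}^1_{\rm f}(G_{\QQ,\Sigma},T,\Delta)\otimes_RQ=\ker\varphi_Q$, and $\delta_0$ generates $\delta(T,\Delta)\otimes_RQ$ inside $\bigcap^r_Q(\ker\varphi_Q)$. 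Since $Q$ is a finite product of Artinian local rings the problem splits into its factors, and I would reduce to the following statement over a single Artinian local ring $A$ with maximal ideal $\mathfrak m$ and residue field $\kappa$: for $\varphi=(\varphi_1,\dots,\varphi_a)\colon A^{a+r}\to A^{a}$, the element $\delta_0^{A}:=(\widetilde{\varphi}_a\circ\cdots\circ\widetilde{\varphi}_1)(e_1\wedge\cdots\wedge e_{a+r})\in\bigwedge^{r}_{A}A^{a+r}$ satisfies $\operatorname{Ann}_A(\delta_0^{A})=0$ when $\varphi$ is surjective, and $\operatorname{Ann}_A(\delta_0^{A})\neq0$ otherwise.

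The engine for this last reduction is that, for a linear functional $\psi$, the operator $\widetilde{\psi}$ on the exterior algebra is the interior product $\iota_\psi$, and these satisfy $\iota_\psi\iota_{\psi'}=-\iota_{\psi'}\iota_\psi$ and $\iota_\psi^2=0$; hence $\iota_{\varphi_a}\circ\cdots\circ\iota_{\varphi_1}$ is an alternating function of $(\varphi_1,\dots,\varphi_a)$. If $\varphi$ is not surjective, Nakayama produces a $\kappa$-linear dependence among $\bar\varphi_1,\dots,\bar\varphi_a$; expanding one functional through the others and using the two relations above shows that $\delta_0^{A}\in\mathfrak m\bigwedge^{r}_{A}A^{a+r}$, hence $\delta_0^{A}$ is killed by $\mathfrak m^{\,n-1}\neq0$ where $n$ satisfies $\mathfrak m^{n}=0$, so $\operatorname{Ann}_A(\delta_0^{A})\neq0$. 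If instead $\varphi$ is surjective, extend $\bar\varphi_1,\dots,\bar\varphi_a$ to a $\kappa$-basis of $(\kappa^{a+r})^{*}$ and lift (Nakayama again) to a basis $\varphi_1,\dots,\varphi_a,\psi_1,\dots,\psi_r$ of $(A^{a+r})^{*}$; with $f_1,\dots,f_{a+r}$ the dual basis, a direct computation gives $\delta_0^{A}=w\cdot f_{a+1}\wedge\cdots\wedge f_{a+r}$ for a unit $w\in A^{\times}$, which exhibits $\delta_0^{A}$ as a basis element of the free rank-one module $\bigwedge^{r}_{A}\ker\varphi$, so $\operatorname{Ann}_A(\delta_0^{A})=0$. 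Reassembling over the factors of $Q$ and descending to $R$ (via $\operatorname{Ann}_R(\delta_0)\otimes_RQ=\prod_{\mathfrak p}\operatorname{Ann}_{R_{\mathfrak p}}((\delta_0)_{\mathfrak p})$) then completes the argument.

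I expect the only genuine work to lie in the bookkeeping with the exterior bi-duals: one has to invoke Proposition~\ref{proposition_exterior_wedge} to realise $\delta_0$ inside the free module $\bigwedge^{r}_{R}R^{a+r}$, and to verify that $\bigcap^r$ is compatible with the flat base change $R\to Q$, after which the heart of the matter is the elementary observation that iterated interior products detect the linear independence of functionals. The Gorenstein hypothesis is used only to know that $R$ is Cohen--Macaulay, so that both ``torsion'' and ``regular'' may be tested at the minimal primes of $R$; if one is content to assume $R$ is a domain (as in the main arithmetic applications) the argument collapses to a single base change to $\Frac(R)$.
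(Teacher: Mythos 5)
Your proposal is correct and follows essentially the same route as the paper: both directions are settled by base change to the total quotient ring $Q$ of $R$, where torsionness of $\widetilde{H}^2_{\rm f}(G_{\QQ,\Sigma},T,\Delta)$ becomes surjectivity of $\varphi_Q$ and regularity of the generator becomes faithfulness of the iterated contraction $\widetilde{\varphi}_a\circ\cdots\circ\widetilde{\varphi}_1$ applied to $e_1\wedge\cdots\wedge e_{a+r}$. The only real difference is in execution: the paper peels off the functionals $\varphi_1,\varphi_2,\dots$ inductively over $Q$, whereas you decompose $Q$ into its Artinian local factors and conclude by Nakayama together with the alternating property of interior products, which amounts to a slightly more careful treatment of the non-domain case that the paper elides by writing $\mathrm{Frac}(R)$.
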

\begin{proof}
Let us set $Q := \mathrm{Frac}(R)$. If $\widetilde{H}^2_{\rm f}(G_{\QQ, \Sigma}, T, \Delta)$  is torsion, then the homomorphism $Q^{a+r} \to Q^a$ is surjective. We therefore have 
\[
\delta(T, \Delta) \otimes_R Q   \cong Q 
\]
by construction, which means that $\delta(T, \Delta)$ is generated by an $R$-regular element of ${\bigcap}^{r}_{R}\widetilde{H}^1_{\rm f}(G_{\QQ, \Sigma}, T, \Delta)$. 

Let us prove the converse. Since $\delta(T, \Delta)$ is generated by an $R$-regular element, the homomorphism 
\[
\widetilde{\varphi}_a \circ \cdots \circ \widetilde{\varphi}_1  \colon {\bigwedge}^{r+a}_Q Q^{r+a} \longrightarrow {\bigwedge}^r_{Q} Q^{r+a}
\]
is injective. 
Then $\widetilde{\varphi}_1 \colon {\bigwedge}^{r+a}_Q Q^{r+a} \to {\bigwedge}^{r+a-1}_Q Q^{r+a}$ is injective, and hence $\varphi_1 \colon Q^{r+a} \to Q$ is surjective. 
Then there is a splitting $Q^{r+a} = Q \oplus \ker(\varphi_1)$ such that 
\[
\widetilde{\varphi}_a \circ \cdots \circ \widetilde{\varphi}_2  \colon {\bigwedge}^{r+a-1}_Q \ker(\varphi_1) \lra {\bigwedge}^r_{Q} \ker(\varphi_1)
\]
is injective. 
Since $\ker(\varphi_1) \cong Q^{r+a-1}$, we see that $\varphi_2 \colon \ker(\varphi_1) \to Q$ is surjective. 
Repeating the same argument, we conclude that $\varphi \colon Q^{r+a} \to Q^r$ is surjective, that is, 
$\widetilde{H}^2_{\rm f}(G_{\QQ, \Sigma}, T, \Delta) \otimes_R Q = 0$.  
\end{proof}

\begin{theorem}
Suppose that $R$ is normal. 
If $\delta(T, \Delta) \neq 0$, then the $R$-module $\widetilde{H}^2_{\rm f}(G_{\QQ, \Sigma}, T, \Delta)$  is torsion and 
\[
\mathrm{char}_R\left( \left. 
{\bigcap}^r_R \widetilde{H}^1_{\rm f}(G_{\QQ, \Sigma}, T, \Delta)   \middle/  \delta(T, \Delta)  \right. \right) 
= 
\mathrm{char}_R\left( \widetilde{H}^2_{\rm f}(G_{\QQ, \Sigma}, T, \Delta) \right). 
\]
\end{theorem}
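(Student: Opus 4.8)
The plan is to work with the explicit four-term presentation \eqref{exact:stark} of the Selmer complex and to compare two fractional-ideal computations: one for the bidual quotient on the left, and one for $\widetilde{H}^2_{\rm f}$ on the right. First, since the module of leading terms $\delta(T,\Delta)$ is by definition the image of $\det(R^{a+r})\to{\bigcap}^r_R\widetilde H^1_{\rm f}(G_{\QQ,\Sigma},T,\Delta)$, the hypothesis $\delta(T,\Delta)\neq 0$ forces that image to be generated by an $R$-regular element; by Proposition~\ref{proposition_tortion_non-zero} this is equivalent to $\widetilde H^2_{\rm f}(G_{\QQ,\Sigma},T,\Delta)$ being torsion. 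This takes care of the first assertion and also places us in the situation where both $\det(\widetilde{R\Gamma}_{\rm f})$ and $\mathrm{char}_R(\widetilde H^2_{\rm f})$ are honest nonzero fractional ideals of $\mathrm{Frac}(R)$; note that since $R$ is normal it is in particular Gorenstein, so the bidual ${\bigcap}^r_R$ behaves well and agrees with $\bigwedge^r_R$ after inverting a height-$\geq 2$ ideal.

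Next I would reduce to the case where $R$ is a discrete valuation ring. The point is that all three of the ideals in play — $\mathrm{char}_R\big({\bigcap}^r_R\widetilde H^1_{\rm f}\big/\delta(T,\Delta)\big)$, $\mathrm{char}_R(\widetilde H^2_{\rm f})$, and (implicitly) $\det(\widetilde{R\Gamma}_{\rm f})$ — are reflexive (divisorial) ideals of the normal ring $R$, so by Lemma~\ref{lem:reflexible} it suffices to check the claimed equality after localizing at every height-one prime $\fp$. Over $R_\fp$, which is a DVR, the complex $\widetilde{R\Gamma}_{\rm f}(G_{\QQ,\Sigma},T,\Delta)\otim_R R_\fp$ is still represented by $[R_\fp^{a+r}\xrightarrow{\varphi}R_\fp^a]$ with $\widetilde H^1_{\rm f}$ free (it is torsion-free over a DVR, being a submodule of $R_\fp^{a+r}$) and $\widetilde H^2_{\rm f}$ of finite length. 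Over a DVR the bidual ${\bigcap}^r$ is just $\bigwedge^r$, and by Proposition~\ref{proposition_exterior_wedge} the map $\widetilde\varphi_a\circ\cdots\circ\widetilde\varphi_1\colon\bigwedge^{a+r}R_\fp^{a+r}\to{\bigcap}^r_{R_\fp}\widetilde H^1_{\rm f}$ is injective with image $\delta(T,\Delta)_\fp$.

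The heart of the matter is the local (DVR) computation: if $0\to H^1\to R_\fp^{a+r}\xrightarrow{\varphi}R_\fp^a\to H^2\to 0$ is exact with $H^1$ free of rank $r$ and $H^2$ of finite length, then the cyclic submodule $\delta$ of $\bigwedge^r H^1$ produced by the Fitting-type construction satisfies $\mathrm{length}_{R_\fp}\big(\bigwedge^r H^1/\delta\big)=\mathrm{length}_{R_\fp}(H^2)$. This is a Smith-normal-form argument: choosing bases so that $\varphi$ is diagonal with entries $\pi^{e_1},\dots,\pi^{e_a}$ (and $r$ further zero columns), one reads off $H^2\cong\bigoplus R_\fp/\pi^{e_i}$, so $\mathrm{length}(H^2)=\sum e_i$, while $H^1$ is spanned by the $r$ basis vectors in the kernel and the generator of $\delta$ produced by $\widetilde\varphi_a\circ\cdots\circ\widetilde\varphi_1$ is $\pi^{\sum e_i}$ times a basis vector of $\bigwedge^r H^1$. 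Hence $\mathrm{char}_{R_\fp}(\bigwedge^r H^1/\delta)=\pi^{\sum e_i}R_\fp=\mathrm{char}_{R_\fp}(H^2)$. Assembling these local equalities over all height-one primes via Lemma~\ref{lem:reflexible} gives the global identity, and (parallel to Corollary~\ref{cor:det=char}) one also recovers $\det(\widetilde{R\Gamma}_{\rm f}(G_{\QQ,\Sigma},T,\Delta))=\mathrm{char}_R(\widetilde H^2_{\rm f})$, consistent with the naming of $\delta(T,\Delta)$ as a module of leading terms. The main obstacle is the careful bookkeeping in the Smith-normal-form step to make sure that the bidual construction of $\delta$ matches the naive Fitting ideal of $\varphi$ exactly on the nose — in particular tracking that the iterated contraction $\widetilde\varphi_a\circ\cdots\circ\widetilde\varphi_1$ lands in $\bigwedge^r H^1$ (not a strictly larger lattice) and generates precisely the ideal $\prod\pi^{e_i}$ times a generator, with no spurious unit or extra factor; everything else is reflexivity bookkeeping that is by now standard after \cite{BS19,sakamoto_tot_real}.
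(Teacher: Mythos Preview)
Your approach is correct and matches the paper's proof essentially verbatim: reduce to height-one primes via Lemma~\ref{lem:reflexible}, then over the DVR $R_\fp$ put $\varphi$ in Smith normal form and compute both sides as $\pi^{\sum e_i}R_\fp$. Two minor points: the implication ``$\delta(T,\Delta)\neq 0$ $\Rightarrow$ generated by an $R$-regular element'' deserves a word of justification (the bidual embeds in the free module $\bigwedge^r R^{a+r}$ by Proposition~\ref{proposition_exterior_wedge}, and a normal local ring is a domain, so nonzero elements there are $R$-regular); and your aside that ``normal implies Gorenstein'' is false in general, though harmless here since Gorenstein is a standing hypothesis in \S\ref{sec_Koly_Sys_Dec_4_18_05_2021}.
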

See \cite[Definition 2.1]{BBL2014} for the definition of the characteristic ideal in the case of a normal ring.
\begin{proof}
The first assertion follows from Proposition \ref{proposition_tortion_non-zero}. 
Let us show that
\[
\mathrm{char}_R\left( \left. 
{\bigcap}^r_R \widetilde{H}^1_{\rm f}(G_{\QQ, \Sigma}, T, \Delta)   \middle/  \delta(T, \Delta)  \right. \right) 
= 
\mathrm{char}_R\left( \widetilde{H}^2_{\rm f}(G_{\QQ, \Sigma}, T, \Delta) \right).
\]
To that end, let $\fp$ be a height-1 prime of $R$. 
Then $\delta(T, \Delta)R_\fp$ coincides with the image of the map $\det(R_\fp^{r+a}) \to {\bigwedge}^r_{R_\fp}R_\fp^{r+a}$ 
induced by the homomoprhism $\varphi \colon R_\fp^{r+a} \longrightarrow R_\fp^{r}$. 
Since $R_\fp$ is a discrete valuation ring and $\widetilde{H}^2_{\rm f}(G_{\QQ, \Sigma}, T, \Delta)$  is torsion, 
we may assume that $\varphi_i = \pi^{a_i} \cdot \mathrm{pr}_i$ for some integer $a_i \geq 0$. 
Here $\pi$ denotes an uniformizer of $R_\fp$ and $ \mathrm{pr}_i$ projection onto the $i$th factor. 
If we denote by $\{e_1, \ldots, e_{r+a}\}$ the standard basis of $R_\fp^{r+a}$,  we then have 
$\widetilde{H}^1_{\rm f}(G_{\QQ, \Sigma}, T, \Delta) = R_\fp e_{a+1} + \cdots + R_\fp e_{a+r}$ and 
\begin{align*}
    \delta(T, \Delta)R_\fp 
    = \mathrm{im}\left( \det(R_\fp^{r+a}) \longrightarrow {\bigwedge}^r_{R_\fp}R_\fp^{r+a} \right) &= \widetilde{\varphi}_a \circ \cdots \circ \widetilde{\varphi}_1(e_1 \wedge \cdots \wedge e_{r+a})R_\fp
     \\
     &= \pi^{\sum a_i} {\bigwedge}^r_{R_\fp} (\widetilde{H}^1_{\rm f}(G_{\QQ, \Sigma}, T, \Delta) \otimes_R R_\fp). 
\end{align*}
We therefore conclude that 
\begin{align*}
        \mathrm{char}_R\left( \left. {\bigcap}^r_R \widetilde{H}^1_{\rm f}(G_{\QQ, \Sigma}, T, \Delta)   \middle/  \delta(T, \Delta)  \right. \right) R_\fp
    &= \pi^{\sum a_i}R_\fp = \mathrm{char}_R\left( \widetilde{H}^2_{\rm f}(G_{\QQ, \Sigma}, T, \Delta) \right)R_\fp, 
\end{align*}
which completes the proof. 
\end{proof}

\begin{lemma}[{\cite{sakamoto_stark}, Lemma 4.8}]\label{lemma_fitt_stark}
Let $S$ be a $0$-dimensional Gorenstein local ring. 
Let $s$ and $t$ be non-negative integers and
\[
0 \longrightarrow M \longrightarrow S^{s+t} \longrightarrow S^{s} \longrightarrow N \longrightarrow 0
\]
an exact sequence of $S$-modules. 
Let 
$$\delta \in {\bigcap}_{S}^{t}M = \Hom_{S}({\bigwedge}_{S}^{t}M^*, S)$$ denote the image of $1 \in S = \det(S^{s+t})$ under the homomorphism $\det(S^{s+t}) \longrightarrow {\bigcap}_{S}^{t}M$. 
We then have  
\[
\mathrm{im}(\delta) = \mathrm{Fitt}_{S}^0(N). 
\]
\end{lemma}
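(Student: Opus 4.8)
The plan is to make $\delta$ completely explicit in coordinates and to recognise its image as the ideal of maximal minors of a presentation matrix of $N$. Write $\varphi\colon S^{s+t}\to S^{s}$ for the middle map of the given exact sequence, so that $M=\ker(\varphi)$ and $N=\mathrm{coker}(\varphi)$, let $\{e_{1},\dots,e_{s+t}\}$ be the standard basis of $S^{s+t}$, and let $A=(a_{ik})_{1\le i\le s,\,1\le k\le s+t}$ be the matrix of $\varphi=(\varphi_{1},\dots,\varphi_{s})$. By construction the map $\det(S^{s+t})\to{\bigcap}_{S}^{t}M$ is $\widetilde{\varphi}_{s}\circ\cdots\circ\widetilde{\varphi}_{1}$, so $\delta$ is the image of the canonical generator $e_{1}\wedge\cdots\wedge e_{s+t}$; by Proposition~\ref{proposition_exterior_wedge} applied to $0\to M\to S^{s+t}\xrightarrow{\varphi}S^{s}$ this image lies in the submodule ${\bigcap}_{S}^{t}M\hookrightarrow{\bigwedge}_{S}^{t}S^{s+t}$ and is computed there. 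The first step is to record the standard interior-product identity
\[
\widetilde{\varphi}_{s}\circ\cdots\circ\widetilde{\varphi}_{1}(e_{1}\wedge\cdots\wedge e_{s+t})=\sum_{\substack{I\subseteq\{1,\dots,s+t\}\\ \#I=t}}\varepsilon_{I}\,\det\!\bigl(A_{\widehat{I}}\bigr)\,e_{I},
\]
where $e_{I}:=e_{i_{1}}\wedge\cdots\wedge e_{i_{t}}$ for $I=\{i_{1}<\cdots<i_{t}\}$, $A_{\widehat{I}}$ is the $s\times s$ submatrix of $A$ on the columns not indexed by $I$, and $\varepsilon_{I}\in\{\pm1\}$; this is a Laplace-type expansion proved by an easy induction on $s$.

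The second step is to identify $\mathrm{im}(\delta)\subseteq S$ with the ideal generated by the coefficients $\varepsilon_{I}\det(A_{\widehat{I}})$. By definition $\mathrm{im}(\delta)=\delta\bigl({\bigwedge}_{S}^{t}M^{*}\bigr)$, and, under the identification ${\bigcap}_{S}^{t}M=\bigl({\bigwedge}_{S}^{t}M^{*}\bigr)^{*}\hookrightarrow{\bigwedge}_{S}^{t}S^{s+t}$, one has $\delta\bigl({\bigwedge}^{t}\!\rho(\omega)\bigr)=\langle\delta,\omega\rangle$ for every $\omega\in{\bigwedge}_{S}^{t}(S^{s+t})^{*}$, where $\rho\colon(S^{s+t})^{*}\to M^{*}$ is the restriction map; evaluating on $\omega=e_{I}^{*}$ recovers precisely the $I$-coordinates of $\delta$. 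Hence it suffices that $\rho$, and therefore ${\bigwedge}^{t}\rho$, be surjective, which is equivalent to $\Ext^{1}_{S}(S^{s+t}/M,S)=0$. This is exactly where the hypothesis that $S$ is a $0$-dimensional Gorenstein local ring enters: such a ring is self-injective, so $\Ext^{1}_{S}(-,S)$ vanishes identically. Consequently $\mathrm{im}(\delta)$ is generated by the coordinates of the explicit element above, i.e. by the $s\times s$ minors of $A$.

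Finally, since $S^{s+t}\xrightarrow{\varphi}S^{s}\to N\to0$ is exact, $A$ is a presentation matrix of $N$, so by definition $\mathrm{Fitt}^{0}_{S}(N)$ is the ideal generated by the $s\times s$ minors of $A$; combining this with the previous step yields $\mathrm{im}(\delta)=\mathrm{Fitt}^{0}_{S}(N)$. I expect the only point that needs real care to be the second step — matching the pairing ${\bigcap}_{S}^{t}M\times{\bigwedge}_{S}^{t}M^{*}\to S$ with the coordinate functionals on ${\bigwedge}_{S}^{t}S^{s+t}$ and isolating the precise role of self-injectivity (the surjectivity of $\rho$); the interior-product identity and the Fitting-ideal bookkeeping are routine.
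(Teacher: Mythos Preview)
The paper does not supply its own proof of this lemma; it is quoted verbatim from \cite{sakamoto_stark}, Lemma~4.8, and used as a black box. Your argument is a correct self-contained proof: the Laplace-type expansion of $\widetilde{\varphi}_{s}\circ\cdots\circ\widetilde{\varphi}_{1}(e_{1}\wedge\cdots\wedge e_{s+t})$ in terms of the $s\times s$ minors of the presentation matrix is standard, and your identification of $\mathrm{im}(\delta)$ with the ideal generated by those coordinates is exactly right. The crucial observation --- that self-injectivity of a $0$-dimensional Gorenstein local ring forces the restriction map $(S^{s+t})^{*}\to M^{*}$ (hence $\bigwedge^{t}\rho$) to be surjective --- is precisely what makes the coordinate functionals $e_{I}^{*}$ hit all of $\bigwedge^{t}M^{*}$, and you have isolated this cleanly. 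Nothing is missing.
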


\begin{lemma}\label{lemma_dual_image}
Let $S$ be a $0$-dimensional Gorenstein local ring, and $M$ a finitely generated $S$-module. Let $\delta \in M^*$. 
Then we have a natural isomorphism 
\[
 (S \delta)^* \stackrel{\sim}{\longrightarrow} \mathrm{im}(\delta); \,\quad  \varphi \mapsto \varphi(\delta). 
\]
\end{lemma}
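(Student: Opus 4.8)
The statement to prove is Lemma~\ref{lemma_dual_image}: for a $0$-dimensional Gorenstein local ring $S$ and a finitely generated $S$-module $M$ with $\delta\in M^*=\Hom_S(M,S)$, there is a natural isomorphism $(S\delta)^*\stackrel{\sim}{\to}\mathrm{im}(\delta)$ sending $\varphi\mapsto\varphi(\delta)$.

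The plan is to exploit the fact that a $0$-dimensional Gorenstein local ring is self-injective, so the functor $(-)^*=\Hom_S(-,S)$ is exact on the category of finitely generated $S$-modules and $M\mapsto M^{**}$ is the identity (equivalently, $S$ is an injective cogenerator and Matlis duality is given by $\Hom_S(-,S)$). First I would record the short exact sequence $0\to K\to M\stackrel{\delta}{\to}\mathrm{im}(\delta)\to 0$, where $K=\ker(\delta)$ and $\mathrm{im}(\delta)\subseteq S$ is an ideal. Applying the exact functor $(-)^*$ yields a short exact sequence $0\to \mathrm{im}(\delta)^*\to M^*\to K^*\to 0$; but I actually want to identify $(S\delta)^*$, so instead I would dualize $0\to S\delta\to M^*$ — wait, more directly: consider the surjection $S\twoheadrightarrow S\delta$, $1\mapsto\delta$, whose kernel is $\mathrm{Ann}_S(\delta)$. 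Dualizing the exact sequence $0\to\mathrm{Ann}_S(\delta)\to S\to S\delta\to 0$ gives $0\to(S\delta)^*\to S^*=S\to \mathrm{Ann}_S(\delta)^*\to 0$, so $(S\delta)^*$ is identified with an ideal of $S$, namely the annihilator of the image of $\mathrm{Ann}_S(\delta)\hookrightarrow S$ under Matlis duality; by the Gorenstein property this ideal is $(0:\mathrm{Ann}_S(\delta))=\mathrm{Ann}_S(\mathrm{Ann}_S(\delta))$.

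Then I would show the evaluation map $\mathrm{ev}_\delta\colon(S\delta)^*\to S$, $\varphi\mapsto\varphi(\delta)$, is injective with image exactly $\mathrm{im}(\delta)$. Injectivity is immediate since $\delta$ generates $S\delta$: if $\varphi(\delta)=0$ then $\varphi=0$. For the image, note $\varphi(\delta)\in\varphi(S\delta)$ and conversely every element of $\mathrm{im}(\delta)=\delta(M)$ is $\delta(m)$ for some $m\in M$; the map $S\delta\to S$, $x\delta\mapsto$ (well-defined?) — here one must use that $\mathrm{Ann}_S(\delta)\subseteq\mathrm{Ann}_S(\delta(m))$ for every $m$, which holds because $\delta\in M^*$ is $S$-linear, so the assignment $\delta\mapsto\delta(m)$ extends to a well-defined $S$-homomorphism $S\delta\to S$. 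Hence $\delta(m)\in\mathrm{im}(\mathrm{ev}_\delta)$ for all $m$, giving $\mathrm{im}(\delta)\subseteq\mathrm{im}(\mathrm{ev}_\delta)$; the reverse inclusion $\mathrm{im}(\mathrm{ev}_\delta)\subseteq\mathrm{im}(\delta)$ follows because any $\varphi\colon S\delta\to S$ is determined by $\varphi(\delta)$, and $\varphi$ restricted through $M\twoheadrightarrow S\delta$ is an element of $M^*$, so $\varphi(\delta)=(\varphi\circ\delta)(\text{something})$... more cleanly: $\varphi(\delta)=\varphi(\delta(m_0))$ is not quite it — rather, the composite $M\to S\delta\xrightarrow{\varphi}S$ lies in $M^*$ and takes some value; I would instead argue via the identification above that both $(S\delta)^*$ and $\mathrm{im}(\delta)$ equal $\mathrm{Ann}_S(\mathrm{Ann}_S(\delta))$ and that $\mathrm{ev}_\delta$ realizes this identification.

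I expect the only real subtlety — the "main obstacle" — is pinning down that $\mathrm{im}(\delta)=\mathrm{Ann}_S(\mathrm{Ann}_S(\delta))$; this is precisely the double-annihilator property, valid because $S$ is $0$-dimensional Gorenstein (hence self-injective, so Matlis duality $\Hom_S(-,S)$ is perfect and annihilator duality for ideals is an involution). Concretely: $\mathrm{im}(\delta)\cong S/\mathrm{Ann}_S(\delta)$ as $S$-modules via $x\delta\mapsto x$; dualizing, $\mathrm{im}(\delta)^*\cong(S/\mathrm{Ann}_S(\delta))^*=\mathrm{Ann}_S(\mathrm{Ann}_S(\delta))$, and applying $(-)^*$ once more and using reflexivity gives $\mathrm{im}(\delta)\cong\mathrm{Ann}_S(\mathrm{Ann}_S(\delta))$; since $\mathrm{im}(\delta)$ is already an ideal of $S$ and the isomorphism is compatible with the inclusions into $S$, it is an equality. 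The same computation shows $(S\delta)^*\cong\mathrm{Ann}_S(\mathrm{Ann}_S(\delta))$ and that the composite of these identifications with $\mathrm{ev}_\delta$ is the identity. Everything else — exactness of $(-)^*$, reflexivity of finitely generated modules, $S^*=S$ — is standard $0$-dimensional Gorenstein duality and can be cited. Naturality in $\delta$ is then formal. I would present this compactly, doing the double-annihilator identification once and deducing the isomorphism $\mathrm{ev}_\delta$ directly from it.
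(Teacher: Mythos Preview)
Your double-annihilator approach is correct once cleaned up: the evaluation $\mathrm{ev}_\delta\colon(S\delta)^*\to S$ is injective for the reason you give, and since $S\delta\cong S/\mathrm{Ann}_S(\delta)$ its image is exactly $\mathrm{Ann}_S(\mathrm{Ann}_S(\delta))$; combined with the elementary equality $\mathrm{Ann}_S(\delta)=\mathrm{Ann}_S(\mathrm{im}(\delta))$ and the double-annihilator involution on ideals in a $0$-dimensional Gorenstein ring, this yields $\mathrm{im}(\mathrm{ev}_\delta)=\mathrm{im}(\delta)$. One genuine slip to repair: the sentence ``$\mathrm{im}(\delta)\cong S/\mathrm{Ann}_S(\delta)$ via $x\delta\mapsto x$'' conflates $\mathrm{im}(\delta)\subseteq S$ with $S\delta\subseteq M^*$, and the claimed isomorphism is false in general (e.g.\ $S=k[x,y]/(x^2,y^2)$, $M=S^2$, $\delta(a,b)=ax+by$: then $\mathrm{im}(\delta)=(x,y)$ has $1$-dimensional socle while $S/\mathrm{Ann}_S(\delta)=S/(xy)$ has $2$-dimensional socle). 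You do not need that isomorphism---only the annihilator equality above---so simply drop the offending line. Your earlier abandoned attempt ``$\varphi$ restricted through $M\twoheadrightarrow S\delta$'' should also be discarded: there is no such surjection.

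The paper argues differently and somewhat more directly. It invokes Matlis duality in the form $M\xrightarrow{\sim}M^{**}$, which immediately recasts $\mathrm{im}(\delta)$ as $\{\psi(\delta):\psi\in (M^*)^*\}$; then exactness of $\Hom_S(-,S)$ applied to $S\delta\hookrightarrow M^*$ makes the restriction $(M^*)^*\to(S\delta)^*$ surjective, so $\mathrm{ev}_\delta$ is onto $\mathrm{im}(\delta)$, and injectivity is trivial. Both proofs rest on the same self-injectivity of $S$; yours packages it as the annihilator involution on ideals, the paper's as reflexivity of $M$ together with exactness of the dual. The paper's version avoids any computation with annihilators and works uniformly without ever naming $\mathrm{Ann}_S(\delta)$.
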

\begin{proof}
By Matlis duality, the canonical homomorphism $M \longrightarrow M^{**}$ is an isomorphism. 
We therefore have 
\[
\mathrm{im}(\delta) = \{\varphi(\delta) \mid \varphi \in M^{**} = \Hom_S(M^*, S)\}. 
\]
Since the functor $(-)^*$ is exact,  $\Hom_S(M^*, S) \longrightarrow \Hom_S(S\delta, S)$ is surjective. 
Hence, the homomorphism 
\[
 (S \delta)^* \longrightarrow \mathrm{im}(\delta); \,\qquad \varphi \mapsto \varphi(\delta)
\]
is surjective.  The injectivity of this map is clear. 
\end{proof}

Note that one can also define 
\[
\delta(T/IT, \Delta) \subset {\bigcap}^r_{R/I}\widetilde{H}^1_{\rm f}(G_{\QQ, \Sigma}, T/IT, \Delta) 
\]
for any ideal $I$ of $R$ such that $R/I$ is Gorenstein, since $\widetilde{R\Gamma}_{\rm f}(G_{\QQ, \Sigma}, T/IT, \Delta) \in D^{[1,2]}_{\rm parf}(_{R/I}\mathrm{Mod})$. 
We deduce the following from Lemma \ref{lemma_fitt_stark} and Lemma \ref{lemma_dual_image}:

\begin{proposition}\label{proposition_delta=fitt}
If $R/I$ is a $0$-dimensional Gorenstein local ring, then 
any generator $\delta$ of $\delta(T/IT, \Delta)$ induces an isomorphism  
\[
\delta(T/IT, \Delta)^* \stackrel{\sim}{\longrightarrow} \mathrm{Fitt}_{R/I}^{0}\,\widetilde{H}^2_{\rm f}(G_{\QQ, \Sigma}, T/IT, \Delta)\,; \,\qquad  
\varphi \mapsto \varphi(\delta). 
\]
\end{proposition}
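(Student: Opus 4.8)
The plan is to deduce the statement directly from Lemmas~\ref{lemma_fitt_stark} and~\ref{lemma_dual_image}, after unwinding the definition of $\delta(T/IT,\Delta)$ and the bookkeeping of exterior bi-duals. First I would write $S := R/I$ (a $0$-dimensional Gorenstein local ring by hypothesis) and abbreviate $M := \widetilde{H}^1_{\rm f}(G_{\QQ,\Sigma},T/IT,\Delta)$, $N := \widetilde{H}^2_{\rm f}(G_{\QQ,\Sigma},T/IT,\Delta)$. Since $\widetilde{R\Gamma}_{\rm f}(G_{\QQ,\Sigma},T/IT,\Delta)\in D^{[1,2]}_{\mathrm{parf}}(_{S}\mathrm{Mod})$ by Proposition~\ref{prop_[1,2]_parf}, it is represented by a two-term complex $[\,S^{a+r}\xrightarrow{\ \varphi\ }S^{a}\,]$ of finite free $S$-modules in degrees $1$ and $2$, giving the exact sequence
\[
0 \longrightarrow M \longrightarrow S^{a+r} \xrightarrow{\ \varphi\ } S^{a} \longrightarrow N \longrightarrow 0 ,
\]
where $r = r(T,\Delta) = \mathrm{rank}_R(T^{c=-1}) - \mathrm{rank}_R(F^-T) \geq 0$ is the same numerical invariant as over $R$ (it is forced by the Euler characteristic) and $a\geq 0$. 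By construction $\delta(T/IT,\Delta) = S\delta$, where $\delta \in {\bigcap}^r_S M = \Hom_S({\bigwedge}^r_S M^*,\,S)$ is the image of $1 \in S = \det(S^{a+r})$ under the map $\det(S^{a+r}) \to {\bigcap}^r_S M$ attached to this exact sequence.

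Next I would apply Lemma~\ref{lemma_fitt_stark} with $(s,t) = (a,r)$ to the above exact sequence; this identifies the image ideal $\mathrm{im}(\delta)\subseteq S$ of the homomorphism $\delta\colon {\bigwedge}^r_S M^* \to S$ with $\mathrm{Fitt}^0_S(N)$. Then I would apply Lemma~\ref{lemma_dual_image} to the finitely generated $S$-module ${\bigwedge}^r_S M^*$ and the functional $\delta \in \left({\bigwedge}^r_S M^*\right)^* = {\bigcap}^r_S M$: as $S$ is $0$-dimensional Gorenstein, this yields a natural isomorphism $(S\delta)^* \xrightarrow{\ \sim\ } \mathrm{im}(\delta)$, $\varphi\mapsto\varphi(\delta)$. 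Chaining the two identifications gives
\[
\delta(T/IT,\Delta)^* = (S\delta)^* \xrightarrow{\ \sim\ } \mathrm{im}(\delta) = \mathrm{Fitt}^0_{R/I}\,\widetilde{H}^2_{\rm f}(G_{\QQ,\Sigma},T/IT,\Delta), \qquad \varphi \mapsto \varphi(\delta),
\]
which is the assertion for the distinguished generator $\delta$. To pass to an arbitrary generator, I would note that any generator of the cyclic module $\delta(T/IT,\Delta)$ has the form $u\delta$ with $u\in S^\times$ (because $S$ is local), so $\varphi\mapsto\varphi(u\delta)$ differs from $\varphi\mapsto\varphi(\delta)$ by multiplication by the unit $u$ on $\mathrm{Fitt}^0_S(N)$ and is again an isomorphism; the degenerate case $\delta(T/IT,\Delta)=0$ is vacuous.

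I do not anticipate a genuine obstacle: essentially all the content is already packaged in Lemmas~\ref{lemma_fitt_stark} and~\ref{lemma_dual_image}, and the role of this proposition is mostly to record their combination. The only points that warrant care are (i) the identification ${\bigcap}^r_S M = \Hom_S({\bigwedge}^r_S M^*,\,S)$, which is precisely what allows $\delta$ to be read simultaneously as an element of the exterior bi-dual and as an $S$-linear functional whose image ideal is the one computed by Lemma~\ref{lemma_fitt_stark}; and (ii) checking that the two-term representative over $R/I$ carries the same invariants $(a+r,a)$, i.e.\ is compatible with the construction of $\delta(T,\Delta)$ over $R$ under the derived base change of Proposition~\ref{prop_[1,2]_parf}(i). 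Both are routine, so the proof should be short.
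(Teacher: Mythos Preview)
Your proposal is correct and is exactly the approach the paper takes: the paper simply states that the proposition is deduced from Lemma~\ref{lemma_fitt_stark} and Lemma~\ref{lemma_dual_image} without further elaboration, and you have spelled out precisely how those two lemmas combine (including the identification ${\bigcap}^r_S M = \bigl(\bigwedge^r_S M^*\bigr)^*$ needed to read $\delta$ as a functional, and the local-ring argument that any two generators of a nonzero cyclic module differ by a unit). Your point (ii) is not a genuine concern: the paper defines $\delta(T/IT,\Delta)$ intrinsically over $R/I$ using that the Selmer complex over $R/I$ lies in $D^{[1,2]}_{\mathrm{parf}}$, so no compatibility check with base change is needed here.
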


Since we have
\[
\widetilde{R\Gamma}_{\rm f}(G_{\QQ, \Sigma}, T, \Delta) \otimes^{\mathbb{L}}_{R} R/I \stackrel{\sim}{\longrightarrow} 
\widetilde{R\Gamma}_{\rm f}(G_{\QQ, \Sigma}, T/IT, \Delta)
\]
by Proposition \ref{prop_[1,2]_parf}, it follows from Proposition \ref{proposition_exterior_wedge} that we have a commutative diagram 
\[
\xymatrix{
\det(R^{r+a}) \ar[r] \ar[d] & {\bigcap}^r_{R}\widetilde{H}^1_{\rm f}(G_{\QQ, \Sigma}, T, \Delta) \ar[d] 
\\
\det((R/I)^{r+a}) \ar[r]  & {\bigcap}^r_{R/I}\widetilde{H}^1_{\rm f}(G_{\QQ, \Sigma}, T/IT, \Delta). 
}
\]
As 
\[
\varprojlim_{I}{\bigcap}^{r}_{R/I}\widetilde{H}^1_{\rm f}(G_{\QQ, \Sigma}, T/IT, \Delta) 
= {\bigcap}^{r}_{R}\widetilde{H}^1_{\rm f}(G_{\QQ, \Sigma}, T, \Delta),  
\]
we have the following: 

\begin{proposition}
\label{prop_51_2022_09_12_1507}
We have a natural surjection
$\delta(T, \Delta) \twoheadrightarrow \delta(T/IT, \Delta)$, and $\delta(T, \Delta) = \varprojlim_{I} \delta(T/IT, \Delta)$. 
\end{proposition}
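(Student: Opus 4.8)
The plan is to extract both assertions from the commutative diagram that precedes the statement, which compares the defining maps for $\delta(T,\Delta)$ and $\delta(T/IT,\Delta)$. First I would fix a representative $[\,R^{a+r}\xrightarrow{\varphi}R^{a}\,]$ of $\widetilde{R\Gamma}_{\rm f}(G_{\QQ,\Sigma},T,\Delta)$, which exists by Proposition~\ref{prop_[1,2]_parf}(ii). Since $\widetilde{R\Gamma}_{\rm f}(G_{\QQ,\Sigma},T,\Delta)\otimes^{\mathbb L}_R R/I \xrightarrow{\sim}\widetilde{R\Gamma}_{\rm f}(G_{\QQ,\Sigma},T/IT,\Delta)$ by Proposition~\ref{prop_[1,2]_parf}(i), the complex $[\,(R/I)^{a+r}\xrightarrow{\bar\varphi}(R/I)^{a}\,]$ obtained by reduction mod $I$ is a representative of $\widetilde{R\Gamma}_{\rm f}(G_{\QQ,\Sigma},T/IT,\Delta)$. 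Applying Proposition~\ref{proposition_exterior_wedge} to the four-term exact sequences \eqref{exact:stark} for $T$ and for $T/IT$, together with the base-change compatibility, yields the commutative square relating $\det(R^{r+a})\to\bigcap^r_R\widetilde{H}^1_{\rm f}(G_{\QQ,\Sigma},T,\Delta)$ and $\det((R/I)^{r+a})\to\bigcap^r_{R/I}\widetilde{H}^1_{\rm f}(G_{\QQ,\Sigma},T/IT,\Delta)$, where the left vertical map is the canonical reduction $\det(R^{r+a})\twoheadrightarrow\det((R/I)^{r+a})$ (a surjection, indeed an isomorphism after $\otimes R/I$) and the right vertical map is the natural map on exterior bi-duals induced by $\widetilde{H}^1_{\rm f}(G_{\QQ,\Sigma},T,\Delta)\to\widetilde{H}^1_{\rm f}(G_{\QQ,\Sigma},T/IT,\Delta)$.

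For the surjectivity statement: $\delta(T,\Delta)$ is by definition the image of the top horizontal arrow and $\delta(T/IT,\Delta)$ the image of the bottom one. Chasing the square, the image of $\delta(T,\Delta)$ under the right vertical map is the image of the composite $\det(R^{r+a})\to\det((R/I)^{r+a})\to\bigcap^r_{R/I}\widetilde{H}^1_{\rm f}(G_{\QQ,\Sigma},T/IT,\Delta)$; since the left vertical arrow is surjective, this composite has the same image as the bottom horizontal arrow, namely $\delta(T/IT,\Delta)$. This gives the natural surjection $\delta(T,\Delta)\twoheadrightarrow\delta(T/IT,\Delta)$.

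For the limit statement I would pass to the inverse limit over all ideals $I$ (ordered by reverse inclusion, so that $R=\varprojlim_I R/I$). The key input, already recorded just before the statement, is the identification $\varprojlim_I\bigcap^r_{R/I}\widetilde{H}^1_{\rm f}(G_{\QQ,\Sigma},T/IT,\Delta)=\bigcap^r_R\widetilde{H}^1_{\rm f}(G_{\QQ,\Sigma},T,\Delta)$; this follows from $\varprojlim_I\widetilde{H}^1_{\rm f}(G_{\QQ,\Sigma},T/IT,\Delta)=\widetilde{H}^1_{\rm f}(G_{\QQ,\Sigma},T,\Delta)$ (a consequence of Proposition~\ref{prop_[1,2]_parf} and the fact that $R^{a+r}=\varprojlim_I (R/I)^{a+r}$), plus the fact that $\bigcap^r$ is a left-exact (bi-dual) construction compatible with the relevant limits, and that $\det(R^{r+a})=\varprojlim_I\det((R/I)^{r+a})$. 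Taking $\varprojlim_I$ of the commutative squares, the top arrow is recovered as the limit of the bottom arrows, so its image $\delta(T,\Delta)$ equals $\varprojlim_I\delta(T/IT,\Delta)$ inside $\bigcap^r_R\widetilde{H}^1_{\rm f}(G_{\QQ,\Sigma},T,\Delta)$.

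The only mildly delicate point — and the step I would be most careful about — is the exactness needed to commute $\varprojlim_I$ past the exterior bi-dual and past taking images of module maps; this is where one uses that the modules $R^{a+r}$, $(R/I)^{a+r}$ are finite free and that the transition maps in the system $\{R/I\}_I$ are surjective, so the relevant Mittag–Leffler conditions hold and $\varprojlim^1$ vanishes. Everything else is a diagram chase through the square supplied in the text.
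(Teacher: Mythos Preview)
Your proof is correct and follows essentially the same approach as the paper: the paper does not give a separate proof but derives the proposition directly from the commutative square and the limit identity $\varprojlim_I\bigcap^r_{R/I}\widetilde{H}^1_{\rm f}(G_{\QQ,\Sigma},T/IT,\Delta)=\bigcap^r_R\widetilde{H}^1_{\rm f}(G_{\QQ,\Sigma},T,\Delta)$ stated just before it, and you have spelled out the diagram chase and the Mittag--Leffler justification that the paper leaves implicit. The only minor point is that the inverse system runs over ideals $I$ with $R/I$ Gorenstein (as noted where $\delta(T/IT,\Delta)$ is introduced), but since $R$ is complete local Gorenstein these contain a cofinal family of $\mathfrak m_R$-primary ideals and your argument goes through unchanged.
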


\subsubsection{The case $r=0$}
\label{subsubsec_EP_char_is_zero}

Note that ${\bigcap}^0_S M = S$ for any (commutative) ring $S$ and any finitely generated $S$-module $M$. 
As a result, we have $\delta(T, \Delta) \subset R$ when $r=0$.
\begin{theorem}\label{theorem_delta_rank0}
Suppose that $r = 0$. 

    \item[1)] $\delta(T, \Delta) =  \mathrm{Fitt}_{R}^0\,\widetilde{H}^2_{\rm f}(G_{\QQ, \Sigma}, T, \Delta)$. 
    \item[2)] If $\widetilde{H}^1_{\rm f}(G_{\QQ, \Sigma}, T, \Delta) = \{0\}$, then $\widetilde{H}^2_{\rm f}(G_{\QQ, \Sigma}, T, \Delta)$ is torsion and  $\delta(T, \Delta) = \det(\widetilde{R\Gamma}_{\rm f}(G_{\QQ, \Sigma}, T, \Delta))$\,. In other words, $\delta(T, \Delta)$ coincides with the module of algebraic $p$-adic $L$-functions for $(T, \Delta)$. 
  \item[3)] If $\delta(T, \Delta)$ is generated by a regular element of $R$, then $\widetilde{H}^1_{\rm f}(G_{\QQ, \Sigma}, T, \Delta) = \{0\}$.


\end{theorem}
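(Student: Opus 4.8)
The plan is to pick a convenient representative of the Selmer complex and read off all three assertions from elementary linear algebra over $R$. Since $r=0$ the Euler--Poincar\'e characteristic $\chi(\widetilde{R\Gamma}_{\rm f}(G_{\QQ,\Sigma},T,\Delta))$ vanishes, so by Proposition~\ref{prop_[1,2]_parf} the complex $\widetilde{R\Gamma}_{\rm f}(G_{\QQ,\Sigma},T,\Delta)$ admits a representative $[\,R^{a}\xrightarrow{\varphi}R^{a}\,]$ concentrated in degrees $1$ and $2$ whose differential $\varphi$ is a \emph{square} $a\times a$ matrix; this is precisely the exact sequence \eqref{exact:stark} specialized to $r=0$, in which source and target both have rank $a$. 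Then $\widetilde{H}^1_{\rm f}(G_{\QQ,\Sigma},T,\Delta)=\ker\varphi$, $\widetilde{H}^2_{\rm f}(G_{\QQ,\Sigma},T,\Delta)=\mathrm{coker}\,\varphi$, and by construction $\delta(T,\Delta)\subset{\bigcap}^{0}_R\widetilde{H}^1_{\rm f}(G_{\QQ,\Sigma},T,\Delta)=R$ is the image of the map $\det(R^{a})={\bigwedge}^{a}_R R^{a}\to R$ given by $\widetilde{\varphi}_a\circ\cdots\circ\widetilde{\varphi}_1$. A Laplace expansion (carried out in the appendix of \cite{BS19}) identifies this element with $\det\varphi$ up to sign, so $\delta(T,\Delta)=(\det\varphi)$; the independence of this ideal from the chosen representative is the lemma immediately following the definition of $\delta(T,\Delta)$. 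Granting $\delta(T,\Delta)=(\det\varphi)$, statement (1) is immediate: the right-hand part of \eqref{exact:stark}, namely $R^{a}\xrightarrow{\varphi}R^{a}\to\widetilde{H}^2_{\rm f}(G_{\QQ,\Sigma},T,\Delta)\to 0$, is a finite free presentation with a square presentation matrix, so $\mathrm{Fitt}^0_R\,\widetilde{H}^2_{\rm f}(G_{\QQ,\Sigma},T,\Delta)$ is generated by the maximal minors of $\varphi$, i.e.\ by $\det\varphi$, whence $\mathrm{Fitt}^0_R\,\widetilde{H}^2_{\rm f}(G_{\QQ,\Sigma},T,\Delta)=(\det\varphi)=\delta(T,\Delta)$.

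For (2): if $\widetilde{H}^1_{\rm f}(G_{\QQ,\Sigma},T,\Delta)=\ker\varphi=0$, then since $\chi=0$ forces $\mathrm{rank}_R\,\widetilde{H}^2_{\rm f}=\mathrm{rank}_R\,\widetilde{H}^1_{\rm f}=0$, the module $\widetilde{H}^2_{\rm f}(G_{\QQ,\Sigma},T,\Delta)$ is torsion. As $\ker\varphi=0$, the complex $[\,R^{a}\xrightarrow{\varphi}R^{a}\,]$ is a length-one free resolution of $\widetilde{H}^2_{\rm f}(G_{\QQ,\Sigma},T,\Delta)$ placed in degree $2$, so by the shift formula for determinants $\det(\widetilde{R\Gamma}_{\rm f}(G_{\QQ,\Sigma},T,\Delta))=\det\widetilde{H}^2_{\rm f}(G_{\QQ,\Sigma},T,\Delta)$; and for a module presented by a square matrix $\varphi$ one has, with the normalization recorded earlier (cf.\ the Example $\det(R/rR)=rR$), the equality of fractional ideals $\det(\mathrm{coker}\,\varphi)=(\det\varphi)$. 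Combining with (1) gives $\det(\widetilde{R\Gamma}_{\rm f}(G_{\QQ,\Sigma},T,\Delta))=(\det\varphi)=\delta(T,\Delta)$, which is by definition the module of algebraic $p$-adic $L$-functions for $(T,\Delta)$. For (3): if $\delta(T,\Delta)$ is generated by an $R$-regular element, then Proposition~\ref{proposition_tortion_non-zero} (applied with $r=0$, so that ${\bigcap}^0_R\widetilde{H}^1_{\rm f}=R$) shows $\widetilde{H}^2_{\rm f}(G_{\QQ,\Sigma},T,\Delta)=\mathrm{coker}\,\varphi$ is torsion; hence $\varphi\otimes_R\mathrm{Frac}(R)$ is a surjective endomorphism of a finite free $\mathrm{Frac}(R)$-module, therefore an isomorphism, and by flatness of $R\to\mathrm{Frac}(R)$ we get $\ker\varphi\otimes_R\mathrm{Frac}(R)=0$; since $\ker\varphi=\widetilde{H}^1_{\rm f}(G_{\QQ,\Sigma},T,\Delta)$ is a submodule of the torsion-free module $R^{a}$, it must vanish. (Equivalently: $\delta(T,\Delta)=(\det\varphi)$ by (1), so $\det\varphi$ is a non-zero-divisor and McCoy's theorem gives $\ker\varphi=0$ directly.)

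There is no substantial obstacle here: the entire content is the observation that $r=0$ collapses the Selmer complex to a two-term complex with a square differential $\varphi$, after which (1)--(3) merely record that $\delta(T,\Delta)$, $\mathrm{Fitt}^0_R\,\widetilde{H}^2_{\rm f}$ and $\det(\widetilde{R\Gamma}_{\rm f})$ all coincide with the principal ideal $(\det\varphi)$, together with the elementary equivalences ``$\det\varphi$ is $R$-regular'' $\Leftrightarrow$ ``$\ker\varphi=0$'' $\Leftrightarrow$ ``$\mathrm{coker}\,\varphi$ is torsion''. The only point meriting care is the bookkeeping of sign and shift conventions in the definition of the determinant of a complex, so that one lands on $(\det\varphi)$ rather than its inverse; this is pinned down by the normalization $\det(R/rR)=rR$ fixed earlier, and by the compatibility $\widetilde{\varphi}_a\circ\cdots\circ\widetilde{\varphi}_1\equiv\pm\det\varphi$, whose proof is the only computation one actually has to perform.
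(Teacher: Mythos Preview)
Your proof is correct. For parts (2) and (3) it is essentially the same as the paper's; the paper is terser, deferring (3) to Proposition~\ref{proposition_tortion_non-zero} without spelling out the passage from ``$\widetilde{H}^2_{\rm f}$ torsion'' to ``$\widetilde{H}^1_{\rm f}=0$'', which you make explicit via the injectivity of $\varphi\otimes\mathrm{Frac}(R)$ (equivalently McCoy's theorem).

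For part (1), however, you take a genuinely different and more direct route. The paper proves (1) by reducing modulo ideals $I$ with $R/I$ zero-dimensional Gorenstein, invoking Proposition~\ref{proposition_delta=fitt} (itself resting on Lemma~\ref{lemma_fitt_stark}, quoted from \cite{sakamoto_stark}) to get $\delta(T/IT,\Delta)=\mathrm{Fitt}^0_{R/I}\,\widetilde{H}^2_{\rm f}(G_{\QQ,\Sigma},T/IT,\Delta)$, and then passes to the inverse limit via Proposition~\ref{prop_51_2022_09_12_1507}. You bypass this entirely: with $r=0$ the differential $\varphi$ is square, and the Laplace expansion gives $\widetilde{\varphi}_a\circ\cdots\circ\widetilde{\varphi}_1(e_1\wedge\cdots\wedge e_a)=\pm\det\varphi$, so $\delta(T,\Delta)=(\det\varphi)=\mathrm{Fitt}^0_R(\mathrm{coker}\,\varphi)$ straight from the definition of the Fitting ideal of a square presentation. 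This is more elementary and self-contained for the $r=0$ case. The paper's detour through zero-dimensional quotients is not really needed here, but that machinery is the engine driving the harder $r=1$ case (Theorem~\ref{theorem_delta_rank1}), so its appearance in the $r=0$ proof is presumably for uniformity of method.
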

\begin{proof}
\item[1)] 
Let us take an ideal $I \subset R$  such that $R/I$ is a 0-dimensional Gorenstein ring.  
Note that for any ideal $J$ of $R/I$, we have $\mathrm{Ann}_{R/I}(\mathrm{Ann}_{R/I}(J)) = J$. 
Since $\delta(T/IT, \Delta)$ is cyclic, any generator $\delta \in \delta(T/IT, \Delta)$ induces an isomorphism 
\[
\delta(T/IT, \Delta)^* \stackrel{\sim}{\longrightarrow} \delta(T/IT, \Delta) \subset R/I; \,\qquad \varphi \mapsto \varphi(\delta), 
\]
and Proposition \ref{proposition_delta=fitt} tells us that 
$\delta(T/IT, \Delta) =  \mathrm{Fitt}_{R/I}^0(\widetilde{H}^2_{\rm f}(G_{\QQ, \Sigma}, T/IT, \Delta))$. We therefore conclude 
\[
\delta(T, \Delta) =  \varprojlim_{I} \delta(T/IT, \Delta) = \varprojlim_{I} \mathrm{Fitt}_{R/I}^0(\widetilde{H}^2_{\rm f}(G_{\QQ, \Sigma}, T/IT, \Delta)) = \mathrm{Fitt}_{R}^0\,\widetilde{H}^2_{\rm f}(G_{\QQ, \Sigma}, T, \Delta)\,. 
\]
\item[2)]
Since $r=0$ and $\widetilde{H}^1_{\rm f}(G_{\QQ, \Sigma}, T, \Delta)$ vanishes, the $R$-module $\widetilde{H}^2_{\rm f}(G_{\QQ, \Sigma}, T, \Delta)$ is torsion and its projective dimension is $1$ by Proposition \ref{prop_[1,2]_parf}. 
Hence,
\[
\delta(T, \Delta) =  \mathrm{Fitt}_{R}^0\,\widetilde{H}^2_{\rm f}(G_{\QQ, \Sigma}, T, \Delta) = \det(\widetilde{R\Gamma}_{\rm f}(G_{\QQ, \Sigma}, T, \Delta)). 
\]

\item[3)] This portion is an immediate consequence of Proposition \ref{proposition_tortion_non-zero}. 
\end{proof}

\begin{remark}\label{rem:non-trivial_r=0_case}
    Suppose that $r = 0$. When $\delta(T, \Delta) = 0$,  by Proposition \ref{proposition_tortion_non-zero}, the $R$-module $\widetilde{H}^2_{\rm f}(G_{\QQ, \Sigma}, T, \Delta)$ is not torsion. 
    Since $r=0$ and $\widetilde{R\Gamma}_{\rm f}(G_{\QQ, \Sigma}, T, \Delta) \in D^{[1,2]}_{\rm parf}(_{R}\mathrm{Mod})$, it follows that  $\widetilde{H}^1_{\rm f}(G_{\QQ, \Sigma}, T, \Delta)$ is a non-trivial  torsion-free $R$-module. 
\end{remark}

\subsubsection{The case $r=1$}
\label{subsubsec_EP_char_is_1}
In this subsection, we study closely the case $r=1$. 
We note for any 0-dimensional Gorenstein local ring $S$ and any finitely generated $S$-module $M$ that the canonical homomorphism $M \to {\bigcap}^1_S M = M^{**}$ is an isomorphism by Matlis duality. In this particular case, we therefore have 
\[
\delta(T, \Delta) = \varprojlim_{I} \delta(T/IT, \Delta) \subset \varprojlim_{I}\widetilde{H}^1_{\rm f}(G_{\QQ, \Sigma}, T/IT, \Delta) = \widetilde{H}^1_{\rm f}(G_{\QQ, \Sigma}, T, \Delta)\,. 
\]

\begin{theorem}\label{theorem_delta_rank1} 
Suppose that $\delta(T, \Delta)$ is generated by an $R$-regular element. We then have an exact sequence of $R$-modules 
    \[
    0 \longrightarrow \mathrm{Fitt}^0_R(\widetilde{H}^2_{\rm f}(G_{\QQ, \Sigma}, T, \Delta))  
    \longrightarrow \mathrm{Ind}(\delta(T, \Delta))
    \longrightarrow \mathrm{Ext}_R^2(\widetilde{H}^2_{\rm f}(G_{\QQ, \Sigma}, T, \Delta), R)
    \longrightarrow 0. 
    \]
    Here, we have put 
    \[
    \mathrm{Ind}(\delta(T, \Delta)) := \{\varphi(\delta) \mid \delta \in \delta(T, \Delta),\, \varphi \in \Hom_R(\widetilde{H}^1_{\rm f}(G_{\QQ, \Sigma}, T, \Delta), R)\}. 
    \]
\end{theorem}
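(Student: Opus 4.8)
The plan is to work with an explicit representative $[\,R^{a+1}\xrightarrow{\varphi}R^{a}\,]$ of $\widetilde{R\Gamma}_{\rm f}(G_{\QQ,\Sigma},T,\Delta)$ coming from Proposition~\ref{prop_[1,2]_parf}, so that we have the exact sequence \eqref{exact:stark} with $r=1$:
\[
0\longrightarrow \widetilde{H}^1_{\rm f}(G_{\QQ,\Sigma},T,\Delta)\xrightarrow{\ \iota\ }R^{a+1}\xrightarrow{\ \varphi\ }R^{a}\longrightarrow \widetilde{H}^2_{\rm f}(G_{\QQ,\Sigma},T,\Delta)\longrightarrow 0.
\]
Write $H^1:=\widetilde{H}^1_{\rm f}$, $H^2:=\widetilde{H}^2_{\rm f}$ for brevity, and set $C:=\operatorname{coker}(\varphi)=H^2$, $Z:=\operatorname{im}(\varphi)=\ker(R^a\to H^2)$. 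First I would record that, since $\delta(T,\Delta)$ is generated by an $R$-regular element, Proposition~\ref{proposition_tortion_non-zero} gives that $H^2$ is torsion; hence $\varphi$ is injective (so $\iota$ identifies $H^1$ with $\ker\varphi=0$? no — rather $\varphi\otimes\operatorname{Frac}(R)$ is an isomorphism onto its image and $H^1$ has rank $1$). The key elementary input is the dual four-term sequence obtained by applying $\Hom_R(-,R)$, together with the standard identification, for a map between free modules of ranks $a+1$ and $a$, of the cokernel of the transpose in terms of the original cokernel; concretely, $\Ext^1_R(C,R)$ and $\Ext^2_R(C,R)$ enter because $C$ has projective dimension $\le 2$ (indeed exactly $1$ over the localizations at height-$\le 1$ primes, but possibly $2$ globally).

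The heart of the argument is a diagram chase. Dualizing \eqref{exact:stark} and using that $\Hom_R(R^n,R)=R^n$, one gets
\[
0\to C^*\to R^a\xrightarrow{\varphi^{*}}R^{a+1}\to \Ext^1_R(C,R)\to 0\qquad\text{(from }0\to Z\to R^a\to C\to 0\text{)},
\]
together with $0\to H^{1*}\to R^{a+1}\to Z^*\to \Ext^1_R(H^1,R)$ coming from $0\to H^1\to R^{a+1}\to Z\to 0$; since $H^1$ is a submodule of the free module $R^{a+1}$, hence torsion-free, one has control on $\Ext^1_R(H^1,R)$ (it is supported in codimension $\ge 2$), and splicing the two dual sequences one obtains a complex whose homology computes the various Ext groups. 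Now $\delta(T,\Delta)$ is, by definition, the image of $1\in\det R^{a+1}=\bigwedge^{a+1}R^{a+1}$ under $\widetilde\varphi_a\circ\cdots\circ\widetilde\varphi_1$, landing in $\bigcap^1_R H^1=H^1$ (using Matlis reflexivity after $\otimes R/I$ and passing to the limit via Proposition~\ref{prop_51_2022_09_12_1507}). The map $\varphi\mapsto\varphi(\delta)$ sends $\Hom_R(H^1,R)$ onto $\operatorname{Ind}(\delta(T,\Delta))$, and I would identify this image: by the same computation as in Lemma~\ref{lemma_fitt_stark} (now with $r=1$ instead of $r=0$), the pairing of $\delta$ against $H^{1*}$ realizes $\operatorname{Ind}(\delta(T,\Delta))$ as the $0$-th Fitting ideal of the cokernel of the map $R^a\xrightarrow{\varphi^*}R^{a+1}\to H^{1*}$, i.e.\ of $\operatorname{coker}(Z^*\hookrightarrow H^{1*})$-type data. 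Matching this against $\Fitt^0_R(H^2)$ on one end and the cokernel $\Ext^2_R(H^2,R)$ on the other produces the asserted short exact sequence.

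More precisely, the final step is to produce the map $\Fitt^0_R(H^2)\hookrightarrow\operatorname{Ind}(\delta(T,\Delta))$ and identify the cokernel with $\Ext^2_R(H^2,R)$. For the injection: localize at a height-$1$ prime $\fp$; there $R_\fp$ is a DVR, $H^1_\fp$ is free of rank $1$, $H^2_\fp$ has projective dimension $1$, and the case-$r=0$ computation (Theorem~\ref{theorem_delta_rank0}(1)) combined with the explicit diagonalization $\varphi_i=\pi^{a_i}\mathrm{pr}_i$ used in the proof of the preceding theorem shows $\operatorname{Ind}(\delta)R_\fp=\pi^{\sum a_i}R_\fp=\Fitt^0_R(H^2)R_\fp$; so the two ideals agree in codimension $1$, and since both are reflexive (being, respectively, the image of a homomorphism into $R$ from a cyclic reflexive module and a Fitting ideal of a torsion module of finite projective dimension — apply Lemma~\ref{lem:reflexible}), they would agree if $R$ were normal; in general one gets the inclusion $\Fitt^0_R(H^2)\subseteq\operatorname{Ind}(\delta)$ from the diagram chase directly, without normality. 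For the cokernel: the obstruction to $\Fitt^0_R(H^2)=\operatorname{Ind}(\delta)$ is exactly the failure of $H^2$ to have projective dimension $1$, which is measured by $\Ext^2_R(H^2,R)$ (note $\Ext^i_R(H^2,R)=0$ for $i\ge 3$ once $\widetilde{R\Gamma}_{\rm f}\in D^{[1,2]}_{\rm parf}$ gives $C=H^2$ projective dimension $\le 2$). I expect \textbf{the main obstacle} to be precisely this last identification — carefully extracting $\Ext^2_R(H^2,R)$ as the cokernel rather than just knowing it annihilates the discrepancy. The cleanest route is probably to apply $\RHom_R(-,R)$ to the two-term complex $[\,R^{a+1}\xrightarrow{\varphi}R^{a}\,]$ representing $\widetilde{R\Gamma}_{\rm f}$, obtaining a two-term complex $[\,R^a\xrightarrow{\varphi^*}R^{a+1}\,]$ in degrees $-2,-1$ whose cohomology is $\widetilde{H}^{-i}:=\Ext^i_R(H^{?},R)$ arranged by a spectral sequence; chasing the resulting four-term exact sequence, with $\delta$ pairing $H^1$ into $R$, yields the stated sequence. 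The torsion-freeness of $H^1$ (it embeds in $R^{a+1}$) is what guarantees no spurious $\Ext^1_R(H^1,R)$-term pollutes the middle, and the regularity hypothesis on the generator of $\delta(T,\Delta)$ is what makes $\operatorname{Ind}(\delta)$ a genuine ideal (a rank-$1$ reflexive module, hence identifiable with a fractional ideal) rather than a lower-rank degeneration.
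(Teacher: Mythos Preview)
Your direct dualization approach (the $\RHom_R(-,R)$ route you mention at the end) can indeed be made to work, and is arguably more transparent than the paper's method --- but you have not executed it, and the piece you flag as the ``main obstacle'' is precisely the computation you need. Here is the missing step. Split \eqref{exact:stark} into $0\to H^1\xrightarrow{\iota} R^{a+1}\to Z\to 0$ and $0\to Z\to R^a\to H^2\to 0$; dualizing gives $\operatorname{coker}(\iota^*\colon (R^{a+1})^*\to (H^1)^*)\cong\Ext^1_R(Z,R)\cong\Ext^2_R(H^2,R)$. Since $\delta$ is $R$-regular and $H^1$ has rank one, $\operatorname{ev}_\delta\colon(H^1)^*\to R$ is injective with image $\mathrm{Ind}(\delta(T,\Delta))$. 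Now the crucial elementary observation: the coordinates of $\delta=\widetilde\varphi_a\circ\cdots\circ\widetilde\varphi_1(e_1\wedge\cdots\wedge e_{a+1})\in R^{a+1}$ are (up to sign) exactly the $a\times a$ minors of the matrix of $\varphi$, so $\operatorname{ev}_\delta(\operatorname{im}\iota^*)=\Fitt^0_R(H^2)$. This immediately gives the exact sequence. Without this identification your argument is incomplete, and your attempts to deduce the inclusion $\Fitt^0_R(H^2)\subseteq\mathrm{Ind}(\delta)$ via localization at height-one primes and reflexivity are a detour --- the theorem does not assume $R$ normal, and the claim that Fitting ideals are reflexive is false in general.

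For comparison, the paper takes a genuinely different route: rather than dualizing over $R$ directly, it reduces modulo ideals $I$ with $R/I$ zero-dimensional Gorenstein, where Matlis duality is exact. Base-changing the perfect complex produces a four-term Tor sequence, whose $R/I$-dual is the four-term Ext sequence $0\to\Ext^1_R(H^2,R/I)\to\Hom_{R/I}(\widetilde H^1_{\rm f}(T/IT),R/I)\to\Hom_R(H^1,R/I)\to\Ext^2_R(H^2,R/I)\to 0$. Proposition~\ref{proposition_delta=fitt} identifies the image of the middle arrow (under $\operatorname{ev}_\delta$) with $\Fitt^0_{R/I}(\widetilde H^2_{\rm f}(T/IT))$, and one passes to the inverse limit over $I$. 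The paper's approach has the virtue of leveraging the already-established Proposition~\ref{proposition_delta=fitt} (which is the $r=0$ case over Artinian $R/I$); your direct approach avoids the inverse-limit step but requires the explicit minor computation you did not carry out.
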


\begin{proof}
Let us take an ideal $I \subset R$ such that $R/I$ is a $0$-dimensional Gorenstein local ring. Since we have
\[
\widetilde{R\Gamma}_{\rm f}(G_{\QQ, \Sigma}, T, \Delta) \otimes^{\mathbb{L}}_R R/I \cong \widetilde{R\Gamma}_{\rm f}(G_{\QQ, \Sigma}, T/IT, \Delta) \,\,\, \textrm{ and } \,\,\, 
\widetilde{R\Gamma}_{\rm f}(G_{\QQ, \Sigma}, T, \Delta) \in D^{[1,2]}_{\rm parf}(_R{\rm Mod}) 
\]
by Proposition \ref{prop_[1,2]_parf}, we have an exact sequence 
\begin{align*}
    0 \longrightarrow \mathrm{Tor}^R_2(\widetilde{H}^2_{\rm f}(G_{\QQ, \Sigma}, T, \Delta), R/I) 
    \longrightarrow & \widetilde{H}^1_{\rm f}(G_{\QQ, \Sigma}, T, \Delta) \otimes_R R/I 
    \\
    &\longrightarrow \widetilde{H}^1_{\rm f}(G_{\QQ, \Sigma}, T/IT, \Delta) 
    \longrightarrow 
    \mathrm{Tor}^R_1(\widetilde{H}^2_{\rm f}(G_{\QQ, \Sigma}, T, \Delta), R/I) \longrightarrow 0
\end{align*} 
of $R/I$-modules. Since $R/I$ is injective, we obtain the following
an exact sequence of $R/I$-modules by passing to $R/I$-duals in the exact sequence above:
\begin{align*}
    0 \longrightarrow \mathrm{Ext}_R^1(\widetilde{H}^2_{\rm f}(G_{\QQ, \Sigma}, T, \Delta), R/I) 
    \longrightarrow & \Hom_{R/I}(\widetilde{H}^1_{\rm f}(G_{\QQ, \Sigma}, T/IT, \Delta), R/I )
    \\
    &\longrightarrow \Hom_{R}(\widetilde{H}^1_{\rm f}(G_{\QQ, \Sigma}, T, \Delta), R/I) 
    \longrightarrow 
    \mathrm{Ext}^2_R(\widetilde{H}^2_{\rm f}(G_{\QQ, \Sigma}, T, \Delta), R/I) \longrightarrow 0. 
\end{align*} 
Let us write $F_I$ for the image of the homomorphism 
\[
\Hom_{R/I}(\widetilde{H}^1_{\rm f}(G_{\QQ, \Sigma}, T/IT, \Delta), R/I ) \longrightarrow \Hom_{R}(\widetilde{H}^1_{\rm f}(G_{\QQ, \Sigma}, T, \Delta), R/I)\,.
\]
Then by Proposition \ref{proposition_delta=fitt},
\[
\mathrm{Fitt}_{R/I}^0(\widetilde{H}^2_{\rm f}(G_{\QQ, \Sigma}, T/IT, \Delta)) 
= \{\varphi(\delta) \mid \delta \in \delta(T, \Delta),\, \varphi \in F_I\}. 
\]
Since $\delta(T, \Delta)$ is generated by an $R$-regular element, we have 
\begin{align*}
  \Hom_R(\widetilde{H}^1_{\rm f}(G_{\QQ, \Sigma}, T, \Delta), R) &\stackrel{\sim}{\longrightarrow} \mathrm{Ind}(\delta(T, \Delta)); \, \qquad \delta \mapsto \varphi(\delta), 
  \\
   \varprojlim_{I}F_I  &\stackrel{\sim}{\longrightarrow} \mathrm{Fitt}_{R}^0\,\widetilde{H}^2_{\rm f}(G_{\QQ, \Sigma}, T, \Delta)\, ; \, \qquad \delta \mapsto \varphi(\delta), 
\end{align*}
where $\delta \in \delta(T, \Delta)$ is a generator. 
The desired exact sequence is thus constructed and our proof is complete. 
\end{proof}

\begin{corollary}\label{corollary_regular_delta_r=1}
Suppose that $R$ is regular and $\delta(T, \Delta) \neq \{0\}$. 
We then have 
\[
\det(\widetilde{H}^1_{\rm f}(G_{\QQ, \Sigma}, T, \Delta)/\delta(T, \Delta)) = \det(\widetilde{H}^2_{\rm f}(G_{\QQ, \Sigma}, T, \Delta)). 
\]
\end{corollary}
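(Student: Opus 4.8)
The plan is to deduce this from Theorem~\ref{theorem_delta_rank1} by passing from Fitting ideals and Ext-modules to determinantal (characteristic) ideals, using that $R$ is regular (hence normal, so that Corollary~\ref{cor:det=char} and Lemma~\ref{lem:reflexible} apply) and that $\widetilde{R\Gamma}_{\rm f}(G_{\QQ,\Sigma},T,\Delta)\in D^{[1,2]}_{\mathrm{parf}}({}_R\mathrm{Mod})$. First I would note that since $\delta(T,\Delta)\neq\{0\}$, Proposition~\ref{proposition_tortion_non-zero} (with $r=1$) shows that $\delta(T,\Delta)$ is generated by an $R$-regular element and that $\widetilde{H}^2_{\rm f}(G_{\QQ,\Sigma},T,\Delta)$ is torsion. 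Hence Theorem~\ref{theorem_delta_rank1} applies and gives the exact sequence
\[
0 \longrightarrow \mathrm{Fitt}^0_R\bigl(\widetilde{H}^2_{\rm f}(G_{\QQ,\Sigma},T,\Delta)\bigr)
\longrightarrow \mathrm{Ind}(\delta(T,\Delta))
\longrightarrow \mathrm{Ext}_R^2\bigl(\widetilde{H}^2_{\rm f}(G_{\QQ,\Sigma},T,\Delta),R\bigr)
\longrightarrow 0.
\]
Because $\delta(T,\Delta)$ is generated by a regular element, the isomorphism $\Hom_R(\widetilde{H}^1_{\rm f}(G_{\QQ,\Sigma},T,\Delta),R)\xrightarrow{\sim}\mathrm{Ind}(\delta(T,\Delta))$ exhibited in the proof of Theorem~\ref{theorem_delta_rank1} lets me identify $\mathrm{Ind}(\delta(T,\Delta))$ with $(\widetilde{H}^1_{\rm f})^{**}\big/\!\sim$; more precisely, since $R$ is regular, $\widetilde{H}^1_{\rm f}(G_{\QQ,\Sigma},T,\Delta)$ is torsion-free (it injects into a free module by \eqref{exact:stark}), hence reflexive up to a pseudo-null discrepancy, and the quotient $\widetilde{H}^1_{\rm f}/\delta(T,\Delta)$ differs from $\mathrm{Ind}(\delta(T,\Delta))$ (viewed appropriately) only in height $\geq 2$.

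The key reduction is to work prime by prime at height-one primes $\fp$ of $R$ and invoke Lemma~\ref{lem:reflexible}: both sides of the claimed equality are invertible (hence reflexive) ideals of $\mathrm{Frac}(R)$, so it suffices to check the identity after localizing at each height-one prime $\fp$. Over the discrete valuation ring $R_\fp$ the module $\widetilde{H}^1_{\rm f}(G_{\QQ,\Sigma},T,\Delta)\otimes_R R_\fp$ is free of rank one (its generic rank equals $r=1$ since $\widetilde{H}^2_{\rm f}$ is torsion), $\delta(T,\Delta)R_\fp = \pi^{e}\cdot\widetilde{H}^1_{\rm f}\otimes R_\fp$ for the uniformizer $\pi$ and some $e\geq0$, and $\widetilde{H}^2_{\rm f}\otimes R_\fp$ is a finite torsion module; then $\mathrm{Ext}^2_{R_\fp}(\widetilde{H}^2_{\rm f}\otimes R_\fp,R_\fp)=0$ (global dimension of $R_\fp$ is $1$), so the exact sequence above collapses to $\mathrm{Fitt}^0_{R_\fp}(\widetilde{H}^2_{\rm f}\otimes R_\fp)=\mathrm{Ind}(\delta(T,\Delta))R_\fp=\pi^{e}R_\fp$, while $\det(\widetilde{H}^1_{\rm f}/\delta(T,\Delta))R_\fp = \pi^{e}R_\fp$ directly from the presentation, and $\det(\widetilde{H}^2_{\rm f})R_\fp=\mathrm{char}_{R_\fp}(\widetilde{H}^2_{\rm f}\otimes R_\fp)=\mathrm{Fitt}^0_{R_\fp}(\widetilde{H}^2_{\rm f}\otimes R_\fp)=\pi^{e}R_\fp$ (the last equality because over a DVR the zeroth Fitting ideal of a finite torsion module equals its characteristic ideal). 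This yields the equality of the two determinantal ideals locally at every height-one prime, and Lemma~\ref{lem:reflexible} upgrades it to the global equality
\[
\det\bigl(\widetilde{H}^1_{\rm f}(G_{\QQ,\Sigma},T,\Delta)/\delta(T,\Delta)\bigr)=\det\bigl(\widetilde{H}^2_{\rm f}(G_{\QQ,\Sigma},T,\Delta)\bigr).
\]

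The main obstacle I anticipate is the bookkeeping needed to pass cleanly from $\mathrm{Ind}(\delta(T,\Delta))$ — which is defined in terms of $\Hom$ into $R$ and can a priori differ from the ``naive'' quotient $\widetilde{H}^1_{\rm f}/\delta(T,\Delta)$ by modules supported in codimension $\geq2$ — to the determinant of the quotient module. Since determinants (characteristic ideals over a normal ring) are insensitive to pseudo-null modules, this discrepancy disappears after applying $\mathrm{char}_R(-)$, but one must be careful that the exact sequence of Theorem~\ref{theorem_delta_rank1} and the identification of $\mathrm{Ind}(\delta(T,\Delta))$ with (a twist of) $\widetilde{H}^1_{\rm f}/\delta(T,\Delta)$ are compatible at every height-one prime; this is exactly where regularity of $R$ (and not merely normality) is used, to guarantee that $\widetilde{H}^2_{\rm f}$ has finite projective dimension everywhere and that the $\mathrm{Ext}^2$ term is pseudo-null. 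Alternatively, one can avoid Theorem~\ref{theorem_delta_rank1} altogether and argue directly from the presentation \eqref{exact:stark}: localizing at a height-one prime, diagonalize $\varphi$ as in the proof of the theorem preceding Lemma~\ref{lemma_fitt_stark}, and read off both sides as $\pi^{\sum a_i}R_\fp$; I would likely present this second, more self-contained route as the main argument and mention the Ext-sequence only as motivation.
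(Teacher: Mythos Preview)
Your proposal is correct. The paper gives no explicit proof of this corollary, and both routes you outline are valid; in fact your ``alternative'' route (localize the presentation \eqref{exact:stark} at a height-one prime, diagonalize $\varphi$, and read off $\pi^{\sum a_i}R_\fp$ on both sides) is exactly the proof the paper already gave for the unlabeled theorem just before Lemma~\ref{lemma_fitt_stark}, which asserts $\mathrm{char}_R\bigl({\bigcap}^r_R\widetilde{H}^1_{\rm f}/\delta\bigr)=\mathrm{char}_R(\widetilde{H}^2_{\rm f})$ for normal $R$ and general $r$. The corollary is most cleanly read as the $r=1$ case of that theorem together with two observations: over a regular ring $\det=\mathrm{char}$ (Corollary~\ref{cor:det=char}), and the natural map $\widetilde{H}^1_{\rm f}\to(\widetilde{H}^1_{\rm f})^{**}={\bigcap}^1_R\widetilde{H}^1_{\rm f}$ is an isomorphism at every height-one prime (since $\widetilde{H}^1_{\rm f}$ is torsion-free and $R_\fp$ is a DVR), so the two quotients have the same characteristic ideal. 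Your main route through Theorem~\ref{theorem_delta_rank1} and the collapse of the $\mathrm{Ext}^2$ term over a DVR is a slightly longer but equally valid path to the same conclusion.
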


\begin{remark} \label{rmk:comparisonkolyvagin}
Let us consider the case that $R$ is regular and the $R$-module $\widetilde{H}^2_{\rm f}(G_{\QQ, \Sigma}, T, \Delta)$  is torsion.  We also assume the standard assumptions of the theory of Euler--Kolyvagin systems, e.g., that the Galois representation $G_{\QQ}\to \GL(T)$ has a large image, $H^0(G_p, F^{\pm}\overline{T})$ and $H^2(G_p, F^{\pm}\overline{T})$ vanish, and the complex $R\Gamma(G_v, T)$ is acyclic for each $v \in \Sigma^p$. 
One can then show that  the module of Kolyvagin systems $\mathrm{KS}_1(T, \Delta)$ associated with the pair $(T,\Delta)$  is free of rank $1$; cf. \cite[Theorem 5.2.10(ii)]{mr02}, \cite[Theorem A]{kbbdeform}, and \cite[Theorem 5.2(i)]{bss_kolyvagin-stark}. Moreover, we have a homomorphism 
\[
\mathrm{KS}_1(T, \Delta) \longrightarrow \widetilde{H}^1_{\rm f}(G_{\QQ, \Sigma}, T, \Delta); \,\qquad \kappa \mapsto \kappa_1= \textrm{the leading class of the Kolyvagin system $\kappa$} 
\]
since $\widetilde{H}^1_{\rm f}(G_{\QQ, \Sigma}, T, \Delta) = H^1_{\cF_{\Delta}}(G_{\QQ, \Sigma}, T) := \ker\left(H^1_{\cF^{\rm can}}(G_{\QQ, \Sigma}, T) \to H^1(G_{\mathbb{Q}_p}, F^-T)\right)$ by \cite[Lemma 9.6.3]{nekovar06} or \cite[Proposition 2.5 and Remark 2.6]{kbbCMH}. 
Moreover, in this case, $\widetilde{H}^2_{\rm f}(G_{\QQ, \Sigma}, T, \Delta)$ identifies with the dual Selmer module associated with the Selmer structure $\cF_{\Delta}$. 
Hence, the assumption that $\widetilde{H}^2_{\rm f}(G_{\QQ, \Sigma}, T, \Delta)$ is torsion shows that this homomorphism is injective (in fact, the converse to this statement also holds). 
Let us denote by $\kappa(T,\Delta)$ for its image, that is, the module of the leading classes of Kolyvagin systems. Then the theory of Kolyvagin systems tells us that 
\[
\det\,(\widetilde{H}^1_{\rm f}(G_{\QQ, \Sigma}, T, \Delta)/\kappa(T,\Delta)) = \det\,(\widetilde{H}^2_{\rm f}(G_{\QQ, \Sigma}, T, \Delta)), 
\]
and Corollary \ref{corollary_regular_delta_r=1} implies that $
\kappa(T,\Delta) = \delta(T, \Delta)$. In other words, the module $\delta(T, \Delta)$ is generated by the leading classes of Kolyvagin systems. 
It is worth mentioning that, in order to construct the module $\delta(T, \Delta)$, we only need the assumption concerning Tamagawa factors, which is considerably milder than the standard assumptions of the theory of  Kolyvagin systems.

The rigidity of Kolyvagin systems $($cf. \cite{mr02,kbb,kbbdeform}$)$ combined with Poitou--Tate global duality allows one to recast the Iwasawa Main Conjecture as the statement that the ground-level Euler system class $($which should map under a suitable Perrin-Riou logarithm to the relevant $p$-adic $L$-function$)$ generates the module of leading terms. 

Such a comparison $($of the ground-level Euler system class and the module of leading terms$)$ can be deduced without relying on the structure of the module of Kolyvagin systems, directly comparing Corollary~\ref{corollary_regular_delta_r=1} to the statement of Iwasawa Main Conjecture ``without $p$-adic $L$-functions'' $($in the sense of Kato; cf. \cite[Conjecture~12.10]{kato04}$)$ that one formulates in terms of the ground-level Euler system class. This form of the Main Conjecture is equivalent to its more familiar form in terms of $p$-adic $L$-functions $($granted a reciprocity law linking the said class to the $p$-adic $L$-function$)$ by the Poitou--Tate global duality; cf. \cite[\S17.13]{kato04}.  We refer the reader to Proposition~\ref{prop_eqn_2022_09_12_1953_bis}, where we go through a special case of this discussion.
\end{remark}

\subsubsection{}
Let us consider a pair of $R[G_p]$-submodules $F_1^+T \subset F_2^+T$ of  $T$ such that the quotients $T/F^+_1T$ and $T/F^+_2T$ are free as $R$-modules. 
We then have the Greenberg local conditions $\Delta_1 := \Delta_{F^+_1}$ and $\Delta_2 := \Delta_{F^+_2}$. Let us assume that
\begin{align*}
    \mathrm{rank}_R(T^{c=-1}) - \mathrm{rank}_R(T/F^+_1 T) &= 0, 
    \\
    \mathrm{rank}_R(T^{c=-1}) - \mathrm{rank}_R(T/F^+_2 T) &= 1. 
\end{align*}
Suppose in addition that 
\begin{itemize}
    \item $H^0(G_p, F_2^+\overline{T}/F_1^+\overline{T}) = 0=  H^2(G_p, F_2^+\overline{T}/F_1^+\overline{T})$. 
\end{itemize}
Then the $R$-module $H^1(G_p, F_2^+T/F_1^+T)$ is free of rank one by the local Euler characteristic formula. Let us fix a trivialisation 
$$\mathrm{LOG}_{\Delta_2/\Delta_1}\,:\quad H^1(G_p, F_2^+T/F_1^+T) \stackrel{\sim}{\lra} R$$
and abusively denote the compositum of the arrows
$$\widetilde{H}^1_{\rm f}(G_{\QQ, \Sigma}, T, \Delta_2)\xrightarrow{\res_p}H^1(G_p, F_2^+T)\lra H^1(G_p, F_2^+T/F_1^+T)\xrightarrow{\mathrm{LOG}_{\Delta_2/\Delta_1}}R$$
also by $\mathrm{LOG}_{\Delta_2/\Delta_1}$. We then have an exact sequence 
\begin{align*}
    0 \longrightarrow \widetilde{H}^1_{\rm f}(G_{\QQ, \Sigma}, T, \Delta_1) 
    \longrightarrow \widetilde{H}^1_{\rm f}(G_{\QQ, \Sigma}, T, \Delta_2)  
    \xrightarrow{\mathrm{LOG}_{\Delta_2/\Delta_1}} R 
    \longrightarrow \widetilde{H}^2_{\rm f}(G_{\QQ, \Sigma}, T, \Delta_1)
    \longrightarrow \widetilde{H}^2_{\rm f}(G_{\QQ, \Sigma}, T, \Delta_2) 
    \longrightarrow 0. 
\end{align*}

\begin{lemma}\label{lemma_log_H^1_vanishes}
If $\mathrm{LOG}_{\Delta_2/\Delta_1}(\delta(T, \Delta_2))$ is generated by a regular element of $R$,  
then $\widetilde{H}^1_{\rm f}(G_{\QQ, \Sigma}, T, \Delta_1) = 0$. 
\end{lemma}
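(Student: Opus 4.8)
The plan is to reduce the statement to Theorem~\ref{theorem_delta_rank0}(3), applied to $(T,\Delta_1)$ — which has invariant $r(T,\Delta_1)=0$ by hypothesis — and which asserts precisely that if $\delta(T,\Delta_1)$ is generated by an $R$-regular element then $\widetilde{H}^1_{\rm f}(G_{\QQ,\Sigma},T,\Delta_1)=0$. So the entire content of the lemma is the identity
\[
\delta(T,\Delta_1)\;=\;\mathrm{LOG}_{\Delta_2/\Delta_1}\big(\delta(T,\Delta_2)\big)\qquad\text{inside }R,
\]
after which the hypothesis of the lemma (that the right-hand side is generated by a regular element) gives exactly what Theorem~\ref{theorem_delta_rank0}(3) needs.

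\textbf{Proof of the identity.} I would argue with explicit representatives, in the spirit of the proofs in \S\ref{sec_Koly_Sys_Dec_4_18_05_2021}. By Proposition~\ref{prop_[1,2]_parf} and the exact sequence \eqref{exact:stark} (with $r=r(T,\Delta_2)=1$), the complex $\widetilde{R\Gamma}_{\rm f}(G_{\QQ,\Sigma},T,\Delta_2)$ is represented by a two-term complex $[\,R^{a+1}\xrightarrow{\varphi}R^{a}\,]$ in degrees $1,2$, with $\ker\varphi=\widetilde{H}^1_{\rm f}(G_{\QQ,\Sigma},T,\Delta_2)$. The hypotheses $H^0(G_p,F_2^+\overline T/F_1^+\overline T)=0=H^2(G_p,F_2^+\overline T/F_1^+\overline T)$ together with the local Euler characteristic formula show that $R\Gamma(G_p,F_2^+T/F_1^+T)$ has cohomology concentrated in degree $1$ and free of rank one there, hence is isomorphic in the derived category to $R[-1]$ via the chosen trivialization $\mathrm{LOG}_{\Delta_2/\Delta_1}$. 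The exact triangle of Selmer complexes for the nested Greenberg conditions $\Delta_1\leq\Delta_2$, namely
\[
\widetilde{R\Gamma}_{\rm f}(G_{\QQ,\Sigma},T,\Delta_1)\longrightarrow\widetilde{R\Gamma}_{\rm f}(G_{\QQ,\Sigma},T,\Delta_2)\longrightarrow R\Gamma(G_p,F_2^+T/F_1^+T)\xrightarrow{+1},
\]
induces precisely the five-term exact sequence displayed just before the lemma, the middle arrow being $\mathrm{LOG}_{\Delta_2/\Delta_1}$. Since $[\,R^{a+1}\xrightarrow{\varphi}R^a\,]$ is a complex of projectives, the morphism to $R[-1]$ is represented by an honest chain map, i.e.\ by a homomorphism $\pi\colon R^{a+1}\to R$ with $\pi|_{\ker\varphi}=\mathrm{LOG}_{\Delta_2/\Delta_1}$; forming the mapping fibre of this chain map termwise produces the square complex $[\,R^{a+1}\xrightarrow{(-\varphi,\,\pi)}R^{a+1}\,]$ in degrees $1,2$, which is therefore a representative of $\widetilde{R\Gamma}_{\rm f}(G_{\QQ,\Sigma},T,\Delta_1)$. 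Using that $\delta(T,\Delta)$ is independent of the chosen representative (established above) and unwinding the definition together with Proposition~\ref{proposition_exterior_wedge}, $\delta(T,\Delta_1)$ is the image of $\bigwedge^{a+1}_R R^{a+1}$ under $\widetilde{\psi}_{a+1}\circ\cdots\circ\widetilde{\psi}_1$ for $\psi=(-\varphi_1,\dots,-\varphi_a,\pi)$; pulling the single factor $\widetilde{\pi}$ to the outside, this image equals $\pi$ applied to $\mathrm{im}(\widetilde{\varphi}_a\circ\cdots\circ\widetilde{\varphi}_1)=\delta(T,\Delta_2)\subset\ker\varphi$. Hence $\delta(T,\Delta_1)=\pi(\delta(T,\Delta_2))=\mathrm{LOG}_{\Delta_2/\Delta_1}(\delta(T,\Delta_2))$, as claimed.

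\textbf{Conclusion and main difficulty.} Granting the identity, the hypothesis of the lemma says $\delta(T,\Delta_1)$ is generated by an $R$-regular element of $R$, so Theorem~\ref{theorem_delta_rank0}(3) gives $\widetilde{H}^1_{\rm f}(G_{\QQ,\Sigma},T,\Delta_1)=0$. (One may also remark that $\delta(T,\Delta_2)\neq 0$ with a non-zero-divisor generator, whence Proposition~\ref{proposition_tortion_non-zero} shows $\widetilde{H}^2_{\rm f}(G_{\QQ,\Sigma},T,\Delta_2)$ is torsion and the five-term sequence becomes transparent, although this is not strictly needed.) The genuinely delicate point is the compatibility in the middle paragraph: one must confirm that the morphism $\widetilde{R\Gamma}_{\rm f}(\Delta_2)\to R\Gamma(G_p,F_2^+T/F_1^+T)$ coming from the local condition at $p$ really has mapping fibre $\widetilde{R\Gamma}_{\rm f}(\Delta_1)$, and then track the signs in passing from $\psi=(-\varphi,\pi)$ back to $\varphi$ in the exterior bi-dual computation — both are routine within the Burns--Sakamoto exterior-power formalism recalled in \S\ref{sec_Koly_Sys_Dec_4_18_05_2021}, but warrant care.
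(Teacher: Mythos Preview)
Your argument is correct, but it takes a substantially different (and more elaborate) route than the paper's.

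The paper's proof is a four-line cohomological argument: since $\mathrm{LOG}_{\Delta_2/\Delta_1}(\delta(T,\Delta_2))$ is generated by a regular element, so is $\delta(T,\Delta_2)$ itself, whence $\widetilde{H}^2_{\rm f}(G_{\QQ,\Sigma},T,\Delta_2)$ is torsion by Proposition~\ref{proposition_tortion_non-zero}. Then the five-term exact sequence displayed just before the lemma shows that $\widetilde{H}^2_{\rm f}(G_{\QQ,\Sigma},T,\Delta_1)$ is sandwiched between the torsion modules $R/\mathrm{LOG}(\delta(T,\Delta_2))$ and $\widetilde{H}^2_{\rm f}(G_{\QQ,\Sigma},T,\Delta_2)$, hence is itself torsion. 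Finally, since $\chi(\widetilde{R\Gamma}_{\rm f}(G_{\QQ,\Sigma},T,\Delta_1))=0$ and the complex lies in $D^{[1,2]}_{\rm parf}$, the vanishing $\widetilde{H}^1_{\rm f}(G_{\QQ,\Sigma},T,\Delta_1)=0$ follows exactly as in the proof of Corollary~\ref{corollary_normal_det=char} (that part of the argument does not use normality).

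Your approach instead establishes the identity $\delta(T,\Delta_1)=\mathrm{LOG}_{\Delta_2/\Delta_1}(\delta(T,\Delta_2))$ directly by building an explicit two-term projective representative for $\widetilde{R\Gamma}_{\rm f}(G_{\QQ,\Sigma},T,\Delta_1)$ as a mapping fibre, and then invokes Theorem~\ref{theorem_delta_rank0}(3). This is genuinely more work, but it buys you more: the identity you prove is precisely Theorem~\ref{thm_PR_formal}, which in the paper is proved \emph{after} the lemma (and using it), and moreover only under the additional hypothesis that $R$ is regular. Your explicit-complex argument does not seem to require regularity, so it in fact yields a strengthening of Theorem~\ref{thm_PR_formal}. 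From the paper's organizational standpoint, however, the short cohomological proof suffices here, and the identity is deferred to Theorem~\ref{thm_PR_formal}; your route effectively reverses this order. The sign and fibre compatibilities you flag as ``warrant care'' are indeed routine: the anti-commutation $\widetilde{\psi}_i\widetilde{\psi}_j=-\widetilde{\psi}_j\widetilde{\psi}_i$ gives $\widetilde{\psi}_{a+1}\circ\cdots\circ\widetilde{\psi}_1=(-1)^a\,\widetilde{\pi}\circ\widetilde{\varphi}_a\circ\cdots\circ\widetilde{\varphi}_1$, and the sign is irrelevant for the cyclic submodule.
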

\begin{proof}
By Theorem \ref{theorem_delta_rank1}.1, the $R$-module $\widetilde{H}^2_{\rm f}(G_{\QQ, \Sigma}, T, \Delta_2)$ is torsion. 
Since the quotient $R/\mathrm{LOG}_{\Delta_2/\Delta_1}(\delta(T, \Delta_2))$ is torsion by assumption, 
$\widetilde{H}^2_{\rm f}(G_{\QQ, \Sigma}, T, \Delta_1)$ is also torsion. 
Since we have $\chi(\widetilde{R\Gamma}_{\rm f}(G_{\QQ, \Sigma}, T, \Delta_1)) = 0$, it follows that $\widetilde{H}^1_{\rm f}(G_{\QQ, \Sigma}, T, \Delta_1) = 0$, as required. 
\end{proof}


\begin{theorem}
\label{thm_PR_formal}
Suppose that $R$ is regular. 
Then we have 
    \[
\mathrm{LOG}_{\Delta_2/\Delta_1}(\delta(T, \Delta_2)) = \delta(T, \Delta_1). 
\]
In particular, if $\widetilde{H}^2_{\rm f}(G_{\QQ, \Sigma}, T, \Delta_1)$ is torsion, then 
$\mathrm{LOG}_{\Delta_2/\Delta_1}(\delta(T, \Delta_2)) = \det(\widetilde{R\Gamma}_{\rm f}(G_{\QQ, \Sigma}, T, \Delta_1))$. 
\end{theorem}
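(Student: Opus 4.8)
The plan is to prove the identity $\mathrm{LOG}_{\Delta_2/\Delta_1}(\delta(T,\Delta_2)) = \delta(T,\Delta_1)$ by a direct computation at every height-one prime of $R$, exploiting the fact that $R$ is regular (so reflexive ideals, in particular invertible ideals and cyclic torsion-free modules, are determined by their localizations at height-one primes, cf. Lemma~\ref{lem:reflexible}). Since $r(T,\Delta_1) = 0$ and $r(T,\Delta_2) = 1$, we have $\delta(T,\Delta_1) \subset R$ and $\delta(T,\Delta_2) \subset \widetilde{H}^1_{\rm f}(G_{\QQ,\Sigma},T,\Delta_2)$, and $\mathrm{LOG}_{\Delta_2/\Delta_1}$ maps the latter into $R$; so both sides are ideals of $R$. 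First I would reduce to the case where $R$ is a discrete valuation ring by localizing at a height-one prime $\fp$ and invoking Lemma~\ref{lem:reflexible} (both $\delta(T,\Delta_1)$ and $\mathrm{LOG}_{\Delta_2/\Delta_1}(\delta(T,\Delta_2))$ are reflexive, being either $\{0\}$ or invertible ideals — if $\delta(T,\Delta_2)=\{0\}$ the identity is either trivial or follows from Theorem~\ref{theorem_delta_rank0}.3 and Lemma~\ref{lemma_log_H^1_vanishes} forcing the relevant modules to behave compatibly).

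The key step is then to set up the two Selmer complexes on a common footing. The exact sequence displayed just before Lemma~\ref{lemma_log_H^1_vanishes},
\[
0 \to \widetilde{H}^1_{\rm f}(G_{\QQ,\Sigma},T,\Delta_1) \to \widetilde{H}^1_{\rm f}(G_{\QQ,\Sigma},T,\Delta_2) \xrightarrow{\mathrm{LOG}_{\Delta_2/\Delta_1}} R \to \widetilde{H}^2_{\rm f}(G_{\QQ,\Sigma},T,\Delta_1) \to \widetilde{H}^2_{\rm f}(G_{\QQ,\Sigma},T,\Delta_2) \to 0,
\]
together with the exact triangle relating $\widetilde{R\Gamma}_{\rm f}(\cdot,\Delta_1)$, $\widetilde{R\Gamma}_{\rm f}(\cdot,\Delta_2)$ and $H^1(G_p,F_2^+T/F_1^+T)$ (shifted), lets me choose compatible presentations. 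Concretely, pick a representative $[\,R^{a+1}\xrightarrow{\varphi}R^a\,]$ of $\widetilde{R\Gamma}_{\rm f}(G_{\QQ,\Sigma},T,\Delta_2)$ (using $r=1$); since $H^1(G_p,F_2^+T/F_1^+T)\cong R$ is concentrated in degree $1$, the triangle shows $\widetilde{R\Gamma}_{\rm f}(G_{\QQ,\Sigma},T,\Delta_1)$ is represented by $[\,R^{a+1}\xrightarrow{(\varphi,\psi)}R^a\oplus R\,]$ for the map $\psi\colon R^{a+1}\to R$ which is exactly (the composite computing) $\mathrm{LOG}_{\Delta_2/\Delta_1}$ on the kernel. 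Now I unwind the definitions of the two modules of leading terms: $\delta(T,\Delta_2)$ is the image of $\det(R^{a+1}) \to {\bigcap}^1_R \widetilde{H}^1_{\rm f}(\cdot,\Delta_2)$ given by $\widetilde{\varphi}_a\circ\cdots\circ\widetilde{\varphi}_1$, while $\delta(T,\Delta_1)$ (with $r=0$) is the image of $\det(R^{a+1})\to R$ given by $\widetilde{\psi}\circ\widetilde{\varphi}_a\circ\cdots\circ\widetilde{\varphi}_1$ — and $\widetilde{\psi}$ on ${\bigwedge}^1$ is literally $\psi$, i.e. the map $\mathrm{LOG}_{\Delta_2/\Delta_1}$ after identifying ${\bigcap}^1_R\widetilde{H}^1_{\rm f}(\cdot,\Delta_2)$ with $\widetilde{H}^1_{\rm f}(\cdot,\Delta_2)$ via Proposition~\ref{proposition_exterior_wedge}. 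This is precisely the content of the claimed identity, once one checks the identifications are compatible.

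Over a DVR the verification becomes the Smith-normal-form computation already carried out in the proof of Theorem (the one just above Lemma~\ref{lemma_fitt_stark}, establishing $\mathrm{char}$-compatibility of $\delta(T,\Delta)$): after a change of basis write $\varphi_i = \pi^{a_i}\mathrm{pr}_i$ for $1\le i\le a$, so $\widetilde{H}^1_{\rm f}(\cdot,\Delta_2)\otimes R_\fp$ is free of rank one on $e_{a+1}$, $\delta(T,\Delta_2)R_\fp = \pi^{\sum a_i}R_\fp\cdot e_{a+1}$, and applying $\psi$ (which sends $e_{a+1}$ to $\pi^{a_{a+1}}$ for the appropriate $a_{a+1}\ge0$ after the same normalization) yields $\delta(T,\Delta_1)R_\fp = \pi^{\sum_{i=1}^{a+1}a_i}R_\fp$. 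Since the left-hand side $\mathrm{LOG}_{\Delta_2/\Delta_1}(\delta(T,\Delta_2))R_\fp$ is by definition $\psi(\pi^{\sum_{i\le a}a_i}e_{a+1})R_\fp = \pi^{\sum_{i\le a+1}a_i}R_\fp$, the two agree. Gluing over all height-one primes via Lemma~\ref{lem:reflexible} gives the global identity. The final sentence then follows: if $\widetilde{H}^2_{\rm f}(G_{\QQ,\Sigma},T,\Delta_1)$ is torsion, then $\widetilde{H}^1_{\rm f}(G_{\QQ,\Sigma},T,\Delta_1)=0$ since $r(T,\Delta_1)=0$ and the Euler characteristic vanishes, so by Theorem~\ref{theorem_delta_rank0}.2, $\delta(T,\Delta_1) = \det(\widetilde{R\Gamma}_{\rm f}(G_{\QQ,\Sigma},T,\Delta_1))$, as asserted.

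I expect the main obstacle to be bookkeeping rather than conceptual: namely, making the identification between the "extra differential" $\psi$ appearing in the mapping-cone representative of $\widetilde{R\Gamma}_{\rm f}(\cdot,\Delta_1)$ and the explicit trivialization map $\mathrm{LOG}_{\Delta_2/\Delta_1}$ genuinely compatible with the isomorphism of Proposition~\ref{proposition_exterior_wedge}, including getting all signs and the placement of the $[-1]$ shift in the Selmer-complex definition right. One has to be careful that the surjection $H^1(G_p,F_2^+T/F_1^+T)\twoheadrightarrow$ (the relevant cokernel piece) matches $\res_p$ followed by the chosen trivialization, and that the presentation $[\,R^{a+1}\to R^a\oplus R\,]$ is actually a legitimate representative of the $\Delta_1$-complex and not merely quasi-isomorphic up to an untracked twist; this is where invoking Proposition~\ref{prop_[1,2]_parf} (perfection in $D^{[1,2]}$) and the independence-of-representative lemma for $\delta$ is essential.
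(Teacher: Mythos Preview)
Your approach is correct and takes a genuinely different route from the paper's. The paper argues by case analysis: it treats $\delta(T,\Delta_2)=0$, then $\delta(T,\Delta_2)\neq 0$ but $\mathrm{LOG}(\delta(T,\Delta_2))=0$, and finally the generic case, where it compares determinants along the five-term exact sequence using Theorem~\ref{theorem_delta_rank0} and Corollary~\ref{corollary_regular_delta_r=1}. Your argument instead produces a \emph{compatible presentation}: if $[R^{a+1}\xrightarrow{\varphi}R^a]$ represents $\widetilde{R\Gamma}_{\rm f}(\cdot,\Delta_2)$, then the cocone of the map to $R\Gamma(G_p,F_2^+T/F_1^+T)\simeq R[-1]$ gives $[R^{a+1}\xrightarrow{(\pm\varphi,\psi)}R^a\oplus R]$ as a representative of $\widetilde{R\Gamma}_{\rm f}(\cdot,\Delta_1)$, with $\psi$ inducing $\mathrm{LOG}_{\Delta_2/\Delta_1}$ on $H^1=\ker\varphi$. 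Unwinding, $\delta(T,\Delta_1)$ is generated (up to sign) by $\widetilde\psi\circ\widetilde\varphi_a\circ\cdots\circ\widetilde\varphi_1(e_1\wedge\cdots\wedge e_{a+1})=\psi(x)$ where $x$ generates $\delta(T,\Delta_2)$, and since $x\in\ker\varphi$ this is $\mathrm{LOG}_{\Delta_2/\Delta_1}(x)$. This is clean and avoids the case split entirely.

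Two remarks. First, your DVR reduction is unnecessary: the compatible-presentation argument works globally over $R$, and in fact does not use regularity at all (the independence-of-representative lemma and the cocone construction only need the standing Gorenstein hypothesis). So you are proving a slightly stronger statement than the paper does. Second, your Smith-normal-form paragraph is a little loose when you say ``$\psi$ sends $e_{a+1}$ to $\pi^{a_{a+1}}$ after the same normalization'': having already fixed the basis to diagonalize $\varphi$, you cannot further normalize $\psi(e_{a+1})$ without disturbing that choice. But this is harmless, because what you actually need is just $\psi(x)=\psi(\pi^{\sum a_i}e_{a+1})=\pi^{\sum a_i}\psi(e_{a+1})$, which is exactly the direct argument written out explicitly---no simultaneous diagonalization is required. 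The bookkeeping concern you flag (that the extra differential $\psi$ really induces $\mathrm{LOG}_{\Delta_2/\Delta_1}$ on $H^1$) is genuine but routine: it follows from the definition of the Selmer-complex triangle and of $\mathrm{LOG}_{\Delta_2/\Delta_1}$ as $\res_p$ composed with the chosen trivialization.
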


\begin{proof}
If $\delta(T, \Delta_2) = 0$, then $\widetilde{H}^2_{\rm f}(G_{\QQ, \Sigma}, T, \Delta_2)$ is non-torsion by Theorem \ref{theorem_delta_rank1}. Therefore, $\widetilde{H}^2_{\rm f}(G_{\QQ, \Sigma}, T, \Delta_1)$ is also non-torsion. Theorem \ref{theorem_delta_rank0}.1 shows that $\delta(T, \Delta_1) = 0$, and $\mathrm{LOG}_{\Delta_2/\Delta_1}(\delta(T, \Delta_2)) = 0 = \delta(T, \Delta_1)$. 

Suppose now that $\delta(T, \Delta_2) \neq 0$. Then the $R$-module $\widetilde{H}^2_{\rm f}(G_{\QQ, \Sigma}, T, \Delta_2)$ is  torsion. 
Since $\chi(\widetilde{R\Gamma}_{\rm f}(G_{\QQ, \Sigma}, T, \Delta_2)) = -1$, we have 
$\mathrm{rank}_R(\widetilde{H}^1_{\rm f}(G_{\QQ, \Sigma}, T, \Delta_2)) = 1$. 
Therefore, if $\mathrm{LOG}_{\Delta_2/\Delta_1}(\delta(T, \Delta_2)) = 0$, then $\widetilde{H}^2_{\rm f}(G_{\QQ, \Sigma}, T, \Delta_1)$ is non-torsion, and $\mathrm{LOG}_{\Delta_2/\Delta_1}(\delta(T, \Delta_2)) = 0 = \delta(T, \Delta_1)$. 

If $\mathrm{LOG}_{\Delta_2/\Delta_1}(\delta(T, \Delta_2)) \neq 0$, then Lemma \ref{lemma_log_H^1_vanishes} shows that 
$\widetilde{H}^2_{\rm f}(G_{\QQ, \Sigma}, T, \Delta_1) = 0$ and we have an exact sequence  
\begin{align*}
    0 \to \widetilde{H}^1_{\rm f}(G_{\QQ, \Sigma}, T, \Delta_2)/&\delta(T, \Delta_2)  
    \longrightarrow R/ \mathrm{LOG}_{\Delta_2/\Delta_1}(\delta(T, \Delta_2)) \longrightarrow \widetilde{H}^2_{\rm f}(G_{\QQ, \Sigma}, T, \Delta_1)
    \longrightarrow \widetilde{H}^2_{\rm f}(G_{\QQ, \Sigma}, T, \Delta_2) 
    \to 0
\end{align*}
of torsion $R$-modules. Theorem \ref{theorem_delta_rank0} combined with Corollary \ref{corollary_regular_delta_r=1} show that 
$$
 \mathrm{LOG}_{\Delta_2/\Delta_1}(\delta(T, \Delta_2)) = \det(R/ \mathrm{LOG}_{\Delta_2/\Delta_1}(\delta(T, \Delta_2)))
= \det(\widetilde{H}^2_{\rm f}(G_{\QQ, \Sigma}, T, \Delta_1))
= \delta(T, \Delta_1). 
$$
\end{proof}

\section{Factorisation of algebraic \texorpdfstring{$p$}{}-adic \texorpdfstring{$L$}{}-functions}
\label{subsec_KS}

Our objective in \S\ref{subsec_KS} is to establish one of our main factorisation results (Theorem~\ref{thm_main_8_4_4_factorisation}). Our approach draws from the general ETNC philosophy and we expect that it is only an instance of a much more general factorisation principle one can establish in other settings. The central role of Nekov\'a\v{r}'s general formalism of Selmer complexes will be clear to the reader\footnote{As this was the case also in our earlier works \cite{kbbMTT, benoisbuyukboduk, BS19, BS2020} that concern arithmetic properties of $p$-adic $L$-functions.}, and we would like to underline its importance as a most natural medium to study the arithmetic properties of $p$-adic $L$-functions.

\subsection{Modules of leading terms (bis)}
\label{subsec_leading_terms_bis}
The factorisation formulae we shall prove will amount to a comparison of the following modules of leading terms.

\subsubsection{Hypotheses}
\label{subsubsec_hypo_section_6}
Throughout \S\ref{subsec_KS}, we shall work under the following list of hypotheses.

Assumption~\ref{assumption_tamagawa} is enforced both with $\f$ and $\g$. Note that this condition trivially holds if the tame conductors of the Hida families $\f$ and $\g$ are square-free. We assume in addition that \ref{item_Irr} holds for the family $\f$, \ref{item_Dist} for both $\f$ and $\g$, as well as the following strengthening of \ref{item_Irr} for $\g$:
\begin{itemize}
       \item[\mylabel{item_Irr_plus}{\bf (Irr$+$)}]  The residual representation $\bar{\rho}_\hg$ is absolutely irreducible when restricted $G_{\QQ(\zeta_p)}$.
    \end{itemize}
We further assume that the $p$-part of the Tamagawa factor for a single member of the Hida family {$\hf$} and a single member of the family {$\hg$} equals $1$.
    
We remark that these hypotheses guarantee the validity of \ref{item_Irr_general} and \ref{item_Tam} for $T_3^\dagger,T_2^\dagger, M_2^\dagger$ and $T_\f^\dagger$. In particular, our main conclusions in \S\ref{sec_selmer_complexes} apply and show that the Selmer complexes associated with these Galois representations are perfect. The constructions of \S\ref{sec_Koly_Sys_Dec_4_18_05_2021} apply as well; we will recall these in \S\ref{subsec_leading_terms_bis} below.

Finally, we assume that the rings $\cR_\f$ and $\cR_\g$ are both regular. We note this assumption can be ensured on passing to a smaller disc in the weight space (cf. \cite[\S5]{BSV}, where $\cR_?$ here, for $?=\f,\g$, plays the role of $\LL_?$ in op. cit.). That is, if we replace $\Lambda_{\mathrm{wt}}$ by the ring of bounded-by-$1$ functions $\LL_U$ on a sufficiently small wide-open disc $U$ in the weight space, and consider the base change of $\mathcal{R}_\f$ and $\mathcal{R}_\g$ along this restriction, the resulting rings are regular. 

We work in the scenario of \S\ref{subsubsec_2017_05_17_1800}, so that we have

\begin{itemize}
    \item[\mylabel{item_root_numbers_general}{${\bf (Sign)}$}] 
    \quad $\varepsilon(\hf)=-1$ \quad and \quad $\varepsilon(\hf_\kappa\otimes \Ad^0(\hg_\lambda))= \begin{cases}
         +1& \hbox{ if } (\kappa,\lambda)\in \cW_2^\Ad\,\\
         -1& \hbox{ if } (\kappa,\lambda)\in \cW_2^\hf\,
     \end{cases}$\,
\end{itemize}
for the global root numbers. In order to compare the module of leading terms associated with the pair $(T_\f^\dagger,\Delta_\emptyset)$, we will need to assume also the following conditions:
\begin{itemize}
\item[\mylabel{item_MC}{\bf MC})]
 $\f$ admits a crystalline specialisation of weight $k\equiv 2 \pmod{p-1}$, and either there exists a prime $q|| N$ such that $\overline{\rho}_\f(I_q)\neq \{1\}$, or there exists a real quadratic field $F$ verifying the conditions of \cite[Theorem 4]{xinwanwanhilbert}.
 \item[\mylabel{item_BI}{\bf BI})] $\rho_\f(G_{\QQ(\zeta_{p^\infty})})$ contains a conjugate of ${\rm SL}_2(\ZZ_p)$.     
 \item[\mylabel{item_non_anom}{\bf NA})] $a_p(\f)-1\in \cR_\f^\times$\,.
 \end{itemize}
We note that the hypothesis \eqref{item_MC} and the big image condition \eqref{item_BI}, which is a strengthening of \ref{item_Irr}, are to guarantee that the results on the validity of the Iwasawa main conjectures for $\f$ established in \cite{skinnerurbanmainconj,xinwanwanhilbert} apply. The hypothesis \eqref{item_non_anom} is only needed to ensure that certain local cohomology groups are free and it is certainly possible to dispense with it with more work.

\subsubsection{}
\label{subsubsec_leading_terms_bis_3}
We begin our discussion by noting that we have
$$\chi(\widetilde{R\Gamma}_{\rm f}(G_{\QQ, \Sigma}, T_\f^\dagger,\Delta_\emptyset))=-1\,$$
and we are in the situation of \S\ref{subsubsec_EP_char_is_1}.  The cyclic submodule
\begin{equation}
    \label{eqn_2022_09_12_1953}
    \delta(T_\f^\dagger,\Delta_\emptyset)\subset \widetilde{H}^1_{\rm f}(G_{\QQ, \Sigma},T_\f^\dagger, \Delta_\emptyset)
    \end{equation}
is generated by the Beilinson--Kato element ${\rm BK}_\f^\dagger\in H^1(\QQ,T_\f^\dagger)$ under the hypotheses recorded in \S\ref{subsubsec_hypo_section_6}:

\begin{proposition}
\label{prop_eqn_2022_09_12_1953_bis}
In the setting of \S\ref{subsubsec_hypo_section_6}, we have
\begin{equation}
\label{eqn_2022_09_12_1953_bis}
    \delta(T_\f^\dagger,\Delta_\emptyset)=\cR_\f\cdot {\rm BK}_\f^\dagger\,.
    \end{equation}    
\end{proposition}
\begin{proof} 
Let us put $\Sigma_\f := \{\ell \mid N_\f\} \cup \{p\}$; note that $\Sigma$ contains $\Sigma_\f$. We remark that we have an identification $\widetilde{R\Gamma}_{\rm f}(G_{\QQ, \Sigma},T_\f^\dagger, \Delta_\emptyset) = \widetilde{R\Gamma}_{\rm f}(G_{\QQ, \Sigma_\f},T_\f^\dagger, \Delta_\emptyset)$ of Selmer complexes by \cite[Proposition 7.8.8(ii)]{nekovar06}. By the definition of $\widetilde{R\Gamma}_{\rm f}(G_{\QQ, \Sigma},T_\f^\dagger, \Delta_\emptyset)$, we have
 \begin{align}
 \begin{aligned}
     \label{eqn_2026_05_13_1022}
     0 \lra \widetilde{H}^1_{\rm f}(G_{\QQ, \Sigma},T_\f^\dagger, \Delta_\emptyset) &\lra H^1(G_{\QQ, \Sigma_\f},T_\f^\dagger) \lra \bigoplus_{\ell \in \Sigma_\f \setminus \{p\}} \frac{ H^1(G_{\QQ_\ell},T_\f^\dagger)}{H^1_{\mathrm{ur}}(G_{\QQ_\ell},T_\f^\dagger) }
     \\
     &\lra \widetilde{H}^2_{\rm f}(G_{\QQ, \Sigma},T_\f^\dagger, \Delta_\emptyset) \lra H^2(G_{\QQ, \Sigma_\f},T_\f^\dagger) 
     \lra \bigoplus_{\ell \in \Sigma_\f \setminus \{p\}} H^2(G_{\QQ_\ell},T_\f^\dagger) \lra 0\,,
 \end{aligned}
 \end{align}
where we use our running assumption \eqref{item_non_anom} for the left-most vanishing. For each prime $\ell \in \Sigma_\f \setminus \{p\}$, the vanishing of the Tamagawa factors implies that $H^1(I_\ell, T_\f^\dagger)^{\mathrm{Fr}_\ell = 1} = \{0\}$ 
(see Proposition \ref{prop_I_v_infinite} and Corollary \ref{cor:tam_vanish_fin}), 
which tells us (by the inflation-restriction sequence) that 
\[
\frac{ H^1(G_{\QQ_\ell},T_\f^\dagger)}{H^1_{\mathrm{ur}}(G_{\QQ_\ell},T_\f^\dagger) } = \{0\}\,. 
\]
This fact, combined with \eqref{eqn_2026_05_13_1022}, tells us that 
\begin{equation}
\label{eqn_2026_05_13_1023}
    H^1(G_{\QQ, \Sigma_\f},T_\f^\dagger) = \widetilde{H}^1_{\rm f}(G_{\QQ, \Sigma},T_\f^\dagger, \Delta_\emptyset)\,. 
\end{equation}
Moreover, our running hypothesis \eqref{item_non_anom} implies $H^2(G_{\QQ_p},T_\f^\dagger) = 0$, which implies that 
\begin{equation}
\label{eqn_2026_05_13_1057}
    \widetilde{H}^2_{\rm f}(G_{\QQ, \Sigma},T_\f^\dagger, \Delta_\emptyset) = \ker\left( H^2(G_{\QQ, \Sigma_\f},T_\f^\dagger) 
     \lra \bigoplus_{\ell \in \Sigma_\f} H^2(G_{\QQ_\ell},T_\f^\dagger) \right)=\Sha^2_{\Sigma_\f}(T_\f^\dagger)\,,  
\end{equation}
where $\Sha^2_{\Sigma_\f}(T_\f^\dagger)$  is given as in \cite[Definition 2.3.2]{ochiaiAIF2005}. We then have 
\begin{align}
 \begin{aligned}
    \label{eqn_2026_05_13_1059}
 \mathrm{char}_{\mathcal{R}_\f}(\widetilde{H}^1_{\rm f}(G_{\QQ, \Sigma},T_\f^\dagger, \Delta_\emptyset)/\cR_\f\cdot {\rm BK}_\f^\dagger) 
 &=  \mathrm{char}_{\mathcal{R}_\f}(H^1(G_{\QQ, \Sigma_\f},T_\f^\dagger)/\cR_\f\cdot {\rm BK}_\f^\dagger) 
 \\
 &=  \mathrm{char}_{\mathcal{R}_\f}(\Sha^2_{\Sigma_\f}(T_\f^\dagger))
 \\
 &= 
 \mathrm{char}_{\mathcal{R}_\f}(\widetilde{H}^2_{\rm f}(G_{\QQ, \Sigma},T_\f^\dagger, \Delta_\emptyset)),    
  \end{aligned}
 \end{align}
 where the first equality follows from \eqref{eqn_2026_05_13_1023}, the second equality is the Iwasawa main conjecture ``without $p$-adic $L$-functions'', and the third equality follows from \eqref{eqn_2026_05_13_1057}. We remark that this formulation of the Iwasawa Main Conjecture (namely, the second equality in \eqref{eqn_2026_05_13_1059}) is equivalent to its more familiar form in terms of the Mazur--Kitagawa $p$-adic $ L$-function, thanks to the Kato--Ochiai reciprocity law (linking ${\rm BK}_\f^\dagger$ to the $p$-adic $L$-function; see \cite{ochiaiAIF2005}, Theorem B) and Poitou--Tate global duality; cf. \cite[\S17.13]{kato04} and \cite[p. 125]{ochiaiAIF2005} (Proof of Theorem C in op. cit.).

If ${\rm BK}_\f^\dagger = 0$, then the $\mathcal{R}_\f$-module $\widetilde{H}^2_{\rm f}(G_{\QQ, \Sigma},T_\f^\dagger, \Delta_\emptyset)$ is not torsion by \eqref{eqn_2026_05_13_1059}. As a result, it follows from Proposition~\ref{proposition_tortion_non-zero} that $\delta(T_\f^\dagger,\Delta_\emptyset) = \{0\}$ and our claimed is proved in this case. 
On the other hand, if ${\rm BK}_\f^\dagger \neq 0$, then the $\mathcal{R}_\f$-module $\widetilde{H}^2_{\rm f}(G_{\QQ, \Sigma},T_\f^\dagger, \Delta_\emptyset)$ is  torsion. Again by  Proposition~\ref{proposition_tortion_non-zero}, it follows that $\delta(T_\f^\dagger,\Delta_\emptyset) \neq 0$. 
Then the $\mathcal{R}_\f$-module $\widetilde{H}^1_{\rm f}(G_{\QQ, \Sigma},T_\f^\dagger, \Delta_\emptyset)$ is free of rank one (see  Proposition \ref{prop_suport_range_Selmer_complexes_8_6_2_2} below). Therefore, Corollary \ref{corollary_regular_delta_r=1} shows that 
$\delta(T_\f^\dagger,\Delta_\emptyset)=\cR_\f\cdot {\rm BK}_\f^\dagger$, as required.  
\end{proof}


\begin{proposition}
\label{prop_suport_range_Selmer_complexes_8_6_2_2}
We have 
\[
\widetilde{R\Gamma}(G_{\QQ,\Sigma},T_\f^\dagger,\Delta_{0})\,,\, \widetilde{R\Gamma}_{\rm f}(G_{\QQ,\Sigma},T_\f^\dagger,\Delta_{\emptyset}) \in D_{\rm parf}^{[1,2]}({}_{\cR_\f}{\rm Mod}).
\]
If $\delta(T_\f^\dagger,\Delta_\emptyset)\neq \{0\}$, then:   
\item[i)] The image of $\delta(T_\f^\dagger,\Delta_\emptyset)$ under the composite map
\[
\res_p^-: H^1(G_{\QQ,\Sigma},T_\f^\dagger)\lra H^1(G_p,T_\f^\dagger)\lra H^1(G_p,F^-T_\f^\dagger)
\]
is zero and $\widetilde{H}^1_{\rm f}(G_{\QQ,\Sigma},T_\f^\dagger,\Delta_{\mathrm{Pan}} )=\widetilde{H}^1_{\rm f}(G_{\QQ,\Sigma},T_\f^\dagger,\Delta_{\emptyset})$ are both free $\cR_\f$-modules of rank one. Here, we recall $\Delta_{\mathrm{Pan}} = \Delta_{F^+T^\dagger_{\hf}}$ are the Greenberg-local conditions on $T_\f^\dagger$ given by the submodule $F^+T_\f^\dagger\subset T_\f^\dagger$. 

\item[ii)]  $\widetilde{H}^2_{\rm f}(G_{\QQ,\Sigma},T_\f^\dagger,\Delta_{\emptyset})$ is torsion. Moreover,
\begin{equation}
    \label{eqn_prop_suport_range_Selmer_complexes_8_6_2_2_ii}
    \det\left(\widetilde{H}^1_{\rm f}(G_{\QQ,\Sigma},T_\f^\dagger,\Delta_{\emptyset})\big{/}\delta(T_\f^\dagger,\Delta_\emptyset)\right)=\det\left(\widetilde{H}^2_{\rm f}(G_{\QQ,\Sigma},T_\f^\dagger,\Delta_{\emptyset})\right)\,.
\end{equation}
\item[iii)] If in addition $ \res_p\left(\delta(T_\f^\dagger,\Delta_\emptyset)\right)\neq \{0\}$, then $\widetilde{H}^1_{\rm f}(G_{\QQ,\Sigma},T_\f^\dagger,\Delta_{0})=\{0\}$ and the $\cR_\f$-module $\widetilde{H}^2_{\rm f}(G_{\QQ,\Sigma},T_\f^\dagger,\Delta_{0})$ has rank one. 
\end{proposition}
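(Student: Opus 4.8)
The plan is to deduce all three items from the general formalism of \S\ref{subsec_selmer_complexes}--\S\ref{sec_Koly_Sys_Dec_4_18_05_2021}, applied with $R=\cR_\f$ (which is regular by our running hypotheses in \S\ref{subsubsec_hypo_section_6}), $T=T_\f^\dagger$, and the two Greenberg local conditions $\Delta_1=\Delta_0=\Delta(\{0\})$ and $\Delta_2=\Delta_\emptyset=\Delta(T_\f^\dagger)$. The perfectness claim $\widetilde{R\Gamma}_{\rm f}(G_{\QQ,\Sigma},T_\f^\dagger,\Delta_0),\,\widetilde{R\Gamma}_{\rm f}(G_{\QQ,\Sigma},T_\f^\dagger,\Delta_\emptyset)\in D^{[1,2]}_{\rm parf}({}_{\cR_\f}{\rm Mod})$ is a direct instance of Proposition~\ref{prop_[1,2]_parf}(ii): we must only check \ref{item_Irr_general} and \ref{item_Tam} for $T_\f^\dagger$, and both follow from \ref{item_Irr} together with Assumption~\ref{assumption_tamagawa} (invoked for $\f$), exactly as noted in \S\ref{subsubsec_hypo_section_6}. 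For the Euler characteristics, Proposition~\ref{proposition_euler_poincare_characteristic} gives $\chi=\mathrm{rank}(T_\f^{\dagger,c=1})-\mathrm{rank}(F^+T_\f^\dagger)$, which is $1-0=1$ for $\Delta_\emptyset$ (taking $F^+=T_\f^\dagger$ so that $F^-=0$, with the sign convention of \S\ref{subsec_greenberg_conditions_17_05_2021}) — so we are in the regime $r=1$ of \S\ref{subsubsec_EP_char_is_1} — while for $\Delta_0$ one computes $\chi=0$, i.e.\ $r=0$, the case of \S\ref{subsubsec_EP_char_is_zero}. Note $\Delta_0={\rm tr}^*\Delta_\g$ and $\Delta_{\mathrm{Pan}}={\rm tr}^*\Delta_\f={\rm tr}^*\Delta_{\rm bal}$ as local conditions on $T_\f^\dagger$, by Remark~\ref{remark_defn_propagate_local_conditions_via_dual_trace_V_fdagger}.

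For item (i): since $F^-T_\f^\dagger=0$ for the condition $\Delta_\emptyset$, the map $\res_p^-$ factors through $H^1(G_p,F^-T_\f^\dagger)=0$, so the vanishing of $\res_p^-(\delta(T_\f^\dagger,\Delta_\emptyset))$ is automatic; this vanishing is precisely the statement that the Beilinson--Kato class (which generates $\delta(T_\f^\dagger,\Delta_\emptyset)$ by \eqref{eqn_2022_09_12_1953_bis}) satisfies the Panchishkin local condition $\Delta_{\mathrm{Pan}}$, whence $\delta(T_\f^\dagger,\Delta_\emptyset)\subset\widetilde{H}^1_{\rm f}(G_{\QQ,\Sigma},T_\f^\dagger,\Delta_{\mathrm{Pan}})$. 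To see $\widetilde{H}^1_{\rm f}(G_{\QQ,\Sigma},T_\f^\dagger,\Delta_{\mathrm{Pan}})=\widetilde{H}^1_{\rm f}(G_{\QQ,\Sigma},T_\f^\dagger,\Delta_\emptyset)$ and that both are free of rank one: the exact sequence comparing $\Delta_{\mathrm{Pan}}\subset\Delta_\emptyset$ reads
\[
0\to\widetilde{H}^1_{\rm f}(G_{\QQ,\Sigma},T_\f^\dagger,\Delta_{\mathrm{Pan}})\to\widetilde{H}^1_{\rm f}(G_{\QQ,\Sigma},T_\f^\dagger,\Delta_\emptyset)\to H^1(G_p,F^-T_\f^\dagger)=0,
\]
so the two $H^1$'s agree; freeness of rank one then follows because $\widetilde{H}^1_{\rm f}(G_{\QQ,\Sigma},T_\f^\dagger,\Delta_\emptyset)\subset R^{a+1}\xrightarrow{\varphi}R^a$ is reflexive (a second syzygy over the regular, hence normal, ring $\cR_\f$), has rank one as $\delta(T_\f^\dagger,\Delta_\emptyset)\neq\{0\}$ forces $\widetilde{H}^2_{\rm f}$ to be torsion by Theorem~\ref{theorem_delta_rank1} combined with Proposition~\ref{proposition_tortion_non-zero}, and a reflexive rank-one module over a regular local ring of dimension $\geq 2$ is free — alternatively this is built into the exact sequence of Theorem~\ref{theorem_delta_rank1} once one knows $\delta(T_\f^\dagger,\Delta_\emptyset)$ is generated by a regular element, which holds by \eqref{eqn_2022_09_12_1953_bis} and the non-vanishing hypothesis. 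Item (ii) is then exactly Corollary~\ref{corollary_regular_delta_r=1}: $\cR_\f$ is regular and $\delta(T_\f^\dagger,\Delta_\emptyset)\neq\{0\}$, so $\widetilde{H}^2_{\rm f}(G_{\QQ,\Sigma},T_\f^\dagger,\Delta_\emptyset)$ is torsion and $\det(\widetilde{H}^1/\delta)=\det(\widetilde{H}^2)$, which is \eqref{eqn_prop_suport_range_Selmer_complexes_8_6_2_2_ii} after substituting $\widetilde{H}^1_{\rm f}(\cdots,\Delta_\emptyset)=\cR_\f\cdot{\rm BK}_\f^\dagger$ is not needed — only that $\delta=\cR_\f\cdot\delta(T_\f^\dagger,\Delta_\emptyset)$ sits inside $\widetilde{H}^1$.

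For item (iii): here we use the chain $\Delta_0\subset\Delta_{\mathrm{Pan}}\subset\Delta_\emptyset$ with the difference $F^+_{\mathrm{Pan}}T_\f^\dagger/F^+_0T_\f^\dagger=F^+T_\f^\dagger$, and the trivialization $\mathrm{LOG}_{\Delta_\emptyset/\Delta_0}$ can be taken to be (a choice of) the map $\res_p$ composed with an isomorphism $H^1(G_p,F^+T_\f^\dagger)\cong\cR_\f$ — freeness of rank one of $H^1(G_p,F^+T_\f^\dagger)$ is where the non-anomaly condition \ref{item_non_anom} enters, via $H^0(G_p,F^+\overline{T}_\f^\dagger)=0=H^2(G_p,F^+\overline{T}_\f^\dagger)$ (equivalently $\bar a_p(\f)\neq 1$) and the local Euler characteristic formula. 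The hypothesis $\res_p(\delta(T_\f^\dagger,\Delta_\emptyset))\neq\{0\}$ says exactly that $\mathrm{LOG}_{\Delta_\emptyset/\Delta_0}(\delta(T_\f^\dagger,\Delta_\emptyset))$ is generated by a regular element of $\cR_\f$, so Lemma~\ref{lemma_log_H^1_vanishes} yields $\widetilde{H}^1_{\rm f}(G_{\QQ,\Sigma},T_\f^\dagger,\Delta_0)=\{0\}$; then $\chi(\widetilde{R\Gamma}_{\rm f}(\cdots,\Delta_0))=0$ forces $\widetilde{H}^2_{\rm f}(G_{\QQ,\Sigma},T_\f^\dagger,\Delta_0)$ to have generic rank one as well — indeed from the six-term sequence around $\mathrm{LOG}_{\Delta_\emptyset/\Delta_0}$ we get $\mathrm{rank}\,\widetilde{H}^2_{\rm f}(\cdots,\Delta_0)=\mathrm{rank}\,\widetilde{H}^2_{\rm f}(\cdots,\Delta_\emptyset)+1=0+1=1$, using (ii). The main obstacle, such as it is, is purely bookkeeping: one must be careful that the ``$F^+$'' in Proposition~\ref{proposition_euler_poincare_characteristic} for $\Delta_\emptyset$ is the full module (so $r=1$, not $r=-1$), and that the free-rank-one statements for $\widetilde{H}^1$ and for the local term $H^1(G_p,F^+T_\f^\dagger)$ genuinely require the regularity of $\cR_\f$ and the hypothesis \ref{item_non_anom} respectively — everything else is a direct citation of the machinery already set up in \S\ref{sec_Koly_Sys_Dec_4_18_05_2021}.
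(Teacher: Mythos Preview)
There is a genuine gap in part (i). You claim the vanishing $\res_p^-(\delta(T_\f^\dagger,\Delta_\emptyset))=0$ is automatic because ``$F^-T_\f^\dagger=0$ for the condition $\Delta_\emptyset$''. This conflates two different filtrations. The $F^-T_\f^\dagger$ appearing in the map $\res_p^-$ of the statement is the quotient $T_\f^\dagger/F^+T_\f^\dagger$ coming from the $p$-ordinary filtration \eqref{eqn:filtrationf}, which is free of rank one --- not the trivial quotient $T_\f^\dagger/T_\f^\dagger=0$ attached to $\Delta_\emptyset$. The vanishing is a genuine arithmetic input: the paper deduces it from the identification \eqref{eqn_2022_09_12_1953_bis} of $\delta(T_\f^\dagger,\Delta_\emptyset)$ with the Beilinson--Kato class, Kato's reciprocity laws, and crucially the sign hypothesis $\varepsilon(\f)=-1$ from \ref{item_root_numbers_general}. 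The same confusion vitiates your exact-sequence argument for $\widetilde{H}^1_{\rm f}(\Delta_{\mathrm{Pan}})=\widetilde{H}^1_{\rm f}(\Delta_\emptyset)$: the relevant term $H^1(G_p,F^-T_\f^\dagger)$ has rank one, not zero. The paper's order of operations is the reverse of yours: first establish that $\widetilde{H}^1_{\rm f}(\Delta_\emptyset)$ is free of rank one (your reflexivity argument is fine here, as is the paper's $\mathrm{Ext}^3=0$ computation over the $2$-dimensional regular ring $\cR_\f$), and only then use the arithmetic vanishing of $\res_p^-$ on the Beilinson--Kato generator to conclude that the map $\widetilde{H}^1_{\rm f}(\Delta_\emptyset)\to H^1(G_p,F^-T_\f^\dagger)$ is identically zero.

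There are also two bookkeeping errors in (iii). First, $\chi(\widetilde{R\Gamma}_{\rm f}(G_{\QQ,\Sigma},T_\f^\dagger,\Delta_0))=1$, not $0$: Proposition~\ref{proposition_euler_poincare_characteristic} with $F^+=\{0\}$ gives $1-0=1$, so $r(\Delta_0)=-1$. Second, Lemma~\ref{lemma_log_H^1_vanishes} does not apply to either pair $(\Delta_0,\Delta_\emptyset)$ or $(\Delta_0,\Delta_{\mathrm{Pan}})$, since its hypotheses demand $r(\Delta_1)=0$ and $r(\Delta_2)=1$ together with a rank-one local quotient. The paper's argument is more direct and avoids this machinery: $\widetilde{H}^1_{\rm f}(\Delta_0)$ is the kernel of $\res_p$ restricted to the free rank-one module $\widetilde{H}^1_{\rm f}(\Delta_\emptyset)$, and the hypothesis $\res_p(\delta)\neq 0$ forces this kernel (in a domain) to vanish; the rank of $\widetilde{H}^2_{\rm f}(\Delta_0)$ then follows from $\chi=1$.
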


\begin{proof}
The fact that the Selmer complexes are perfect and that they are concentrated in the indicated degrees follows from  Proposition \ref{prop_[1,2]_parf}.

Let us first prove that $\widetilde{H}^1_{\rm f}(G_{\QQ,\Sigma},T_\f^\dagger,\Delta_{\emptyset})$ is free of rank $1$. Since $\delta(T_\f^\dagger,\Delta_\emptyset)\neq 0$, the $\cR_\hf$-module $\widetilde{H}^2_{\rm f}(G_{\QQ,\Sigma},T_\f^\dagger,\Delta_{\emptyset})$ is torsion by Corollary~\ref{corollary_regular_delta_r=1}. The fact that $\widetilde{R\Gamma}_{\rm f}(G_{\QQ,\Sigma},T_\f^\dagger,\Delta_{\emptyset}) \in D_{\rm parf}^{[1,2]}({}_{\cR_\f}{\rm Mod})$ combined with the calculation $\chi(\widetilde{R\Gamma}_{\rm f}(G_{\QQ,\Sigma},T_\f^\dagger,\Delta_{\emptyset})) = -1$ show that the rank of $\widetilde{H}^1_{\rm f}(G_{\QQ,\Sigma},T_\f^\dagger,\Delta_{\emptyset})$ equals one. Moreover, since $R_{\hf}$ is a 2-dimensional regular local ring, 
\[
\mathrm{Ext}_{R_{\hf}}^1(\widetilde{H}^1_{\rm f}(G_{\QQ,\Sigma},T_\f^\dagger,\Delta_{\emptyset}), M) \cong \mathrm{Ext}_{R_{\hf}}^3(\widetilde{H}^2_{\rm f}(G_{\QQ,\Sigma},T_\f^\dagger,\Delta_{\emptyset}), M) = \{0\}
\] 
for any finitely generated $R_{\hf}$-module $M$. Thence, $\widetilde{H}^1_{\rm f}(G_{\QQ,\Sigma},T_\f^\dagger,\Delta_{\emptyset})$ is free.



Since $\varepsilon(\f) = -1$, the vanishing 
\begin{equation}
\label{eqn_2022_09_12_2030}
\res_p^-\left(\delta(T_\f^\dagger,\Delta_\emptyset)\right)=\{0\}
\end{equation}
follows from the comparison of the class $\delta(T_\f^\dagger,\Delta_\emptyset)$ with the Beilinson--Kato element as in \eqref{eqn_2022_09_12_1953_bis} and Kato's reciprocity laws. The equality  $\widetilde{H}^1_{\rm f}(G_{\QQ,\Sigma},T_\f^\dagger,\Delta_{\mathrm{Pan}})=\widetilde{H}^1_{\rm f}(G_{\QQ,\Sigma},T_\f^\dagger,\Delta_{\emptyset})
$ is a consequence of the vanishing \eqref{eqn_2022_09_12_2030} and
the fact $\widetilde{H}^1_{\rm f}(G_{\QQ,\Sigma},T_\f^\dagger,\Delta_{\emptyset})$ is free of rank one. This completes the proof of (i), and Part (ii) now follows from Corollary \ref{corollary_regular_delta_r=1}. 

Suppose now that  $\res_p\left(\delta(T_\f^\dagger,\Delta_\emptyset)\right)\neq \{0\}$. The assertion that $\widetilde{H}^1_{\rm f}(G_{\QQ,\Sigma},T_\f^\dagger,\Delta_0)=\{0\}$ follows from this assumption combined with the conclusions of (i), whereas the claim that the $\cR_\f$-module $\widetilde{H}^2_{\rm f}(G_{\QQ,\Sigma},T_\f^\dagger,\Delta_{0})$ is of rank one follows from the calculation $\chi(\widetilde{R\Gamma}_{\rm f}(G_{\QQ,\Sigma},T_\f^\dagger,\Delta_{0})) = 1$ of the global Euler--Poincar\'e characteristic. Our proof is now complete.
\end{proof}

\begin{remark}
\label{remark_Greenberg_conjecture}
A conjecture of Greenberg predicts that the derivative of the Manin-Vi\v{s}ik $p$-adic $L$-function $L_p(\f_\kappa,\sigma)$ at the central critical points is non-zero for some arithmetic specialisation $\kappa$; cf. \cite{Greenberg_1994_families} $($see also \cite{TrevorArnold_Greenberg_Conj}, Conjectures 1.3 and 1.4$)$. Assuming Greenberg's conjecture, combining with the main conjectures $($proved by Kato, Skinner--Urban, and Wan$)$ and using \eqref{eqn_2022_09_12_1953_bis} together with Kato's reciprocity laws, we deduce that $\res_p\left(\delta(T_\f^\dagger,\Delta_\emptyset)\right)\neq \{0\}$. 
\end{remark}

Before we conclude \S\ref{subsubsec_leading_terms_bis_3}, we describe a canonical trivialisation of the free module $H^1(G_p,F_p^+T_\f^\dagger)$ of rank one in terms of Coleman maps. We recall that the non-anomaly hypothesis \eqref{item_non_anom} is in effect.

Let us consider the unramified twist 
$$
M:=F^+T_\f(\chi_\cyc\bbchi_\f^{-1})\simeq \cR_\f(\alpha_\f^{-1})
$$ 
of the $G_p$-representation $F^+T_\f$, where $\alpha_\f$ is the unramified character that assigns the geometric Frobenius the value $a_p(\f)$ and gives the action of $G_p$ and $F^-T_\f$. Let us put $\cR_\f^\cyc := \cR_{\f} \widehat{\otimes} \LL(\Gamma_\cyc)$ to ease our notation.

The theory of Coleman maps gives rise to an isomorphism (cf. \cite{KLZ2}, \S8.2)
\begin{equation}
\label{eqn_2022_09_13_0843}    
{\rm LOG}_{M,0}\,:\,H^1(G_p,M\widehat{\otimes}_{\cR_\f}\cR_\f^\cyc)\stackrel{\sim}{\lra}\mathbf{D}_{\rm cris}(M)\,\widehat{\otimes}_{\cR_\f}\, \cR_\f^\cyc \,,
\end{equation}
where $\mathbf{D}_{\rm cris}(M)=(M\, \widehat\otimes\, \widehat{\Zp^{\mathrm ur}})^{G_{p}}$. 
Let us define the morphism 
\begin{equation}
\label{eqn_2022_09_13_0900}    
{\rm Log}_{F^+T_\f^\dagger}\,:\, H^1(G_p,F^+T_\f^\dagger)\lra \mathbf{D}_{\rm cris}(M) 
\end{equation}
on tensoring the isomorphism \eqref{eqn_2022_09_13_0843} of 
$\cR_\f^\cyc$-modules by $\cR_\f^\cyc\big{/}(\gamma-\chi_\cyc^{-1}{\chi_\f^{\frac{1}{2}}}(\gamma))$ (where $\gamma\in \Gamma_\cyc$ is a topological generator) 
and relying on the fact that the natural injection 
\[
H^1(G_p,F^+T_\f^\dagger\widehat{\otimes}_{\cR_\f}\cR_\f^\cyc)\Big{/}(\gamma-\chi_\cyc^{-1}{\chi_\f^{\frac{1}{2}}}(\gamma))\lra H^1(G_p,F^+T_\f^\dagger)
\]
is an isomorphism thanks to our running hypothesis \eqref{item_non_anom}. 
This in turn shows that the map ${\rm Log}_{F^+T_\f^\dagger}$ is indeed an isomorphism. 

To canonically trivialize the module $\mathbf{D}_{\rm cris}(M)$, we need Ohta's work~\cite{ohta00} as in \cite[Proposition 10.1.1]{KLZ2}. According to op. cit., there exists a canonical isomorphism
\begin{equation}
\label{eqn_2022_09_13_0945}   
\omega_\f\,:\,\mathbf{D}_{\rm cris}(M) \stackrel{\sim}{\lra} \cR_\f
\end{equation}
of $\cR_\f$-modules, since the branch $\f$ of the Hida family we are working with is cuspidal by assumption. Combining the isomorphism \eqref{eqn_2022_09_13_0900} with \eqref{eqn_2022_09_13_0945}, we obtain a canonical trivialisation
\begin{equation}
    \label{eqn_2022_09_13_0946}   
{\rm Log}_{\omega_\f}\,:\,H^1(G_p,F^+T_\f^\dagger) \xrightarrow[{\rm Log}_{F^+T_\f^\dagger}]{\sim} \mathbf{D}_{\rm cris}(M) \xrightarrow[\omega_\f]{\sim} \cR_\f\,.
\end{equation}

\subsubsection{}
\label{subsubsec_leading_terms_bis_2}
Since we have
\begin{equation}
    \label{eqn_2022_09_16_1342}
    \chi(\widetilde{R\Gamma}_{\rm f}(G_{\QQ, \Sigma}, M_2^\dagger,{\rm tr}^*\Delta_\g))=-1\,,
        \end{equation}
we are in the situation of \S\ref{subsubsec_EP_char_is_1} and we have a cyclic submodule
\begin{equation}
    \label{eqn_2022_09_12_1553}
    \delta(M_2^\dagger,{\rm tr}^*\Delta_\g)\subset \widetilde{H}^1_{\rm f}(G_{\QQ, \Sigma}, M_2^\dagger, {\rm tr}^*\Delta_\g)\,.
    \end{equation}
Whenever $\delta(M_2^\dagger,{\rm tr}^*\Delta_\g)\neq \{0\}$, we have thanks to Corollary~\ref{corollary_regular_delta_r=1} that
    \begin{align}
\begin{aligned}
    \label{eqn_2022_09_12_1543}
     &\widetilde{H}^1_{\rm f}(G_{\QQ, \Sigma}, M_2^\dagger,{\rm tr}^*\Delta_{\g}) \hbox{ is torsion-free of rank one},\quad  \widetilde{H}^2_{\rm f}(G_{\QQ, \Sigma}, M_2^\dagger,{\rm tr}^*\Delta_{\g}) \hbox{ is torsion},
     \\
    &\det\left(\widetilde{H}^1_{\rm f}(G_{\QQ, \Sigma}, M_2^\dagger,{\rm tr}^*\Delta_\g)\big{/}\delta(M_2^\dagger,{\rm tr}^*\Delta_\g)\right) = \det\left(\widetilde{H}^2_{\rm f}(G_{\QQ, \Sigma}, M_2^\dagger,{\rm tr}^*\Delta_\g)\right)\,. 
\end{aligned}
    \end{align}
On the other hand, we have 
$\chi(\widetilde{R\Gamma}_{\rm f}(G_{\QQ, \Sigma}, M_2^\dagger,{\rm tr}^*\Delta_{\rm bal}))=0\,,$ and we are in the situation of \S\ref{subsubsec_EP_char_is_zero}. We have a submodule
\begin{equation}
    \label{eqn_2022_09_12_1552}
    \delta(M_2^\dagger,{\rm tr}^*\Delta_{\rm bal})\subset \cR_2
    \end{equation} 
    of leading terms. According to Theorem~\ref{thm_PR_formal}, the submodules \eqref{eqn_2022_09_12_1553} and \eqref{eqn_2022_09_12_1552} are related to one another in the following manner. The short exact sequence \eqref{eqn_lemma_remark_ranks_3_3_2021_11_05_2} allows us to define the morphism
    $$\res_{/\rm bal}: \widetilde{H}^1_{\rm f}(G_{\QQ, \Sigma}, M_2^\dagger,{\rm tr}^*\Delta_{\g})\lra H^1(G_p, F^-T_\f^\dagger)\otimes_{\varpi_{2,1}^*}\cR_2\,.$$
    Let us fix a trivialisation\footnote{A canonical trivialisation of this module is given in terms of Perrin-Riou's large dual exponential maps; see the final portion of \S\ref{subsubsec_leading_terms_bis_3} for a prototypical example. Since we need not specify the choice of ${\rm Exp}_{F^-T_\f^\dagger}^*$ for our purposes, we will not dwell further on this point.} 
    \begin{equation}
    \label{eqn_2022_09_16_1244}
        {\rm Exp}_{F^-T_\f^\dagger}^*:\,H^1(G_p, F^-T_\f^\dagger) \stackrel{\sim}{\lra} \cR_\f
    \end{equation}
    and denote also by ${\rm EXP}_{F^-T_\f^\dagger}^*$ the composite map 
    \begin{equation}
        \label{eqn_2022_09_12_1647}
        \widetilde{H}^1_{\rm f}(G_{\QQ, \Sigma}, M_2^\dagger,{\rm tr}^*\Delta_{\g}) \lra H^1(G_p, F^-T_\f)\otimes_{\varpi_{2,1}^*}\cR_2\xrightarrow[{\rm EXP}_{F^-T_\f^\dagger}^*\otimes {\rm id}]{\sim} \cR_2.
    \end{equation}
    Then Theorem~\ref{thm_PR_formal} combined with \eqref{eqn_lemma_remark_ranks_3_3_2021_11_05_2} tells us that 
    \begin{equation}
    \label{eqn_2022_09_12_1620}
    {\rm Exp}_{F^-T_\f^\dagger}^*\left(\delta(M_2^\dagger,{\rm tr}^*\Delta_{\g})\right)=\delta(M_2^\dagger,{\rm tr}^*\Delta_{\rm bal})\,.
    \end{equation} 
    Moreover, if $ {\rm Exp}_{F^-T_\f^\dagger}^*\left(\delta(M_2^\dagger,{\rm tr}^*\Delta_{\g})\right)\neq 0$, then it follows from Theorem~\ref{theorem_delta_rank0}, Theorem~\ref{thm_PR_formal} and \eqref{eqn_2022_09_12_1620} that
    \begin{equation}
    \label{eqn_2022_09_12_1626}
    {\rm Exp}_{F^-T_\f^\dagger}^*\left(\delta(M_2^\dagger,{\rm tr}^*\Delta_{\g})\right)=\det\left(\widetilde{H}^2_{\rm f}(G_{\QQ, \Sigma}, M_2^\dagger,{\rm tr}^*\Delta_{\rm bal})\right)
    \end{equation} 
    is the module of $p$-adic $L$-functions, which is expected to be generated by the degree-6 balanced $p$-adic $L$-function $\cL_p^\Ad(\hf\otimes \Ad^0\hg)$. A generalisation of Greenberg's conjecture leads to the prediction that $\cL_p^\Ad(\hf\otimes \Ad^0\hg)\neq 0$ under our assumption \ref{item_root_numbers_general} on global root numbers.
    
    We note that $\widetilde{R\Gamma}_{\rm f}(G_{\QQ, \Sigma}, M_2^\dagger,{\rm tr}^*\Delta_{?})\in D_{\rm parf}^{[1,2]}({}_{\cR_2}{\rm Mod})$ (for $?=\g,{\rm bal}$) and hence
    \begin{equation}
    \label{eqn_2022_09_12_1644}
    \widetilde{H}^1_{\rm f}(G_{\QQ, \Sigma}, M_2^\dagger,{\rm tr}^*\Delta_{\rm bal})=\{0\},\quad  \widetilde{H}^2_{\rm f}(G_{\QQ, \Sigma}, M_2^\dagger,{\rm tr}^*\Delta_{\rm bal}) \hbox{ is torsion}\,,
    \end{equation}
    \begin{align}
    \label{eqn_2022_09_12_1651}
    \begin{aligned}
         &\widetilde{H}^1_{\rm f}(G_{\QQ, \Sigma}, M_2^\dagger,{\rm tr}^*\Delta_{\g}) \lra H^1(G_p, F^-T_\f)\otimes_{\varpi_{2,1}^*}\cR_2\xrightarrow{\sim}  \cR_2 \quad \hbox{ is injective}
    \end{aligned}
    \end{align}
    whenever $ {\rm Exp}_{F^-T_\f^\dagger}^*\left(\delta(M_2^\dagger,{\rm tr}^*\Delta_{\g})\right)\neq \{0\}$.

\subsubsection{}
\label{subsubsec_leading_terms_bis_1}
Note that we have $\widetilde{R\Gamma}_{\rm f}(G_{\QQ, \Sigma}, T_?^\dagger,\Delta_\g)\in D_{\rm parf}^{[1,2]}({}_{\cR_?}{\rm Mod})$ thanks to Proposition~\ref{prop_[1,2]_parf}, and we have $\chi(\widetilde{R\Gamma}_{\rm f}(G_{\QQ, \Sigma}, T_?^\dagger,\Delta_\g))=0$ for the Euler--Poincar\'e characteristics of the indicated Selmer complex (?=2,3). We are therefore in the situation of \S\ref{subsubsec_EP_char_is_zero} and we have submodules
\begin{align}
\label{eqn_2022_09_16_1209}
\begin{aligned}
 \delta(T_3^\dagger,\Delta_\g) \subset \cR_3\,, 
\qquad \qquad 
\delta(T_2^\dagger,\Delta_\g) \subset \cR_2
\end{aligned}
\end{align}
of leading terms which are related via
$\delta(T_3^\dagger,\Delta_\g)\otimes_{\cR_3}\cR_2=\delta(T_2^\dagger,\Delta_\g)\,,$ cf. Proposition~\ref{prop_51_2022_09_12_1507}. Whenever $\delta(T_3^\dagger,\Delta_\g)\neq \{0\}$, we have thanks to Theorem~\ref{theorem_delta_rank0} that 
\[
\delta(T_3^\dagger,\Delta_\g)=\det \left(\widetilde{R\Gamma}_{\rm f}(G_{\QQ, \Sigma}, T_3^\dagger,\Delta_\g)\right)
\]
is the module of algebraic $p$-adic $L$-functions, and it is expected to be generated by the $p$-adic $L$-function $\cL_p^{\hg}(\f\otimes\g\otimes\g^c)^2$, which is given as the image of a diagonal cycle under a large Perrin-Riou logarithm map.

A similar discussion applies when $\delta(T_2^\dagger,\Delta_\g)\neq \{0\}$: 
\begin{equation}
    \label{eqn_2022_09_12_1711}
    \delta(T_2^\dagger,\Delta_\g)=\det \left(\widetilde{R\Gamma}_{\rm f}(G_{\QQ, \Sigma}, T_2^\dagger,\Delta_\g)\right) = \det \left(\widetilde{H}^2_{\rm f}(G_{\QQ, \Sigma}, T_2^\dagger,\Delta_\g)\right)
    \end{equation}
and it is expected to be generated by the $p$-adic $L$-function $\cL_p^{\hg}(\f\otimes\g\otimes\g^c)^2(\kappa,\lambda,\lambda)$ whose range of interpolation is empty.

Recall also the local conditions $\Delta_+$ given as in Example~\ref{example_local_conditions}(iv). We have  $\widetilde{R\Gamma}_{\rm f}(G_{\QQ, \Sigma}, T_?^\dagger,\Delta_+)\in D_{\rm parf}^{[1,2]}({}_{\cR_?}{\rm Mod})$ (where $?=2,3$) and $\chi\left(\widetilde{R\Gamma}_{\rm f}(G_{\QQ, \Sigma}, T_?^\dagger,\Delta_+) \right)=-1$. As a result, we are in the situation of \S\ref{subsubsec_EP_char_is_1} and we have a submodule
\begin{equation}
    \label{eqn_2022_09_16_1210}\delta(T_?^\dagger, \Delta_+) \in \widetilde{H}^1_{\rm f}(G_{\QQ, \Sigma}, T_?^\dagger, \Delta_+)\,,\qquad ?=2,3.
\end{equation}
We explain the manner the submodules \eqref{eqn_2022_09_16_1209} and \eqref{eqn_2022_09_16_1210} are related to one another in view of Theorem~\ref{thm_PR_formal}. 

Let us consider the natural map (induced by the inclusion $F^+_{\g}T_?^\dagger \longrightarrow F^+_{\textrm{bal}+}T_?^\dagger$, cf. Example~\ref{example_local_conditions}(iv)) 
\begin{equation}
\label{eqn_2022_09_16_1417}
    \res^{(/\g)}_{p}\,:\quad\widetilde{H}^1_{\rm f}(G_{\QQ,\Sigma},T_?^\dagger,\Delta_{+})\lra {H}^1(G_p,F^+T_\f^\dagger\,\widehat\otimes\,F^-T_\g \,\widehat\otimes\, F^+T_\g^*)\xrightarrow{\sim}{H}^1(G_p,F^+T_\f^\dagger)\,\widehat\otimes\,\cR_? \,.
\end{equation}

The canonical trivialisation \eqref{eqn_2022_09_13_0946} gives rise to a morphism
$$
\widetilde{H}^1_{\rm f}(G_{\QQ,\Sigma},T_?^\dagger,\Delta_{+})\xrightarrow{\res_p^{(/\g)}}{H}^1(G_p,F^+T_\f^\dagger)\,\widehat\otimes\,\cR_?\xrightarrow[{\rm Log}_{\omega_\f}]{\sim} \cR_?
$$
which we also denote by ${\rm Log}_{\omega_\f}$.  Then Theorem~\ref{thm_PR_formal} combined with the discussion above  yields 
\begin{equation}
    \label{eqn_2022_09_16_1252}
    {\rm Log}_{\omega_\f}\left(\delta(T_?^\dagger,\Delta_{+})\right)=\delta(T_?^\dagger,\Delta_\g)\,.
    \end{equation} 
    Moreover, if ${\rm Log}_{\omega_\f}\left(\delta(T_?^\dagger,\Delta_{+})\right)\neq 0$, then it follows from Theorem~\ref{theorem_delta_rank0}, Theorem~\ref{thm_PR_formal} and \eqref{eqn_2022_09_16_1252} that
    \begin{equation}
    \label{eqn_2022_09_16_1253}
    {\rm Log}_{\omega_\f}\left(\delta(T_?^\dagger,\Delta_{+})\right) = \delta(T_?^\dagger,\Delta_\g)=\det\left(\widetilde{H}^2_{\rm f}(G_{\QQ, \Sigma}, T_?^\dagger,\Delta_{\g})\right)
    \end{equation} 
    is the module of $p$-adic $L$-functions, which is expected to be generated by the degree-8 balanced $p$-adic $L$-function $\cL_p^{(\g)}(\hf\otimes \hg\otimes \hg^c)$ (if $?=3$) and by $\cL_p^{(\g)}(\hf\otimes \hg\otimes \hg^c)(\kappa,\lambda,\lambda)$ (if $?=2$). 
    
    We further remark that we have
    \begin{equation}
    \label{eqn_2022_09_16_1301}
    \widetilde{H}^1_{\rm f}(G_{\QQ, \Sigma}, T_?^\dagger,\Delta_{\g})=\{0\},\quad  \widetilde{H}^2_{\rm f}(G_{\QQ, \Sigma}, T_2^\dagger,\Delta_{\g}) \hbox{ is torsion}\,,
    \end{equation}
    \begin{align}
    \label{eqn_2022_09_16_1302}
    \begin{aligned}
         &\widetilde{H}^1_{\rm f}(G_{\QQ, \Sigma}, T_?^\dagger,\Delta_{+}) \hbox{ is torsion-free of rank one},\quad  \widetilde{H}^2_{\rm f}(G_{\QQ, \Sigma}, T_?^\dagger,\Delta_{+}) \hbox{ is torsion},\\
         &\widetilde{H}^1_{\rm f}(G_{\QQ, \Sigma}, T_?^\dagger,\Delta_{+}) \xrightarrow{\res_p^{(/\g)}} H^1(G_p, F^-T_\f)\,\widehat{\otimes}\,\cR_?\xrightarrow{\sim}  \cR_? \quad \hbox{ is injective}
    \end{aligned}
    \end{align}
    provided that $ \delta(T_?^\dagger,\Delta_\g)={\rm Log}_{\omega_\f}\left(\delta(T_?^\dagger,\Delta_{+})\right)\neq \{0\}$.

\subsubsection{Further consequences}
\label{subsubsec_consequences}
We conclude \S\ref{subsec_leading_terms_bis} with a non-vanishing criterion for the leading terms to supplement the discussion in \S\ref{subsubsec_leading_terms_bis_3}--\S\ref{subsubsec_leading_terms_bis_1}.

Similar to \eqref{eqn_2022_09_16_1417}, we may also consider the natural map  
$$
\res^{(/{\rm bal})}_{p}\,:\quad\widetilde{H}^1_{\rm f}(G_{\QQ,\Sigma},T_?^\dagger,\Delta_{+})\lra {H}^1(G_p,F^-T_\f^\dagger\,\widehat\otimes\,F^+T_\g \,\widehat\otimes\, F^-T_\g^*)\xrightarrow{\sim}{H}^1(G_p,F^-T_\f^\dagger)\,\widehat\otimes\,\cR_? \,.
$$
induced by the inclusion $F^+_{\rm bal}T_?^\dagger \longrightarrow F^+_{\textrm{bal}+}T_?^\dagger$, cf. Example~\ref{example_local_conditions}(iv).

The trivialisation \eqref{eqn_2022_09_16_1244} gives rise to a morphism
$$\widetilde{H}^1_{\rm f}(G_{\QQ,\Sigma},T_?^\dagger,\Delta_{+})\xrightarrow{\res_p^{(/{\rm bal})}}{H}^1(G_p,F^-T_\f^\dagger)\,\widehat\otimes\,\cR_?\xrightarrow[{\rm Exp}_{F^-T_\f^\dagger}^*]{\sim} \cR_?$$
which we also denote by ${\rm Exp}_{F^-T_\f^\dagger}^*$.  Then Theorem~\ref{thm_PR_formal} combined with the discussion above  yields 
\begin{equation}
    \label{eqn_2022_09_16_1422}
   {\rm Exp}_{F^-T_\f^\dagger}^*\left(\delta(T_?^\dagger,\Delta_{+})\right)=\delta(T_?^\dagger,\Delta_{\rm bal})\,.
    \end{equation} 
In view of the Iwasawa main conjectures and given the condition 
\[
\varepsilon(\f_\kappa\otimes \g_\lambda\otimes\g^*_\mu)=-1\,,\qquad (\kappa,\lambda,\mu)\in \cW_3^{\rm bal}
\]
on the global root numbers in the scenario we have placed ourselves, we expect in light of Theorem~\ref{theorem_delta_rank0} that 
\begin{equation}
    \label{eqn_2022_09_16_1430} 
    {\rm Exp}_{F^-T_\f^\dagger}^*\left(\delta(T_?^\dagger,\Delta_{+})\right)=\delta(T_?^\dagger,\Delta_{\rm bal})= \{0\}.
        \end{equation} 
        This is a reflection of the fact that the balanced triple product $p$-adic $L$-function vanishes identically.

\begin{proposition}
\label{prop_suport_range_Selmer_complexes_8_6_2_8}
If
$\delta(T_2^\dagger,\Delta_\g)\neq \{0\}$, 
then also $\res_p\left(\delta(T_\f^\dagger,\Delta_\emptyset)\right)\neq \{0\} \neq \delta(M_2^\dagger,{\rm tr}^*\Delta_{\g})$. Moreover, $\delta(T_2^\dagger,\Delta_{\rm bal})= \{0\}$.
\end{proposition}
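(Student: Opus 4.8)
\textbf{Proof proposal for Proposition~\ref{prop_suport_range_Selmer_complexes_8_6_2_8}.}

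The plan is to exploit the two exact triangles relating $T_2^\dagger$, $M_2^\dagger$ and $\varpi_{2,1}^*(T_\f^\dagger)$, namely \eqref{eqn_sequence_dual_trace_derived_category}, together with the compatibility \eqref{prop_51_2022_09_12_1507} of leading-term modules under base change and the formal Perrin-Riou identity of Theorem~\ref{thm_PR_formal}. First I would record that, under the hypotheses of \S\ref{subsubsec_hypo_section_6}, all four Selmer complexes in sight ($T_2^\dagger$ with $\Delta_\g$, $\Delta_+$, $\Delta_{\rm bal}$, and $M_2^\dagger$ with ${\rm tr}^*\Delta_\g$, and $T_\f^\dagger$ with $\Delta_\emptyset$, $\Delta_0$) lie in $D^{[1,2]}_{\rm parf}$ by Proposition~\ref{prop_[1,2]_parf}, so that the leading-term formalism of \S\ref{sec_Koly_Sys_Dec_4_18_05_2021} applies to each; this is the bookkeeping step. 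The hypothesis $\delta(T_2^\dagger,\Delta_\g)\neq\{0\}$ means, by Theorem~\ref{theorem_delta_rank0}.1 and \eqref{eqn_2022_09_12_1711}, that $\widetilde H^2_{\rm f}(G_{\QQ,\Sigma},T_2^\dagger,\Delta_\g)$ is torsion and $\widetilde H^1_{\rm f}(G_{\QQ,\Sigma},T_2^\dagger,\Delta_\g)=\{0\}$.

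Next I would feed this vanishing into the exact triangle \eqref{eqn_sequence_dual_trace_derived_category}. Taking cohomology gives an exact sequence
\[
\widetilde H^1_{\rm f}(G_{\QQ,\Sigma},\varpi_{2,1}^*(T_\f^\dagger),\Delta_0)\xrightarrow{{\rm id}\otimes{\rm tr}^*}\widetilde H^1_{\rm f}(G_{\QQ,\Sigma},T_2^\dagger,\Delta_\g)\xrightarrow{\pi_{{\rm tr}^*}}\widetilde H^1_{\rm f}(G_{\QQ,\Sigma},M_2^\dagger,{\rm tr}^*\Delta_\g)\xrightarrow{\delta}\widetilde H^2_{\rm f}(G_{\QQ,\Sigma},\varpi_{2,1}^*(T_\f^\dagger),\Delta_0)\to\cdots
\]
Since the middle $H^1$ vanishes, the connecting map embeds $\widetilde H^1_{\rm f}(G_{\QQ,\Sigma},M_2^\dagger,{\rm tr}^*\Delta_\g)$ into $\widetilde H^2_{\rm f}(G_{\QQ,\Sigma},\varpi_{2,1}^*(T_\f^\dagger),\Delta_0) = \widetilde H^2_{\rm f}(G_{\QQ,\Sigma},T_\f^\dagger,\Delta_0)\,\widehat\otimes\,\cR_2$; and the torsionness of $\widetilde H^2_{\rm f}(T_2^\dagger,\Delta_\g)$ forces (again from the triangle, comparing ranks via the Euler characteristic computations $\chi=-1,0,-1$ of \S\ref{subsubsec_leading_terms_bis_2}--\ref{subsubsec_leading_terms_bis_3}) that $\widetilde H^2_{\rm f}(M_2^\dagger,{\rm tr}^*\Delta_\g)$ is torsion and $\widetilde H^2_{\rm f}(T_\f^\dagger,\Delta_0)$ has $\cR_2$-rank one. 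By the multiplicativity of determinants of perfect complexes along the triangle (Lemma~\ref{lemma_determinant}), $\delta(T_2^\dagger,\Delta_\g)$ (which equals the determinant by \eqref{eqn_2022_09_12_1711}) is the product of $\det\widetilde{R\Gamma}_{\rm f}(M_2^\dagger,{\rm tr}^*\Delta_\g)$ and $\det\widetilde{R\Gamma}_{\rm f}(T_\f^\dagger,\Delta_0)$ (base-changed to $\cR_2$); non-vanishing of the left side forces non-vanishing of both factors. The factor from $M_2^\dagger$ being non-zero gives, via Theorem~\ref{theorem_delta_rank0}.1 and the non-vanishing criterion, that $\delta(M_2^\dagger,{\rm tr}^*\Delta_\g)\neq\{0\}$. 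The factor from $T_\f^\dagger$ being non-zero, combined with Proposition~\ref{prop_suport_range_Selmer_complexes_8_6_2_2}(iii) and the canonical trivializations ${\rm Log}_{\omega_\f}$, ${\rm Exp}^*_{F^-T_\f^\dagger}$ of \S\ref{subsubsec_leading_terms_bis_3}, translates into $\res_p(\delta(T_\f^\dagger,\Delta_\emptyset))\neq\{0\}$: indeed $\delta(T_\f^\dagger,\Delta_0)$ is, by Theorem~\ref{thm_PR_formal} applied to the pair $\Delta_0\subset\Delta_\emptyset$ (so that $\mathrm{LOG}_{\Delta_\emptyset/\Delta_0}$ is essentially $\res_p$ followed by a trivialization), the image $\mathrm{LOG}_{\Delta_\emptyset/\Delta_0}(\delta(T_\f^\dagger,\Delta_\emptyset))$, and its non-vanishing is exactly $\res_p(\delta(T_\f^\dagger,\Delta_\emptyset))\neq\{0\}$.

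For the last assertion $\delta(T_2^\dagger,\Delta_{\rm bal})=\{0\}$, I would use \eqref{eqn_2022_09_16_1422}, which identifies $\delta(T_2^\dagger,\Delta_{\rm bal})$ with ${\rm Exp}^*_{F^-T_\f^\dagger}(\delta(T_2^\dagger,\Delta_+))$, together with Remark~\ref{remark_self_duality_orthogonal_complements} ($\Delta_+^\perp=\Delta_-$, $\Delta_{\rm bal}^\perp=\Delta_{\rm bal}$) and the global functional-equation / sign constraint ${\rm (Sign)}$: since $\varepsilon(\f_\kappa\otimes\g_\lambda\otimes\g^*_\mu)=-1$ throughout the balanced range $\cW_3^{\rm bal}$, the self-dual Selmer module $\widetilde H^1_{\rm f}(G_{\QQ,\Sigma},T_2^\dagger,\Delta_{\rm bal})$ has odd rank, hence rank $\geq 1$; but $\chi(\widetilde{R\Gamma}_{\rm f}(T_2^\dagger,\Delta_{\rm bal}))=0$ by Proposition~\ref{proposition_euler_poincare_characteristic}, so $\widetilde H^2_{\rm f}$ is then non-torsion, and Theorem~\ref{theorem_delta_rank0}.1 gives $\delta(T_2^\dagger,\Delta_{\rm bal})=\mathrm{Fitt}^0=\{0\}$. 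Alternatively, and perhaps more cleanly, I would derive the vanishing directly: under $\Delta_{\rm bal}$ the local condition at $p$ for $T_\f^\dagger$ propagated via the dual-trace sequence is $\Delta_{\rm Pan}$ on $T_\f^\dagger$ together with the balanced condition on $M_2^\dagger$; since ${\rm Exp}^*_{F^-T_\f^\dagger}(\delta(M_2^\dagger,{\rm tr}^*\Delta_\g))=\delta(M_2^\dagger,{\rm tr}^*\Delta_{\rm bal})$ by \eqref{eqn_2022_09_12_1620} and $\res^-_p(\delta(T_\f^\dagger,\Delta_\emptyset))=\{0\}$ by Proposition~\ref{prop_suport_range_Selmer_complexes_8_6_2_2}(i), the relevant $\mathrm{LOG}$-trivialization kills the contribution, so the product describing $\delta(T_2^\dagger,\Delta_{\rm bal})$ vanishes. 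The main obstacle I anticipate is organizing the rank bookkeeping across the triangle \eqref{eqn_sequence_dual_trace_derived_category} carefully enough to conclude that all the intermediate $H^2$'s are torsion of the expected rank (so that the determinant factorization is an honest identity of non-zero fractional ideals rather than a vacuous one), and in pinning down precisely which trivialization ($\mathrm{LOG}_{\Delta_\emptyset/\Delta_0}$ versus $\res_p$) matches the map appearing in the statement — this is where I would spend the most care, leaning on Theorem~\ref{thm_PR_formal} and the explicit Coleman-map description of ${\rm Log}_{\omega_\f}$ in \S\ref{subsubsec_leading_terms_bis_3}.
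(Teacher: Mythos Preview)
Your rank bookkeeping along the triangle \eqref{eqn_sequence_dual_trace_derived_category} is correct and does yield $\widetilde H^2_{\rm f}(M_2^\dagger,{\rm tr}^*\Delta_\g)$ torsion, hence $\delta(M_2^\dagger,{\rm tr}^*\Delta_\g)\neq\{0\}$ via Proposition~\ref{proposition_tortion_non-zero}; this matches the paper. The problem is the argument for $\res_p(\delta(T_\f^\dagger,\Delta_\emptyset))\neq\{0\}$. The ``multiplicativity of determinants'' step is not meaningful here: the two factors $\det\widetilde{R\Gamma}_{\rm f}(M_2^\dagger,{\rm tr}^*\Delta_\g)$ and $\det\widetilde{R\Gamma}_{\rm f}(T_\f^\dagger,\Delta_0)$ are invertible modules whose cohomology is non-torsion, so neither trivializes as a fractional ideal and ``non-vanishing of both factors'' has no content. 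Worse, your appeal to Theorem~\ref{thm_PR_formal} for the pair $\Delta_0\subset\Delta_\emptyset$ on $T_\f^\dagger$ is illegitimate: that theorem requires $r(\Delta_1)=0$ and $r(\Delta_2)=1$, but here $r(T_\f^\dagger,\Delta_0)=-1$ and $r(T_\f^\dagger,\Delta_\emptyset)=1$ (the rank jump is $2$), and in particular $\delta(T_\f^\dagger,\Delta_0)$ is not even defined. The missing idea is global duality: $\Delta_0$ and $\Delta_\emptyset$ are orthogonal complements under the self-duality of $T_\f^\dagger$, so from $\widetilde H^1_{\rm f}(T_\f^\dagger,\Delta_0)=\{0\}$ (which you do get from the triangle) one deduces that $\widetilde H^2_{\rm f}(T_\f^\dagger,\Delta_\emptyset)$ is torsion, whence $\delta(T_\f^\dagger,\Delta_\emptyset)\neq\{0\}$; then the identification $\widetilde H^1_{\rm f}(T_\f^\dagger,\Delta_0)=\ker(\res_p)$ gives injectivity of $\res_p$ and the claim.

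Your argument for $\delta(T_2^\dagger,\Delta_{\rm bal})=\{0\}$ also has a gap. The first route (odd rank of $\widetilde H^1_{\rm f}(T_2^\dagger,\Delta_{\rm bal})$ forced by root numbers) would require a parity theorem for these family Selmer groups, which is not proved in the paper and cannot be simply invoked. The second route (``the relevant $\mathrm{LOG}$-trivialization kills the contribution, so the product describing $\delta(T_2^\dagger,\Delta_{\rm bal})$ vanishes'') presupposes a factorization of $\delta(T_2^\dagger,\Delta_{\rm bal})$ that you have not established. The paper's argument is direct and avoids both issues: since ${\rm tr}^*\Delta_{\rm bal}=\Delta_{\rm Pan}$ on $T_\f^\dagger$ (Remark~\ref{remark_defn_propagate_local_conditions_via_dual_trace_V_fdagger}), the dual-trace map gives an injection $\widetilde H^1_{\rm f}(T_\f^\dagger,\Delta_{\rm Pan})\otimes_{\varpi_{2,1}^*}\cR_2\hookrightarrow\widetilde H^1_{\rm f}(T_2^\dagger,\Delta_{\rm bal})$; once you know $\delta(T_\f^\dagger,\Delta_\emptyset)\neq\{0\}$, Proposition~\ref{prop_suport_range_Selmer_complexes_8_6_2_2}(i) says the source is free of rank one, so the target has rank $\geq 1$, and since $\chi=0$ this forces $\widetilde H^2_{\rm f}(T_2^\dagger,\Delta_{\rm bal})$ non-torsion and hence $\delta(T_2^\dagger,\Delta_{\rm bal})=\{0\}$ by Theorem~\ref{theorem_delta_rank0}.
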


\begin{proof}
We begin by noting that the natural map
$$\widetilde{H}^1_{\rm f}(G_{\QQ,\Sigma},T_\f^\dagger,\Delta_{0})\otimes_{\varpi_{2,1}^*}\cR_2\lra \widetilde{H}^1_{\rm f}(G_{\QQ,\Sigma},T_2^\dagger,\Delta_{\g})$$
is injective  thanks to \eqref{eqn_sequence_dual_trace_derived_category}  and our running hypotheses that \ref{item_Irr} holds for $\f$ and \ref{item_Irr_plus} holds for $\g$. This together with \eqref{eqn_2022_09_16_1301} show that  $\widetilde{H}^1_{\rm f}(G_{\QQ,\Sigma},T_\f^\dagger,\Delta_{0})=\{0\}$. We infer using duality of Selmer complexes (and the self-duality of $T_\f^\dagger$) that $\widetilde{H}^2_{\rm f}(G_{\QQ,\Sigma},T_\f^\dagger,\Delta_{\emptyset})$ is torsion. Theorem~\ref{theorem_delta_rank1} tells us that $\delta(T_\f^\dagger,\Delta_\emptyset)\neq \{0\}$. Moreover, since 
$$\{0\}=\widetilde{H}^1_{\rm f}(G_{\QQ,\Sigma},T_\f^\dagger,\Delta_{0})=\ker\left(\widetilde{H}^1_{\rm f}(G_{\QQ,\Sigma},T_\f^\dagger,\Delta_{\emptyset})\xrightarrow{\res_p} H^1(G_p, T_\f^\dagger)\right)\,,$$
we conclude that $\res_p\left(\delta(T_\f^\dagger,\Delta_\emptyset)\right)\neq \{0\}$, as required.

The prove the second asserted non-vanishing, we note that the exact sequence \eqref{eqn_sequence_dual_trace_derived_category} combined with the discussion in the preceding paragraph gives rise to an exact sequence
\begin{equation}
    \label{eqn_2022_09_16_1335}
    0\lra \widetilde{H}^1_{\rm f}(G_{\QQ,\Sigma},M_2^\dagger,{\rm tr}^*\Delta_{\g})\lra \underbrace{\widetilde{H}^2_{\rm f}(G_{\QQ,\Sigma},T_\f^\dagger,\Delta_{0})\otimes_{\varpi_{2,1}^*} \cR_2}_{\mathrm{rank} \, = \, 1}\lra \underbrace{\widetilde{H}^2_{\rm f}(G_{\QQ,\Sigma},T_2^\dagger,\Delta_{\g})}_{\rm torsion}
\end{equation}
of $\cR_2$-modules. The exact sequence \eqref{eqn_2022_09_16_1335} then shows that $\widetilde{H}^1_{\rm f}(G_{\QQ,\Sigma},M_2^\dagger,{\rm tr}^*\Delta_{\g})$ is of rank one. Combining the discussion in \S\ref{subsubsec_EP_char_is_1} with \eqref{eqn_2022_09_16_1342}, we conclude that
$\delta(M_2^\dagger,{\rm tr}^*\Delta_{\g})\neq \{0\}\,.$

By Remark~\ref{remark_defn_propagate_local_conditions_via_dual_trace_V_fdagger}, we have an injection \begin{align}\label{eqn_exact_sequence_dual_trace_derived_category_balanced}
   \widetilde{H}^1_{\rm f}(G_{\QQ,\Sigma},T_\f^\dagger,\Delta_{\mathrm{Pan}})\otimes_{\varpi_{2,1}^*}\cR_2\xrightarrow{{\rm id}\otimes {\rm tr}^*} \widetilde{H}^1_{\rm f}(G_{\QQ,\Sigma},T_2^\dagger,\Delta_{\rm bal})\,.
\end{align}
Since we have $\delta(T_\f^\dagger,\Delta_\emptyset)\neq \{0\}$, it follows from Proposition~\ref{prop_suport_range_Selmer_complexes_8_6_2_2} that the $\cR_\f$-module $\widetilde{H}^1_{\rm f}(G_{\QQ,\Sigma},T_\f^\dagger,\Delta_{\mathrm{Pan}})$ is free of rank one. This together with \eqref{eqn_exact_sequence_dual_trace_derived_category_balanced} show that $\delta(T_2^\dagger,\Delta_{\rm bal}) = \{0\}$. 
\end{proof}

We will considerably strengthen Proposition~\ref{prop_suport_range_Selmer_complexes_8_6_2_8} in Theorem~\ref{thm_main_8_4_4_factorisation}.

\subsection{Factorisation}
\label{subsec_factor_general}
Our main goal in \S\ref{subsec_factor_general} is to establish Theorem~\ref{thm_main_8_4_4_factorisation}, which is one of the main results in the present article. Before doing so in \S\ref{subsubsec_factor_general_20_05_2021}, we will need to study the connecting morphism $\delta^1$.

    The homomorphism $\mathrm{tr} \colon T_2^\dagger \longrightarrow T_\f^\dagger \otimes_{\varpi_{2,1}^*} \cR_2$ induces a morphism 
    \[
    \widetilde{R\Gamma}_{\rm f}(G_{\QQ,\Sigma}, T_2^\dagger,\Delta_\g)
     \longrightarrow 
    \widetilde{R\Gamma}_{\rm f}(G_{\QQ,\Sigma},T_\f^\dagger,\Delta_\emptyset) \otimes_{\varpi_{2,1}^*} \cR_2\,. 
    \]
    Moreover, we also have a morphism (see the proof of Lemma \ref{lemma_remark_ranks_3_3_2021_11_05}) 
    \[
    \widetilde{R\Gamma}_{\rm f}(G_{\QQ,\Sigma}, M_2^\dagger, \mathrm{tr}^*\Delta_\g)
     \longrightarrow 
    \widetilde{R\Gamma}_{\rm f}(G_{p}, F_\g^+M_2^\dagger/F_\g M_2^\dagger) \cong \widetilde{R\Gamma}_{\rm f}(G_{\QQ,\Sigma},T_\f^\dagger,\Delta_\emptyset) \otimes_{\varpi_{2,1}^*} \cR_2\,. 
    \]
    Combining Remark \ref{remark_Fg+diagram-comuutes} and \eqref{eqn_sequence_dual_trace_derived_category}, we obtain a commutative diagram 
    \[
    \xymatrix{
 \widetilde{R\Gamma}_{\rm f}(G_{\QQ,\Sigma}, T_\f^\dagger, \Delta_{0})  \otimes_{\varpi_{2,1}^*} \cR_2 
    \ar[r] \ar[d]^{=} &
    \widetilde{R\Gamma}_{\rm f}(G_{\QQ,\Sigma},T_2^\dagger,\Delta_{\g}) \ar[r]  \ar[d] & \widetilde{R\Gamma}_{\rm f}(G_{\QQ,\Sigma},M_2^\dagger,{\rm tr}^*\Delta_{\g}) \ar[d]
    \ar[r]^-{\delta}_-{+1} &
    \\
      \widetilde{R\Gamma}_{\rm f}(G_{\QQ,\Sigma},T_\f^\dagger,\Delta_0)  \otimes_{\varpi_{2,1}^*} \cR_2   \ar[r] &  \widetilde{R\Gamma}_{\rm f}(G_{\QQ,\Sigma},T_\f^\dagger,\Delta_\emptyset) \otimes_{\varpi_{2,1}^*} \cR_2 
          \ar[r] &
      {R\Gamma}(G_p,T_\f^\dagger) \otimes_{\varpi_{2,1}^*} \cR_2
       \ar[r]^-{\delta}_-{+1} &, 
    }
\]   
which yields the following commutative diagram with exact rows: 
    \begin{align}
    \begin{split}
\label{diagram:connectinghom-M2trDelta_g}
       \xymatrix{
    \widetilde{H}_{\rm f}^1(G_{\QQ,\Sigma},T_2^\dagger,\Delta_{\g}) \ar[r]  \ar[d] & \widetilde{H}_{\rm f}^1(G_{\QQ,\Sigma},M_2^\dagger,{\rm tr}^*\Delta_{\g}) \ar[d]
    \ar[r]^-{\delta^1} & \widetilde{H}_{\rm f}^2(G_{\QQ,\Sigma}, T_\f^\dagger, \Delta_{0})  \otimes_{\varpi_{2,1}^*} \cR_2 \ar[d]^{=}
    \\
\widetilde{H}_{\rm f}^1(G_{\QQ,\Sigma},T_\f^\dagger,\Delta_\emptyset) \otimes_{\varpi_{2,1}^*} \cR_2 
          \ar[r]^-{\mathrm{res}_p \otimes {\rm id}} &
      H^1(G_p,T_\f^\dagger) \otimes_{\varpi_{2,1}^*} \cR_2
       \ar[r]^-{\partial^1_\f \otimes {\rm id}} &      \widetilde{H}_{\rm f}^2(G_{\QQ,\Sigma},T_\f^\dagger,\Delta_0)  \otimes_{\varpi_{2,1}^*} \cR_2. 
       }
\end{split}
\end{align}

\begin{corollary}
\label{cor_prop_20_05_2021_4_10}
Suppose that 
$\delta(T_2^\dagger,\Delta_\g)\neq \{0\}\,.
$ 
Then the map $\delta^1$ factors as 
\begin{equation}
    \label{eqn_20052021_5_4_bis}
    \begin{aligned}
        \xymatrix{
    \widetilde{H}^1_{\rm f}(G_{\QQ,\Sigma},M_2^\dagger,{\rm tr}^*\Delta_{\g})\ar@{^{(}->}[rr]^{\delta^1} \ar@{^{(}->}[rd]_{\res_{/\emptyset}}&& \widetilde{H}^2_{\rm f}(G_{\QQ,\Sigma},T_\f^\dagger,\Delta_{0})\otimes_{\varpi_{2,1}^*}\cR_2\\
     &\dfrac{H^1(G_p,T_\f^\dagger)}{\res_p(\widetilde{H}^1_{\rm f}(G_{
     \QQ,S},T_\f^\dagger,\Delta_{\emptyset}))}\otimes_{\varpi_{2,1}^*}\cR_2\ar@{^{(}->}[ru]_(.58){\partial^1_\f \otimes {\rm id}}&
    }
    \end{aligned}
\end{equation}
and all the $\cR_2$-modules that appear in \eqref{eqn_20052021_5_4_bis} are of rank one.
\end{corollary}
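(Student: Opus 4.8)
\textbf{Proof strategy for Corollary~\ref{cor_prop_20_05_2021_4_10}.} The plan is to feed the non-vanishing hypothesis $\delta(T_2^\dagger,\Delta_\g)\neq\{0\}$ into Proposition~\ref{prop_suport_range_Selmer_complexes_8_6_2_8} and Proposition~\ref{prop_suport_range_Selmer_complexes_8_6_2_2}, and then read off the factorization from the commutative diagram \eqref{diagram:connectinghom-M2trDelta_g}. First I would record the consequences of Proposition~\ref{prop_suport_range_Selmer_complexes_8_6_2_8}: the hypothesis gives $\res_p(\delta(T_\f^\dagger,\Delta_\emptyset))\neq\{0\}$, $\delta(M_2^\dagger,{\rm tr}^*\Delta_\g)\neq\{0\}$, and $\delta(T_2^\dagger,\Delta_{\rm bal})=\{0\}$; in particular $\widetilde{H}^1_{\rm f}(G_{\QQ,\Sigma},T_\f^\dagger,\Delta_0)=\{0\}$, and hence (by \eqref{eqn_sequence_dual_trace_derived_category} applied to $M_2^\dagger$ together with the exact sequence \eqref{eqn_2022_09_16_1335}) $\widetilde{H}^1_{\rm f}(G_{\QQ,\Sigma},M_2^\dagger,{\rm tr}^*\Delta_\g)$ is of rank one. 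Since $\res_p(\delta(T_\f^\dagger,\Delta_\emptyset))\neq\{0\}$, Proposition~\ref{prop_suport_range_Selmer_complexes_8_6_2_2}(iii) applies and gives $\widetilde{H}^1_{\rm f}(G_{\QQ,\Sigma},T_\f^\dagger,\Delta_0)=\{0\}$ and ${\rm rank}_{\cR_\f}\widetilde{H}^2_{\rm f}(G_{\QQ,\Sigma},T_\f^\dagger,\Delta_0)=1$; so the right column $\widetilde{H}^2_{\rm f}(G_{\QQ,\Sigma},T_\f^\dagger,\Delta_0)\otimes_{\varpi_{2,1}^*}\cR_2$ of \eqref{diagram:connectinghom-M2trDelta_g} is of rank one over $\cR_2$.

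Next I would establish injectivity of $\delta^1$. From the exact sequence \eqref{eqn_sequence_dual_trace_derived_category} the map $\delta^1$ sits in the long exact cohomology sequence, whose kernel is the image of $\widetilde{H}^1_{\rm f}(G_{\QQ,\Sigma},T_2^\dagger,\Delta_\g)$. But $\widetilde{H}^1_{\rm f}(G_{\QQ,\Sigma},T_2^\dagger,\Delta_\g)=\{0\}$ by \eqref{eqn_2022_09_16_1301} (valid since the non-vanishing of $\delta(T_2^\dagger,\Delta_\g)$ is precisely the hypothesis appearing there, via \eqref{eqn_2022_09_16_1252}), so $\delta^1$ is injective. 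Then I would use the commutativity of the lower-left square of \eqref{diagram:connectinghom-M2trDelta_g} together with the vanishing of $\widetilde{H}^1_{\rm f}(G_{\QQ,\Sigma},T_\f^\dagger,\Delta_0)=\ker(\res_p\colon\widetilde{H}^1_{\rm f}(G_{\QQ,\Sigma},T_\f^\dagger,\Delta_\emptyset)\to H^1(G_p,T_\f^\dagger))$: this makes $\res_p\otimes{\rm id}$ injective on $\widetilde{H}^1_{\rm f}(G_{\QQ,\Sigma},T_\f^\dagger,\Delta_\emptyset)\otimes_{\varpi_{2,1}^*}\cR_2$, so its image is exactly $\res_p(\widetilde{H}^1_{\rm f}(G_{\QQ,S},T_\f^\dagger,\Delta_\emptyset))\otimes_{\varpi_{2,1}^*}\cR_2$, and therefore the bottom row of \eqref{diagram:connectinghom-M2trDelta_g} identifies $\partial^1_\f\otimes{\rm id}$ as an injection on the quotient $H^1(G_p,T_\f^\dagger)/\res_p(\widetilde{H}^1_{\rm f}(G_{\QQ,S},T_\f^\dagger,\Delta_\emptyset))\otimes_{\varpi_{2,1}^*}\cR_2$. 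Chasing the right-hand square of \eqref{diagram:connectinghom-M2trDelta_g} (the vertical map on the right is the identity) then factors $\delta^1$ through $\res_{/\emptyset}$ followed by $\partial^1_\f\otimes{\rm id}$, exactly as displayed in \eqref{eqn_20052021_5_4_bis}; since $\delta^1$ is injective and factors through $\res_{/\emptyset}$, the latter is injective as well.

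Finally I would check the rank assertions for all three modules in \eqref{eqn_20052021_5_4_bis}. We already have ${\rm rank}_{\cR_2}\widetilde{H}^1_{\rm f}(G_{\QQ,\Sigma},M_2^\dagger,{\rm tr}^*\Delta_\g)=1$ and ${\rm rank}_{\cR_2}\bigl(\widetilde{H}^2_{\rm f}(G_{\QQ,\Sigma},T_\f^\dagger,\Delta_0)\otimes_{\varpi_{2,1}^*}\cR_2\bigr)=1$. For the middle module, ${\rm rank}_{\cR_\f}H^1(G_p,T_\f^\dagger)=2$ by the local Euler characteristic formula while ${\rm rank}_{\cR_\f}\widetilde{H}^1_{\rm f}(G_{\QQ,\Sigma},T_\f^\dagger,\Delta_\emptyset)=1$ by Proposition~\ref{prop_suport_range_Selmer_complexes_8_6_2_2}(i), so the quotient has rank one; tensoring with $\cR_2$ over $\varpi_{2,1}^*$ preserves this. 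Since an injection of finitely generated modules over the domain $\cR_2$ between modules of equal rank is automatically a rank-one statement on each, all three modules in \eqref{eqn_20052021_5_4_bis} are of rank one, completing the proof. The only place that requires genuine care — the main (mild) obstacle — is verifying that the two triangles defining the maps $\mathrm{tr}$ and the composite $\widetilde{R\Gamma}_{\rm f}(G_{\QQ,\Sigma},M_2^\dagger,{\rm tr}^*\Delta_\g)\to\widetilde{R\Gamma}_{\rm f}(G_p,F_\g^+M_2^\dagger/F_\g M_2^\dagger)$ genuinely glue into the commutative diagram \eqref{diagram:connectinghom-M2trDelta_g}; this is where Remark~\ref{remark_Fg+diagram-comuutes} (the compatibility of $\pi_{\mathrm{tr}^*}$ with $\mathrm{tr}$ on the relevant graded pieces) does the work, and I would spell that compatibility out before invoking the diagram.
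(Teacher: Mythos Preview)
Your proposal is correct and follows essentially the same approach as the paper: the paper's proof is the one-line ``This follows from \eqref{eqn_2022_09_16_1301} and \eqref{diagram:connectinghom-M2trDelta_g},'' and your argument simply unpacks what those two inputs buy, together with the rank computations from Propositions~\ref{prop_suport_range_Selmer_complexes_8_6_2_2} and~\ref{prop_suport_range_Selmer_complexes_8_6_2_8}. Your added remark that the commutativity of \eqref{diagram:connectinghom-M2trDelta_g} ultimately rests on Remark~\ref{remark_Fg+diagram-comuutes} is accurate and already recorded in the paper just before the diagram.
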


\begin{proof}
This follows from \eqref{eqn_2022_09_16_1301} and \eqref{diagram:connectinghom-M2trDelta_g}. 
\end{proof}

\subsubsection{}
\label{subsubsec_factor_general_20_05_2021_prelim}
In \S\ref{subsubsec_factor_general_20_05_2021_prelim}, we work in the setting and under the assumptions of Corollary~\ref{cor_prop_20_05_2021_4_10} and \S\ref{subsec_leading_terms_bis}. 
Recall our map $\res_{/{\rm bal}}$ given as the composite map $$\widetilde{H}^1_{\rm f}(G_{\QQ,\Sigma},M_2^\dagger,{\rm tr}^*\Delta_{\g})\xrightarrow{{\rm pr}_{/\g}\,\circ\,\res_p} H^1(G_p,T_\f^\dagger) \otimes_{\varpi_{2,1}^*} \cR_2\lra H^1(G_p,F^-T_\f^\dagger) \otimes_{\varpi_{2,1}^*} \cR_2\,$$
hence it factors as 
\begin{align}\label{eq:res_bal_factrisation}
\res_{/{\rm bal}}:\,\widetilde{H}^1_{\rm f}(G_{\QQ,\Sigma},M_2^\dagger,{\rm tr}^*\Delta_{\g})\xrightarrow{\res_{/\emptyset}} \dfrac{H^1(G_p,T_\f^\dagger)}{\res_p(\widetilde{H}^1_{\rm f}(G_{\QQ,S},T_\f^\dagger,\Delta_{\emptyset}))}\otimes_{\varpi_{2,1}^*}\cR_2 \stackrel{\pi}{\twoheadrightarrow} H^1(G_p,F^-T_\f^\dagger) \otimes_{\varpi_{2,1}^*} \cR_2\,.     
\end{align}
Thanks to this factorisation, when $\delta(T_2^\dagger,\Delta_\g)\neq \{0\}$,  the map $\res_{/{\rm bal}}$ is injective since 
\begin{itemize}
\item $\res_{/\emptyset}$ is injective,  
\item $\widetilde{H}^1_{\rm f}(G_{\QQ,\Sigma},M_2^\dagger,{\rm tr}^*\Delta_{\g})$ is  torsion-free, and 
\item $\ker(\pi)$ is torsion. (Note that $\widetilde{H}^1_{\rm f}(G_{
     \QQ,S},T_\f^\dagger,\Delta_{\emptyset})=\widetilde{H}^1_{\rm f}(G_{
     \QQ,S},T_\f^\dagger,\Delta_{\mathrm{Pan}})$ and $\mathrm{res}_p$ is injective  by Proposition~\ref{prop_suport_range_Selmer_complexes_8_6_2_2}.) 
\end{itemize}
Thence, we have an exact sequence of torsion $\cR_2$-modules
\begin{align}
    \label{eqn_exact_seq_2152021_5_22}
    \begin{aligned}
    0\lra \dfrac{H^1(G_p,F^+T_\f^\dagger)\otimes_{\varpi_{2,1}^*}\cR_2}{\res_p\left(\widetilde{H}^1_{\rm f}(G_{\QQ,\Sigma},T_\f^\dagger,\Delta_{\emptyset})\right)\otimes_{\varpi_{2,1}^*}\cR_2}
   & \lra 
    \dfrac{\dfrac{H^1(G_p,T_\f^\dagger)\otimes_{\varpi_{2,1}^*}\cR_2}{\res_p\left(\widetilde{H}^1_{\rm f}(G_{\QQ,\Sigma},T_\f^\dagger,\Delta_{\emptyset})\right)\otimes_{\varpi_{2,1}^*}\cR_2}}{\res_{/\emptyset}(\delta(M_2^\dagger,{\rm tr}^*\Delta_{\g}))}
    \\
    &\qquad\qquad\qquad\qquad\lra
   \dfrac{H^1(G_p,F^-T_\f^\dagger)\otimes_{\varpi_{2,1}^*}\cR_2}{{\res}_{/{\rm bal}}(\delta(M_2^\dagger,{\rm tr}^*\Delta_{\g}))} \lra  0. 
   \end{aligned}
\end{align}

\begin{proposition}
\label{prop_useful_step_in_factorisation_21_05_2021}
In the setting of Corollary~\ref{cor_prop_20_05_2021_4_10}, we have
\begin{align*}
{\rm char}\left(\dfrac{\widetilde{H}^2_{\rm f}(G_{\QQ,\Sigma},T_\f^\dagger,\Delta_{0})\otimes_{\varpi_{2,1}^*}\cR_2}{\delta^1(\delta(M_2^\dagger,{\rm tr}^*\Delta_{\g}))}\right)
=
{\rm char}\left( \dfrac{H^1(G_p,F^-T_\f^\dagger)\otimes_{\varpi_{2,1}^*}\cR_2}{{\res}_{/{\rm bal}}(\delta(M_2^\dagger,{\rm tr}^*\Delta_{\g}))}\right)
{\rm char}\left( \dfrac{H^1(G_p,F^+T_\f^\dagger)}{\res_p(\delta(T_\f^\dagger,\Delta_\emptyset))}\otimes_{\varpi_{2,1}^*}\cR_2\right) \,.
\end{align*}
\end{proposition}

\begin{proof}
Combining Corollary~\ref{cor_prop_20_05_2021_4_10} and \eqref{eqn_exact_seq_2152021_5_22}, we infer that
\begin{align}
\label{eqn_proof_step_1_prop_useful_step_in_factorisation_21_05_2021}
\notag {\rm char}\left(\dfrac{\widetilde{H}^2_{\rm f}(G_{\QQ,\Sigma},T_\f^\dagger,\Delta_{0})\otimes_{\varpi_{2,1}^*}\cR_2}{\delta^1(\delta(M_2^\dagger,{\rm tr}^*\Delta_{\g}))}\right)&={\rm char}\left( \dfrac{H^1(G_p,F^-T_\f^\dagger)\otimes_{\varpi_{2,1}^*}\cR_2}{{\res}_{/{\rm bal}}(\delta(M_2^\dagger,{\rm tr}^*\Delta_{\g}))}\right)\\
&\quad\times
{\rm char}\left( \dfrac{H^1(G_p,F^+T_\f^\dagger)}{\res_p\left(\widetilde{H}^1_{\rm f}(G_{\QQ,\Sigma},T_\f^\dagger,\Delta_{\emptyset})\right)}\otimes_{\varpi_{2,1}^*}\cR_2\right) 
{\rm char}\left({\rm coker}(\partial^1_\f\otimes{\rm id}) \right)\,.
\end{align}
The long exact sequence
\begin{equation*}
    \label{eqn_exact_seq_06052021_5_8}
    \begin{aligned}
    \xymatrix@C=0.6cm{0\ar[r]& \widetilde{H}^1_{\rm f}(G_{\QQ,\Sigma},T_\f^\dagger,\Delta_{\emptyset})\ar[r]^(.55){\res_p}&H^1(G_p,T_\f^\dagger)\ar[r]^(.43){\partial^1_{\f}}& \widetilde{H}^2_{\rm f}(G_{\QQ,\Sigma},T_\f^\dagger,\Delta_{0})\ar[r]& \widetilde{H}^2_{\rm f}(G_{\QQ,\Sigma},T_\f^\dagger,\Delta_{\emptyset})\ar[r]&0
    }
    \end{aligned}
\end{equation*}
shows that ${\rm coker}(\partial_\f^1\otimes{\rm id})\xrightarrow{\sim} \widetilde{H}^2_{\rm f}(G_{\QQ,\Sigma},T_\f^\dagger,\Delta_{\emptyset})\otimes_{\varpi_{2,1}^*}\cR_2$. 
We may reorganize \eqref{eqn_proof_step_1_prop_useful_step_in_factorisation_21_05_2021} as
\footnotesize
\begin{align*}\label{eqn_proof_step_2_prop_useful_step_in_factorisation_21_05_2021}
&{\rm char}\left(\dfrac{\widetilde{H}^2_{\rm f}(G_{\QQ,\Sigma},T_\f^\dagger,\Delta_{0})\otimes_{\varpi_{2,1}^*}\cR_2}{\delta^1(\delta(M_2^\dagger,{\rm tr}^*\Delta_{\g}))}\right)\\
&\stackrel{\eqref{eqn_prop_suport_range_Selmer_complexes_8_6_2_2_ii}}{=} 
{\rm char}\left( \dfrac{H^1(G_p,F^-T_\f^\dagger)\otimes_{\varpi_{2,1}^*}\cR_2}{  {\res}_{/{\rm bal}}(\delta(M_2^\dagger,{\rm tr}^*\Delta_{\g}))}\right)   {\rm char}\left( \dfrac{H^1(G_p,F^+T_\f^\dagger)}{\res_p\left(\widetilde{H}^1_{\rm f}(G_{\QQ,\Sigma},T_\f^\dagger,\Delta_{\emptyset})\right)}\otimes_{\varpi_{2,1}^*}\cR_2\right) {\rm char}\left(\dfrac{\widetilde{H}^1_{\rm f}(G_{\QQ,\Sigma},T_\f^\dagger,\Delta_{\emptyset})}{R_\f\cdot\delta(T_\f^\dagger,\Delta_\emptyset)}\otimes_{\varpi_{2,1}^*}\cR_2\right)\\
&={\rm char}\left( \dfrac{H^1(G_p,F^-T_\f^\dagger)\otimes_{\varpi_{2,1}^*}\cR_2}{  {\res}_{/{\rm bal}}(\delta(M_2^\dagger,{\rm tr}^*\Delta_{\g}))}\right)
{\rm char}\left( \dfrac{H^1(G_p,F^+T_\f^\dagger)}{\res_p(\delta(T_\f^\dagger,\Delta_\emptyset))}\otimes_{\varpi_{2,1}^*}\cR_2\right)\,, 
\end{align*}
\normalsize
as required.
\end{proof}

\subsubsection{} 
\label{subsubsec_factor_general_20_05_2021}
We are now ready to explain our factorisation result concerning algebraic $p$-adic $L$-functions, whenever the relevant leading terms are non-trivial. 

\begin{theorem}
\label{thm_main_8_4_4_factorisation}
Under the hypotheses recorded in \S\ref{subsubsec_hypo_section_6}, we have
\begin{equation}
\label{eqn_2022_09_13_1733}
    {\rm Log}_{\omega_\f}\left(\delta(T_2^\dagger,\Delta_{+})\right)=\delta(T_2^\dagger,\Delta_\g)\,{=}\,  {\rm Exp}_{F^-T_\f^\dagger}^*(\delta(M_2^\dagger,{\rm tr}^*\Delta_{\g}))\cdot \varpi_{2,1}^*{\rm Log}_{\omega_\f}(\delta(T_\f^\dagger,\Delta_\emptyset)).
\end{equation}
In particular, if $\delta(T_2^\dagger,\Delta_\g)\neq \{0\}$, then  ${\rm Exp}_{F^-T_\f^\dagger}^*(\delta(M_2^\dagger,{\rm tr}^*\Delta_{\g}))=\delta(M_2^\dagger,\Delta_{\rm bal})\neq \{0\} \neq {\rm Log}_{\omega_\f}(\delta(T_\f^\dagger,\Delta_\emptyset))$.
\end{theorem}

\begin{proof}
The first equality in \eqref{eqn_2022_09_13_1733} is \eqref{eqn_2022_09_16_1252}. 
Let us prove the second equality. Suppose first that $\delta(T_2^\dagger,\Delta_\g) = \{0\}$. We may assume without loss if generality that $\delta(M_2^\dagger,{\rm tr}^*\Delta_{\g}) \neq \{0\} \neq {\rm Log}_{\omega_\f}(\delta(T_\f^\dagger,\Delta_\emptyset))$, since otherwise the claimed equality \eqref{eqn_2022_09_13_1733} reduces to $0=0$. These assumptions imply that 
$\widetilde{H}_{\rm f}^1(G_{\QQ,\Sigma},T_2^\dagger,\Delta_{\g})$ is a non-trivial torsion-free $\cR_2$-module  by Remark \ref{rem:non-trivial_r=0_case}, 
that $\widetilde{H}_{\rm f}^1(G_{\QQ,\Sigma},M_2^\dagger,{\rm tr}^*\Delta_{\g})$ is torsion-free of rank $1$ by \eqref{eqn_2022_09_12_1543}, 
and that $\widetilde{H}_{\rm f}^1(G_{\QQ,\Sigma},T_\f^\dagger,\Delta_{0})=0$ by Proposition \ref{prop_suport_range_Selmer_complexes_8_6_2_2}(iii). 
Hence, it follows from the exact triangle \eqref{eqn_sequence_dual_trace_derived_category} that the morphism $\widetilde{H}_{\rm f}^1(G_{\QQ,\Sigma},T_2^\dagger,\Delta_{\g}) \hookrightarrow \widetilde{H}_{\rm f}^1(G_{\QQ,\Sigma},M_2^\dagger,{\rm tr}^*\Delta_{\g})$ is  injective and its cokernel is a torsion $\cR_2$-module. 
Consequently, by \eqref{diagram:connectinghom-M2trDelta_g} and \eqref{eq:res_bal_factrisation}, 
the homomorphism $\res_{/{\rm bal}}$ is zero. 
In particular, 
\[
{\rm Exp}_{F^-T_\f^\dagger}^*(\delta(M_2^\dagger,{\rm tr}^*\Delta_{\g})) = {\rm char}\left( \dfrac{H^1(G_p,F^-T_\f^\dagger)\otimes_{\varpi_{2,1}^*}\cR_2}{  {\res}_{/{\rm bal}}(\delta(M_2^\dagger,{\rm tr}^*\Delta_{\g}))}\right) = \{0\}. 
\]
This concludes the proof of \eqref{eqn_2022_09_13_1733} when $\delta(T_2^\dagger,\Delta_\g)= \{0\}$. Next, suppose that $\delta(T_2^\dagger,\Delta_\g) \neq \{0\}$. It follows on combining \eqref{eqn_sequence_dual_trace_derived_category} and Proposition~\ref{prop_suport_range_Selmer_complexes_8_6_2_8} that the sequence
\begin{align*}
\begin{aligned}
    0\rightarrow \widetilde{H}^1_{\rm f}(G_{\QQ,\Sigma},M_2^\dagger,{\rm tr}^*\Delta_{\g})\xrightarrow{\delta^1} \widetilde{H}^2_{\rm f}(G_{\QQ,\Sigma},T_\f^\dagger,\Delta_{0})\otimes_{\varpi_{2,1}^*}\cR_2\rightarrow \widetilde{H}^2_{\rm f}(G_{\QQ,\Sigma},T_2^\dagger,\Delta_{\g})\rightarrow \widetilde{H}^2_{\rm f}(G_{\QQ,\Sigma},M_2^\dagger,{\rm tr}^*\Delta_{\g})\rightarrow 0
\end{aligned}
\end{align*}
is exact. This shows that 
\begin{align}
    \label{eqn_eqn_sequence_dual_trace_cohomology_first_consequence}
    \begin{aligned}
        {\rm char} &\left(\widetilde{H}^1_{\rm f}(G_{\QQ,\Sigma},M_2^\dagger,{\rm tr}^*\Delta_{\g})\big{/}\delta(M_2^\dagger,{\rm tr}^*\Delta_{\g})\right)\,{\rm char}\left(\widetilde{H}^2_{\rm f}(G_{\QQ,\Sigma},T_2^\dagger,\Delta_{\g})\right)\\
        &\qquad\qquad\qquad={\rm char}\left(\widetilde{H}^2_{\rm f}(G_{\QQ,\Sigma},T_\f^\dagger,\Delta_{0})\big{/}\delta^1(\delta(M_2^\dagger,{\rm tr}^*\Delta_{\g}))\right) {\rm char}\left(\widetilde{H}^2_{\rm f}(G_{\QQ,\Sigma},M_2^\dagger,{\rm tr}^*\Delta_{\g})\right)\,.
    \end{aligned}
\end{align}
Combining \eqref{eqn_eqn_sequence_dual_trace_cohomology_first_consequence} with \eqref{eqn_2022_09_12_1543}, we conclude that
\begin{equation}
    \label{eqn_eqn_sequence_dual_trace_cohomology_second_consequence}
     {\rm char} \left(\widetilde{H}^2_{\rm f}(G_{\QQ,\Sigma},T_2^\dagger,\Delta_{\g})\right)={\rm char}\left(\frac{\widetilde{H}^2_{\rm f}(G_{\QQ,\Sigma},T_\f^\dagger,\Delta_{0})\otimes_{\varpi_{2,1}^*}\cR_2}{\delta^1(\delta(M_2^\dagger,{\rm tr}^*\Delta_{\g}))}\right)\,.
\end{equation}
This equality together with Proposition~\ref{prop_useful_step_in_factorisation_21_05_2021} then yields
\begin{align}\label{eqn_eqn_sequence_dual_trace_cohomology_second_consequence_2}
    \begin{aligned}
    {\rm char}\left(\widetilde{H}^2_{\rm f}(G_{\QQ,\Sigma},T_2^\dagger,\Delta_{\g})\right)\,&={\rm char}\left( \dfrac{H^1(G_p,F^-T_\f^\dagger)\otimes_{\varpi_{2,1}^*}\cR_2}{  {\res}_{/{\rm bal}}(\delta(M_2^\dagger,{\rm tr}^*\Delta_{\g}))}\right) {\rm char}\left( \dfrac{H^1(G_p,F^+T_\f^\dagger)}{\res_p(\delta(T_\f^\dagger,\Delta_\emptyset))}\otimes_{\varpi_{2,1}^*}\cR_2\right) \\    
    &={\rm Exp}_{F^-T_\f^\dagger}^*(\delta(M_2^\dagger,{\rm tr}^*\Delta_{\g})) \varpi_{2,1}^*{\rm Log}_{\omega_\f}(\delta(T_\f^\dagger,\Delta_\emptyset))\,.
     \end{aligned}
\end{align}
The proof of our theorem is now complete on combining  \eqref{eqn_eqn_sequence_dual_trace_cohomology_second_consequence_2} with  \eqref{eqn_2022_09_12_1711}.
\end{proof}

\begin{corollary}\label{cor_2026_05_11}
Under the hypotheses recorded in \S\ref{subsubsec_hypo_section_6}, we have
\begin{equation*}
\delta(T_2^\dagger,\Delta_\g)\,{=}\,  {\rm Exp}_{F^-T_\f^\dagger}^*(\delta(M_2^\dagger,{\rm tr}^*\Delta_{\g}))\cdot \varpi_{2,1}^*{\rm Log}_{\omega_\f}({\rm BK}_\f^\dagger).
\end{equation*}
\end{corollary}
\begin{proof}
This corollary follows from Theorem \ref{thm_main_8_4_4_factorisation} together with \eqref{eqn_2022_09_12_1953_bis}. 
\end{proof}

\subsubsection{}
    \label{rem_2022_09_13_1624}
    As we have noted in \S\ref{subsubsec_leading_terms_bis_1} and \S\ref{subsubsec_leading_terms_bis_2}, we expect (in view of Iwasawa--Greenberg main conjectures) that the module $\delta(T_2^\dagger,\Delta_\g)$ of leading terms is generated by $\cL_p^{\hg}(\f\otimes\g\otimes\g^c)^2(\kappa,\lambda,\lambda)$, whereas the module ${\rm Exp}_{F^-T_\f^\dagger}^*(\delta(M_2^\dagger,{\rm tr}^*\Delta_{\g}))$ is expected to be generated by the degree-6 balanced $p$-adic $L$-function $\cL_p^\Ad(\hf\otimes \Ad^0\hg)$. 
    
    Moreover, recall that $\delta(T_\f^\dagger,\Delta_\emptyset)$ can be chosen as the Beilinson--Kato element (constructed by Ochiai in this setup). 
    Very loosely speaking, the element ${\rm Log}_{\omega_\f}(\delta(T_\f^\dagger,\Delta_\emptyset))\in \cR_\f$ then interpolates the derivatives of Perrin-Riou's $\mathbf{D}_{\rm cris}(V_f^*)\otimes \mathcal{H}(\Gamma_\cyc)$-valued $p$-adic $L$-functions attached to crystalline members $f$ of the family $\f$ at their central critical points, paired with the canonical vector $\omega_f\in {\rm Fil}^1\mathbf{D}_{\rm cris}(V_f)$ (cf. \cite{perrinriou93}, \S2.2.2). 
    
    The precise relation of $\delta(T_\f^\dagger,\Delta_\emptyset)$  with $p$-adic $L$-functions will be explained in \S\ref{BK-PR}, and it will be clear that \eqref{eqn_2022_09_13_1733} is indeed an algebraic analogue of the factorisation predicted by Conjecture~\ref{conj_main_6_plus_2}.

\subsubsection{}
\label{BK-PR}
Let us assume that $\kappa$ is a crystalline specialisation and $f=f_\kappa$. Let $f_0$ denote the newform associated to $f$, and let us denote by $\alpha_0,\beta_0$ the roots of its Hecke polynomial at $p$. We assume that $v_p(\alpha_0)=0$ and $f_0^{\beta_0}$ is non-theta-critical.  Combining \cite[\S2.2.2]{perrinriou93} with Kato's reciprocity laws \cite[Theorem 16.6]{kato04} (as enhanced in \cite{bb_CK1} to cover the case of critical-slope $p$-adic $L$-functions), \cite[Proposition 10.1.1(1)]{KLZ2}, and the interpolative properties of the big logarithm map given as in \eqref{eqn_2022_09_13_0843}, we deduce that
\begin{equation}
    \label{eqn_2022_09_20_1257}
    {\rm Log}_{\omega_\f}({\rm BK}_{\f}^\dagger) (\kappa)=d_\alpha L_{p,\alpha_0}^\prime(f_0,{w(\kappa)}/{2}) +d_\beta L_{p,\beta_0}^\prime(f_0,{w(\kappa)}/{2})\,,\qquad d_\alpha,d_\beta\in \overline{\QQ}^\times\,.
\end{equation}


\appendix
\section{A strategy to prove Conjecture~\ref{conj_main_6_plus_2}}
\label{subsec_strategy_to_prove_conj_2_2}
We outline an action plan to prove a weak form\footnote{This form does not require the $p$-optimal construction of the family of degree-6 $p$-adic $L$-functions $\cL_p^\Ad(\hf\otimes \Ad^0\hg)$.} of Conjecture~\ref{conj_main_6_plus_2} in full generality. We remark that this strategy has been carried out in detail in the follow-up paper \cite{BCPV}, and we refer the reader to this paper for details.

\subsection{Reformulation of  Conjecture~\ref{conj_main_6_plus_2}} 
\label{subsec_A1_2024_11}
To prove Conjecture~\ref{conj_main_6_plus_2}, one needs to verify the following:
\begin{itemize}
    \item[\mylabel{item_211}{\bf 2.1.1}.] The ratio $\mathscr{L}:=\dfrac{\cL_p^\hg(\hf\otimes\hg\otimes\hg^c)^2(\kappa,\lambda,\lambda)}{\mathscr C(\kappa)\cdot {\rm Log}_{\omega_\f}({\rm BK}_{\f}^\dagger)} \in {\rm Frac}(\cR_2)$ interpolates the algebraic numbers on the right-side of \eqref{eqn_conj_2022_06_02_0940} (with $?=\Ad$).  
    \item[\mylabel{item_212}{\bf 2.1.2}.] $\mathscr{L} \in \cR_2$.
\end{itemize}

Once (i) is verified, the verification of (ii) follows once the $p$-adic $L$-function $\cL_p^\Ad(\hf\otimes \Ad^0\hg)$ as in Conjecture~\ref{conj_2022_06_02_0940} has been constructed. The weak form of Conjecture~\ref{conj_main_6_plus_2} alluded to above is \eqref{item_211} above.

Thanks to the fundamental results in \cite{BDV, DarmonRotger}, the numerator of \eqref{item_211} can be computed (up to explicit fudge factors) in terms of the square of the image of the diagonal cycles under the large Perrin-Riou exponential. Moreover, via a family-version of Perrin-Riou's conjecture (cf. \cite{BC_Part2}, see also \cite{BCPV}, \S2.2.8),  the term ${\rm Log}_{\omega_\f}({\rm BK}_{\f}^\dagger)$ in the denominator can be related to the square of the logarithm of a big Heegner point. We remark that Heegner points are defined over an imaginary quadratic field $K$ where $p$ splits and that verifies the Heegner hypothesis relative to $\f$ (so that we have a ring isomorphism $\cO_K/\mathfrak{N}\simeq \ZZ/N_\f\ZZ$ for some ideal $\mathfrak{N}$ of $\cO_K$). We henceforth fix such $K$. A natural recourse\footnote{This action plan is, of course, akin to the strategy of  (Gross and) Dasgupta we outlined in \S\ref{subsubsec_intro_comparison_palvannan_dasgupta}. One fundamental difference is that one needs to compare cycles, as opposed to elements in the motivic cohomology groups of non-critical motives (or rather, their realisations in Deligne cohomology). Since these cycles come about to explain the derivatives of $L$-functions at the central critical point, one needs to work with complex analytic heights in the present setting.} to prove Conjecture~\ref{conj_main_6_plus_2}, therefore, is to establish a comparison between diagonal cycles and Heegner cycles. Such a relationship will come about as a consequence of Gross--Kudla conjecture and the Gross--Zagier--Zhang formulae for Heegner cycles. We note that the diagonal cycles and Heegner points live in different spaces (the former in the Chow group of codimension-$2$ cycles on the product of $3$ modular curves, the latter in the Jacobian of a modular curve), hence they have no chance to admit a direct comparison. It is the ``Chow--Heegner points'' of Darmon--Rotger--Sols~\cite{DRS} (which are the images of diagonal cycles under an intersection pairing) that play an intermediary role; see \S\ref{subsec_A2_2024_11} below for their key properties.

From this standpoint, \eqref{item_211} can reformulated as follows\footnote{That \eqref{item_211bis} is equivalent to \eqref{item_211} has been explained in the follow-up paper \cite[\S2.2]{BCPV} to the present work.}. Let us denote by ${\mathscr{R}}_{\kappa,\lambda}\eqref{eqn_conj_2022_06_02_0940}$ the quantity on the right-hand-side of \eqref{eqn_conj_2022_06_02_0940} with $?=\Ad$ for fixed arithmetic specialisations $\kappa$ and $\lambda$.
\begin{itemize}
    \item[\mylabel{item_211bis}{$\mathbf{2.1.1^\prime}$}.] The ratio \, $\dfrac{\log_{\omega_{\f_\kappa}}\left(\Delta^{\rm (tr)}_p(\f\otimes{\rm ad}(\g))_{\vert_{\kappa,\lambda}}\right)^2\, p^{2s_\kappa-2} [L_{s_\kappa}:H_{p^{s_\kappa}}]^{2}}{\mathscr{D}(\kappa)\cdot   a_p(\f_\kappa)^{-2s_\kappa}\cdot\log_{\omega_{\f_\kappa}}(Q_\kappa)^2}$ equals ${\mathscr{R}}_{\kappa,\lambda}\eqref{eqn_conj_2022_06_02_0940}$ for infinitely many specialisations $(\kappa,\lambda)$ of $\cR_2$ of weight $(2,2)$.
\end{itemize}
We will explain below in \S\ref{subsubsec_whyweight2_GK} why we work exclusively with specialisations of weight $2$. In \eqref{item_211bis}:
\begin{itemize}
    \item $\Delta^{\rm (tr)}_p(\f\otimes{\rm ad}(\g)) \in H^1(\QQ_p,T_\f^\dagger)\widehat{\otimes}_{\cR_\f} \cR_2$ is the image of the big diagonal cycle of \cite{DarmonRotger} under the morphism induced from
    $$H^1(\QQ,T_2^\dagger)\xrightarrow{\res_p} H^1(\QQ_p,T_2^\dagger) \xrightarrow{{\rm tr}} T_\f^\dagger \widehat{\otimes}_{\cR_\f} \cR_2\,;$$
    cf. \cite[\S3.1.2]{BCPV}, and $\Delta^{\rm (tr)}_p(\f\otimes{\rm ad}(\g))_{\vert_{\kappa,\lambda}} \in H^1(\QQ_p,T_{\f_\kappa}^\dagger)$ is its specialisation.
    \item $p^{s_\kappa}$ is the conductor of the wild-nebentype $\psi_\kappa$ of $\f_\kappa$ (as well as its level at $p$), $H_{p^{s_\kappa}}/K$ is the ring class field of conductor $p^{s_\kappa}$, and $L_{s_\kappa}=H_{p^{s_\kappa}}(\mu_{p^{s_\kappa}})$.
    \item $Q_\kappa\in H^1(K,T_{\f_\kappa}^\dagger)$ is Howard's twisted Heegner point given as in \cite[\S2.2.2]{BCPV}.
    \item Finally, 
    $$\mathscr{D}(\kappa):=\dfrac{2^{2\wt(\kappa)-1} (-1)^{\wt(\kappa)+1} }{\cL_p^{\rm Kit}(\f\otimes\epsilon_K)(\kappa, \frac{\rmw(\kappa)}{2})}\times\dfrac{\mathfrak C_{\rm exc}(\hf\otimes\hg\otimes \hh) }{ \pmb\xi_\kappa(\mathfrak N^{-1}) (\sqrt{-1})^{\frac{\wt(\kappa)}{2}-1} \sqrt{-D_K} }=\dfrac{-8\cdot  \mathfrak C_{\rm exc}(\hf\otimes\hg\otimes \hh)}{\cL_p^{\rm Kit}(\f\otimes\epsilon_K)(\kappa,1)\cdot \pmb\xi_\kappa(\mathfrak N^{-1}) \sqrt{-D_K}}$$ 
    is the specialisation of $\mathscr{D}\in {\rm Frac}(\cR_2)$, where we use that $\wt(\kappa)=2$ in \eqref{item_211bis}, and: 
    \begin{itemize}
    \item $\pmb\xi$ is the universal anticyclotomic character given as in \cite[Definition 2.8(3)]{CastellapadicvariationofHeegnerpoints}, and $\pmb\xi_\kappa$ is its specialisation, where we treat $\kappa$ as an element of the anticyclotomic weight space as in op. cit. (identification of one with two is given by the canonical anticyclotomic character, which gives the action on the module denoted by $\QQ_p\langle 1 \rangle$ in \cite[\S4.2]{kobayashiota_Iwasawa2017}).
    \item $\mathfrak C_{\rm exc}(\hf\otimes\hg\otimes \hh)$ is as in the statement of Theorem~\ref{thm:unbalancedinterpolation}.
      \end{itemize}    
  \end{itemize}  

Note that we implicitly assume that $\log_{\omega_{\f_\kappa}}(Q_\kappa)\neq 0$ for all but finitely many $(\kappa,\lambda)$ as above. While this is conjectured by Greenberg to be always the case, we may assume without loss of generality that this is indeed the case; cf. \cite[\S7.2.1]{BCPV}.

\subsubsection{}
The discussion in Section~\ref{subsec_A1_2024_11} tells us that one may directly formulate Conjecture~\ref{conj_main_6_plus_2} in terms of the logarithm of a big Heegner point (relative to an auxiliary choice of an imaginary quadratic field) rather than the big Beilinson--Kato class. However, based on our analysis with the algebraic $p$-adic $L$-functions and modules of leading terms in \S\ref{subsec_KS}, we were led to believe that it is more natural to formulate this conjecture in terms of the logarithm of a big Beilinson--Kato element.

\subsection{Reformulation of \texorpdfstring{\eqref{item_211bis}}{} in terms of Chow--Heegner points}
\label{subsec_A2_2024_11}
We retain the notation in \S\ref{subsec_A1_2024_11}. Let us fix throughout this subsection $(\kappa,\lambda)$ as in \eqref{item_211bis}, let us put $f=\f_\kappa \in S_{2}(\Gamma_0(N_\f)\cap \Gamma_1(p^{s_\kappa}), \psi_\kappa)$ and $g=\g_\lambda\in S_{2}(\Gamma_0(N_\g)\cap \Gamma_1(p^{s_\kappa}))$ to ease our notation. Let us denote by $J$ the Jacobian of the compactified modular curve of level $\Gamma_0(N_\f)\cap \Gamma_1(p^{s_\kappa})$. 

Following \cite[\S1.4]{YZZ10} and  \cite[\S2]{DRS}, we define the Chow--Heegner point $P^{\circ}(\kappa,\lambda)\in (J(L_{s_\kappa})\otimes_{\ZZ}\overline{\QQ})^{\psi_{\kappa}^{\frac{1}{2}}}$ as in \cite[\S5.2.1]{BCPV}. By a slight abuse, we denote the image of $P^{\circ}(\kappa,\lambda)$ in $H^1_{\rm f}(L_{s_\kappa}, T_{\f_\kappa}^\dagger[\frac{1}{p}])^{\psi_{\kappa}^{\frac{1}{2}}}\simeq H^1_{\rm f}(K,T_{\f_\kappa}^\dagger[\frac{1}{p}])$ under the Kummer map also by $P^{\circ}(\kappa,\lambda)$, where the isomorphism follows from the inflation-restriction sequence (cf. \cite{BCPV}, \S7.2.1). As explained in Corollary 6.3 of op. cit. (based on the ideas in Daub's thesis~\cite{daubthesis}), we have 
$$\log_{\omega_f}(\Delta^{\rm (tr)}_p(\f\otimes{\rm ad}(\g))_{\vert_{\kappa,\lambda}})=a_p(g)^{s_\kappa}\,  \log_{\omega_f}\circ\,\res_p\,(P^{\circ}(\kappa,\lambda))\,.$$
We may therefore reformulate \eqref{item_211bis} as follows:

\begin{itemize}
    \item[\mylabel{item_211bisbis}{$\mathbf{CH\,vs\, H}$}.] We have $$P^{\circ}(\kappa,\lambda)=\pm a_p(g)^{-\frac{s_\kappa}{2}}a_p(f)^{-s_\kappa} p^{1-s_\kappa} [L_{s_\kappa}:H_{p^{s_\kappa}}]^{-1}{\mathscr{R}}_{\kappa,\lambda}\eqref{eqn_conj_2022_06_02_0940}^{\frac{1}{2}}\mathscr{D}(\kappa)^{\frac{1}{2}} \cdot Q_\kappa \qquad \hbox{ in }\quad J(L_{s_\kappa})\otimes_{\ZZ_p}\overline{\QQ}_p$$ 
   for some choice of $\pm\in \{+,-\}$.
    \end{itemize}

 We refer the reader to \cite[Proposition 6.5]{BCPV} where it is explained that \eqref{item_211bisbis} is indeed equivalent to \eqref{item_211bis}.

\subsection{Generalized Gross--Kudla conjecture and heights of Chow--Heegner points}
\label{subsec_A3_2024_11}
We retain the notation and conventions of \S\ref{subsec_A1_2024_11} and \S\ref{subsec_A2_2024_11}. In order to prove \eqref{item_211bisbis}, the key ingredients are:
\begin{itemize}
    \item[\mylabel{item_ingredient_1}{$\mathbf{I.1}$}.] The twisted Gross--Zagier theorems of Howard~ \cite[Theorem 5.6.2]{howardtwisted2009} (see also \cite{BCPV}, Theorem 7.1), expressing the derivative $L'(f,\psi_{\kappa}^{-\frac{1}{2}},1)$ in terms of the N\'eron--Tate height of $Q_\kappa\in J(L_{s_\kappa})$.
    \item[\mylabel{item_ingredient_2}{$\mathbf{I.2}$}.] The Generalized Gross--Kudla conjecture (\cite{BCPV}, Conjecture~5.1) expressing the derivative $L'(f\otimes g \otimes g^c, \psi_{\kappa}^{-\frac{1}{2}},2)$ in terms of Bloch--Beilinson heights of diagonal cycles. 
    \item[\mylabel{item_ingredient_3}{$\mathbf{I.3}$}.] Comparison of the Bloch--Beilinson heights of diagonal cycles and the N\'eron--Tate heights of the twisted Heegner points $Q_\kappa$ (cf. \cite[\S1.3.1]{YZZ12}; see also \cite[Remark 3.1.1]{YZZ23} and \cite{BCPV}, \S5.2.1).
\end{itemize}

\subsubsection{}
\label{subsubsec_obstacles} 
 The only missing ingredient to complete the proof of \eqref{item_211bisbis}, therefore, is \eqref{item_ingredient_2}. This is the subject of the preprints of Yuan--Zhang--Zhang \cite{YZZ10,YZZ12,YZZ23}. Note that the results of these articles do not cover the required level of generality (as they require that the levels of all factors are square-free) to establish \cite[Conjecture~5.1]{BCPV}. However, it is expected that a generalisation of the results in these preprints by Liu--Yuan--Zhang--Zhang (as announced by S. Zhang) will settle\footnote{Note that we need this result only in a degenerate case (i.e. when $h=g^c$). In this scenario, one can alternatively attempt to establish \eqref{item_ingredient_2} using the results of~\cite{Xue_Hang_2019} (see especially the discussion in the final paragraph of \S1.1 in op. cit.), but we have not pursued this. We remark that \cite{Xue_Hang_2019} there are no restrictions on the levels since one works in op. cit. over the generic fibre.} \eqref{item_ingredient_2}, and together with the circle of ideas sketched in this Appendix (as detailed in our companion article \cite{BCPV}), it will complete the proof of \eqref{item_211}.

\subsubsection{} 
\label{subsubsec_whyweight2_GK}
In \eqref{item_211bis}, we work only with specialisations of weight $2$. This is because, if $\wt(\kappa)>2$, the analogous comparison to what is predicted by \eqref{item_211bisbis} would take place in the Chow group of homologically trivial codimension-$\frac{\wt(\kappa)}{2}$ cycles on the suitable Kuga--Sato variety (rather than the Jacobian of the appropriately defined modular curve). Even if the higher-weight generalisation of the Gross--Kudla conjecture were available (which currently seems out of reach), one would also need the injectivity of a $p$-adic Abel--Jacobi map (which is possibly a much harder problem than Conjecture~\ref{conj_main_6_plus_2}).

    \subsubsection*{Data Availability \& Conflict of Interest Statement} On behalf of all authors, the corresponding author states that the manuscript has no associated data, and that there is no conflict of interest.

\bibliographystyle{amsalpha}
\bibliography{references}
\end{document}